\newtheorem{theorem}{Theorem}[section]
\newtheorem{corollary}[theorem]{Corollary}
\newtheorem{lemma}[theorem]{Lemma}
\newtheorem{proposition}[theorem]{Proposition}
\theoremstyle{definition}
\newtheorem{definition}[theorem]{Definition}
\newtheorem{remark}[theorem]{Remark}
\newtheorem{example}[theorem]{Example}
\DeclareMathAlphabet{\mathpzc}{OT1}{pzc}{m}{it}
\DeclareMathOperator{\SL}{\mathsf{SL}}
\DeclareMathOperator{\rk}{rk}
\DeclareMathOperator{\coker}{\mathsf{coker}}
\renewcommand{\ker}{\mathsf{ker}}
\newcommand{\im}{\mathsf{im}}
\renewcommand{\dim}{\mathsf{dim}}
\DeclareMathOperator{\tor}{\mathsf{tor}}
\DeclareMathOperator{\Coh}{\mathsf{Coh}}
\DeclareMathOperator{\SSt}{\mathsf{SS}}
\DeclareMathOperator{\VB}{\mathsf{VB}}
\DeclareMathOperator{\Pic}{\mathsf{Pic}}
\DeclareMathOperator{\Iso}{\mathsf{Isom}}
\DeclareMathOperator{\CM}{\mathsf{CM}}
\DeclareMathOperator{\Rep}{\mathsf{Rep}}
\DeclareMathOperator{\Ob}{\mathsf{Ob}}
\DeclareMathOperator{\add}{\mathsf{add}}
\DeclareMathOperator{\Hom}{\mathsf{Hom}}
\DeclareMathOperator{\Tor}{\mathsf{Tor}}
\DeclareMathOperator{\Ext}{\mathsf{Ext}}
\DeclareMathOperator{\Aut}{\mathsf{Aut}}
\DeclareMathOperator{\End}{\mathsf{End}}
\DeclareMathOperator{\Mat}{\mathsf{Mat}}
\newcommand{\kk}{k}
\newcommand{\FF}{\mathbb{F}}
\newcommand{\GG}{\mathbb{G}}
\renewcommand{\mod}{\mathsf{mod}}
\renewcommand{\AA}{\mathbb{A}}
\newcommand{\DD}{\mathbb{D}}
\renewcommand{\SS}{\mathbb{S}}
\newcommand{\TT}{\mathbb{T}}
\newcommand{\II}{\mathbb{I}}
\newcommand{\PP}{\mathbb{P}}
\newcommand{\XX}{\mathbb{X}}
\newcommand{\YY}{\mathbb{Y}}
\newcommand{\RR}{\mathbb{R}}
\newcommand{\LL}{\mathbb{L}}
\newcommand{\QQ}{\mathbb Q}
\newcommand{\CC}{\mathbb C}
\newcommand{\ZZ}{\mathbb Z}
\newcommand{\kA}{\mathcal{A}}
\newcommand{\kF}{\mathcal{F}}
\newcommand{\kG}{\mathcal{G}}
\newcommand{\kH}{\mathcal{H}}
\newcommand{\kO}{\mathcal{O}}
\newcommand{\kL}{\mathcal{L}}
\newcommand{\kP}{\mathcal{P}}
\newcommand{\kN}{\mathcal{N}}
\newcommand{\kT}{\mathcal{T}}
\newcommand{\kS}{\mathcal{S}}
\newcommand{\kX}{\mathcal{X}}
\newcommand{\lar}{\longrightarrow}
\newcommand{\overr}{\overrightarrow}
\newcommand{\overl}{\overleftarrow}
\newcommand{\cA}{\mathsf{A}}
\newcommand{\cB}{\mathsf{B}}
\newcommand{\cC}{\mathsf{C}}
\newcommand{\cD}{\mathsf{D}}
\newcommand{\cI}{\mathsf{J}}
\begin{document}

\title[The composition  algebra of a weighted projective line]{The composition Hall algebra of a weighted projective line}

\author{Igor Burban}
\address{
Mathematisches Institut,
Universit\"at Bonn,
Endenicher Allee 60,
D-53115 Bonn,
Germany
}
\email{burban@math.uni-bonn.de}

\author{Olivier Schiffmann}
\address{
D\'epartement de Math\'ematiques \\
Universit\'e  Paris Sud \\
B\^atiment 425 \\
91405 Orsay Cedex \\
 France
}
\email{Olivier.Schiffmann@math.u-psud.fr}

\subjclass[2000]{Primary 17B37, 17B67, Secondary 13D09}

\begin{abstract}
In this article, we deal with   properties of the reduced Drinfeld double of the
composition subalgebra of the Hall algebra of the category of coherent sheaves
on a weighted projective line.
This  study is motivated by applications in the theory of
 quantized  enveloping algebras of some Lie algebras.
 We obtain
 a new realization of the quantized  enveloping algebras
of affine Lie algebras of simply-laced types as well as  some  new embeddings between them.
  Moreover, our approach allows to derive   new results  on the structure of  the quantized enveloping
algebras of the  toroidal algebras of types $D_4^{(1,1)}$, $E_6^{(1,1)}$,
$E_7^{(1,1)}$ and $E_8^{(1,1)}$. In particular,  our method leads  to a construction of
a modular action  and allows to define  a PBW-type basis for that classes of algebras.
\end{abstract}

\maketitle

\vspace{.1in}

\section{Introduction}
By the works of Ringel \cite{Ringel} and Green \cite{Green},
the study  of Hall algebras of the category of nilpotent representations of a
finite quiver plays an important role in the theory of quantized Kac-Moody algebras.
In this article,   we deal with  the Hall algebra of the category of coherent sheaves
on a weighted projective line $\XX$.

As it was established  by Kapranov  \cite{Ka} and elaborated by Baumann and Kassel
 \cite{BK}, the Hall algebra of the category of coherent sheaves on the classical
projective line $\PP^1$ is closely related
with Drinfeld's new realization of the quantized enveloping algebra
$U_q(\widehat{\mathfrak{sl}}_2)$.  In our previous joint work  \cite{DrinfeldBeck}
it was shown that the Drinfeld--Beck map  $U_q\bigl(\widehat{\mathfrak{sl}}_2\bigr)
\rightarrow U_q\bigl(\mathfrak{L sl}_2\bigr) $ (see \cite{Drinfeld, Beck})
comes from  the derived equivalence
$D^b(\Rep(\overr{\Delta})\bigr) \rightarrow D^b\bigl(\Coh(\PP^1)\bigr)$, where
$\overr{\Delta}$ is the Kronecker quiver.
An attempt to generalize this result on  other quantized enveloping algebras
leads us to the study of Hall algebras of
weighted projective lines.

Weighted projective lines and coherent sheaves on them were introduced by Geigle and
Lenzing in \cite{GL}. This notion turned out to be quite useful  from
various points of view. In particular, the so-called \emph{domestic} weighted projective lines
 provide a ``geometric realization'' of  the derived category of representation of
 affine Dynkin quivers. Weighted projective lines of \emph{tubular} type lead
 to a very interesting class of the  canonical tubular algebras \cite{RingelBook}.
 Geometrically, they correspond to the category of equivariant coherent sheaves on an elliptic curve
 with respect to the action of a finite automorphism group \cite{GL}.
 As it was shown in the  earlier work of the second-named author \cite{S1}, the Hall algebra of such
 weighted projective lines is closely related with the quantized enveloping
algebras of the  \emph{toroidal algebras} of types $\widehat{\widehat{D}}_4$, $\widehat{\widehat{E}}_6$,
$\widehat{\widehat{E}}_7$ and $\widehat{\widehat{E}}_8$.

The main results of this article are the following. Let $\XX$ be a weighted projective line
over a finite field $\FF_q$ and $H(\XX) = H\bigl(\Coh(\XX)\bigr)$ be its Hall algebra.
In the  work of the second-named author
\cite{S1},  the composition subalgebra $U(\XX) \subset H(\XX)$ was introduced.
We give an alternative definition of  $U(\XX)$  and show  that it  is a \emph{topological bialgebra},
i.e.~a subalgebra of $H(\XX)$
closed under Green's comultiplication $H(\XX) \to H(\XX) \widehat\otimes H(\XX)$. The core
of our proof is the technique of the so-called stability conditions on abelian categories.

Next, we  prove  that the subalgebra $\overline{U}(\XX)_{\mathrm{tor}}$ of  $U(\XX)$, which
generated by the classes
of the skyscraper sheaves, is isomorphic to $\mathcal{Z} \otimes
U_q^+\bigl(\widehat{\mathfrak{sl}}_{p_1}\bigr)
\otimes \dots \otimes U_q^+\bigl(\widehat{\mathfrak{sl}}_{p_n}\bigr)$, where
$\mathcal{Z} = \CC[z_1, z_2, \dots,z_l, \dots]$ is the Macdonald's ring of symmetric functions
and $\underline{p} = (p_1, p_2, \dots, p_n)$ is the \emph{weight type} of the weighted projective line  $\XX$.

We study in details
further properties of $U(\XX)$.  In particular, we
show that the functor  $\Coh(\PP^1) \rightarrow \Coh(\XX)$, which is right  adjoint to the functor
 of reduction of weights of Geigle and Lenzing \cite{GeiglLenzPerpCat}, induces an injective morphism
of the reduced Drinfeld doubles $U_q\bigl(\mathfrak{L} \mathfrak{sl}_{2}\bigr) \cong DU(\PP^1) \rightarrow DU(\XX)$. In particular, this implies
that the reduced Drinfeld double of the composition algebra of an arbitrary weighted projective line $\XX$
contains a subalgebra isomorphic to  $U_q(\widehat{\mathfrak{sl}}_2)$. More generally,
for any weighted projective line $\YY$ of type dominated by the type of $\XX$, we construct
an injective algebra homomorphism $DU(\YY) \to DU(\XX)$. These results are based on the
notion of perpendicular categories introduced by Geigle and Lenzing in \cite{GeiglLenzPerpCat}.
We also discuss some general properties of the reduced Drinfeld double of a perpendicular
subcategory of a hereditary abelian category, which can be applied in the context of quivers as well.
For example, this technique allows to construct a new  embedding of the quantized enveloping algebras
$U_q(\widehat{A}_3) \to U_q(\widehat{D}_4)$.

Using a recent result of Cramer \cite{Cramer},
 we show that for a domestic weighted projective line $\XX$ of affine Dynkin type
$\Delta$,  any derived equivalence $D^b\bigl(\Rep(\overr{\Delta})\bigr) \rightarrow
D^b\bigl(\Coh(\XX)\bigr)$ induces an isomorphism of the reduced Drinfeld doubles
of the composition algebras
$DC(\overr\Delta) \rightarrow DU(\XX)$, commuting with the Coxeter transformation.
 This differs from the isomorphism given by
Drinfeld \cite{Drinfeld} and Beck \cite{Beck} and might
 be interesting from the point of view of quantum groups.

Next, for a tubular weighted projective line $\XX$, we show that
the  group of the exact auto-equivalences of the derived category
$\Aut\bigl(D^b\bigl(\Coh(\mathbb{X})\bigr)\bigr)$ acts on the reduced Drinfeld double $DU(\XX)$ by
algebra automorphisms. In particular, it leads to a very interesting modular action
on $DU(\XX)$.  Using this  action, we  construct a monomial basis of $DU(\XX)$.

The Hall algebra
of a weighted projective line of was also considered
in the  recent paper \cite{PapierBarbare}, where applications to the
Drinfeld-Beck isomorphism were studied.

\vspace{1mm}
\noindent
\emph{Notation}. Throughout the paper, $\kk = \FF_q$ is a finite field with $q$ elements and
$\widetilde\QQ = \QQ[v, v^{-1}]/(v^{-2}- q) \cong \QQ[\sqrt{q}]$.
For a Kac-Moody Lie algebra $\mathfrak{g}$ we denote by $U_v(\mathfrak{g})$ the $\ZZ[v, v^{-1}]$
algebra, which is the
integral form of the corresponding quantized enveloping algebra, whereas
$U_q(\mathfrak{g}) := U_v(\mathfrak{g}) \otimes_{\ZZ[v, v^{-1}]} \widetilde{\QQ}$.

\vspace{2mm}
\noindent
\emph{Acknowledgement}.   The research of the first-named
author was supported by the DFG project Bu--1866/2--1. Parts of this work were done
during the authors stay at  the Mathematical Research Institute in Oberwolfach within
the ``Research in Pairs'' programme. We are also grateful to the anonymous referee for
his/her valuable comments and suggestions.

\section{Hereditary categories, their Hall algebras and Drinfeld doubles}

In this section we recall some basic facts on Hall algebras
of hereditary categories. Here we follow very closely the notations of our preceding
 article \cite{DrinfeldBeck}.
Let $\cA$ be an
essentially small hereditary abelian $\kk$--linear category such that for any pair  $M, N \in \mathrm{Ob}(\cA)$
the $\kk$--vector spaces $\Hom_\cA(M, N)$ and $\Ext^1_\cA(M, N)$ are finite dimensional.

\begin{itemize}
\item
Let $\cI = {\cI}_\cA := \bigl(\mathrm{Ob}(\cA)/\cong\bigr)$ be the set of  isomorphy classes of objects in $\cA$.
\item
For an object $X \in \mathrm{Ob}(\cA)$,  we denote by $[X]$ its image in $\cI$ and set  $a_X = \bigl|\Aut_\cA(X)\bigr|$.
\item For any triple of  objects $X, Y, Z \in \mathrm{Ob}(\cA)$ we denote
$$P^{Z}_{X, Y} = \left| \left\{(f, g) \in \Hom_{\cA}(Y, Z) \times \Hom_{\cA}(Z, X) \, \, \left| \, \,
0 \to Y \stackrel{f}\rightarrow  Z \stackrel{g}\rightarrow X \to 0 \right.
\mbox{\,  is exact} \right\}\right|
$$
and set   $F^{Z}_{X, Y} = \frac{\displaystyle P^{Z}_{X, Y}}{\displaystyle a_X \cdot a_Y}$. Note
that both numbers $P^{Z}_{X, Y}$ and $F^{Z}_{X, Y}$ are integers.
\item Let $K = K_0(\cA)$ be the K--group of $\cA$. For an object $X \in \mathrm{Ob}(\cA)$,  we denote by
$\overline{X}$ its image in $K$.
\item
Let $\widetilde\QQ[K]$ be the group algebra of $K$. For a class $\alpha \in K$ we denote by $K_\alpha$ the corresponding element in $\widetilde{\QQ}[K]$.
\item Let $\langle \,-\,, \,-\,\rangle: K \times K \rightarrow \ZZ$ be the
Euler form: for $X, Y \in \Ob(\cA)$ we have
$$
\bigl\langle \overline{X}, \overline{Y}\bigr\rangle := \dim_{\kk} \Hom_\cA(X, Y) - \dim_\kk \Ext^1_\cA(X, Y).
$$
Next,  let $(\,-\,, \,-\,): K \times K \rightarrow \ZZ$ be  the  symmetrization of
$\langle \,-\,, \,-\,\rangle$, i.e.~$
(\alpha, \beta) := \langle\alpha, \beta\rangle + \langle\beta, \alpha\rangle$ for all $\alpha, \beta
\in K.
$
\end{itemize}

\begin{definition}
Following  the  work of Ringel \cite{Ringel},   the  extended twisted \emph{Hall algebra} of the abelian
category
$\cA$ is
an associative algebra over $\widetilde{\QQ}$  defined
as follows.
\begin{itemize}
\item As a vector space over $\widetilde{\QQ}$, we have
$
\overline{H}(\cA) := \bigoplus\limits_{[Z] \in \cI} \widetilde{\QQ}[Z]$ and
$
H(\cA) :=  \overline{H}(\cA) \otimes_{\widetilde{\QQ}} \widetilde{\QQ}[K].
$
\item The multiplication  $\circ$ in $H(\cA)$ is defined by the following formulae:
\begin{itemize}
\item For any $\alpha, \beta \in K$ we have: $K_\alpha \circ K_\beta = K_{\alpha + \beta}$.
\item For any $\alpha \in K$ and $[X] \in \cI$ we have:
$
K_\alpha \circ [X] = v^{-(\alpha, \bar{X})} [X] \circ K_\alpha.
$
\item For   $[X], [Y] \in \cI$ the product $\circ$ is defined to be
$$
[X] \circ [Y] =  v^{-\langle \bar{X}, \bar{Y}\rangle} \sum\limits_{[Z] \in \cI} F^{Z}_{X, Y}\; [Z].
$$
\end{itemize}
\item
As it was shown in \cite{Ringel},  the multiplication  $\circ$ is associative and the element
$1 := [0] \otimes K_0$ is the unit element in $H(\cA)$.  In what follows, we
shall use the notation $[X]K_\alpha$ for the  element $[X] \otimes K_\alpha  \in
H(\cA)$.
\item Note that $H(\cA)$ is naturally a  $K$--graded algebra. For $\gamma \in K$ we shall
denote by $H(\cA)[\gamma]$ the corresponding graded peace. In these notations,
$\widetilde{\QQ}[K] = H(\cA)[0]$.
\end{itemize}
\end{definition}

\begin{definition}
According to  the  work of Green \cite{Green},
the Hall algebra $H(\cA)$ has a natural structure of a topological bialgebra endowed
with a bialgebra pairing.
\begin{itemize}
\item There exists a comultiplication
$H(\cA) \stackrel{\Delta}\longrightarrow H(\cA) \widehat\otimes H(\cA)$,
given by the formula:
$$
\Delta\bigl([Z] K_\alpha\bigr) = \sum\limits_{[X], \,  [Y] \in \cI}
v^{-\langle \bar{X}, \bar{Y}\rangle}
\frac{\displaystyle P^{Z}_{X, Y}}{a_Z} [X] K_{\bar{Y} + \alpha} \otimes [Y] K_\alpha.
$$
Here we refer to \cite[Section 2]{HallEll} for the definition of the completed tensor product
$H(\cA) \widehat\otimes H(\cA)$.
The map $\Delta$ is coassociative:  $(\Delta \otimes \mathbbm{1})\circ  \Delta = (\mathbbm{1} \otimes \Delta) \circ \Delta$. Moreover, $\Delta$ is  an algebra homomorphism.
\item There  exists a $\widetilde\QQ$--linear algebra homomorphism
  $H(\cA) \stackrel{\eta}\lar \widetilde\QQ$ given by the formula
$\eta\bigl([Z] K_\alpha\bigr) = \delta_{Z, \, 0}$. For any $a \in H(\cA)$ it satisfies
the equality $(\eta \otimes \mathbbm{1}) \circ \Delta (a) = (\mathbbm{1}\otimes \eta) \circ \Delta (a)
= a$.
\item
There exists  a  bilinear pairing $(\, -\,,\,-\,): H(\cA) \times H(\cA) \rightarrow
\widetilde{\QQ}$  given by the formula
$$
\Bigl([X] K_\alpha, [Y] K_\beta\Bigr) = v^{-(\alpha, \beta)}
\frac{\displaystyle \delta_{X, Y}}{\displaystyle a_X}.
$$
This pairing is non-degenerate on $\bar{H}(\cA)$ and symmetric.
Moreover,
for any elements $a, b, c \in H(\cA)$,
the expression $\bigl(a \otimes  b, \Delta(c)\bigr)$
 takes a  finite value  and the equalities
 $(a \circ b, \, c) = \bigl(a \otimes b, \Delta(c)\bigr)$ and $ (a, 1) = \eta(a)
$ are fulfilled.
\item If $\cA$ is moreover a finite length hereditary  category (for instance, the category of representations
of a finite quiver) then $H(\cA)$ is a true bialgebra over $\widetilde{\QQ}$ with the multiplication
$\circ$, unit $1$, comultiplication $\Delta$ and counit $\eta$.  Moreover,
by a work of Xiao \cite{Xiao}, the Hall algebra $H(\cA)$ has a natural Hopf algebra structure.
\end{itemize}
\end{definition}

\begin{remark}
The fact that the map $\Delta$ is a homomorphism of algebras, was proven by Green \cite{Green}
in the case when $\cA$ is the category of representations of a finite quiver. The
case of general hereditary abelian categories can be treated in a similar way,
see \cite{RingelGreen, OlivierNotes}.
\end{remark}

\noindent
Our next goal is to introduce the \emph{reduced} Drinfeld double of the topological bialgebra
$H(\cA)$.
 To define it,  consider the pair of   algebras $H^\pm(\cA)$, where  we use the notation
$$
H^+(\cA) = \bigoplus\limits_{Z \in \cI} \widetilde{\QQ}[Z]^+  \otimes_{\widetilde{\QQ}} \widetilde{\QQ}[K]
\,\, \mbox{and} \, \,
H^-(\cA) = \bigoplus\limits_{Z \in \cI} \widetilde{\QQ}[Z]^-  \otimes_{\widetilde{\QQ}} \widetilde{\QQ}[K].
$$
In these notations  $H^\pm(\cA) = H(\cA)$  viewed as $\widetilde{\QQ}$-algebras.
Let $a = [Z] K_\gamma$ and
$$
\Delta(a) = \sum\limits_{i} a_i^{(1)} \otimes a_i^{(2)} =
 \sum\limits_{[X], \, [Y] \in \cI} v^{-\langle \bar{X}, \bar{Y}\rangle}
\frac{\displaystyle P^{Z}_{X, Y}}{a_Z} [X] K_{\bar{Y} + \gamma} \otimes [Y] K_\gamma.
$$
Then we denote
$$
\Delta(a^\pm) = \sum\limits_{i} a_i^{(1)\pm} \otimes a_i^{(2)\pm} =
 \sum\limits_{[X], \, [Y] \in \cI} v^{-\langle \bar{X}, \bar{Y}\rangle}
\frac{\displaystyle P^{Z}_{X, Y}}{a_Z} [X]^\pm K_{\pm \bar{Y} + \gamma} \otimes [Y]^\pm  K_\gamma.
$$

\begin{definition}
The Drinfeld double of the topological bialgebra
$H(\cA)$ with respect to the Green's pairing $(\,-\,,\,-\,)$ is
the associative algebra $\widetilde{D}H(\cA)$, defined as  the free product of algebras $H^+(\cA)$ and
$H^-(\cA)$ subject to the following  relations $D(a, b)$ for all  $a, b \in H(\cA)$:
$$
\sum\limits_{i, j} a_{i}^{(1)-} b_{j}^{(2)+}  \bigl(a_{i}^{(2)}, b_{j}^{(1)}\bigr) =
\sum\limits_{i, j} b_{j}^{(1)+} a_{i}^{(2)-}  \bigl(a_{i}^{(1)}, b_{j}^{(2)}\bigr).
$$
The \emph{reduced} Drinfeld double
$DH(\cA)$ is the quotient of $\widetilde{D}H(\cA)$ by the two-sided ideal
$$
I = \left\langle K_{\alpha}^+ \otimes K_{-\alpha}^- - \mathbbm{1}^+ \otimes \mathbbm{1}^-
\, \, | \, \alpha \, \in K\right\rangle.$$
 Note that if $\cA$ is a finite length abelian category, then $I$ is a Hopf ideal and the reduced
 Drinfeld double
$DH(\cA)$ is again  a Hopf algebra.
\end{definition}

\begin{proposition}\label{P:DrinfDoubleStr}
We have an isomorphism of $\widetilde{\QQ}$--vector spaces
$$
 \overline{H}^{+}(\cA) \otimes_{\widetilde{\QQ}}  \widetilde{\QQ}[K]
 \otimes_{\widetilde{\QQ}} \overline{H}^{-}(\cA) \xrightarrow{\mathsf{mult}} DH(\cA)
$$
called the triangular decomposition of $DH(\cA)$.
\end{proposition}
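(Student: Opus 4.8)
The plan is to prove separately that $\mathsf{mult}$ is surjective and that it is injective, following the pattern of \cite[\S 2]{DrinfeldBeck} (which extends the finite-length case treated by Xiao \cite{Xiao}). I would keep track throughout of the grading of $DH(\cA)$ by $K = K_0(\cA)$, in which $[X]^{+}$ has degree $\bar X$, $[X]^{-}$ has degree $-\bar X$ and each $K_\alpha$ has degree $0$; all sums occurring below are finite in each fixed degree once the completion inherent in $\Delta$ is accounted for.

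\emph{Surjectivity.} By construction $DH(\cA)$ is generated by the images of $H^{+}(\cA)$ and $H^{-}(\cA)$. Using the relations $K_\alpha\circ[X]=v^{-(\alpha,\bar X)}[X]\circ K_\alpha$ inside $H^{\pm}(\cA)$ and the relations $K_{\alpha}^{+}K_{-\alpha}^{-}=1$ generating the ideal $I$, these generators reduce to the classes $[X]^{\pm}$ together with a single torus $\{K_\alpha\}$, and the $K_\alpha$ can be moved into a prescribed central slot. It therefore suffices to rewrite an arbitrary word in the $[X]^{+}$ and the $[Y]^{-}$ as a $\widetilde\QQ[K]$-linear combination of words in which every $+$-letter precedes every $-$-letter. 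For this I would use the double relation $D([M],[N])$: isolating the leading components $[M]^{\pm}\otimes 1$ and $K^{\pm}_{\pm\bar M}\otimes[M]^{\pm}$ of $\Delta([M]^{\pm})$ (and likewise for $N$) and using that Green's pairing is diagonal on $\overline H(\cA)$, one sees that $D([M],[N])$ takes the form
$$
[M]^{-}[N]^{+}\;=\;[N]^{+}[M]^{-}\;+\;\sum\,[N']^{+}\,K_{\gamma}\,[M']^{-}\;+\;(\text{terms supported on the torus}),
$$
where $M'$, $N'$ run over proper subquotients of $M$ and $N$. An elementary straightening step thus either strictly lowers the total $\overline H$-degree (for the correction terms) or preserves it while lowering the number of inversions (for the term $[N]^{+}[M]^{-}$); an induction on this pair of quantities, performed degree by degree, then shows that $\mathsf{mult}$ is surjective.

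\emph{Injectivity.} On the vector space $V:=\overline H^{+}(\cA)\otimes_{\widetilde\QQ}\widetilde\QQ[K]\otimes_{\widetilde\QQ}\overline H^{-}(\cA)$ I would introduce the \emph{straightened product} $\star$: the unique bilinear operation that restricts to the given products on the three tensor factors, reproduces the obvious tensor on factors already in triangular order, makes $K_\alpha$ commute with the outer factors exactly as in $H^{\pm}(\cA)$, and satisfies, for $a\in\overline H^{-}$ and $b\in\overline H^{+}$,
$$
(1\otimes 1\otimes a)\star(b\otimes 1\otimes 1)\;=\;\sum_{i,j}\bigl(a_{i}^{(2)},b_{j}^{(1)}\bigr)\;b_{j}^{(2)}\otimes K_{\gamma_{ij}}\otimes a_{i}^{(1)},
$$
the exponents $\gamma_{ij}$ and the accompanying $v$-powers being those dictated by $D(a,b)$. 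Granting that $(V,\star)$ is an associative unital algebra and that the embeddings $\overline H^{\pm}\hookrightarrow V$ extend to algebra homomorphisms $H^{\pm}(\cA)\to V$ satisfying all relations $D(a,b)$ and annihilating $I$, the universal property of the reduced Drinfeld double produces an algebra homomorphism $\Phi\colon DH(\cA)\to V$. Since $\star$ introduces no correction on factors already in triangular order, $\Phi\circ\mathsf{mult}=\mathrm{id}_{V}$ follows directly from the definitions; hence $\mathsf{mult}$ is injective, and combined with the surjectivity above it is an isomorphism.

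\emph{Main obstacle.} The real work lies in the construction of $(V,\star)$. First, since $\Delta$ takes values in the completed tensor product $H(\cA)\widehat\otimes H(\cA)$, the sum defining $\star$ is a priori infinite; one must check that it is finite in each $K_0(\cA)$-graded piece, which rests on the finiteness of $\Hom_\cA$ and $\Ext^{1}_\cA$, hence of the Hall numbers $P^{Z}_{X,Y}$, and is precisely where the passage to the completed tensor product has to be handled with care. Second, and this I expect to be the main difficulty, one must prove that $\star$ is associative — equivalently, that $\Phi$ is well defined, i.e.~that every relation $D(a,b)$ holds in $V$. This is the computation in which the facts that $\Delta$ is an algebra homomorphism (Green's theorem) and is coassociative, together with the identity $(a\circ b,c)=(a\otimes b,\Delta c)$ relating the coproduct and the pairing, all enter simultaneously; it is nevertheless a formal manipulation entirely parallel to the one for the Drinfeld double of a finite-length hereditary category in \cite{Xiao} and to the argument in \cite[\S 2]{DrinfeldBeck}. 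Should one wish to bypass it, injectivity of $\mathsf{mult}$ can also be deduced from a faithful ``Verma-type'' module for $DH(\cA)$, but the bicrossproduct description above is the more economical route and fits the framework of this section.
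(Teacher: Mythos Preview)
Your proposal is a reasonable sketch of the standard proof, but you should be aware that the paper itself does \emph{not} prove this proposition at all: the statement is followed immediately by a remark citing Joseph's book \cite{Joseph} for the Hopf-algebra case and the authors' earlier paper \cite{HallEll} for the topological-bialgebra case. So there is nothing to compare against in the paper proper; what you have written is, in outline, precisely the argument one finds in those references (and in \cite{Xiao}), namely a straightening argument for surjectivity together with a bicrossed-product construction of an algebra structure on $V=\overline H^{+}\otimes\widetilde\QQ[K]\otimes\overline H^{-}$ for injectivity.

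Two minor caveats on your write-up. First, in the surjectivity step your induction variable ``total $\overline H$-degree'' needs to be made precise: in the generality of the paper $\cA$ is not assumed of finite length (the main example is $\Coh(\XX)$), so you cannot simply induct on the length of $M$ and $N$. What does work is that for fixed $M,N$ only finitely many pairs $(M',N')$ of subquotients occur in $D([M],[N])$ (this is the finiteness of $\Hom$ and $\Ext^{1}$), and each nontrivial correction term has $[M']$ a proper quotient of $M$ or $[N']$ a proper subobject of $N$; the noetherianity hypothesis on $\cA$ (implicit in the examples) then guarantees termination. Second, your displayed formula for $\star$ has the Sweedler indices in an unusual order relative to the paper's convention for $D(a,b)$; it is worth double-checking against the explicit form of $\Delta(a^{-})$ given just before Definition~2.4 so that the signs on the $K$-part come out correctly. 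Neither point is a genuine gap, but both are places where the topological-bialgebra setting requires more care than the finite-length case, which is exactly why the authors defer to \cite{HallEll}.
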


\begin{remark}
The notion of the reduced Drinfeld double was introduced by Xiao in \cite{Xiao}.
In the case of Hopf algebras, a proof of Proposition \ref{P:DrinfDoubleStr} can be found in the  book
of Joseph \cite{Joseph}, whereas the case of topological bialgebras was treated in
 our previous joint paper \cite{HallEll}.
\end{remark}

\noindent
The following important  theorem was recently proven by Cramer \cite{Cramer}.

\begin{theorem}\label{T:Cramer}
Let $\cA$ and $\cB$ be two $\kk$-linear finitary hereditary categories such that there exists
 an equivalence
of triangulated categories $D^b(\cA) \stackrel{\FF}\longrightarrow D^b(\cB)$.
Then there is an algebra
isomorphism
$
DH(\cA) \stackrel{\FF}\lar DH(\cB)
$
uniquely determined  by the following properties.
For any $\alpha \in K$ we have:
$\FF(K_\alpha) = K_{\FF(\alpha)}$ and
for any object $X \in \mathrm{Ob}(\cA)$ such that
$\FF(X) \cong \widehat{X}[n]$ with $\widehat{X} \in \mathrm{Ob}(\cB)$ and
$n\in \ZZ$ we have 
\begin{equation}\label{E:action-on-Db}
\FF\bigl([X]^\pm\bigr) = v^{-n \bigl\langle \bar{X}, \bar{X}\bigr\rangle}
\bigl[\widehat{X}\bigr]^{\pm \varepsilon(n)}
\bigl(K_{\widehat{X}}^{\pm \varepsilon(n)}\bigr)^n,
\end{equation}
where $\varepsilon(n) = (-1)^n$. 
\end{theorem}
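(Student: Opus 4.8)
The plan is to separate the assertion into a uniqueness statement and an existence statement, and to prove existence by decomposing $\FF$ into elementary derived equivalences.

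\emph{Uniqueness.} Since $\cA$ is hereditary abelian with $\kk$-finite-dimensional $\Hom$ and $\Ext^{1}$, it is Krull--Schmidt; it is well known that the classes $[X]$ with $X \in \Ob(\cA)$ indecomposable, together with the elements $K_\alpha$ ($\alpha \in K$), generate $H(\cA)$, and hence the $[X]^{\pm}$ with $X$ indecomposable and the $K_\alpha$ generate $DH(\cA)$. If $X$ is indecomposable then $\FF(X)$ is indecomposable in $D^b(\cB)$; since $\cB$ is hereditary, every indecomposable object of $D^b(\cB)$ is a shift of an indecomposable of $\cB$, so $\FF(X) \cong \widehat{X}[-n_{\FF}(X)]$ with $\widehat{X} \in \Ob(\cB)$, and formula \eqref{E:action-on-Db} applies to every indecomposable $X$. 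Together with $\FF(K_\alpha) = K_{\FF(\alpha)}$ this determines $\FF$ on a generating set, so at most one such algebra homomorphism exists.

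\emph{Reduction to elementary equivalences.} For existence I would appeal to the structure of bounded $t$-structures in a hereditary derived category: the functor $\FF$ transports the standard heart of $D^b(\cB)$ to a heart $\cA' \subset D^b(\cA)$, and $\cA$ is joined to $\cA'$ by a finite chain of elementary moves, so that $\FF$ is, up to isomorphism of functors, a composition of derived equivalences of three types: (i) the derived functor of an exact equivalence of abelian categories; (ii) the shift $[1]$; (iii) the tilting equivalence $D^b(\cA') \xrightarrow{\sim} D^b(\cB')$ attached to a torsion pair $(\kT, \kF)$ in $\cA'$ whose tilt $\cB'$ is again a hereditary heart, equivalent to $\cB$. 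The right-hand side of \eqref{E:action-on-Db} is compatible with composition of functors -- the integers $n_{\FF}$ add, and the scalars $v^{n_{\FF}(X)\langle \bar{X}, \bar{X}\rangle}$ combine consistently because $\langle \bar{X}, \bar{X}\rangle$ depends only on $\bar{X} \in K$, a derived invariant -- so it suffices to treat the three types. Type (i) is immediate: $H(\cA')$ and $H(\cB')$ have identical structure constants $F^{Z}_{X,Y}$, automorphism numbers $a_X$ and Euler form, hence identical Drinfeld doubles, and \eqref{E:action-on-Db} reduces to this identification with $n_{\FF} \equiv 0$. Type (ii) is the statement that the assignment interchanging the two copies $H^{\pm}(\cA)$ and inverting the Cartan part is an automorphism of $DH(\cA)$ -- in the quantum group picture, a twisted form of the involution exchanging the positive and negative halves -- which one verifies against the relations $D(a,b)$ and the ideal $I$; it matches \eqref{E:action-on-Db} because the induced map on $K$ is $-\mathrm{id}$ and $n_{\FF} \equiv -1$ on $\cA$.

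\emph{The tilting case.} Case (iii) carries the real content. Under the tilt, $\cB'$ is endowed with the torsion pair $(\kF[1], \kT)$; an object $X \in \cA'$ with canonical sequence $0 \to tX \to X \to fX \to 0$ goes to the two-term complex with $fX$ in degree $-1$ and $tX$ in degree $0$, so a stalk $X \in \kT$ maps to the stalk $X \in \cB'$ (with $n_{\FF}(X) = 0$) and a stalk $X \in \kF$ maps to the shift $X[1]$ of an object of $\cB'$ (with $n_{\FF}(X) = -1$). As indecomposables lie in $\kT$ or in $\kF$, \eqref{E:action-on-Db} is forced and unambiguous. One must then prove that the $\widetilde{\QQ}$-linear map which is the identity on $[X]^{\pm}$ for $X \in \kT$, the restriction of the shift involution of case (ii) on $[X]^{\pm}$ for $X \in \kF$, and the identity on the $K_\alpha$, extends to an algebra isomorphism $DH(\cA') \to DH(\cB')$. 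The verification is organised according to whether the two arguments lie in $\kT$ or in $\kF$: the both-in-$\kT$ case is trivial, the both-in-$\kF$ case follows from (ii) applied inside the extension-closed abelian subcategory $\kF$, and the genuinely new identities are the mixed ones. These amount to rewriting counts $P^{Z}_{X,Y}$ of short exact sequences in $\cA'$ with one term in $\kT$ and the other in $\kF$ -- equivalently, counts of the corresponding distinguished triangles in $D^b(\cA')$, identified with $D^b(\cB')$ -- as Hall numbers, comultiplication coefficients, or Green-pairing values in $\cB'$, each such identity being read off from the long exact $\Hom$/$\Ext^{1}$-sequences attached to a short exact sequence and to the torsion pair $(\kT, \kF)$. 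The main obstacle is precisely this mixed bookkeeping: one must match, term by term, the powers of $v$ produced by the Euler forms $\langle \bar{X}, \bar{Y}\rangle$ (which change when an argument is moved by $[1]$) against the twists in \eqref{E:action-on-Db}, and verify the resulting equalities of Hall and comultiplication data. Granting the mixed multiplication and comultiplication identities, compatibility with the double relations $D(a,b)$ is formal via $(a \circ b, c) = (a \otimes b, \Delta(c))$ and the definition of $\widetilde{D}H(\cA)$, and the descent to the reduced double is automatic, the ideal $I$ being generated by the Cartan elements $K_{\alpha}^{+} \otimes K_{-\alpha}^{-} - \mathbbm{1}^{+} \otimes \mathbbm{1}^{-}$, which the map manifestly preserves.
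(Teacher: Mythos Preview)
The paper does not prove this theorem: it is stated as a result of Cramer and simply cited (``The following important theorem was recently proven by Cramer \cite{Cramer}''), with no argument given in the text. There is therefore no proof in the paper to compare your sketch against.

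That said, your outline is essentially Cramer's own strategy: one first reduces an arbitrary derived equivalence between hereditary categories to a finite chain of (i) exact equivalences of abelian categories, (ii) shifts, and (iii) HRS-tilts at torsion pairs whose tilted heart is again hereditary, and then verifies \eqref{E:action-on-Db} for each type, the substantive case being (iii). Your identification of the ``mixed'' identities in the tilting step as the crux is accurate. One point you leave implicit deserves emphasis: the fact that two hereditary hearts in the same bounded derived category are connected by a \emph{finite} chain of such tilts is itself a nontrivial input (it relies on the structure of $t$-structures in hereditary derived categories and is one of the ingredients Cramer has to establish or cite). Apart from that, and the routine but delicate $v$-power bookkeeping you flag, the sketch is sound.
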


\section{Perpendicular categories and Hall algebras}

\begin{definition}
Let $\cA$ be a hereditary abelian category and $\cC$ be a  full subcategory. Following
Geigle and Lenzing \cite{GeiglLenzPerpCat}, we define the \emph{perpendicular category} $\cC^{\perp}$ as follows:
$$
\cC^\perp := \left\{
X \in \Ob(\cA)\, \bigl| \, \bigr. \Hom_\cA(C, X) = 0 = \Ext^1_\cA(C, X)
\quad \mbox{for all} \quad C \in \Ob(\cC)\right\}.
$$
\end{definition}

\begin{proposition}\label{P:perpcat-is-abelian}
In the above notations, the category $\cC^\perp$ is abelian, hereditary  and
extension-closed in $\cA$.
\end{proposition}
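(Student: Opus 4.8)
The plan is to establish the three claimed properties in sequence, each time using the fact that $\cA$ is hereditary to control $\Ext$-vanishing. First I would show that $\cC^\perp$ is closed under extensions: if $0 \to X' \to X \to X'' \to 0$ is exact in $\cA$ with $X', X'' \in \cC^\perp$, then for any $C \in \Ob(\cC)$ the long exact sequence obtained by applying $\Hom_\cA(C, -)$ reads
\[
0 \to \Hom_\cA(C, X') \to \Hom_\cA(C, X) \to \Hom_\cA(C, X'') \to \Ext^1_\cA(C, X') \to \Ext^1_\cA(C, X) \to \Ext^1_\cA(C, X'') \to 0,
\]
where the sequence terminates because $\cA$ is hereditary. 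Since the outer terms in each pair vanish, so do $\Hom_\cA(C, X)$ and $\Ext^1_\cA(C, X)$, hence $X \in \cC^\perp$.

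Next I would check that $\cC^\perp$ is an abelian subcategory, i.e.\ closed under kernels and cokernels of its own morphisms. Let $f\colon X \to Y$ be a morphism in $\cC^\perp$, and form $K = \ker_\cA(f)$, $I = \im_\cA(f)$, $Q = \coker_\cA(f)$ inside $\cA$; the point is to show $K, Q \in \cC^\perp$ (then $I \in \cC^\perp$ as well, being an extension-closed consequence, or directly by the same argument). From the short exact sequence $0 \to K \to X \to I \to 0$ and $0 \to I \to Y \to Q \to 0$, applying $\Hom_\cA(C,-)$ gives two long exact sequences. The first difficulty is that a priori one only knows the outer objects $X, Y$ lie in $\cC^\perp$, not the intermediate $I$. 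I would resolve this by a diagram chase: $\Hom_\cA(C,K) \hookrightarrow \Hom_\cA(C,X) = 0$ gives $\Hom_\cA(C,K)=0$ immediately; then $\Hom_\cA(C,I) \hookrightarrow \Ext^1_\cA(C,K)$, and one needs $\Ext^1_\cA(C,K)=0$. For this, use the sequence $\Hom_\cA(C,I) \to \Ext^1_\cA(C,K) \to \Ext^1_\cA(C,X) = 0$ together with $\Hom_\cA(C,X) \twoheadrightarrow \Hom_\cA(C,I)$ — but $\Hom_\cA(C,X)=0$, so $\Hom_\cA(C,I)=0$, whence $\Ext^1_\cA(C,K)=0$ and therefore $K \in \cC^\perp$. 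Now knowing $\Hom_\cA(C,I)=0$ and $\Ext^1_\cA(C,I)=0$ (the latter from $\Ext^1_\cA(C,X) \twoheadrightarrow \Ext^1_\cA(C,I)$ with source zero), feed $I \in \cC^\perp$ and $Y \in \cC^\perp$ into the second sequence $0 = \Hom_\cA(C,Y) \to \Hom_\cA(C,Q) \to \Ext^1_\cA(C,I) = 0$, giving $\Hom_\cA(C,Q)=0$, and $\Ext^1_\cA(C,Y) \twoheadrightarrow \Ext^1_\cA(C,Q)$ gives $\Ext^1_\cA(C,Q)=0$. Thus $Q \in \cC^\perp$, and $\cC^\perp$ is closed under kernels and cokernels; since these agree with the $\cA$-kernel and $\cA$-cokernel, the inclusion $\cC^\perp \hookrightarrow \cA$ is exact and $\cC^\perp$ is abelian.

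Finally, heredity of $\cC^\perp$: I would argue that for $X, Y \in \cC^\perp$ the groups $\Ext^i_{\cC^\perp}(X,Y)$ computed in $\cC^\perp$ agree with $\Ext^i_\cA(X,Y)$, which vanish for $i \ge 2$ because $\cA$ is hereditary. Agreement in degree $1$ follows from the fact that $\cC^\perp$ is extension-closed (any $\cA$-extension of $X$ by $Y$ lies in $\cC^\perp$ by the first step), so the Yoneda $\Ext^1$'s coincide. For higher degrees one can either invoke that an extension-closed abelian subcategory of a hereditary category on which $\Ext^1$ agrees is automatically hereditary, or note that $\Ext^2_{\cC^\perp}(X,Y)$ is a subquotient built from $\cA$-extensions and hence vanishes; a clean way is to observe that any short exact sequence in $\cC^\perp$ remains exact in $\cA$, so a length-$2$ Yoneda class in $\cC^\perp$ maps to one in $\cA$, which is zero, and splicing shows the original class is already zero. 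I expect the main obstacle to be precisely the bookkeeping in the middle step — getting the intermediate image object $I$ into $\cC^\perp$ before one can conclude anything about the cokernel — and the heredity claim is then essentially formal given extension-closedness.
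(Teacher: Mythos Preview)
Your overall strategy matches the paper's almost exactly: show extension-closedness, then closure under kernels/cokernels via the image, then heredity by pushing a Yoneda $2$-class into $\cA$ and using extension-closedness to pull the witness back. There is, however, one genuine slip in the abelian step.

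You claim ``$\Hom_\cA(C,X) \twoheadrightarrow \Hom_\cA(C,I)$'' and use this to get $\Hom_\cA(C,I)=0$. But the long exact sequence for $0 \to K \to X \to I \to 0$ only gives $\Hom_\cA(C,X) \to \Hom_\cA(C,I) \to \Ext^1_\cA(C,K)$, and the first map is surjective precisely when $\Ext^1_\cA(C,K)=0$, which is what you are trying to prove --- so the argument is circular as written. The fix is immediate and is what the paper does: use the inclusion $\imath\colon I \hookrightarrow Y$ instead, which gives $\Hom_\cA(C,I) \hookrightarrow \Hom_\cA(C,Y)=0$ directly. Once $\Hom_\cA(C,I)=0$, your remaining chase for $\Ext^1_\cA(C,K)=0$, $\Ext^1_\cA(C,I)=0$, and then $K,I,Q \in \cC^\perp$ goes through cleanly.

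For heredity, your sketch is correct in spirit but the phrase ``splicing shows the original class is already zero'' hides the key point. Knowing that a $2$-extension $0 \to Y \to E \to F \to X \to 0$ becomes zero in $\Ext^2_\cA(X,Y)$ means there exists an object $J$ of $\cA$ sitting in a commutative diagram with exact rows and columns (a pushout/pullback filling). The substance of the argument is that $J$, being an extension of objects of $\cC^\perp$, lies in $\cC^\perp$ by extension-closedness --- and \emph{that} is what certifies the original Yoneda class is zero in $\cC^\perp$, not just in $\cA$. You should make this explicit; the paper does exactly this with the $3\times 3$ diagram.
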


\begin{proof} Let $X, Y \in \Ob(\cC^\perp)$ be any pair of objects and $X \xrightarrow{f} Y$ be any
morphism. We first show that $Z = \mathsf{im}(f)$ belongs
to $\cC^\perp$.
Let $X \xrightarrow{p} Z$ and $Z \xrightarrow{\imath} Y$ be the canonical morphisms. Note that
$f = \imath p$ and $T := \ker(p) \cong \ker(f)$. For any $C \in \Ob(\cC)$ we have exact sequences
$$
0 \lar \Hom_\cA(C, Z) \stackrel{\imath_*}\lar \Hom_\cA(C, Y)
\quad \mbox{\rm and} \quad
\Ext^1_\cA(C, X) \stackrel{p_*}\lar \Ext^1_\cA(C, Z) \lar \Ext^2_\cA(C, T).
$$
Since $\cA$ is hereditary, we have: $\Hom_\cA(C, Z) = 0 = \Ext^1_\cA(C, Z)$, hence
$Z \in \Ob(\cC^\perp)$.
Next, let $L = \ker(f)$ and $N = \coker(f)$. Then we have the following short exact sequences
$$
0 \lar T \stackrel{\jmath}\lar  X \stackrel{p}\lar Z \lar 0
\quad
\mbox{\rm and}
\quad
0 \lar Z \stackrel{\imath}\lar  Y \stackrel{q}\lar N \lar 0.
$$
Taking into account  that $X, Y, Z \in \Ob(\cC^\perp)$  and the assumption $\cA$ to be hereditary, the induced
long exact sequences of $\Ext_\cA^\bullet$ imply  that $T, N$ belong to $ \cC^\perp$ as well. Hence,
the perpendicular category $\cC^\perp$ is an abelian
extension-closed subcategory of $\cA$.

Using Baer's description of the bifunctor  $\Ext^1_{\cC^\perp}(\,-\,,\,-\,)$,  it is easy to see
that for any $X, Y \in \Ob(\cC^\perp)$ the canonical morphism
$
\Ext^1_{\cC^\perp}(X, Y) \to \Ext^1_{\cA}(X, Y)
$
is an isomorphism. It remains to show that the category $\cC^\perp$ is hereditary.
Let $\omega \in \Ext^2_{\cC^\perp}(X, Y)$ be an extension class represented by an exact sequence
$$
0 \lar Y \stackrel{a}\lar E \stackrel{b}\lar F \stackrel{c}\lar X \lar 0,
$$
where $E, F \in \Ob(\cC^\perp)$. Let $I = \im(b) \cong \ker(c)$, then $I \in \Ob(\cC^\perp)$.
Since the extension
$\omega$ is zero, viewed as an element of $\Ext^2_{\cA}(X, Y)$,  there exists an object
$J$ of $\cA$ such that the following diagram is commutative
$$
\xymatrix
{
          &                 & 0 \ar[d]        & 0 \ar[d]        & \\
 0 \ar[r] & Y \ar[r]^a \ar[d]_= & E \ar[r]^e \ar[d] \ar@{.>}[rd]^{b} & I \ar[r] \ar[d]^f & 0 \\
 0 \ar[r] & Y \ar[r]  & J \ar[r] \ar[d] & F \ar[r] \ar[d]^c & 0 \\
          &                 & X \ar[d] \ar[r]^=        & X \ar[d]        & \\
          &                 & 0               & 0        &
}
$$
where  all rows and columns are exact sequences in $\cA$.
 Since the subcategory $\cC^\perp$ is extension-closed in $\cA$,
it follows that $J \in \Ob(\cC^\perp)$. Hence, $\Ext^2_{\cC^\perp}(X, Y) = 0$ and the
perpendicular category
$\cC^\perp$ is hereditary.
\end{proof}

\noindent
As a corollary,  we obtain the following interesting result.

\begin{theorem}\label{T:perpcatDrinfDouble}
Let $\cA$ be a hereditary abelian category over $\kk$, $\cC$ be some full subcategory of $\cA$ and $\cC^\perp$
be its  perpendicular category. Then the embedding  functor
$ \cC^\perp \stackrel{\II}\lar \cA$ induces an injective algebra  homomorphism of the reduced Drinfeld doubles
$DH(\cC^\perp) \to DH(\cA)$.
\end{theorem}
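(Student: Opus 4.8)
The plan is to show that the embedding functor $\II \colon \cC^\perp \to \cA$ is \emph{exact} and \emph{fully faithful}, and moreover that it preserves $\Ext^1$-groups (all of which is essentially contained in Proposition~\ref{P:perpcat-is-abelian}), and then to argue that any such functor between finitary hereditary categories induces a homomorphism on Hall algebras which is compatible with all the extra structure (twisting by the Euler form, comultiplication, Green's pairing), hence descends to the reduced Drinfeld doubles. First I would record the consequences of Proposition~\ref{P:perpcat-is-abelian}: for $X, Y \in \Ob(\cC^\perp)$ we have $\Hom_{\cC^\perp}(X,Y) = \Hom_\cA(X,Y)$ and $\Ext^1_{\cC^\perp}(X,Y) \xrightarrow{\sim} \Ext^1_\cA(X,Y)$, and since $\cC^\perp$ is extension-closed, a short exact sequence in $\cA$ with all three terms in $\cC^\perp$ is already a short exact sequence in $\cC^\perp$ and conversely. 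This implies that for $X, Y, Z \in \Ob(\cC^\perp)$ the Hall numbers agree, $F^Z_{X,Y}$ computed in $\cC^\perp$ equals $F^Z_{X,Y}$ computed in $\cA$, and the Euler forms agree on the image of $K_0(\cC^\perp) \to K_0(\cA)$. Consequently the $\widetilde\QQ$-linear map sending $[X] \mapsto [X]$ and $K_\alpha \mapsto K_{\II(\alpha)}$ (for $\alpha \in K_0(\cC^\perp)$) is an algebra homomorphism $H(\cC^\perp) \to H(\cA)$; it is clearly injective on the basis $\{[X]K_\alpha\}$ modulo the (harmless) possibility that $K_0(\cC^\perp) \to K_0(\cA)$ is not injective — but on $\bar H(\cC^\perp)$ it is visibly injective, and the $K$-part is dealt with after passing to the reduced double.

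Next I would check compatibility with the coalgebra structure and the pairing. For the comultiplication, the subtlety is that $\Delta$ on $H(\cA)$ involves a sum over \emph{all} $[X],[Y] \in \cI_\cA$, not just those in $\cC^\perp$; but for $[Z] \in \cI_{\cC^\perp}$ the coefficient $P^Z_{X,Y}/a_Z$ is nonzero only when $X$ and $Y$ are subquotients of $Z$, hence lie in $\cC^\perp$ (here one uses that $\cC^\perp$ is closed under subobjects and quotients \emph{within the image of a morphism of $\cC^\perp$-objects}, which is exactly what the first part of the proof of Proposition~\ref{P:perpcat-is-abelian} gives: $\im f \in \cC^\perp$, hence $\ker$ and $\coker$ too). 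Therefore $\Delta_\cA$ restricted to $H(\cC^\perp)$ lands in $H(\cC^\perp) \widehat\otimes H(\cC^\perp)$ and agrees with $\Delta_{\cC^\perp}$; similarly the counit $\eta$ and Green's pairing $(\,-\,,\,-\,)$ restrict correctly, since the pairing formula only involves $\delta_{X,Y}$, $a_X$ and the symmetrized Euler form. Thus $H(\cC^\perp) \hookrightarrow H(\cA)$ is a morphism of topological bialgebras with pairing.

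Then I would invoke the functoriality of the (reduced) Drinfeld double construction with respect to such morphisms: given an embedding of topological bialgebras-with-pairing, one gets $H^\pm(\cC^\perp) \hookrightarrow H^\pm(\cA)$, the defining relations $D(a,b)$ of $\widetilde D H(\cC^\perp)$ are among the relations $D(a,b)$ of $\widetilde D H(\cA)$ (the pairing and comultiplications being the restrictions), so there is an algebra map $\widetilde D H(\cC^\perp) \to \widetilde D H(\cA)$, and it carries the ideal $I_{\cC^\perp}$ into $I_\cA$, yielding $DH(\cC^\perp) \to DH(\cA)$. Injectivity is then immediate from the triangular decomposition of Proposition~\ref{P:DrinfDoubleStr}: the map is, via the two decompositions, the tensor product of $\bar H^+(\cC^\perp) \hookrightarrow \bar H^+(\cA)$, the map $\widetilde\QQ[K_0(\cC^\perp)] \to \widetilde\QQ[K_0(\cA)]$, and $\bar H^-(\cC^\perp) \hookrightarrow \bar H^-(\cA)$; the outer two are injective on the nose, and although $K_0(\cC^\perp) \to K_0(\cA)$ need not be injective, in the reduced double only the image of $K_0$ acts and it does so through its action on $\bar H^\pm$, where it is faithful — so the composite is injective. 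The main obstacle I expect is precisely this bookkeeping at the level of the torus part together with the ``finiteness'' points (that the restricted $\Delta$ really has image in the completed tensor product of the \emph{smaller} algebra, and that the sums defining $D(a,b)$ in the sub-double are literally the restrictions of those in the ambient one); the genuinely geometric input — closure of $\cC^\perp$ under the relevant sub/quotient operations and the $\Ext$-comparison — has already been supplied by Propositions~\ref{P:perpcat-is-abelian}, so the remaining work is careful but formal.
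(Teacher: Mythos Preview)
Your approach is essentially the paper's: both use Proposition~\ref{P:perpcat-is-abelian} to see that $\II$ preserves $\Hom$, $\Ext^1$, and hence Hall numbers and the Euler form, giving an algebra embedding $H(\cC^\perp)\hookrightarrow H(\cA)$; both then use the closure of $\cC^\perp$ under images, kernels and cokernels of maps between its objects to pass to the Drinfeld double; and both invoke the triangular decomposition for injectivity. The only cosmetic difference is that the paper verifies the relations $D(X^+,Y^-)$ directly (observing that for $X,Y\in\cC^\perp$ any $Z$ appearing as a quotient of $X$ and a subobject of $Y$ already lies in $\cC^\perp$), whereas you first check compatibility with $\Delta$ and the pairing and then appeal to functoriality of the double; the underlying computation is the same.

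One point deserves correction. Your handling of the torus part is not right: in the triangular decomposition $DH \cong \bar H^+ \otimes_{\widetilde\QQ} \widetilde\QQ[K] \otimes_{\widetilde\QQ} \bar H^-$ the factor $\widetilde\QQ[K]$ is a genuine tensor factor, not merely something acting on $\bar H^\pm$, so if $K_0(\cC^\perp)\to K_0(\cA)$ failed to be injective then the map of group algebras, and hence of reduced doubles, would fail to be injective as well. Your ``faithful action'' argument does not rescue this. The paper does not spell this out either---it simply displays the map on triangular pieces---so you are not missing an idea present in the paper; but you should either drop the parenthetical doubt and assert (or prove) that the $K_0$-map is injective, or add it as a hypothesis. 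In every application in the paper (perpendicular categories to exceptional simples on $\XX$) this injectivity is clear from the explicit description of the functor $\FF$.
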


\begin{proof}
By Proposition \ref{P:perpcat-is-abelian} we know that the category $\cC^\perp$ is an extension-closed
hereditary full abelian subcategory of $\cA$. Moreover, for any $X, Y \in \Ob(\cC^\perp)$ and
$i = 0, 1$ the
canonical morphism $
\Ext^i_{\cC^\perp}(X, Y) \to \Ext^i_{\cA}(X, Y)
$
is an isomorphism. Hence, the functor $\II$ induces an isometry $K_0(\cC^\perp) \to
K_0(\cA)$ with respect to the Euler forms. Moreover, for any $X, Y, Z \in \Ob(\cC^\perp)$
the Hall constants $F^{Z}_{X, Y}$, $P^{Z}_{X, Y}$ and $a_X$ are preserved by $\II$.
Hence, we get an injective algebra homomorphism $ H(\cC^\perp) \stackrel{\II}\lar
H(\cA)$.

Next, let $X, Y$ be objects of $\cC^\perp$  and $Z$ be an object of $\cA$
such that there exist an epimorphism
$X \xrightarrow{p} Z$ and a monomorphism $Z \xrightarrow{\imath} Y$. As in the proof of
Proposition
\ref{P:perpcat-is-abelian} it can be shown   that all three objects
$Z$, $\ker(p)$ and $\coker(\imath)$ belong to
$\cC^\perp$. This shows that the relations $D(X^+, Y^-)$ of the Drinfeld double
$DH(\cC^\perp)$ are preserved by $\II$ and we have  a well-defined injective
algebra homomorphism $ DH(\cC^\perp) \stackrel{\II}\lar  DH(\cA)$. Note that $\II$ preserves
the triangular decomposition:
$$
\overline{H}^+(\cC^\perp) \otimes_{\widetilde{\QQ}} \widetilde{\QQ}\bigl[K_0(\cC^\perp)\bigr]
\otimes_{\widetilde{\QQ}} \overline{H}^-(\cC^\perp)
\stackrel{\II}\lar \overline{H}^+(\cA) \otimes_{\widetilde{\QQ}} \widetilde{\QQ}\bigl[K_0(\cA)\bigr]
\otimes_{\widetilde{\QQ}} \overline{H}^-(\cA).
$$
\end{proof}

\medskip
\noindent
As the first application of Theorem \ref{T:perpcatDrinfDouble}, consider the
following example.

\begin{example}\label{Ex:ApplPerpCat}
Let $\overr{\Delta}$ be a  $\widehat{D}_4$--quiver with the orientation
$$
\overr{\Delta} =
\begin{array}{c}
\xymatrix{
1  \ar[rd] &  & 4  \ar[ld]\\
                & 3 &         \\
2 \ar[ru] &  & 5 \ar[lu]
}
\end{array}
$$
and $\cA = \Rep(\overr{\Delta})$ be its  category of representations.
Consider the indecomposable object $T$ with the dimension vector
 $\begin{smallmatrix} 0 &  & 0 \\  & 1 & \\ 1 & & 1
\end{smallmatrix}$ and the perpendicular category $\cB = T^\perp$.
Let $X_\alpha$, $X_\beta$, $X_\gamma$ and $X_\delta$ be the indecomposable representations
of $\overr{\Delta}$
with  the dimension vectors
$$
\alpha =
\begin{smallmatrix} 0 &  & 0 \\  & 1 & \\ 1 & & 0
\end{smallmatrix},
\quad
\beta =
\begin{smallmatrix} 0 &  & 0 \\  & 1 & \\ 0 & & 1
\end{smallmatrix},
\quad
\gamma =
\begin{smallmatrix} 1 &  & 0 \\  & 0 & \\ 0 & & 0
\end{smallmatrix}
\quad
\mbox{\rm and} \quad
\delta =
\begin{smallmatrix} 0 &  & 1 \\  & 0 & \\ 0 & & 0
\end{smallmatrix}.
$$
It is easy to see that $X_\alpha$, $X_\beta$, $X_\gamma$ and $X_\delta$ belong
to $T^\perp$. Moreover, for any object $X$ of the category $\cB$ with the dimension vector
$\begin{smallmatrix} m_1 &  & m_4 \\  & m_3 & \\ m_2 & & m_5
\end{smallmatrix}$
there exists a short exact sequence
$$
0 \lar Y \lar X \lar X_\gamma^{m_1} \oplus X_\delta^{m_4} \lar 0,
$$
where $Y$ has the dimension vector $\begin{smallmatrix} 0 &  & 0 \\  & m_3 & \\ m_2 & & m_5
\end{smallmatrix}$.  As in the proof of Proposition \ref{P:perpcat-is-abelian}
one shows that $Y$ belongs to $\cB$. Since $Y$ is supported on a subquiver of type $A_3$,
it is easy to deduce
that $Y$ splits into a direct sum of several copies of $X_\alpha$ and $X_\beta$. Hence, $\cB$
is a finite length hereditary abelian category with four simple objects
$X_\alpha$, $X_\beta$, $X_\gamma$ and $X_\delta$.

 Note that $\Ext_\cA^1(X_\gamma, X_\alpha) =  \Ext_\cA^1(X_\gamma, X_\beta) =
 \Ext_\cA^1(X_\delta, X_\alpha) =  \Ext_\cA^1(X_\delta, X_\beta) = \kk$, whereas   all the
 remaining $\Ext^1$--spaces between the simple objects of $\cB$
vanish. In the notations of Keller's work \cite{KellerAinfty} we have:
 $\cB = \mathsf{Filt}(X_\alpha, X_\beta, X_\gamma, X_\delta)$. As in \cite[Section 7]{KellerAinfty}
 one can show  that $\cB$ is equivalent to
$\Rep(\overr{\Delta}')$, where $\overr{\Delta}'$ is an $\widehat{A}_3$--quiver
with the orientation
$$
\overr{\Delta}' =
\begin{array}{c}
\xymatrix{
 & \alpha  & \\
\gamma \ar[ru] \ar[rd] &  &  \delta \ar[ld] \ar[lu] \\
 & \beta  &
}
\end{array}
$$
By Theorem  \ref{T:perpcatDrinfDouble},  we have an injective
morphism of the reduced Drinfeld doubles
\begin{equation}\label{E:funny-morph-quantalg}
DH\bigl(\Rep(\overr{\Delta}')\bigr) \stackrel{\II}\lar  DH\bigl(\Rep(\overr{\Delta})\bigr).
\end{equation}
Note that the following equalities are true in $H\bigl(\Rep(\overr{\Delta})\bigr)$:
$$
[X_\alpha] = v^{-1} [S_2] \circ [S_3] - [S_3] \circ [S_2]
\quad
\mbox{\rm and}
\quad
[X_\beta] = v^{-1} [S_5] \circ [S_3] - [S_3] \circ [S_5],
$$
whereas $[X_\gamma] = [S_1]$ and $[X_\delta] = [S_4]$.
Hence, the algebra homomorphism  $\II$  restricts to an injective homomorphism
$ DC(\overr{\Delta}') \stackrel{\II}\lar DC(\overr{\Delta})$ of the reduced Drinfeld doubles of the
composition subalgebras.
\end{example}

\begin{remark}
Passing to the generic composition algebras, the map  $\II$ constructed
in Example \ref{Ex:ApplPerpCat},  can be lifted to
an injective algebra homomorphism of quantum affine algebras $U_v(\widehat{A}_3)
\stackrel{\II}\lar
U_v(\widehat{D}_4).$
A similar game can be played
with the category of representations of other affine Dynkin quivers. However, we prefer to
postpone  its further discussion  to a future work.
\end{remark}

\section{Generalities on coherent sheaves on a weighted projective line}

In  this subsection, we recall some basic facts on the  category of coherent sheaves
on a weighted projective line, following a pioneering work of  Geigle and Lenzing \cite{GL}.
 For a set
  of positive integers $\underline{p}= \bigl\{p_1, p_2, \dots, p_n\bigr\}$
  and a set of pairwise distinct points
   $\underline{\lambda} = \{\lambda_1, \lambda_2, \dots, \lambda_n\}$
   of $\PP^1(\kk)$ normalized in such a way that $\lambda_1 = \infty = (1: 0)$,
   $\lambda_2 = 0 = (0:1)$, $\lambda_3 = 1 = (1: 1)$ and $\lambda_i = (1: \lambda_i)$ for $i \ge 3$,
 consider the ideal $I = I(\underline{p}, \underline{\lambda}) :=
 \bigl\langle x_2^{p_2} - \lambda_i x_1^{p_1}  -
  x_i^{p_i}\,| \, i \ge 3\bigr\rangle$
   and the  algebra
$
R = R(\underline{p}, \underline{\lambda}) =  \kk[x_1,x_2,\dots,x_n]/I.
$
In what follows we shall assume that
the cardinality of the base   field $\kk$  is bigger than the cardinality of $\underline{\lambda}$.

Let $\LL(\underline{p})$ be the abelian group generated by the elements
$\vec{x}_1, \vec{x}_2, \dots, \vec{x}_n$ subject to the relations
$p_1 \vec{x}_1 = p_2 \vec{x}_2 = \dots = p_n \vec{x}_n =: \vec{c}$. Then the polynomial
algebra $\kk[x_1, \dots, x_n]$ is $\LL(\underline{p})$--graded and
$I$ is an $\LL(\underline{p})$--homogeneous ideal. Hence, the algebra $R$ is $\LL(\underline{p})$--graded, too.
Note that $\LL(\underline{p}) \cong \ZZ \oplus \bigl(\bigoplus\limits_{i = 1}^n \ZZ/p_i \ZZ\bigr)$
and any  $\vec{x} \in \LL(\underline{p})$ can be uniquely written  as
\begin{equation}\label{E:can-form-inL(p)}
\vec{x} = l \vec{c} + \sum\limits_{i=1}^n
 a_i \vec{x}_i,
 \end{equation}
where $l \in \ZZ$ and $0 \le a_i < p_i$ for all $1 \le i \le n$. In what follows, the decomposition
(\ref{E:can-form-inL(p)}) will be called the canonical form of an element $\vec{x} \in \LL(\underline{p})$.
We say that $\vec{x} \in \LL(\underline{p})_+$ if in its canonical decomposition (\ref{E:can-form-inL(p)})
we have
 $l \ge 0$.

\begin{definition}
The category of coherent sheaves
$\Coh(\XX)$  on a weighted projective line
$\XX = \XX(\underline{p}, \underline\lambda)$  is  the
 Serre quotient $\mathsf{grmod}(R)/\mathsf{grmod}_0(R)$ of the category of graded Noetherian
 $R$--modules modulo the category of finite length  graded  $R$--modules. For a graded
 $R$--module $M$ we shall denote by $\widetilde{M}$ the corresponding object of $\Coh(\XX)$.
 \end{definition}

 \noindent
 For  an $\LL(\underline{p})$--homogeneous  prime  ideal $\mathfrak{p}$ consider the ring
 $$\kO_{\XX, \mathfrak{p}} := R_{\mathfrak{p}} =
 \left\{\frac{\displaystyle f}{\displaystyle g}\, \Bigl| \, \Bigr.
 g \in R  \, \;  \mbox{is} \, \; \LL(\underline{p})\mbox{--homogeneous},\,\, g \not\in
 \mathfrak{p}\right\}.$$
  Note that $\kO_{\XX, \mathfrak{p}}$ is again an  $\LL(\underline{p})$--graded
 discrete valuation ring and  we have an exact functor
 $\Coh(\XX) \to  \mathsf{grmod}(R_{\mathfrak{p}})$ mapping a coherent sheaf $ \kF$ to the module $ \kF_{\mathfrak{p}}.
 $

 \medskip
 \noindent
 The following observation is due to Geigle and Lenzing \cite[Section 1.3]{GL}.
\begin{lemma}\label{L:homog-prime-ideals}
Given two sets $\underline{p}$ and $ \underline{\lambda}$ as above,
 there are two types of homogeneous prime ideals of height one
in $R(\underline{p}, \underline{\lambda})$:
\begin{enumerate}
\item Ideals of the form $\bigl(f(x_1^{p_1}, x_2^{p_2})\bigr)$, where $f(y_1,y_2)\in \kk[y_1,y_2]$
is an irreducible   homogeneous polynomial in $y_1, y_2$, which is  different from $y_1$ and $y_2$;
\item  Exceptional prime ideals  $(x_1), (x_2), \dots, (x_n)$.
\end{enumerate}
\end{lemma}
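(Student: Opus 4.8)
The statement to prove is Lemma~\ref{L:homog-prime-ideals}, the classification of height-one $\LL(\underline{p})$--homogeneous prime ideals of $R = R(\underline{p}, \underline{\lambda})$. The plan is to reduce everything to the classical situation on the ordinary projective line $\PP^1$ via the inclusion $S := \kk[x_1^{p_1}, x_2^{p_2}] \hookrightarrow R$, where $S$ is (a Veronese-type copy of) the homogeneous coordinate ring $\kk[y_1, y_2]$ of $\PP^1$ placed in the degree-$\ZZ\vec c$ part of $\LL(\underline{p})$. First I would record the basic algebraic facts about $R$: it is a finitely generated, normal, integral $\LL(\underline p)$--graded domain of Krull dimension $2$, and it is finite as a module over $S$ (indeed free, with a basis given by the monomials $x^{\vec a} = x_1^{a_1}\cdots x_n^{a_n}$ with $0 \le a_i < p_i$, corresponding to the canonical form~(\ref{E:can-form-inL(p)})). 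Being a $2$-dimensional normal domain, height-one primes are exactly the codimension-one ones.

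Next I would analyze primes $\mathfrak p$ of height one according to their trace on $S$. Set $\mathfrak q = \mathfrak p \cap S$; by the going-up/going-down and dimension behaviour for the finite extension $S \subset R$ (both normal), $\mathfrak q$ is a homogeneous prime of $S$ of height $\le 1$, and it has height exactly $1$ unless $\mathfrak p$ lies over the irrelevant-type locus. Now distinguish cases on which of $x_1, x_2$ lie in $\mathfrak p$. If neither $x_1 \in \mathfrak p$ nor $x_2 \in \mathfrak p$: then $x_1^{p_1}, x_2^{p_2} \notin \mathfrak q$, so $\mathfrak q$ is a height-one homogeneous prime of $\kk[y_1,y_2]$ not equal to $(y_1)$ or $(y_2)$, hence $\mathfrak q = (f(y_1,y_2))$ for an irreducible homogeneous $f \ne y_1, y_2$. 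One then checks that the ideal $(f(x_1^{p_1}, x_2^{p_2}))$ of $R$ is already prime: using the relations defining $I$, modulo $f(x_1^{p_1},x_2^{p_2})$ one can eliminate all the variables $x_i$, $i\ge 3$, (each satisfies $x_i^{p_i} = x_2^{p_2} - \lambda_i x_1^{p_1}$, which becomes a unit-times-power in the quotient since $f$ is coprime to $y_1,y_2$, forcing — after localizing at $\mathfrak p$ — that $x_i$ is a uniformizer or unit), and the quotient is a domain of dimension $1$; hence $(f(x_1^{p_1},x_2^{p_2})) \subseteq \mathfrak p$ with both height one, so they coincide. This gives case (1). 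If instead $x_j \in \mathfrak p$ for some $j$: I claim $(x_j)$ is itself prime of height one and therefore $\mathfrak p = (x_j)$. Height one is clear since $x_j$ is a nonzerodivisor and $R$ is a domain. For primality, for $j \ge 3$ one sees directly that $R/(x_j) = \kk[x_1,\dots,\widehat{x_j},\dots,x_n]/I'$ where the relation $x_2^{p_2} - \lambda_j x_1^{p_1} = 0$ now appears, and a short computation (again using the $x_i^{p_i}$ relations and that the $\lambda_i$ are distinct) identifies the quotient with a graded domain; for $j = 1$ or $2$ the relations degenerate similarly. Thus one gets the exceptional primes $(x_1), \dots, (x_n)$ of case (2), and one checks these are pairwise distinct (they have different degrees, or vanish at different weighted points).

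The main obstacle I expect is the primality verifications — showing that $(f(x_1^{p_1},x_2^{p_2}))$ and the $(x_j)$ are actually prime in $R$, not merely that $\mathfrak p$ contains such an element. The cleanest route is to avoid hands-on quotient computations and instead argue geometrically: $\Spec R \setminus \{0\}$ (the homogeneous spectrum) maps to $\PP^1$ via the finite morphism induced by $S \hookrightarrow R$, this map is a ramified $\kk$-algebra covering branched precisely over $\underline\lambda$, and the closed points of the weighted line are the fibres; a height-one homogeneous prime is the same as a closed point of $\XX$, and over an unramified point $\lambda \notin \underline\lambda$ the fibre is a single reduced point cut out by $f$, while over each $\lambda_i$ the fibre is the single nonreduced point cut out by $x_i$. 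Once this dictionary is set up, the lemma is just its translation; I would cite \cite{GL} for the existence of this picture and spell out only the elimination argument needed to see that no other homogeneous primes occur. Finiteness of $R$ over $S$ together with normality of $R$ is what makes all the extension-of-primes bookkeeping work, so I would state that at the outset.
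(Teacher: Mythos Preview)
The paper does not actually prove this lemma; it simply attributes the observation to Geigle and Lenzing \cite[Section~1.3]{GL}. Your sketch is therefore not being compared against anything in the paper, and your overall strategy --- reduce to the polynomial subring $S=\kk[x_1^{p_1},x_2^{p_2}]$ via the finite extension $S\hookrightarrow R$ and read off primes from the covering $\XX\to\PP^1$ --- is the standard and correct one.

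There is, however, a genuine gap in your case analysis. You split according to whether $x_1$ or $x_2$ lies in $\mathfrak p$, and in the case ``neither $x_1\in\mathfrak p$ nor $x_2\in\mathfrak p$'' you assert that $(f(x_1^{p_1},x_2^{p_2}))$ is already prime in $R$. This is false when $f$ vanishes at one of the special points $\lambda_i$ with $i\ge 3$: taking $f(y_1,y_2)=y_2-\lambda_i y_1$, the defining relation gives $f(x_1^{p_1},x_2^{p_2})=x_i^{p_i}$ in $R$, and $(x_i^{p_i})$ is visibly not prime once $p_i\ge 2$. Concretely, $\mathfrak p=(x_i)$ for $i\ge 3$ satisfies ``neither $x_1\in\mathfrak p$ nor $x_2\in\mathfrak p$'', so it falls into your Case~1, yet the ideal you produce there is not $\mathfrak p$ itself but a power of it. Your parenthetical ``$x_i$ is a uniformizer or unit'' shows you sensed the issue, but it does not rescue the primality claim for $(f)R$.

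The fix is immediate: split instead on whether \emph{any} $x_j$ ($1\le j\le n$) lies in $\mathfrak p$. If some $x_j\in\mathfrak p$ then $(x_j)\subseteq\mathfrak p$ and both have height one, so $\mathfrak p=(x_j)$ (your Case~2 argument applies verbatim). If no $x_j$ lies in $\mathfrak p$, then in particular $x_i^{p_i}=x_2^{p_2}-\lambda_i x_1^{p_1}\notin\mathfrak p$ for every $i\ge 3$, so the contracted prime $\mathfrak q=(f)$ in $S$ satisfies $f(1,\lambda_i)\ne 0$ for all $i$; in $R/(f)R$ each $x_i^{p_i}$ is then a unit, whence each $x_i$ is a unit, and the quotient is the domain $S/(f)$ tensored with a product of units. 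Now your ``both height one, hence equal'' step goes through. With this corrected dichotomy the argument is complete, and it matches the geometric picture you describe in your final paragraph.
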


 \begin{definition}
 A coherent sheaf $\kF$ on a weighted projective line $\XX$ is called
 \begin{itemize}
 \item locally free (or a vector bundle)  if $\kF_{\mathfrak{p}}$ is a projective object in
 $\mathsf{grmod}(R_{\mathfrak{p}})$ for all  $\LL(\underline{p})$--homogeneous  prime  ideals $\mathfrak{p}$;
 \item a skyscraper  sheaf (or a torsion sheaf) if $\kF_{\mathfrak{p}} = 0$ for all but finitely many
  $\LL(\underline{p})$--homogeneous  prime  ideals $\mathfrak{p}$.
 \end{itemize}
 \end{definition}

 \medskip
 It turns out that the category $\Coh(\XX)$ shares a lot of  common properties with
 the category of coherent sheaves on a commutative smooth projective curve.
 We list some  fundamental results  on this category,  which  are  due to Geigle and Lenzing
 \cite{GL,GeiglLenzPerpCat}, see also the  recent survey article \cite{ChenKrause}.

\medskip
\noindent
$1$.   The abelian category
$\Coh(\XX)$ is a hereditary noetherian category, which is $\Ext$--finite over the base field $\kk$.

\medskip
\noindent
$2$.
The canonical  functor $\mathsf{grmod}(R) \rightarrow \Coh(\XX)$ induces
an equivalence $\CM(R) \rightarrow \VB(\XX)$ between the category of $\LL(\underline{p})$--graded
Cohen--Macaulay $R$--modules and the category of vector bundles on $\XX$. For $\vec{x} \in \LL(\underline{p})$ we denote  $\kO(\vec{x}) = \kO_{\XX}(\vec{x}):= \widetilde{R(\vec{x})}$. Then the  map
 $$R_{\vec{y}- \vec{x}} \cong  \Hom_{\mathsf{grmod}(R)}\bigl(R(\vec{x}), R(\vec{y})\bigr)
 \xrightarrow{\mathsf{can}}  \Hom_{\XX}\bigl(\kO(\vec{x}),  \kO(\vec{y})\bigr)$$
  is an isomorphism of vector spaces for all $\vec{x}, \vec{y} \in \LL(\underline{p})$.

\medskip
\noindent
$3$.
 For any coherent sheaf $\kF$ on a weighted projective line $\XX$,
 the canonical exact sequence $$0 \lar \tor(\kF) \lar
\kF \lar \kF/\tor(\kF) \lar 0$$ splits, i.e.~any coherent sheaf on $\XX$  is a direct sum of a vector bundle and a torsion sheaf.

\medskip
\noindent
$4$. The category of torsion coherent sheaves $\Tor(\XX)$ splits into a direct sum of blocks:
$$
\Tor(\XX) = \bigoplus\limits_{\mathfrak{p} \in \mathsf{P}} \Tor_{\mathfrak{p}}(\XX),
$$
where $\mathsf{P}$ is the set of $\LL(\underline{p})$--homogeneous prime ideals
 in $R$ of height one and $\Tor_{\mathfrak{p}}(\XX)$ is the category of torsion coherent sheaves supported at the
prime ideal $\mathfrak{p}$. Note that each block  $\Tor_{\mathfrak{p}}(\XX)$ is equivalent to
the category of finite length  $\LL(\underline{p})$--graded  modules over $R_\mathfrak{p}$.

\medskip
\noindent
$5$. In the notations of Lemma \ref{L:homog-prime-ideals}, let
$\mathfrak{p} = \bigl(f(x_1^{p_1}, x_2^{p_2})\bigr)$ be  a homogeneous prime ideal of the first type.  Then
the category $\mathsf{grmod}_{\LL(\underline{p})}(R_\mathfrak{p})$ is equivalent to the category
of nilpotent representations over of the Jordan quiver over the field
$\kk(\mathfrak{p})  := \kk[y]/f(y, 1)$. Let $d$ be the degree of the homogeneous form
$f(y_1, y_2) \in \kk[y_1, y_2]$.
Then the unique simple object $\kS_{\mathfrak{p}}$ in $\Tor_{\mathfrak{p}}(\XX)$
has a locally free resolution
$$
0 \lar \kO(-d \vec{c}) \xrightarrow{f(x_1^{p_1}, \, x_2^{p_2})} \kO \lar
\kS_{\mathfrak{p}} \lar 0.
$$

\medskip
\noindent
$6$.
If  $\mathfrak{p} = (x_i)$ is  an
exceptional prime ideal for  $1 \le i \le n$,
then the category $\Tor_{\mathfrak{p}}(\XX)$ is equivalent to the category of
nilpotent finite-dimensional
representations of a cyclic quiver $\overr{C}_{p_i}$ with $p_i$ vertices over the base field $\kk$.
In particular, the category $\Tor_{\mathfrak{p}}(\XX)$ has $p_i$ simple objects
$\kS_i^{(j)}$, where $1 \le j \le p_i$. They have the following locally free resolutions:
$$
0 \lar \kO(-j \vec{x}_i) \stackrel{x_i}\lar \kO\bigl(-(j-1) \vec{x}_i\bigr) \lar
\kS_i^{(j)} \lar 0
$$
for $1 \le j \le p_i$. Note that $\Ext^1\bigl(\kS_i^{(j)}, \kS_i^{(j+1)}\bigr) = \kk$ for
all $1 \le j  \le p_i$, where $\kS_i^{(p_i+1)} := \kS_i^{(1)}$. All other $\Ext^1$--spaces between
the simple objects $\kS_i^{(j)}$ vanish.
The structure of the category of torsion coherent sheaves $\Tor(\XX)$ can be visualized by the following picture

\vspace{3cm}
$$
{\xy /r0.20pc/:
{(25, 5)\ellipse(50,17){-}}
{\POS(-20,50)
@={
+(0,0)*+{\scriptstyle{1}},
+(10,5)*+{\scriptstyle{2}},
+(10,0)*+{\scriptstyle{3}},
+(10,-5)*+{\scriptstyle{\dots}},
+(-15,-5)*+{\scriptstyle{p_1}}
},
s0="x" @@{;"x"; **@{-} = "x"
\POS**{}?>*@{>}}
{\ar@{.} s0+(0,-5); s0+(0,-35)*++{\lambda_1}}
}
{\POS(50,45)
@={
+(-5,0)*+{\scriptstyle{1}},
+(10,+5)*+{\scriptstyle{2}},
+(15,-7)*+{\scriptstyle{\dots}},
+(-10,-5)*+{\scriptstyle{p_n}}
},
s0="x" @@{;"x";
**@{-}="x"
\POS**{}?>*@{>}
}
{\ar@{.} s0+(-8, -5);s0+(-8,-35)*++{\lambda_n}}
}
{\POS(20,35)*+{\scriptstyle{1}}="a" {\ar@(lu,ur)}
{\ar@{.} "a"+(-2,- 5); "a"+(-2,-35)*++{\lambda}}
}
\endxy}
$$

\medskip
\noindent
$7$.
 Consider the  element $\vec\omega := (n-2)\vec{c} - \sum\limits_{i=1}^n \vec{x}_i \in
\LL(\underline{p})$. Then the functor $\kF \mapsto \kF(\vec{\omega}) =: \tau(\kF)$ is the Auslander--Reiten translation
in the triangulated category $D^b\bigl(\Coh(\XX)\bigr)$. It means that  for any two coherent sheaves
$\kF$ and $\kG$ we have a bi-functorial isomorphism
$$
\Hom(\kF, \kG) \cong \DD\Ext^1\bigl(\kG, \kF(\vec{\omega})\bigr),
$$
where $\DD = \Hom_\kk(\,-\,, \kk)$ is the duality over the base field.
Note that for any $\LL(\underline{p})$--homogeneous prime ideal $\mathfrak{p}$ of the
first type we have $\tau(\kS_{\mathfrak{p}}) \cong \kS_{\mathfrak{p}}$, whereas for
the exceptional simple modules  we have:
$\tau\bigl(\kS_i^{(j)}\bigr) \cong  \kS_i^{(j+1)}$ for all $(i, j)$ such that
$1 \le i \le n$ and $1 \le j \le p_i$.

\medskip
\noindent
$8$.
The vector bundle $\kF := \kO \oplus
\bigoplus\limits_{i = 1}^n \bigl(\bigoplus\limits_{l = 1}^{p_i -1} \kO(l \vec{x}_i)\bigr) \oplus \kO(\vec{c})$
is a \emph{tilting object} in the derived category $D^b\bigl(\Coh(\XX)\bigr)$. In particular,
the derived category
 $D^b\bigl(\Coh(\XX)\bigr)$ is equivalent to
 $D^b\bigl(C(\underline{p}, \underline\lambda)-\mod\bigr)$, where
 $C(\underline{p}, \underline\lambda) = \End(\kF)^{\mathrm{op}}$
 is the so-called \emph{canonical algebra} of type $(\underline{p}, \underline\lambda)$ introduced
 and studied by Ringel \cite{RingelBook}. Moreover, if
  $\XX$ is a weighted projective line of domestic type $\Delta$ then $D^b\bigl(\Coh(\XX)\bigr)$
is equivalent to $D^b\bigl(\Rep(\overr{\Delta})\bigr)$, where $\Delta$ is the corresponding
affine Dynkin quiver.

\medskip
\noindent
$9$. The K--group $K_0(\XX)$ of the category $\Coh(\XX)$ is free of rank $\sum\limits_{i=1}^n (p_i -1) +2$ with a basis $$\Bigl\{\overline{\kO}, \overline{\kO(\vec{x}_1)}, \dots, \overline{\kO\bigl((p_1 -1)\vec{x}_1\bigr)},
\dots, \overline{\kO(\vec{x}_n)}, \dots, \overline{\kO\bigl((p_n -1)\vec{x}_n\bigr)},
\overline{\kO(\vec{c})}\Bigr\}.$$
Let  $x \in \PP^1(k)$ be a non-weighted point of degree one, $\kS$ a simple torsion sheaf supported at $x$
and $\delta = \overline{\kS}$ its class in the K--group. Then for any $1 \le i \le n$ we have the following
relations in $K_0(\XX)$:
$
\sum\limits_{j=1}^{p_i} \overline{\kS_i^{(j)}} = \overline{\kO} -
\overline{\kO(-\vec{c})} = \delta.
$
Moreover, the set  $$\Bigl\{\overline{\kS_1^{(1)}}, \dots, \overline{\kS_1^{(p_1-1)}},
\dots, \overline{\kS_n^{(1)}}, \dots, \overline{\kS_n^{(p_n-1)}}, \delta, \overline{\kO}\Bigr\}$$
 is also a
basis of $K_0(\XX)$. The rank function $K_0(\XX) \xrightarrow{\rk} \ZZ$ is defined by the rule
$\rk(\overline\kO) = 1$ and $\rk\bigl(\overline{\kS_i^{(j)}}\bigr) = 0$ for all $1 \le i \le n$ and $1 \le j \le p_i$. Moreover, for a coherent sheaf $\kF$ the  integer  $\rk\bigl(\overline\kF\bigr)$ coincides
with the geometrical rank of $\kF$ as defined in \cite[Section 1.6]{GL}.

\medskip
\noindent
$10$. Let $\cC = \add\bigl(\kS_i^{(1)}\bigr)$ be the Serre subcategory of $\Coh(\XX)$ generated
by an  exceptional simple object supported at the point $\lambda_i$ and
$\cC^\perp$ be the perpendicular subcategory. Let $\LL(\underline{p}')$
be the abelian group generated by the elements $\vec{y}_1, \dots, \vec{y}_n$ subject to the
relations
$
(p_1 -1) \vec{y}_1 = p_2 \vec{y}_2 = \dots = p_n \vec{y}_n
$
and $\YY = \XX(\underline{\lambda}, \underline{p}')$ be the corresponding weighted projective line.
Then we have:
\begin{enumerate}
\item The Serre quotient $\Coh(\XX)/\cC$ is equivalent to $\Coh(\YY)$. Moreover,
the canonical functor $\Coh(\XX) \stackrel{\PP}\lar \Coh(\XX)/\cC$ has an \emph{exact fully faithful}
right adjoint functor $\FF$.
\item The functor $\cC^\perp \to \Coh(\XX)/\cC$ given by the composition
of $\PP$ and the embedding $\cC^\perp \to \Coh(\XX)$, is an equivalence of categories.
Moreover, the essential image of the functor $\Coh(\XX)/\cC \stackrel{\FF}\lar \Coh(\XX)$ is the category $\cC^\perp$.
\item The functor $ \Coh(\YY) \stackrel{\simeq}\lar   \Coh(\XX)/\cC \stackrel{\FF}\lar \Coh(\XX)$ acts on objects as follows:
\begin{enumerate}
\item Let $\vec{y} = l\vec{c}' + \sum_{i=1}^n b_i \vec{y}_i \in \LL(\underline{p}')$ be
written in its canonical form (\ref{E:can-form-inL(p)}). Then
$\FF\bigl(\kO_{\YY}(\vec{y})\bigr) \cong  \kO_{\XX}\bigl(l \vec{c} + \sum_{i=1}^n b_i \vec{x}_i\bigr)$.
\item For any $2 \le i \le n$  and $1 \le j \le p_i$
we have: $\FF\bigl(\kS_i^{(j)}\bigr) \cong  \kS_i^{(j)}$.
\item Let $X$ be a object of the category $\Tor_{\lambda_1}(\YY)$. Assume it is given by a
representation of the cyclic quiver $\overr{C}_{p_1-1}$
$$
\xymatrix
{ V_1 \ar[r]^{A_1} &  V_2 \ar[r]^{A_2} &  \dots \ar[r] & V_{p_1-1} \ar@/^20pt/[lll]^{A_{p_1-1}}\\
}
$$
Then $\FF(X)$ belongs to the category $\Tor_{\lambda_1}(\XX)$ and is given by the representation
$$
\xymatrix
{
V_1 \ar[r]^{{I}} &
V_1 \ar[r]^{A_1} &  V_2 \ar[r]^{A_2} & \dots \ar[r]   & V_{p_1-1}
\ar@/^20pt/[llll]^{A_{p_1-1}}\\
}
$$
of the cyclic quiver $\overr{C}_{p_1}$,
where ${I}$ is the identity operator.
\end{enumerate}
\end{enumerate}

\medskip
\noindent
$11$. In a similar way, let  $\cC$ be the Serre subcategory of $\Coh(\XX)$ generated
by the simple objects $\kS_1^{(1)}, \dots, \kS_1^{(p_1-1)}, \dots,
\kS_n^{(1)}, \dots, \kS_n^{(p_n-1)}$. Then the perpendicular category $\cC^\perp$ is
equivalent to the Serre quotient $\Coh(\XX)/\cC$, which on its turn is equivalent to
the category $\Coh(\PP^1)$. The canonical functor $\Coh(\XX) \stackrel{\PP}\lar
\Coh(\XX)/\cC$ has an exact  fully faithful right adjoint functor $\FF$, whose
essential image is the perpendicular category $\cC^\perp$.  The functor
 $\Coh(\PP^1) \stackrel{\simeq}\lar \Coh(\XX)/\cC \stackrel{\FF}\lar
 \Coh(\XX)$ acts on objects as follows:
\begin{enumerate}
\item $\FF\bigl(\kO_{\PP^1}(l)\bigr) \cong \kO_{\XX}(l \vec{c})$ for all $l \in \ZZ$.
\item If $x \in \PP^1$ is a regular point then for any $\kS \in \Tor_x(\PP^1)$ we have:
$\FF(\kS) \cong  \kS$.
\item If $x = \lambda_i$ is a special point, $1 \le i \le n$ then
$\FF\bigl(\kO_{\PP^1}/\mathfrak{m}_x^l\bigr)$ belongs to $\Tor_{\lambda_i}(\XX)$ and
is given by the following representation of the cyclic quiver $\overr{C}_{p_i}$:
$$
\xymatrix
{ \kk^l \ar[r]^{I} &  \kk^l \ar[r]^{{I}} &  \dots \ar[r]^{I} & \kk^l \ar@/^15pt/[lll]^{J}\\
}
$$
where $I$ is the identity matrix and $J$ a nilpotent Jordan block of size $l \times l$.
\end{enumerate}

\begin{proposition}\label{P:keyprop-on-morphDD}
Let $\XX = \XX(\underline{p}, \underline{\lambda})$ be a weighted projective line,
$\cC = \add\bigl(\kS_1^{(1)}\bigr)$ be the Serre subcategory of $\Coh(\XX)$ generated by the  exceptional
simple object $\kS_1^{(1)}$ and $\YY$ be the weighted projective line such
that $\Coh(\YY)$ is equivalent to $\cC^\perp$. Then the functor
$\Coh(\YY) \stackrel{\FF}\lar  \Coh(\XX)$ induces an injective   homomorphism of the reduced Drinfeld doubles
$DH(\YY) \stackrel{\FF}\lar  DH(\XX).$
\end{proposition}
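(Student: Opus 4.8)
The plan is to realise $\FF$ as the composite of an equivalence onto the perpendicular category $\cC^{\perp}$ with the inclusion $\cC^{\perp}\hookrightarrow\Coh(\XX)$, and then to feed this into Theorem~\ref{T:perpcatDrinfDouble} together with the elementary functoriality of Hall algebras. First I would invoke property~$(10)$ of $\Coh(\XX)$ recalled above: the right adjoint $\FF$ of the Serre quotient functor $\PP\colon\Coh(\XX)\to\Coh(\XX)/\cC$ is exact and fully faithful, and its essential image is exactly $\cC^{\perp}$. Composing it with the equivalence $\Coh(\YY)\xrightarrow{\sim}\Coh(\XX)/\cC$ therefore presents $\FF$ as an exact, fully faithful functor $\Coh(\YY)\to\Coh(\XX)$ with essential image $\cC^{\perp}$, i.e. as a composite
\[
\Coh(\YY)\ \xrightarrow{\ \sim\ }\ \cC^{\perp}\ \xrightarrow{\ \II\ }\ \Coh(\XX),
\]
where the first arrow is an equivalence of abelian categories and $\cC^{\perp}$ carries the intrinsic abelian structure of Proposition~\ref{P:perpcat-is-abelian}. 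By that proposition, $\Hom$ and $\Ext^{1}$ computed inside $\cC^{\perp}$ agree with those computed inside $\Coh(\XX)$, so this identification of abelian structures is harmless for Hall-theoretic purposes; moreover the explicit action of $\FF$ on objects described in property~$(10)$ makes the identification of $\Coh(\YY)$ with $\cC^{\perp}$ completely concrete.

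Next I would record the routine but essential observation that an equivalence of essentially small finitary hereditary abelian categories induces an isomorphism of the whole Hall package: the product constants $F^{Z}_{X,Y}$ and the orders $a_{X}$, the coproduct constants $P^{Z}_{X,Y}/a_{Z}$, the Euler form on the $K$-group and Green's pairing are all manifestly preserved. Hence it induces an isomorphism of the reduced Drinfeld doubles, compatible with the triangular decomposition of Proposition~\ref{P:DrinfDoubleStr}. Applied to the equivalence $\Coh(\YY)\xrightarrow{\sim}\cC^{\perp}$ of the first step, this yields an isomorphism of $\widetilde{\QQ}$-algebras $DH(\YY)\xrightarrow{\sim}DH(\cC^{\perp})$. (Alternatively one may deduce this from Theorem~\ref{T:Cramer} applied to the derived equivalence $D^{b}(\Coh(\YY))\xrightarrow{\sim}D^{b}(\cC^{\perp})$ induced by the abelian equivalence.)

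Finally, Theorem~\ref{T:perpcatDrinfDouble} with $\cA=\Coh(\XX)$ and $\cC=\add\bigl(S_{1}^{(1)}\bigr)$ provides an injective algebra homomorphism $\II\colon DH(\cC^{\perp})\to DH(\XX)$; composing it with the isomorphism $DH(\YY)\xrightarrow{\sim}DH(\cC^{\perp})$ of the previous step gives the asserted injective homomorphism
\[
DH(\YY)\ \xrightarrow{\ \sim\ }\ DH(\cC^{\perp})\ \xrightarrow{\ \II\ }\ DH(\XX),
\]
which is the double functor induced by $\FF$.

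The only delicate point — and the step I would flag as the main obstacle — is the first one: one must be sure that the functor $\FF$ furnished by property~$(10)$, which a priori is merely a right adjoint (so only left exact) into $\Coh(\XX)$, is genuinely exact and identifies $\Coh(\YY)$ with the extension-closed subcategory $\cC^{\perp}$ equipped with its intrinsic abelian structure. Once this is in place, the Hall structure constants computed inside $\Coh(\YY)$, inside $\cC^{\perp}$, and inside $\Coh(\XX)$ all coincide, and Theorem~\ref{T:perpcatDrinfDouble} applies verbatim; everything else is bookkeeping.
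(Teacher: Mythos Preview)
Your argument is correct and matches the paper's own proof, which simply says that the proposition is a consequence of Theorem~\ref{T:perpcatDrinfDouble} together with the Geigle--Lenzing results recalled in property~(10). You have merely unpacked this one-line justification in detail; in particular your ``delicate point'' is precisely covered by the exactness and full faithfulness of $\FF$ asserted there.
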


\begin{proof}
It is a  consequence  of Theorem \ref{T:perpcatDrinfDouble} and results
of Geigle and Lenzing quoted above.
\end{proof}

\section{Composition subalgebra of a weighted projective line: Part I}

\noindent
In this section we recall the definition of the composition subalgebra $U(\XX)$ of the
Hall algebra $H(\XX)$ of a weighted projective line $\XX$ introduced by second-named author
in \cite{S1}. The main
result of this part  is the fact that $U(\XX)$ is a \emph{topological bialgebra}.

\subsection{Hall algebra of a cyclic quiver}
In this subsection we collect some basic results on the Hall algebra of a cyclic quiver
$$\overr{C} = \overr{C}_p = \xymatrix
{ 1 \ar[r] &  2 \ar[r] &  \dots \ar[r] & p \ar@/^12pt/[lll]
}, \quad \mathrm{where} \quad p \ge 2.
$$
Let $\cA = \Rep(\overr{C})$
be the category of \emph{nilpotent} finite dimensional  representations of $\overr{C}$.
The following result is well-known, see for example
\cite{Drozd}.

\begin{theorem}\label{T:Drozd}
Let $\overr{L}$ be a quiver
of type $A_\infty$ with vertices labeled by the integers and arrows ordered linearly  and
 $\Rep(\overr{L})$ its  category of finite-dimensional
representations. Consider the  exact functor $
\Rep(\overr{L}) \xrightarrow{\PP} \Rep(\overr{C})
$ sending
a representation $V$ into the representation $\PP(V)$ such that $\PP(V)_i =
\bigoplus\limits_{j \,= \,  i \; \mod \; p} V_j$ for all $1 \le i \le p$. Then the indecomposable
objects of $\Rep(\overr{C})$  are precisely the images of the indecomposable objects of
$\Rep(\overr{L})$.
\end{theorem}

\noindent
Let $S_1, \dots, S_p$ be the simple objects of $\cA$. In what follows, we denote
$K_i := K_{\overline{S}_i} \in H(\cA)$ and   $\delta :=
\overline{S}_1 + \dots + \overline{S}_p \in K:=K_0(\cA)$.
The following result is due to Ringel \cite{RingelCyclic}.

\begin{theorem}\label{T:RingelonCyclicQuiver}
We have: $C(\overr{C}):= \bigl\langle [S_1], \dots, [S_n]; K_1^\pm, \dots, K_n^\pm\bigr\rangle \cong
U_q\bigl(\mathfrak{b}(\widehat{\mathfrak{sl}}_p)\bigr)$, where
$U_q\bigl(\mathfrak{b}(\widehat{\mathfrak{sl}}_p)\bigr) \cong
U_q^+\bigl(\widehat{\mathfrak{sl}}_p\bigr) \otimes_{\widetilde\QQ} \widetilde\QQ[K]$
is the Borel part of the quantized enveloping algebra of $\widehat{\mathfrak{sl}}_p$.
\end{theorem}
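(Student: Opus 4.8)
The plan is to exhibit $C(\overr C)$ as a quotient of $U_q(\mathfrak b(\widehat{\mathfrak{sl}}_p))$ and then prove that the quotient map is an isomorphism by comparing graded dimensions. First I would record the Euler form on the classes of the simples. From $\dim_\kk\Hom_\cA(S_i,S_j)=\delta_{ij}$ and $\dim_\kk\Ext^1_\cA(S_i,S_j)=1$ exactly when $j\equiv i+1\ (\mathrm{mod}\ p)$ one gets $\langle\overline S_i,\overline S_j\rangle=\delta_{ij}-\delta_{j\equiv i+1}$, hence the symmetrized form $(\overline S_i,\overline S_j)$ is precisely the generalized Cartan matrix of affine type $A_{p-1}^{(1)}$ (the oriented $p$-cycle), respectively of type $A_1^{(1)}$ when $p=2$. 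The relations $K_iK_i^{-1}=1$ and $K_i\circ[S_j]=v^{-(\overline S_i,\overline S_j)}[S_j]\circ K_i$ hold by the very definition of the extended twisted Hall algebra, and the quantum Serre relations among the $[S_i]$ follow from a direct computation inside the Hall subalgebra generated by two simples: when there is no arrow between $S_i$ and $S_j$ one has $[S_i]\circ[S_j]=[S_i\oplus S_j]=[S_j]\circ[S_i]$; when there is exactly one arrow it is Ringel's classical $A_2$ computation; and for $p=2$ it is the degree-three relation for $\widehat{\mathfrak{sl}}_2$, again verified directly in $H(\cA)$. This produces a surjective $\widetilde\QQ$-algebra homomorphism $\Phi\colon U_q(\mathfrak b(\widehat{\mathfrak{sl}}_p))\twoheadrightarrow C(\overr C)$ sending Chevalley generators to the $[S_i]$ and Cartan generators to the $K_i^{\pm}$.

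The next and decisive step is to prove that $\Phi$ is injective by a dimension count. Both sides are graded by $K_0(\cA)\oplus(\text{Cartan lattice})$ and $\Phi$ is the identity on the Cartan part $\widetilde\QQ[K]$, so it suffices to treat the positive pieces: for $\gamma\in K_0(\cA)$ the class of a nilpotent representation one must show $\dim_{\widetilde\QQ}C(\overr C)^{+}_\gamma=\dim_{\widetilde\QQ}U_q^{+}(\widehat{\mathfrak{sl}}_p)_\gamma$, the inequality $\le$ being automatic from surjectivity of $\Phi$. Using the $\{[M]\}$-basis of $H(\cA)$ together with the universal covering $\overr C_\infty\to\overr C_p$ recalled above, the number of isomorphism classes of nilpotent representations of class $\gamma$ has generating series $\prod_{\alpha}(1-q^{\alpha})^{-1}\prod_{m\ge 1}(1-q^{m\delta})^{-p}$, where $\alpha$ runs over the aperiodic segments (the real positive roots), since each real root carries one indecomposable and each imaginary class $m\delta$ carries exactly $p$ of them. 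On the other hand, the PBW theorem for $U_q^{+}(\widehat{\mathfrak{sl}}_p)$ gives the series $\prod_{\alpha}(1-q^{\alpha})^{-1}\prod_{m\ge 1}(1-q^{m\delta})^{-(p-1)}$, the imaginary root multiplicity of $\widehat{\mathfrak{sl}}_p$ being $p-1$. The discrepancy is the factor $\prod_{m\ge 1}(1-q^{m\delta})^{-1}$, which I would account for by constructing a family of pairwise commuting elements $z_1,z_2,\dots$ with $z_m\in H(\cA)_{m\delta}$, central in $H(\cA)$ and not lying in $C(\overr C)$ (extracted from the subalgebra of homogeneous semistable objects, cf. Proposition \ref{P:knownequal}), and then proving that the multiplication map $C(\overr C)\otimes_{\widetilde\QQ}\widetilde\QQ[z_1,z_2,\dots]\to H(\cA)$ is an isomorphism. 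Granting this, the comparison of the three generating series forces $\dim C(\overr C)^{+}_\gamma=\dim U_q^{+}(\widehat{\mathfrak{sl}}_p)_\gamma$ for every $\gamma$, so $\Phi$ is injective; tensoring the Cartan part back in gives $C(\overr C)\cong U_q(\mathfrak b(\widehat{\mathfrak{sl}}_p))$.

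The main obstacle is exactly this last structural statement: the construction of the central polynomial complement and the freeness $H(\cA)\cong C(\overr C)\otimes\widetilde\QQ[z_1,z_2,\dots]$. I would organise it around a Harder--Narasimhan argument for a generic stability function on the uniserial category $\cA=\Rep(\overr C_p)$: the segment modules of length $<p$ are stable and rigid, while each imaginary slope $m\delta$ carries a semistable subcategory equivalent to the category of finite length nilpotent $\kk[[t]]$-modules, whose Hall algebra is a polynomial ring; one then checks that iterated brackets of the $[S_i]$ reach a codimension-one subspace of each such homogeneous piece, the remaining line being spanned by $z_m$, and that the resulting PBW-type monomials are linearly independent by reading off their leading terms with respect to the HN order in the $\{[M]\}$-basis. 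This is the content of Ringel's paper \cite{RingelCyclic}, and the decomposition $\overline U(\XX)_{\mathrm{tor}}\cong\mathcal Z\otimes\bigotimes_i U_q^{+}(\widehat{\mathfrak{sl}}_{p_i})$ used elsewhere in this article is the global counterpart of this local computation carried out block by block over the exceptional points of $\XX$.
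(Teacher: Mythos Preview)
The paper does not give its own proof of this theorem; it is stated as a result due to Ringel with a citation to \cite{RingelCyclic}. So there is nothing to compare your argument against except Ringel's original paper and the logical structure of the present article.

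Your strategy is sound in outline, but there is a dependency issue you should be aware of. The crucial step in your injectivity argument is the decomposition $H(\cA)\cong C(\overr{C})\otimes_{\widetilde\QQ}\widetilde\QQ[z_1,z_2,\dots]$ with the $z_m$ central. In the paper this appears \emph{after} Theorem~\ref{T:RingelonCyclicQuiver}, as the theorems attributed to Schiffmann \cite{SchiffmannCyclic} and Hubery \cite{Hubery}, and historically those results postdate Ringel's by about a decade. If you want to prove Theorem~\ref{T:RingelonCyclicQuiver} this way, you must verify that the construction of the central complement and the freeness statement can be carried out without already knowing that $C(\overr{C})\cong U_q^+(\widehat{\mathfrak{sl}}_p)$. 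This is in fact possible (Hubery's explicit generators $c_r$ and a direct spanning argument suffice), but you have not said so, and your closing sentence ``This is the content of Ringel's paper \cite{RingelCyclic}'' is misleading: the central-complement picture is not in Ringel's paper.

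Ringel's original argument is more self-contained. He builds a monomial basis of $C(\overr{C})$ directly, indexed by aperiodic multisegments (equivalently, by Kostant partitions avoiding the full cycle), via iterated $q$-commutators of the $[S_i]$, and shows these are linearly independent by a triangularity argument against the $[M]$-basis of $H(\cA)$. The count of aperiodic multisegments of class $\gamma$ then matches $\dim U_q^+(\widehat{\mathfrak{sl}}_p)_\gamma$ on the nose, without ever invoking the full Hall algebra or its center. Your route reaches the same conclusion but trades Ringel's combinatorics for the later structural input of the central polynomial complement; either is fine, provided you keep the logical order straight.
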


\noindent
The structure of the complete Hall algebra $H(\overr{C})$ has been  clarified by
the second-named author  \cite{SchiffmannCyclic} and Hubery \cite{Hubery}. Let $\overline{H}(\overr{C})$
be the non-extended Hall algebra of $\cA$.
Then for any $1 \le i \le p$ we have a linear operator
$e_i^* : \overline{H}(\overr{C}) \rightarrow \overline{H}(\overr{C})$ given by the rule
$
\bigl(e_i^*(x), y\bigr) = \bigl(x, [S_i] \otimes y\bigr)
$
for any $x, y \in \overline{H}(\overr{C})$. Here we use the fact that the Green's form
is non-degenerate on $\overline{H}(\overr{C})$. Next, we consider the following subspace
of $\overline{H}(\overr{C})$:
$$
Z(\overr{C}) = \bigcap\limits_{i=1}^p \ker(e_i^*) =
\Bigl\{x \in \overline{H}(\overr{C}) \, \bigl| \, \bigl(\Delta(x), [S_i] \otimes \,-\,\bigr) = 0
\; \mbox{for all} \;  1 \le i \le p \Bigr\}.
$$
The following results are  due to the second-named author   \cite{SchiffmannCyclic} and Hubery \cite{Hubery}.

\begin{theorem} In the above notations we have:
\begin{enumerate}
\item The vector space $Z(\overr{C})$ is the center of the Hall algebra  $H(\overr{C})$.
In particular, $Z(\overr{C})$ is a commutative algebra.
\item The canonical morphism $Z(\overr{C}) \otimes_{\widetilde\QQ} C(\overr{C})
\xrightarrow{\mathsf{mult}} H(\overr{C})$ is an isomorphism of vector spaces over
$\widetilde\QQ$.
\item The algebra $Z(\overr{C})$ is freely generated by the primitive elements
of the Hall algebra $H(\overr{C})$. This means that
$
Z(\overr{C}) = \widetilde{\QQ}[z_1, z_2, \dots, z_r, \dots]$, where $z_r \in H(\overr{C})[r\delta]$
are such that
$\Delta(z_r) = z_r \otimes \mathbbm{1} + K_{r \delta}  \otimes z_r
$
for any $r \in \ZZ_{>0}$.
In particular, $Z(\overr{C}) \otimes_{\widetilde\QQ} \widetilde\QQ[K_\delta^\pm]$ is
a commutative subbialgebra of the Hall algebra $H(\overr{C})$.
\end{enumerate}
\end{theorem}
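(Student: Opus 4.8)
\medskip
\noindent\emph{Sketch of proof.}
The plan is to exploit the bialgebra structure on $H(\overr{C})$ given by Green's coproduct $\Delta$ together with the non-degeneracy of the Green pairing on each homogeneous component of $\overline{H}(\overr{C})$, and to extract all three assertions from the behaviour of the adjoint operators $e_i^*$. The first observation is that each $e_i^*$ is a twisted (skew) derivation for the $\circ$-product: since $S_i$ is simple with no self-extensions, so that $\Delta\bigl([S_i]\bigr)=[S_i]\otimes\mathbbm{1}+K_i\otimes[S_i]$, the multiplicativity of $\Delta$ forces an identity of the form $e_i^*(x\circ y)=e_i^*(x)\circ y+v^{(\,\cdot\,)}\,x\circ e_i^*(y)$. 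Hence $Z(\overr{C})=\bigcap_i\ker(e_i^*)$ is a subalgebra; the same holds for the ``socle'' operators ${}_ie^*$ adjoint to right multiplication by $[S_i]$. A combinatorial analysis of the isoclasses of nilpotent representations of $\overr{C}_p$ (the multisegments) then shows that a nonzero homogeneous element annihilated by all $e_i^*$ must have degree in $\ZZ_{\ge0}\,\delta$, so $Z(\overr{C})$ is concentrated there.

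Next I would identify $Z(\overr{C})$ with the center. On the one hand, Green's commutator identity writes $[S_i]\circ x-x\circ[S_i]$ (for $x$ homogeneous of degree $r\delta$, on which $K_i$ acts by a scalar) as a fixed nonzero multiple of a combination of $e_i^*(x)$ and ${}_ie^*(x)$; by non-degeneracy of the pairing this commutator vanishes iff $e_i^*(x)={}_ie^*(x)=0$. On the other hand, an element commutes with every $K_\alpha$ precisely when its degree lies in the radical $\ZZ\,\delta$ of the symmetrized Euler form. Combining these, a central element lies in $\bigcap_i\ker(e_i^*)\cap\bigcap_i\ker({}_ie^*)$, and conversely such an element of degree in $\ZZ\,\delta$ is central. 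That this coincides with $\bigcap_i\ker(e_i^*)=Z(\overr{C})$ is where one needs the $\kk$-linear duality anti-automorphism of $H(\overr{C})$ (which interchanges $e_i^*$ with ${}_ie^*$ up to the rotation automorphism of $\overr{C}_p$), together with the explicit PBW-type description of $\overline{H}(\overr{C})$ in terms of periodic and aperiodic multisegments. This proves assertion~(1), and in particular $Z(\overr{C})$ is commutative and commutes with $C(\overr{C})$.

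For assertion~(2) the map $Z(\overr{C})\otimes_{\widetilde\QQ}C(\overr{C})\xrightarrow{\mathsf{mult}}H(\overr{C})$ is, by centrality, a homomorphism of algebras. Injectivity follows from a PBW basis of $C(\overr{C})\cong U_q^+\bigl(\widehat{\mathfrak{sl}}_p\bigr)\otimes_{\widetilde\QQ}\widetilde\QQ[K]$ (built from real- and imaginary-root vectors, using Theorem~\ref{T:RingelonCyclicQuiver}) multiplied by a homogeneous basis of $Z(\overr{C})$, the $K$-grading separating the products. Surjectivity --- equivalently, the statement that $C(\overr{C})$ and $Z(\overr{C})$ together generate $H(\overr{C})$ --- is the crux, and I would establish it by matching graded dimensions: the Hilbert series of $H(\overr{C})$ is the cycle-index series counting nilpotent $\overr{C}_p$-representations, that of $C(\overr{C})$ is given by the Kac root-multiplicity formula for $U_q^+\bigl(\widehat{\mathfrak{sl}}_p\bigr)$, and that of $Z(\overr{C})$ should be $\prod_{r\ge1}(1-t^{r\delta})^{-1}$; the resulting identity of formal series, together with the already-established injectivity, forces the map to be an isomorphism. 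This matching of Hilbert series, which genuinely rests on Hall polynomials for the cyclic quiver, is the main obstacle and is the technical heart of the arguments of Schiffmann \cite{SchiffmannCyclic} and Hubery \cite{Hubery}.

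Finally, for assertion~(3): because $Z(\overr{C})$ is central and is cut out by the operators $e_i^*$ (and ${}_ie^*$), one checks from coassociativity that both legs of $\Delta(z)$ are again annihilated by all these operators, so $\Delta$ restricts to a coproduct on $Z(\overr{C})\otimes\widetilde\QQ[K_\delta^\pm]$, making it a commutative subbialgebra. Thus $Z(\overr{C})$ is a $\ZZ_{\ge0}\,\delta$-graded, connected, commutative Hopf algebra over a field of characteristic zero with finite-dimensional homogeneous components; by the structure theory of such Hopf algebras it is the polynomial algebra on its space of primitives, and the Hilbert series $\prod_{r\ge1}(1-t^{r\delta})^{-1}$ obtained in~(2) forces this space to be one-dimensional in each degree $r\delta$, $r\ge1$. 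Choosing generators $z_r$, which are then automatically primitive, i.e.\ $\Delta(z_r)=z_r\otimes\mathbbm{1}+K_{r\delta}\otimes z_r$, yields $Z(\overr{C})=\widetilde\QQ[z_1,z_2,\dots]$ and completes the proof. The recurring subtlety throughout is the left/right asymmetry between $e_i^*$ and ${}_ie^*$, systematically absorbed using the duality anti-automorphism.
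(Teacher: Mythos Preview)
The paper does not supply its own proof of this theorem: it is stated with the attribution ``The following results are due to Schiffmann \cite{SchiffmannCyclic} and Hubery \cite{Hubery}'' and no argument is given. There is therefore nothing in the paper to compare your proposal against directly.

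That said, your sketch is a reasonable outline of how the cited arguments proceed, and you correctly identify the key inputs: the skew-derivation property of the operators $e_i^*$, the restriction of $Z(\overr{C})$ to degrees in $\ZZ_{\ge 0}\,\delta$, the Hilbert series matching for part~(2), and the Milnor--Moore type structure theorem for connected graded cocommutative (here commutative) Hopf algebras for part~(3). Two places deserve more care. First, your claim that the commutator $[S_i]\circ x - x\circ[S_i]$ is ``a fixed nonzero multiple of a combination of $e_i^*(x)$ and ${}_ie^*(x)$'' is imprecise: the actual identity relating left and right multiplication by $[S_i]$ to $e_i^*$ and ${}_ie^*$ involves the $K_i$-twist and is not a simple linear combination, so deducing that centrality forces both $e_i^*(x)=0$ and ${}_ie^*(x)=0$ requires the non-degeneracy argument to be spelled out carefully (this is done in \cite{SchiffmannCyclic}). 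Second, the equality $\bigcap_i\ker(e_i^*)=\bigcap_i\ker({}_ie^*)$ is not a formality even after invoking the duality anti-automorphism, since that anti-automorphism interchanges the $e_i^*$ with the ${}_{\sigma(i)}e^*$ for the rotation $\sigma$ of the cyclic quiver rather than with the ${}_ie^*$ themselves; one really needs the explicit aperiodic/periodic multisegment analysis you allude to. With those caveats, your proposal is a faithful summary of the route taken in the references the paper cites.
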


\noindent
In the  work of Hubery \cite{Hubery} some  further properties of the center  $Z(\overr{C})$
were studied.

\begin{theorem}\label{T:Hubery}
For any $r \in \ZZ_{>0}$ consider the element
\begin{equation}
c_r := (-1)^r q^{-rp} \sum\limits_{\substack{M:\,  \underline{\dim}(M) = r\delta \\
\mathsf{top}(M) \; \mbox{\scriptsize{sq.~free}}}}
(-1)^{\dim_{\kk} \End_\cA(M)} \, |\Aut(M)| \, [M] \; \in \;  H(\overr{C})[r\delta].
\end{equation}
Then the following statements are true:
\begin{enumerate}
\item The elements $c_r$ are central and generate the algebra $Z(\overr{C})$.
\item For any $r \in \ZZ_{>0}$ we have:
$\Delta(c_r) = \sum\limits_{t=0}^r c_t K_{(r-t)\delta} \otimes c_{r-t}$.
\item For any $r, t \in \ZZ_{>0}$ we have: $(c_r, c_t) = \delta_{r, t} \, q^{-rp} \,  (1 - q^{-p})$.
\end{enumerate}
We set $z_1 = c_1$ and for any $r \ge 2$ define the element $z_r \in Z(\overr{C})[r \delta]
$ using the recursion
\begin{equation}
z_r = r c_r - \sum\limits_{l=1}^{r-1} z_l c_{r-l}.
\end{equation}
Then  the following formulae are true:
\begin{enumerate}
\item $\Delta(z_r) = z_r \otimes \mathbbm{1} + K_{r\delta} \otimes z_r$. In other words,
 $z_r$ is primitive for all
$r \in \ZZ_{>0}$.
\item $(z_r, c_r) = \frac{\displaystyle q^{-rp}}{\displaystyle 1 - q^{-rp}} =
\frac{\displaystyle 1}{\displaystyle q^{rp}-1}$ and  $(z_r, z_r) = r \frac{\displaystyle q^{-rp}}{\displaystyle 1 - q^{-rp}} =
\frac{\displaystyle r}{\displaystyle q^{rp}-1}$.
\item $(z_r, a b) = 0$ for any pair $a, b \in \overline{H}(\overr{C})$ such
that both $a$ and $b$ are non-scalar.
\end{enumerate}
\end{theorem}

\noindent
Let $\overr{J} = \overr{C}_1$ be the Jordan quiver. Then we have an exact functor
$\Rep(\overr{J}) \xrightarrow{\FF} \Rep(\overr{C})$ mapping  a representation
$(V, N)$ into the representation
$$
\FF(V, N) :=
\xymatrix
{ V \ar[r]^{I} &  V \ar[r]^{{I}} &  \dots \ar[r]^{I} & V \ar@/^15pt/[lll]^{N}\\
}
$$
where $I$ is the identity map. For any $r \in \ZZ_{>0}$ and any
partition $\lambda$ of $r$ let $I_\lambda$ be the corresponding
representation of $\overr{J}$. Consider the following element
of  $H(\overr{J})$:
\begin{equation}
p_r = \sum\limits_{\lambda | r} n_q\bigl(l(\lambda)\bigr) \bigl[I_\lambda\bigr],
\end{equation}
where $l(\lambda)$ is the length of $\lambda$ and
$n_q(i) = 1$ for $i = 1$ and $(1-q)  \dots  (1-q^{i-1})$ for $i \ge 2$.

\medskip
\noindent
Recall the following standard
facts on the classical Hall algebra $H(\overr{J})$, see \cite[I.5 Ex.~25 and III.4 (4.11)]{Macdonald}.

\begin{theorem}\label{T:classicalHallprimit}
For the elements $p_r \in H(\overr{J})$ the following properties are true:
\begin{enumerate}
\item
$\Delta(p_r) = p_r \otimes \mathbbm{1} + K_{r \delta} \otimes p_r$
for all $r \in \ZZ_{>0}$.
\item $(p_r, p_t) = \delta_{r, t} \frac{\displaystyle  r}{\displaystyle  q^r -1}$.
\end{enumerate}
\end{theorem}

\begin{proposition}
In the above notations,  set $t_r = \FF(p_r) \in H(\overr{C})$ for all
$r \in \ZZ_{>0}$. Then there exists an element $u_r \in C(\overr{C})$ such that
$
t_r = \frac{\displaystyle 1}{\displaystyle 1 - q^{-rp}} z_r + u_r.
$
In particular, the difference $\tau(t_r) - t_r$ belongs to the composition
subalgebra $C(\overr{C})$, where $\tau$ is the Auslander-Reiten translation
in $\Rep(\overr{C})$.
\end{proposition}

\begin{proof} The existence of a constant $\gamma \in \widetilde\QQ$ and an element
$u_r \in C(\overr{C})$ such that $t_r = \gamma z_r + u_r$ can be proven  along the same lines
as in \cite[Theorem 14]{Hubery}. In order to determine the value of  $\gamma$ note that
$(c_r, t_r) = q^{-rp}$, $(c_r, z_r) = q^{-rp}(1- q^{-rp})$ and
$(c_r, u_r) = 0$. Hence, $\gamma = \frac{\displaystyle 1}{\displaystyle 1 - q^{-rp}}$. It remains to observe
 that the translation
$\tau$ acts as identity on the center $Z(\overr{C})$ of the Hall algebra $H(\overr{C})$ and maps the composition
algebra $C(\overr{C})$ to itself.
\end{proof}

\begin{lemma}
In the above notations, let  $\alpha \in K$
be an element of the form $\overline{S}_1 + \dots +\overline{S}_t$ for some $1 \le t < p$.
For any $l \in \ZZ_{>0}$ let $T_{l \delta + \alpha}$ be  the unique indecomposable
object of $\cA$ of class $l\delta + \alpha$ with the simple top $S_1$. Then we have:
\begin{equation}\label{E:ident-in-cyclic-quiver}
\bigl[T_{l \delta + \alpha}\bigr] =
\left\{
\begin{array}{cc}
\bigl[T_{\alpha}\bigr] & \mbox{\rm if} \hspace{3mm} l = 0, \\
\bigl[T_{l\delta}\bigr] \circ \bigl[T_{\alpha}\bigr] -
v^2 \bigl[T_\alpha\bigr] \circ \bigl[T_{l\delta}\bigr] & \mbox{\rm if} \hspace{3mm} l > 0.
\end{array}
\right.
\end{equation}
\end{lemma}

\begin{proof}
This formula is trivial  for $l = 0$, so we consider the case $l \ge 1$.
Since $\Hom_\cA(S_i, S_i) = \kk = \Ext_\cA^1(S_i, S_{i+1})$ for all $1 \le i \le p$ (as usual,
we set $S_{p+1} := S_1$), whereas
all other $\Hom$ and $\Ext^1$ spaces  between the simple objects are zero,
we have: $\langle \delta, \beta\rangle = 0 = \langle \beta, \delta\rangle$
for all $\beta \in K$. It is easy to see that
$\Hom_\cA(T_{l\delta}, T_\alpha) = \kk$, hence $\Ext^1_\cA(T_{l\delta}, T_\alpha) = \kk$ as well.
Moreover, a generator of $\Ext^1_\cA(T_{l\delta}, T_\alpha)$ corresponds to a non-split
extension of the form
$
0 \to T_\alpha \to T_{l\delta + \alpha} \to T_{l\delta} \to 0.
$
Note that $\Hom_\cA(T_\alpha, T_{l\delta}) = 0$, hence
$\Ext^1_\cA(T_\alpha, T_{l\delta}) = 0$. Summing up, we have the following equalities
in the Hall algebra $H(\cA)$:
$$
\bigl[T_{l\delta}\bigr] \circ \bigl[T_\alpha\bigr] = \bigl[T_{l\delta + \alpha}\bigr] +
\bigl[T_{l\delta} \oplus T_{\alpha}\bigr] \quad \mbox{and} \quad
\bigl[T_\alpha\bigr] \circ \bigl[T_{l\delta}\bigr] = v^{-2} \bigl[T_{l\delta}
\oplus T_{\alpha}\bigr],
$$
which conclude the proof.
\end{proof}

\subsection{Composition subalgebra of $\PP^1$}\label{SS:CompAlgebraP1} The composition
subalgebra of the category of coherent sheaves on a projective line $\PP^1$ was introduced
by Kapranov \cite{Ka}. Later, it was studied in details by Baumann and Kassel \cite{BK}.
In this subsection,  we recall its definition and main properties.

\medskip
\noindent
$1$. The map $K_0(\PP^1) \xrightarrow{(\rk, \, \deg)} \ZZ^2$ is an isomorphism of abelian groups.
Let $\delta = (0, 1)$ be the class of the simple torsion sheaf supported at a $\kk$--point
of $\PP^1$. Then $\delta$ generates the radical of the Euler form
$\langle \,-\,,\,-\,\rangle$.

\medskip
\noindent
$2$. For any integer $r \ge 1$ consider the element
\begin{equation}\label{E:generInTor(P1)1}
\mathbbm{1}_{r\delta} := \sum\limits_{\kT \in \Tor(\PP^1): \, \overline{\kT} = (0, r)} [\kT] \in
 H(\PP^1).
\end{equation}
The elements $\bigl\{T_r\bigr\}_{r \ge 1}$   are determined
 by  $\bigl\{\mathbbm{1}_{r\delta}\bigr\}_{r \ge 1}$ via
 the generating series
\begin{equation}\label{E:generInTor(P1)2}
1 + \sum\limits_{r = 1}^\infty \mathbbm{1}_{r\delta} t^r = \exp\Bigl(\sum\limits_{r = 1}^\infty
\frac{T_r}{[r]_v} t^r\Bigr).
\end{equation}
Finally, the elements $\bigl\{\Theta_r\bigr\}_{r \ge 1}$ are defined by the generating series
\begin{equation}\label{E:generInTor(P1)3}
1 + \sum\limits_{r = 1}^\infty \Theta_r  t^r = \exp\bigl((v^{-1}-v)\sum\limits_{r = 1}^\infty T_r t^r\bigr).
\end{equation}
In what follows, we  set $\mathbbm{1}_{(0, 0)} = T_0 = \Theta_0 = [0] = \mathbbm{1}$.

\medskip
\noindent
$3$.
In the above notations  we have the following results, see for example \cite[Chapter 4]{OlivierNotes}.

\medskip
\noindent
(a)
These  three sets  $\bigl\{\mathbbm{1}_{(0, r)}\bigr\}_{r \ge 1}$,
$\bigl\{ T_r \bigr\}_{r \ge 1}$ and $\bigl\{ \Theta_r \bigr\}_{r \ge 1}$
 generate the
same subalgebra $U(\PP^1)_{\mathrm{tor}}$ of  the Hall algebra
$H(\PP^1)$;

\medskip
\noindent
(b)
For any $r,s  \ge 1$ we have the equalities:
\begin{equation}\label{E:coprodInP1}
\Delta(T_r) = T_r \otimes \mathbbm{1} + K_{(0,r)} \otimes T_r, \quad \bigl(\Theta_r, T_r\bigr) =
\frac{\displaystyle [2r]}{\displaystyle r} \quad
\mbox{and} \quad \bigl(T_r, T_s\bigr) = \delta_{r,s}
\frac{[2r]}{r(v^{-1}-v)}.
\end{equation}

\medskip
\noindent
$4$. For any $n \in \ZZ$ we have the following formula:
\begin{equation}\label{E:coprodInP1(2)}
\Delta\bigl(\bigl[\kO_{\PP^1}(n)\bigr]\bigr) =
\bigl[\kO_{\PP^1}(n)\bigr] \otimes \mathbbm{1} + \sum\limits_{r = 0}^\infty \Theta_r K_{(1, n-r)}
\otimes \bigl[\kO_{\PP^1}(n-r)\bigr],
\end{equation}
see    \cite[Theorem 3.3]{Ka} or
\cite[Section 12.2]{S2}.

\begin{definition}\label{D:compalgP1}
The composition algebra $U(\PP^1)$ is the subalgebra of the Hall algebra
$H(\PP^1)$ generated by the elements $L_l:= \bigl[\kO_{\PP^1}(l)\bigr]$, $T_r$ and $K_\alpha$,
where $l \in \ZZ$,  $r \in \ZZ_{>0}$ and $\alpha \in K_0\bigl(\Coh(\PP^1)\bigr) \cong  \ZZ^2$.
We also use the notations: $C = K_\delta$ and $O = K_{(1,0)}$. From the equalities  (\ref{E:coprodInP1}) and
(\ref{E:coprodInP1(2)}) it follows that  $U(\PP^1)$
is a topological subbialgebra of  $H(\PP^1)$.
\end{definition}

\vspace{2mm}

\noindent
A complete list of relations between the generators of
 the composition algebra $U(\PP^1)$ was obtained  by Kapranov
\cite{Ka}, Baumann and Kassel \cite{BK}, see also \cite[Section 4.3]{OlivierNotes}.

\begin{theorem}\label{T:relationsinP1}
The elements  $L_n, T_r$, $O$ and $C$
satisfy  the following relations:
\begin{enumerate}
\item $C$ is central;
\item $[O, T_n] = 0 = [T_n, T_m]$ for all $m, n  \in \ZZ_{> 0}$;
\item $O L_n = v^{-2} L_n O$ for all $n \in \ZZ$;
\item $\bigl[T_r, L_n\bigr] = \frac{\displaystyle [2r]}{\displaystyle r} L_{n+r}$ for all $n \in \ZZ$ and $r \in \ZZ_{> 0}$;
\item $L_{m} L_{n+1}  + L_{n}L_{m+1} =  v^2 \bigl(L_{n+1} L_{m} + L_{m+1} L_{n})$ for all $m, n \in
\ZZ$.
\end{enumerate}

\vspace{1mm}
\noindent
Let $U(\PP^1)_{\mathrm{vec}}$ be the subalgebra of $U(\PP^1)$ generated by the elements
$L_n$ $(n \in \mathbb{Z})$.
Then the map $U(\PP^1)_{\mathrm{vec}} \otimes_{\widetilde{\QQ}} U(\PP^1)_{\mathrm{tor}}
\otimes_{\widetilde{\QQ}} \widetilde{\QQ}[K]
\xrightarrow{\mathsf{mult}} U(\PP^1)$
is an isomorphism. Next,   the elements $$B_{\underline{m}, \,  \underline{l}, \, a, \,  b} =
\prod_{n \in \ZZ} L_n^{m_n} \circ \prod_{r \in \ZZ^+}  T_r^{l_r} \circ K^a C^b,$$
where $a, b \in \ZZ$,
$\underline{m} = (m_n)_{n \in \ZZ}$ and $\underline{l} = (l_r)_{r \in \ZZ_{>0}}$ are
sequences of non-negative integers such that all but finitely many entries are zero, form a basis
of $U(\PP^1)$ over the field $\widetilde\QQ$.
\end{theorem}

\subsection{First results  on  the composition subalgebra of a weighted projective line}\label{SS:FirstResultsonCompAlg}
Let $\XX$ be a weighted projective line of type $(\underline{p}, \underline{\lambda})$
and   $\cC$ be the Serre subcategory of $\Coh(\XX)$ generated
by the simple torsion sheaves  $\kS_1^{(1)}, \dots, \kS_1^{(p_1-1)}, \dots,
\kS_n^{(1)}, \dots, \kS_n^{(p_n-1)}$. Recall that  the Serre quotient $\Coh(\XX)/\cC$
is equivalent to  the category $\Coh(\PP^1)$ and the canonical functor
$\Coh(\XX) \stackrel{\PP}\lar \Coh(\XX)/\cC$ has an exact fully faithful
 right adjoint functor $\Coh(\PP^1) \stackrel{\FF}\lar \Coh(\XX).$ The second-named
 author suggested in \cite{S1} the following definition.

\medskip
 \begin{definition}\label{D:compalgXX}
 The composition algebra $U(\XX)$ is the subalgebra of the Hall algebra
 $H(\XX)$ generated by the elements $\bigl[\kS_i^{(j)}\bigr]$ for  $1 \le i \le n$ and
 $1 \le j \le p_i$,   $K_\alpha$ for $\alpha \in K_0(\XX)$ and the image
 of the composition algebra $U(\PP^1)$ under the homomorphism
 $H(\PP^1) \stackrel{\FF}\lar  H(\XX)$. For the sake of simplicity, we  use the same notation
  $T_r$ for a generator of $U(\PP^1)$ and its image in $H(\XX)$ under
   $\FF$. Recall that for any $l \in \ZZ$ we have an  isomorphism
  $\FF\bigl(\kO_{\PP^1}(l)\bigr) \cong \kO_{\XX}(l \vec{c})$. Summing up, we set
  $$
  U(\XX) := \Bigl\langle \bigl[\kO(l\vec{c})\bigr], \, T_r, \, \bigl[\kS_i^{(j)}\bigr], \,  K_\alpha \; \Bigl| \;
  l \in \ZZ, \, r \in \ZZ_{>0}, \, 1 \le i \le n, \, 1 \le j \le p_i, \,\alpha \in K_0(\XX) \Bigr.
  \Bigr\rangle.$$
 \end{definition}

\begin{remark}
At  first glance, the definition of  $U(\XX)$ depends
on the choice of a Serre subcategory $\cC$ and is not canonical. However, as we shall see later,
this is \emph{not} the case and $U(\XX)$ can be redefined in an \emph{intrinsic} way as an invariant
of the category $\Coh(\XX)$.
\end{remark}

\begin{lemma}\label{L:line-bundles-are-inU(X)}
Let $\kL \in \Pic(\XX)$ be a line  bundle. Then the element $[\kL]$ belongs to the composition
subalgebra $U(\XX)$.
\end{lemma}

\begin{proof}
Any line bundle $\kL$ on the  weighted projective line $\XX$ can be written as  $\kO(\vec{x})$, where
$\vec{x} = m \vec{c} + \sum_{l=1}^n a_l \vec{x}_l$ is an element of the group $\LL(\underline{p})$ written in
its canonical form (\ref{E:can-form-inL(p)}).  For any $0 \le i \le n$ consider the line bundle
$\kL_i = \kO\bigl(m\vec{c} + \sum_{l=1}^i a_l \vec{x}_l\bigr)$. Note that $\kL_0 = \kO(m\vec{c})$ and
$\kL_n = \kL$. Then for any $1 \le i \le n$ we have a short exact sequence
$$
0 \lar \kL_{i-1} \xrightarrow{x_i^{a_i}} \kL_i \lar \kT_i \lar 0,
$$
where $\kT_i$ is the torsion sheaf of length $a_i$ supported at the weighted point $\lambda_i$
and corresponding to the following representation of the cyclic quiver $\overr{C}_{p_i}$:
$$
\xymatrix
{ 0 \ar[r] & \dots \ar[r]  & 0 \ar[r] & \kk \ar[r]^{1} &  \dots \ar[r]^{1} & \kk \ar@/^20pt/[lllll]^{0}\\
}
$$
By a result of Ringel on the Hall algebra of a Dynkin quiver \cite{Ringel}, the element
$[\kT_i]$ belongs to the subalgebra generated by  $\bigl[\kS_i^{(l)}\bigr]$ for $1 \le l \le p_i$. Note that we have the isomorphisms:
$$
\Ext^1(\kT_i, \kL_{i-1}) = \kk \quad \mbox{\rm and} \quad
 \Ext^1(\kL_{i-1},\kT_i) =
\Hom(\kT_i, \kL_{i-1}) =  \Hom(\kL_{i-1},\kT_i) = 0.
$$
They  yield the following identities in the Hall algebra $H(\XX)$:
$$
\bigl[\kT_i\bigr] \circ \bigl[\kL_{i-1}\bigr] = v
\bigl(\bigl[\kL_i\bigr] + \bigl[\kL_{i-1} \oplus \kT_i\bigr]\bigr)
\quad
\mbox{\rm and}
\quad
 \bigl[\kL_{i-1}\bigr] \circ
\bigl[\kT_i\bigr]  =  \bigl[\kL_{i-1} \oplus \kT_i\bigr].
$$
Hence, for any $1 \le i \le n$ we obtain  the equality
$
\bigl[\kL_i\bigr] = v^{-1} \bigl[\kT_i\bigr] \circ \bigl[\kL_{i-1}\bigr] -  \bigl[\kL_{i-1}\bigr] \circ
\bigl[\kT_i\bigr]
$
proving the statement of the lemma.
\end{proof}

\subsection{Composition subalgebra $U(\XX)_{\mathsf{tor}}$}

\begin{definition} Let $\XX$ be a weighted projective line of type $(\underline{p}, \underline{\lambda})$.
Consider the subalgebra  $U(\XX)_{\mathsf{tor}}$ of  $U(\XX)$ defined as follows:
$$
  U(\XX)_{\mathsf{tor}} := \Bigl\langle T_r, \, \bigl[\kS_i^{(j)}\bigr], \,  K_\alpha \; \big| \;
  r \in \ZZ_{>0}, \, 1 \le i \le n, \, 1 \le j \le p_i, \,\alpha \in K_0(\XX) : \rk(\alpha) = 0 \Bigr.
  \Bigr\rangle.$$
  In what follows, we shall also use the notation  $K_i^{(j)} := K_{\overline{\kS_i^{(j)}}}$.
\end{definition}

\noindent
In this subsection,  we  give another characterization of the subalgebra
of $U(\XX)_{\mathsf{tor}}$   and  show it is a Hopf algebra. For this purpose it is convenient to
use the technique of stability conditions, see Section  \ref{S:StabilCond} for the basic definitions.

\begin{definition}\label{D:KeyStabilityFctn}
Let $\cA = \Tor(\XX)$ be the category of torsion coherent sheaves on $\XX$. For
the lattice
$
\bigl(K_0(\XX), \langle \,-\,,\,-\,\rangle\bigr)$ let  $K_0(\cA)
\stackrel{\mathsf{ch}}\lar K_0(\XX)$ be the canonical group homomorphism. We construct
the linear map $K_0(\XX) \stackrel{T}\lar \mathbb{R}_{\ge 0}$ as follows:
\begin{enumerate}
\item For any pair $(i, j)$, where $1 \le i \le n$ and $1 \le j \le p_i$ we attach a positive
weight $w_i^{(j)} \in \mathbb{R}_{>0}$.
\item Moreover, we assume that the following conditions are fulfilled:
\begin{enumerate}
\item For all $1 \le i \le n$ we have: $w_i^{(1)} > w_i^{(2)} > \dots > w_i^{(p_i)}$.
\item There exists $w  \in \mathbb{R}_{>0}$ such that for
      all $1 \le i \le n$ we have $\sum_{j=1}^{p_i} w_i^{(j)} = w$.
\item For any $1 \le i \le n$ and $1 \le a \le b \le p_i$ we set:
$$
\sigma^{(a, b)}_i := \frac{p_i}{b-a+1}\bigl(w_{i}^{(a)} + \dots + w_i^{(b)}\bigr).
$$
Then $\sigma^{(a, b)}_i = \sigma^{(c, d)}_j$ if and only if  $i = j$, $a = c$ and $b = d$.
Note that this
condition is automatically satisfied  when we take
arbitrary elements $w_1^{(1)}, \dots, w_1^{(p_1-1)}, \dots,$ $ w_n^{(1)}, \dots, w_n^{(p_n-1)}, w$
from $\mathbb{R}_{>0} \setminus \QQ_{>0}$,  which are linearly independent over $\QQ$ and such that
$w - \sum_{j=1}^{p_i-1} w_i^{(j)} > 0$ for all $1 \le i \le n$.
\end{enumerate}
\item We take the group  homomorphism
 $K_0(\XX) \xrightarrow{T \, = \,  (d, \, \delta)} \mathbb{R}^2$ given by the formulae:
 \begin{enumerate}
 \item $d\bigl(\overline{\kS_i^{(j)}}\bigr) = w_i^{(j)}$ for all $1 \le i \le n$ and
 $1 \le j \le p_i$ and $d(\overline{\kO}) = 1$.
 \item $\delta\bigl(\overline{\kS_i^{(j)}}\bigr) = \frac{\displaystyle 1}{\displaystyle p_i}$ for all $1 \le i \le n$ and
 $1 \le j \le p_i$ and $\delta(\overline{\kO}) = 0$.
 \end{enumerate}
\end{enumerate}
\end{definition}

\begin{lemma}\label{L:list-semist-objects}
In the above notations, the indecomposable semi-stable objects of the category
$\Tor(\XX)$ with respect to the stability function $Z := T \circ \mathsf{ch}: K_0\bigl(\Tor(\XX)\bigr)
\rightarrow \mathbb{R}^2$ are the following:
\begin{enumerate}
\item Any indecomposable torsion sheaf  supported at a regular point of $\XX$. The slope
of such a sheaf is $w$.
\item For a special point  $x = \lambda_i \, (1 \le i \le n)$ the indecomposable semi-stable sheaves
correspond to the  following representations of the cyclic quiver $\overr{C}_{p_i}$:
\begin{enumerate}
\item Representations $\kT_i(l)$, $l \in \ZZ_{>0}$  of slope $w$:
$$
\xymatrix
{ \kk^l \ar[r]^{I} &   \kk^l \ar[r]^{I} &  \dots \ar[r]^{I} & \kk^l \ar@/^15pt/[lll]^{J}\\
}
$$
where $I$ is the  identity matrix and $J $ is a  nilpotent Jordan block.
\item Representations $\kT_i^{(a, b)}$ of slope $\sigma_i^{(a, b)}$:
$$
\xymatrix
{ 0 \ar[r] & \dots \ar[r]  & 0 \ar[r] & \kk \ar[r]^{1} &  \dots \ar[r]^{1} & \kk  \ar[r] & 0 \ar[r] & \dots \ar[r] & 0
\ar@/^20pt/[llllllll]^{0}\\
}
$$
where $1 \le a \le b \le p_i$ and
    $b - a < p_i-1$. In these notations, the top of $\kT_i^{(a, b)}$    is $\kS_i^{(a)}$ and the
    socle of $\kT_i^{(a, b)}$  is $\kS_i^{(b)}$.
\end{enumerate}
\end{enumerate}
\end{lemma}

\begin{proof}
Let $x \in \PP^1$ be a non-special point. Then the unique simple sheaf $\kS_x$ supported at $x$
is stable with respect to any stability condition. Any torsion sheaf supported
at $x$ has a filtration,  whose factors are $\kS_x$. Since any extension of semi-stable sheaves
of a given slope  is again semi-stable with the same slope, any torsion sheaf supported
at $x$ is semi-stable.

Let $x = \lambda_i$ be a  special  point and $1 \le a \le b \le p_i$  are such that
$b-a < p_i - 1$.
It is clear that any  subrepresentation of  $\kT_i^{(a, b)}$ is isomorphic
to some $\kT_i^{(c, b)}$ with  $a \le c \le b$. However, from the assumption
$w_i^{(a)} > \dots > w_i^{(b)}$ it follows that the slope of $\kT_i^{(a, b)}$ is bigger
than the slope of $\kT_i^{(c, b)}$. Hence, $\kT_i^{(a, b)}$ is stable with respect to the
stability function $Z$. Since all representations
$\kT_i^{(a, b)}$ are rigid, the category of semi-stable objects of $\Tor(\XX)$ of slope
$\sigma_i^{(a, b)}$ is equivalent to the semi-simple abelian category $\add\bigl(\kT_i^{(a, b)}\bigr)$.

Next, consider the representation $\kT_i(1)$. Note that its  slope is $w$. Moreover, any proper subrepresentation of $T_i(1)$
is isomorphic to $\kT_i^{(a, p_i)}$ for some $a \ge 2$. Since $w > \sigma_i^{(a, p_i)}$ for any
$2 \le a \le p_i$, the representation $\kT_i(1)$ is stable. Moreover, for any $l \ge 1$ the
 representation $\kT_i(l)$ can be written as a successive extension of
 $\kT_i(1)$. Hence, $\kT_i(l)$ is semi-stable of slope $w$ for all $l \ge 2$.

 It remains to check that all the remaining indecomposable representations of the cyclic quiver
 $\overr{C}_{p_i}$
 are not semi-stable. Indeed, let $\kX$ be an indecomposable representation, which is neither isomorphic
 to $\kT_i(l)$ nor to $\kT_i^{(a, b)}$. Then $\kX$ either has a subrepresentation isomorphic
 to $\kT_i^{(1, b)}$ for some
  $1 \le b < p_i$ or a quotient isomorphic to $\kT_i^{(a, p_i)}$ for some $2 \le a  \le p_i$, see
  Theorem \ref{T:Drozd}.  In both cases,
  $\kX$ is not semi-stable, which completes the proof.
\end{proof}

\begin{proposition}\label{P:onU(X)tor}
In the notations of this subsection we have:
$$
U(\XX)_{\mathsf{tor}} =
\bigl\langle \mathbbm{1}_{\alpha}, K_{\alpha} \; \bigl| \; \alpha \in K_0(\XX): \; \rk(\alpha) = 0 \bigr.\bigr\rangle.
$$
In particular,  $U(\XX)_{\mathsf{tor}}$ is stable  under the action of the
Auslander-Reiten translation $\tau$.
\end{proposition}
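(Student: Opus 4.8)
The plan is to transport, via Proposition~\ref{P:knownequal}, the description of the right-hand side from the characteristic functions $\mathbbm{1}_{\alpha}$ to the semi-stable indicators $\mathbbm{1}^{\mathrm{ss}}_{\alpha}$ attached to the stability function $Z$ of Definition~\ref{D:KeyStabilityFctn} on the finite length category $\cA = \Tor(\XX)$, and then to evaluate the $\mathbbm{1}^{\mathrm{ss}}_{\alpha}$ by means of the list of semi-stable objects in Lemma~\ref{L:list-semist-objects}. Since the slope of a semi-stable object is determined by its class, each $\mathbbm{1}^{\mathrm{ss}}_{\alpha}$ is supported either on semi-stable sheaves of slope $w$, or on semi-stable sheaves of some exceptional slope $\sigma_{i}^{(a,b)}$ with $b-a < p_{i}-1$; these two families are treated separately.

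The exceptional slopes are the easy case. By Lemma~\ref{L:list-semist-objects} the semi-stable subcategory of slope $\sigma_{i}^{(a,b)}$ is the semisimple category $\add\bigl(T_{i}^{(a,b)}\bigr)$, and since $T_{i}^{(a,b)}$ is rigid, $\mathbbm{1}^{\mathrm{ss}}_{\alpha}$ is either zero or a non-zero scalar multiple of a power $[T_{i}^{(a,b)}]^{\circ m}$. Now $T_{i}^{(a,b)}$ is an indecomposable representation of the cyclic quiver $\overr{C}_{p_{i}}$ whose support does not wrap around the cycle, hence is (up to the obvious identification) an indecomposable of a linearly oriented quiver of type $A$; by Ringel's description of the composition subalgebra of the Hall algebra of such a quiver \cite{Ringel}, $[T_{i}^{(a,b)}]$ lies in the subalgebra generated by $[S_{i}^{(a)}], \dots, [S_{i}^{(b)}]$. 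In particular $[S_{i}^{(j)}] = [T_{i}^{(j,j)}] = \mathbbm{1}^{\mathrm{ss}}_{\overline{S_{i}^{(j)}}} = \mathbbm{1}_{\overline{S_{i}^{(j)}}}$, the last equality holding because $S_{i}^{(j)}$ is the unique torsion sheaf of its class, having total length one.

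For the slope $w$ the crucial point is that, by the explicit description of the right adjoint functor $\FF\colon \Coh(\PP^{1}) \to \Coh(\XX)$ recalled in the previous section, the subcategory of slope-$w$ semi-stable torsion sheaves is precisely the essential image of the restriction $\FF|_{\Tor(\PP^{1})}$, which is an equivalence $\Tor(\PP^{1}) \xrightarrow{\sim} \Tor(\XX)^{\mathrm{ss}}_{w}$ preserving all Hall data. Hence for every torsion class $\alpha$ of slope $w$ one has $\mathbbm{1}^{\mathrm{ss}}_{\alpha} = \FF\bigl(\mathbbm{1}_{m\delta}\bigr)$ for a suitable $m \ge 0$, and conversely every $\FF(\mathbbm{1}_{m\delta})$ arises this way. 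Since the families $\{\mathbbm{1}_{m\delta}\}_{m \ge 1}$ and $\{T_{r}\}_{r \ge 1}$ generate the same subalgebra $U(\PP^{1})_{\mathrm{tor}}$ of $H(\PP^{1})$ (Subsection~\ref{SS:CompAlgebraP1}, via relation (\ref{E:generInTor(P1)2})) and $\FF$ is an injective algebra homomorphism, the families $\{\FF(\mathbbm{1}_{m\delta})\}_{m \ge 1}$ and $\{T_{r}\}_{r \ge 1}$ generate the same subalgebra of $H(\XX)$. Combining the two cases with Proposition~\ref{P:knownequal} gives both inclusions: every $\mathbbm{1}_{\alpha}$ with $\rk\alpha = 0$ lies in $U(\XX)_{\mathsf{tor}}$, and each of the generators $T_{r}$, $[S_{i}^{(j)}]$, $K_{\alpha}$ of $U(\XX)_{\mathsf{tor}}$ lies in $\langle \mathbbm{1}_{\alpha}, K_{\alpha} \mid \rk\alpha = 0 \rangle$.

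Finally, $\tau$-stability is immediate from the new description: the Auslander--Reiten translation $\tau = (\vec{\omega})$ is an exact auto-equivalence of $\Coh(\XX)$ preserving $\Tor(\XX)$, hence induces an algebra automorphism of $H(\XX)$ with $\tau(\mathbbm{1}_{\alpha}) = \mathbbm{1}_{\tau(\alpha)}$ and $\tau(K_{\alpha}) = K_{\tau(\alpha)}$; as $\rk(\tau(\alpha)) = \rk(\alpha)$, the generating set $\{\mathbbm{1}_{\alpha}, K_{\alpha} \mid \rk\alpha = 0\}$ is carried bijectively onto itself. The step requiring the most care is the identification, at slope $w$, of the semi-stable subcategory with $\FF(\Tor(\PP^{1}))$ together with the transport of the $\PP^{1}$-results through $\FF$; the remaining arguments are routine slope bookkeeping and an appeal to Ringel's theorem.
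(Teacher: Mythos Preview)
Your proof is correct and follows essentially the same route as the paper's: both invoke Proposition~\ref{P:knownequal} to pass between the $\mathbbm{1}_{\alpha}$ and the $\mathbbm{1}^{\mathrm{ss}}_{\alpha,Z}$, then use Lemma~\ref{L:list-semist-objects} to identify the semi-stable classes of slope $w$ with $\FF(\mathbbm{1}_{r\delta})=\widetilde{\mathbbm{1}}_{r\delta}$ and those of exceptional slope with powers of $[T_{i}^{(a,b)}]$, reducing to Ringel's result and relation~(\ref{E:generInTor(P1)2}). You have spelled out in more detail why the slope-$w$ semi-stable subcategory coincides with $\FF(\Tor(\PP^1))$ and why both inclusions hold, but the underlying argument is the same.
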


\begin{proof}
Let $\Coh(\PP^1) \stackrel{\FF}\lar  \Coh(\XX)$ be the exact fully faithful functor defined at the beginning of   Subsection \ref{SS:FirstResultsonCompAlg}. For any $r \ge 1$ we set
$\widetilde{\mathbbm{1}}_{r\delta} := \FF(\mathbbm{1}_{r\delta}) \in H(\XX)$, where
$\mathbbm{1}_{r\delta} \in H(\PP^1)$ was defined by the equality (\ref{E:generInTor(P1)1}).  Let
$ K_0(\XX) \stackrel{T}\lar \mathbb{R}^2$ be the function introduced in Definition
\ref{D:KeyStabilityFctn}. Note that

\medskip
\noindent
$1$. For any $1 \le i \le n$ and $1 \le a \le b \le p_i$ such that
$b-a < p_i-1$ the  element $\bigl[\kT_i^{(a, b)}\bigr]$ of $H(\XX)$ belongs to the
subalgebra generated by the elements $\bigl[\kS_i^{(a)}\bigr], \dots, \bigl[\kS_i^{(b)}\bigr]$.

\medskip
\noindent
$2$. From the equality (\ref{E:generInTor(P1)2}) it follows that
 the subalgebras  $\bigl\langle \widetilde{\mathbbm{1}}_{r\delta} \; \bigl| \; r \ge 1\bigr\rangle$ and
$\bigl\langle T_r \; \bigl| \; r \ge 1\bigr\rangle$ of the Hall algebra $H(\XX)$ are equal.

\medskip
\noindent
$3$. By Corollary \ref{C:stabcondHallAlg}  we get the equality
$$\bigl\langle \mathbbm{1}_{\alpha} \; \bigl| \; \alpha \in K_0(\XX): \; \rk(\alpha) = 0 \bigr.\bigr\rangle = \bigl\langle \widetilde{\mathbbm{1}}_{r\delta},
    \bigl[\kS_i^{(j)}\bigr] \,  \bigl| \;
  r \in \ZZ_{>0}, \, 1 \le i \le n, \, 1 \le j \le p_i
     \bigr.\bigr\rangle.$$
This concludes the proof.
\end{proof}

\begin{corollary}\label{C:U(X)tor}
The subalgebra $U(\XX)_{\mathsf{tor}}$ of the Hall algebra $H(\XX)$ is a Hopf algebra.
\end{corollary}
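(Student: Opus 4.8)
The plan is to bootstrap from Proposition~\ref{P:onU(X)tor}, which identifies $U(\XX)_{\mathsf{tor}}$ with the subalgebra of $H(\XX)$ generated by the elements $\mathbbm{1}_\alpha$ and $K_\alpha$ for $\alpha \in K_0(\XX)$ with $\rk(\alpha) = 0$. The first, slightly technical, step is to relocate the whole discussion inside a \emph{genuine} Hopf algebra, since the ambient $H(\XX)$ is only a topological bialgebra. I would set $\Lambda := \bigl\{\alpha \in K_0(\XX)\mid \rk(\alpha) = 0\bigr\}$; this is a free abelian group of finite rank, equal to the image of $K_0\bigl(\Tor(\XX)\bigr)$ under the canonical map $\mathsf{ch}$, and both the Euler form and its symmetrization descend to it. Then I would form the Hall algebra $\widetilde{H}$ of the finite-length hereditary category $\Tor(\XX)$ but graded by $\Lambda$, i.e.\ with toral part $\widetilde{\QQ}[\Lambda]$ in place of $\widetilde{\QQ}\bigl[K_0(\Tor(\XX))\bigr]$; this is well defined because the structure constants (the twist $v^{-\langle\bar X,\bar Y\rangle}$, the commutation rule $K_\alpha\circ[X] = v^{-(\alpha,\mathsf{ch}\bar X)}[X]\circ K_\alpha$, and the Hall numbers) only depend on classes through $\mathsf{ch}$. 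As $\Tor(\XX)$ is of finite length, $\widetilde{H}$ is an honest bialgebra and, by Xiao's theorem \cite{Xiao}, a Hopf algebra. Moreover, since $\Tor(\XX)$ is an extension-closed Serre subcategory of $\Coh(\XX)$, the obvious map $\widetilde{H}\to H(\XX)$ is an injective morphism of (topological) bialgebras: injectivity on the positive part is as in the proof of Theorem~\ref{T:perpcatDrinfDouble}, and compatibility with the coproduct holds because Green's coproduct of a torsion sheaf only involves torsion subquotients. Its image contains $U(\XX)_{\mathsf{tor}}$, so it suffices to show that $U(\XX)_{\mathsf{tor}}$, viewed inside $\widetilde{H}$, is a sub-Hopf-algebra.

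Next I would make the coproduct on the generators explicit. Reorganising the sum in Green's formula for $\Delta\bigl([Z]\bigr)$ according to the classes $\gamma = \mathsf{ch}(\bar X)$ and $\beta = \mathsf{ch}(\bar Y)$ of the quotient and the sub-object, summing over all $[Z]$ with $\mathsf{ch}(\bar Z) = \alpha$, and using the standard identity $\sum_{[Z]} P^{Z}_{X,Y}/a_Z = \bigl|\Ext^1(X,Y)\bigr|/\bigl|\Hom(X,Y)\bigr| = v^{2\langle\bar X,\bar Y\rangle}$, one obtains the closed formula
$$
\Delta(\mathbbm{1}_\alpha) \;=\; \sum_{\substack{\beta + \gamma = \alpha\\ \beta,\gamma\ \text{effective}}} v^{\langle\gamma,\beta\rangle}\; \mathbbm{1}_\gamma K_\beta \otimes \mathbbm{1}_\beta,
$$
a finite sum (only finitely many effective $\beta$ have $\alpha-\beta$ effective, as one sees from the strict positivity of the function of Definition~\ref{D:KeyStabilityFctn} on nonzero effective classes). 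Together with $\Delta(K_\alpha) = K_\alpha\otimes K_\alpha$, $\eta(\mathbbm{1}_\alpha) = \delta_{\alpha,0}$, $\eta(K_\alpha) = 1$, and the fact that $\Delta$ and $\eta$ are algebra homomorphisms, this already shows that $U(\XX)_{\mathsf{tor}}$ is a sub-bialgebra of $\widetilde{H}$.

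Finally, for the antipode $S$ of $\widetilde{H}$: the algebra is graded by the monoid $\Lambda^{+}$ of effective classes, with degree-zero part the group Hopf algebra $\widetilde{\QQ}[\Lambda]$, on which $S(K_\alpha) = K_{-\alpha}$. Isolating the two extreme terms ($\beta = 0$ and $\gamma = 0$) in $m\circ(S\otimes\mathbbm{1})\circ\Delta(\mathbbm{1}_\alpha) = \delta_{\alpha,0}$ yields, for $\alpha\ne 0$, the recursion
$$
S(\mathbbm{1}_\alpha) \;=\; -\,K_{-\alpha}\,\mathbbm{1}_\alpha \;-\!\!\! \sum_{\substack{\beta + \gamma = \alpha\\ \beta,\gamma \ne 0}} v^{\langle\gamma,\beta\rangle}\; K_{-\beta}\,S(\mathbbm{1}_\gamma)\,\mathbbm{1}_\beta,
$$
in which $S$ occurs only applied to $\mathbbm{1}_\gamma$ with $\gamma$ strictly below $\alpha$ in the well-founded order on $\Lambda^{+}$. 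By induction (base case $S(\mathbbm{1}_0)=1$) this gives $S(\mathbbm{1}_\alpha)\in U(\XX)_{\mathsf{tor}}$ for all $\alpha$; and since $S$ is an anti-automorphism fixing $\widetilde{\QQ}[\Lambda]$, it preserves the subalgebra generated by the $\mathbbm{1}_\alpha$ and the $K_\alpha$, i.e.\ all of $U(\XX)_{\mathsf{tor}}$. Hence $U(\XX)_{\mathsf{tor}}$ is a sub-Hopf-algebra of $\widetilde{H}$, and in particular a Hopf algebra. I expect the only genuinely delicate point to be the first step — passing from the infinite-rank lattice $K_0\bigl(\Tor(\XX)\bigr)$ to $\Lambda$ and verifying that the resulting Hall algebra is still a Hopf algebra embedding compatibly into $H(\XX)$; the coproduct and antipode computations are then routine.
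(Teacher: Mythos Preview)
Your proof is correct and follows the same line as the paper: invoke Proposition~\ref{P:onU(X)tor} to reduce to the generators $\mathbbm{1}_\alpha$ and $K_\alpha$, then use the coproduct formula $\Delta(\mathbbm{1}_\alpha)=\sum_{\beta+\gamma=\alpha} v^{\langle\gamma,\beta\rangle}\mathbbm{1}_\gamma K_\beta\otimes\mathbbm{1}_\beta$ (this is exactly the formula~(\ref{E:coprodfundament}) the paper cites from \cite[Lemma~1.7]{OlivierNotes}). Your version is simply more explicit about two points the paper leaves tacit --- the passage from the topological bialgebra $H(\XX)$ to the genuine Hopf algebra of $\Tor(\XX)$ graded by $\Lambda$, and the inductive verification that the antipode preserves the subalgebra --- but the underlying argument is the same.
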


\begin{proof} This result follows from Proposition \ref{P:onU(X)tor} and the formula
\begin{equation}\label{E:coprodfundament}
\Delta(\mathbbm{1}_{\gamma}) = \sum\limits_{\alpha + \beta = \gamma}
v^{\langle \alpha, \beta \rangle} \mathbbm{1}_{\alpha} K_{\beta} \otimes \mathbbm{1}_{\beta},
\end{equation}
valid for any $\gamma \in K_0(\XX)$ such that $\rk(\gamma) = 0$, see for example
\cite[Lemma 1.7]{OlivierNotes}.
\end{proof}

\noindent
We conclude this subsection by the following important corollary.

\begin{corollary}\label{C:new-def-ofU(X)}
The composition subalgebra $U(\XX)$ defined  in \cite{S1}
coincides with the following subalgebra $V(\XX)$
of the Hall algebra $H(\XX)$:
$$
V(\XX):=
\bigl\langle [\kO(\vec{x})], \mathbbm{1}_{\alpha}, K_\beta  \;
 \,  \bigl| \;
  \vec{x} \in \LL(\underline{p}); \alpha \in K_0(\XX): \rk(\alpha) = 0; \beta \in K_0(\XX)
     \bigr.\bigr\rangle.
$$
\end{corollary}

\begin{proof}
This result follows from Lemma \ref{L:line-bundles-are-inU(X)} and Proposition
\ref{P:onU(X)tor}.
\end{proof}

\begin{remark}
Corollary \ref{C:new-def-ofU(X)} gives an \emph{intrinsic}  description of the composition subalgebra $U(\XX)$. Moreover,
it shows that $U(\XX)$ is invariant under the natural action of the groups $\Pic(\XX) \cong \LL(\underline{p})$ and
$\Aut(\XX)$.
\end{remark}

\subsection{Composition algebra $U(\XX)$ is a topological bialgebra}
In this subsection,  we complete the proof of the fact that the composition subalgebra
$U(\XX)$ is a topological bialgebra.  Because of the equality
 $\Delta(K_\beta) = K_\beta \otimes K_\beta \bigl(\beta \in K_0(\XX)\bigr)$ and the formula (\ref{E:coprodfundament}) it is sufficient
to compute the comultiplication  of the line bundles on $\XX$. For any
$\vec{x} \in \LL(\underline{p})_+$ consider the element $\Theta_{\vec{x}} \in \overline{H}(\XX)_{\mathsf{tor}}$
 defined as follows:
\begin{equation}\label{E:Coprod(O)}
\Delta\bigl([\kO]\bigr) = [\kO] \otimes \mathbbm{1} +
\sum\limits_{\vec{x} \in \LL(\underline{p})_+} \Theta_{\vec{x}} K_{\overline{\kO(-\vec{x})}}
\otimes \bigl[\kO(-\vec{x})\bigr].
\end{equation}
 In this subsection we show that
 $\Theta_{\vec{x}} \in \overline{U}(\XX)_{\mathsf{tor}}$ for all  $\vec{x} \in \LL(\underline{p})_+$.
 By Corollary \ref{C:new-def-ofU(X)},  the algebra $U(\XX)$ is closed under
the action of the Picard group $\Pic(\XX)$. This will allows us
   to conclude
 that $U(\XX)$ is a topological bialgebra.

 We start with the case when $\XX = \PP^1$ is the usual non-weighted projective line.
 Recall the following formula for the elements $\Theta_r$ defined in
 (\ref{E:generInTor(P1)3}), see  \cite[Example 4.12]{OlivierNotes}:
\begin{equation}
\Theta_r =  v^{-r}\sum\limits_{m=1}^\infty\sum\limits_{\substack{x_1, \dots, x_m \in \PP^1; \,x_i \ne x_j \,1 \le i \ne j \le m
\\
t_1, \dots, t_m \,:\,
\sum_{i=1}^m t_i \deg(x_i) = r}}
\prod_{i = 1}^m \bigl(1 - v^{2\deg(x_i)}\bigr)\bigl[\kS_{t_i, x_i}\bigr],
\end{equation}
where $\kS_{t_i, x_i}$ is the unique indecomposable torsion sheaf of length $t_i$ supported at $x_i$.

\begin{lemma}
For any $r \in \ZZ_{>0}$ consider  the element  $\vec{x} = r \vec{c} \in \LL(\underline{p})_+$.
Then  we have:
$\Theta_{r \vec{c}} = \FF(\Theta_r)$, where $H(\PP^1) \stackrel{\FF}\lar  H(\XX)$ is the algebra
homomorphism  from Definition \ref{D:compalgXX}.
\end{lemma}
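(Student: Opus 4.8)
The plan is to compute the coproduct $\Delta\bigl([\kO_\XX]\bigr)$ directly from the comultiplication formula of Green, collect the summands whose second tensor factor is $\bigl[\kO_\XX(-r\vec c)\bigr]$, and identify them with the image under $\FF$ of the corresponding summands of $\Delta\bigl([\kO_{\PP^1}]\bigr)$, which are governed by formula (\ref{E:coprodInP1(2)}).

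First I would set up the functoriality along $\FF$. Recall that $\FF$ identifies $\Coh(\PP^1)$ with the perpendicular subcategory $\cC^\perp\subset\Coh(\XX)$ (with $\cC$ as in Subsection \ref{SS:FirstResultsonCompAlg}) and that $\FF\bigl(\kO_{\PP^1}(l)\bigr)\cong\kO_\XX(l\vec c)$. By Proposition \ref{P:perpcat-is-abelian} the category $\cC^\perp$ is extension-closed and abelian, the natural maps $\Ext^i_{\cC^\perp}(M,N)\to\Ext^i_{\XX}(M,N)$ are isomorphisms for $i=0,1$ and $M,N\in\cC^\perp$, and — as established in its proof — the kernel and cokernel, formed in $\Coh(\XX)$, of any morphism between objects of $\cC^\perp$ again lie in $\cC^\perp$. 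Consequently the equivalence $\Coh(\PP^1)\xrightarrow{\sim}\cC^\perp$ induces an isomorphism of Hall algebras compatible with the comultiplication, and the inclusion $\cC^\perp\hookrightarrow\Coh(\XX)$ gives the algebra homomorphism $H(\PP^1)\xrightarrow{\FF}H(\XX)$ of Definition \ref{D:compalgXX}, which sends $\bigl[\kO_{\PP^1}(l)\bigr]$ to $\bigl[\kO_\XX(l\vec c)\bigr]$ and $K_{\overline{\kO_{\PP^1}(-r)}}$ to $K_{\overline{\kO_\XX(-r\vec c)}}$, and which preserves the Euler form and the Hall numbers $P^Z_{X,Y}$, $a_X$ of objects of $\cC^\perp$.

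The key step is a closure observation: for every short exact sequence $0\to\kO_\XX(-r\vec c)\to\kO_\XX\to X\to 0$ in $\Coh(\XX)$, the quotient $X$ lies in $\cC^\perp$. Indeed $X$ is the cokernel of a morphism between $\kO_\XX(-r\vec c)=\FF\bigl(\kO_{\PP^1}(-r)\bigr)$ and $\kO_\XX=\FF\bigl(\kO_{\PP^1}\bigr)$, so this is immediate from Proposition \ref{P:perpcat-is-abelian}. Hence, in the expansion of $\Delta\bigl([\kO_\XX]\bigr)$ via the comultiplication formula, every summand whose second tensor factor is $\bigl[\kO_\XX(-r\vec c)\bigr]$ arises from a short exact sequence $0\to Y\to\kO_\XX\to X\to0$ with $Y\cong\kO_\XX(-r\vec c)$, so that all three of $\kO_\XX$, $\kO_\XX(-r\vec c)$, $X$ lie in $\cC^\perp$; since $\Hom$- and $\Ext^1$-spaces between objects of $\cC^\perp$ — and therefore the counts $P^Z_{X,Y}$ and the Euler pairing — agree whether computed in $\cC^\perp$ or in $\Coh(\XX)$, the totality of these summands equals $\FF$ applied in both tensor factors to the part of $\Delta_{\PP^1}\bigl([\kO_{\PP^1}]\bigr)$ whose second factor is $\bigl[\kO_{\PP^1}(-r)\bigr]$.

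To conclude, I would specialize formula (\ref{E:coprodInP1(2)}) to $n=0$, namely
$$\Delta_{\PP^1}\bigl([\kO_{\PP^1}]\bigr)=[\kO_{\PP^1}]\otimes\mathbbm{1}+\sum_{r\ge 0}\Theta_r\,K_{(1,-r)}\otimes\bigl[\kO_{\PP^1}(-r)\bigr],$$
and apply the previous step: the $\cC^\perp$-part of $\Delta\bigl([\kO_\XX]\bigr)$ equals $[\kO_\XX]\otimes\mathbbm{1}+\sum_{r\ge 0}\FF(\Theta_r)\,K_{\overline{\kO_\XX(-r\vec c)}}\otimes\bigl[\kO_\XX(-r\vec c)\bigr]$. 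Comparing this with the defining identity (\ref{E:Coprod(O)}) for $\Delta\bigl([\kO_\XX]\bigr)$ — noting that distinct $\vec x\in\LL(\underline p)_+$ give non-isomorphic line bundles $\kO(-\vec x)$ and that each $\Theta_{\vec x}$ is supported on torsion sheaves, so that the second tensor factor separates the summands, and that the $\vec x=r\vec c$ summand of (\ref{E:Coprod(O)}) indeed lies in the $\cC^\perp$-part since $\kO_\XX(-r\vec c)\in\cC^\perp$ — one reads off $\Theta_{r\vec c}K_{\overline{\kO(-r\vec c)}}=\FF(\Theta_r)K_{\overline{\kO_\XX(-r\vec c)}}$, whence $\Theta_{r\vec c}=\FF(\Theta_r)$ after cancelling the invertible element $K_{\overline{\kO_\XX(-r\vec c)}}$. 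The only genuine obstacle is the closure observation in the third paragraph, and it follows at once from Proposition \ref{P:perpcat-is-abelian}; the remainder is routine bookkeeping with the comultiplication formula and the functoriality of the Hall numbers along $\FF$.
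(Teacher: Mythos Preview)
Your proof is correct and follows essentially the same approach as the paper: both hinge on the observation that any short exact sequence $0\to\kO_\XX(-r\vec c)\to\kO_\XX\to\kT\to0$ has $\kT\in\cC^\perp=\mathsf{Im}(\FF)$ because $\cC^\perp$ is closed under cokernels of its own morphisms, whence all contributions to $\Theta_{r\vec c}$ come from $\PP^1$ via $\FF$. The paper compresses the remaining bookkeeping by invoking Proposition \ref{P:keyprop-on-morphDD}, whereas you spell out the matching of comultiplication summands explicitly.
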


\begin{proof}
Let $0 \to \kO(-r \vec{c}) \to \kO \to \kT \to 0$ be a short exact sequence in $\Coh(\XX)$.
Since the essential image of the functor $\Coh(\PP^1) \stackrel{\FF}\lar \Coh(\XX)$
is closed under taking kernels and cokernels, the object $\kT$ belongs to
$\mathsf{Im}(\FF)$ as well. This means that all contributions to $\Theta_{r \vec{c}}$
come from $\PP^1$
and Proposition \ref{P:keyprop-on-morphDD} implies the claim.
\end{proof}

\begin{proposition} Let $\underline{\mu}$ be a non-empty subset of the set $\underline{\lambda}$ of exceptional points of our
weighted projective line $\XX$. For simplicity of notations,  we assume that
$\underline{\mu} = \bigl\{\lambda_1, \dots, \lambda_m\bigr\}$ for some
$1 \le m \le n$. For any $1 \le i \le m$  we fix  a  number $1 \le a_i < p_i$ and  for  $r \in
\ZZ_{>0}$   set
$
\vec{x} := r \vec{c} + \sum_{i= 1}^m a_i \vec{x}_i.
$
Then we have:
\begin{equation}\label{E:key-coprod}
\Theta_{\vec{x}} =
v^{-r-m} \sum\limits_{\underline{\mu} = \underline{\mu}' \coprod
\underline{\mu}''
}\bigl(
(-1)^{m''} T_{\underline{\mu}'} \circ \Theta_{r \vec{c}} \circ T_{\underline{\mu}''} -
v^{2 m''} (-1)^{m'} T_{\underline{\mu}''} \circ \Theta_{r \vec{c}} \circ  T_{\underline{\mu}'}
\bigr),
\end{equation}
where the sum is taken over all decompositions of $\underline{\mu}$ into a disjoint union
of two (possibly empty) subsets, $m' = |\underline{\mu}'|$, $m'' = |\underline{\mu}''|$,
$T_{\underline{\mu}'} := \prod_{i \in \underline{\mu}'} \bigl[\kT_i^{(1, a_i)}\bigr]$
and $T_{\underline{\mu}''} := \prod_{i \in \underline{\mu}''} \bigl[\kT_i^{(1, a_i)}\bigr]$. In particular,
$\Theta_{\vec{y}}$ is an element of $\overline{U}(\XX)_{\mathsf{tor}}$ for all $\vec{y} \in
\LL(\underline{p})_+$.
\end{proposition}

\begin{proof}
Consider  a short exact sequence in the category
$\Coh(\XX)$:
\begin{equation*}
0 \lar \kO(-\vec{x}) \stackrel{a}\lar  \kO \stackrel{b}\lar \kT \lar 0.
\end{equation*}
Note that we have the following equality  for the Euler form:
$\bigl\langle \overline{\kT}, \overline{\kO(-\vec{x})} \bigr\rangle = -
\bigl\langle
\overline{\kO}, \overline{\kT} \bigr\rangle = - r-m.
$
Our next goal is to compute the Hall constant $P^{\kO}_{\kT, \kO(-\vec{x})}$. To do  this,
we assume
\begin{equation}\label{E:isotype}
\kT \cong \kS_{l_1, x_1} \oplus \dots \oplus \kS_{l_p, x_p} \oplus \kT_{t_1 \delta + a_1}^{(1)}
\oplus \dots \oplus  \kT_{t_m \delta + a_m}^{(m)},
\end{equation}
where $x_1, \dots, x_p$ are distinct points of $\PP^1 \setminus \underline{\mu}$, whereas
$l_1, \dots, l_p \in
\ZZ_{>0}$ and $t_1, \dots, t_m \in \ZZ_{\ge 0}$ are such that
$\sum_{i=1}^p l_i \deg(x_i) + \sum_{i=1}^m t_i = r$. If $1 \le i \le p$ is such
that the point $x_i$ is non-special,  then $\kS_{l_i, x_i}$ is the unique torsion coherent sheaf
of length $l_i$ supported at $x_i$. If $x_i \in \underline{\lambda} \setminus
\underline{\mu}$ then the sheaf $\kS_{l_i, x_i}$ corresponds to the representation
$\kT_i(l_i)$ of the cyclic quiver $\overr{C}_{p_i}$ (we follow the notations of Lemma
\ref{L:list-semist-objects}).
For any $1 \le i \le m$  the coherent sheaf $\kT_{t_i \delta + a_i}^{(i)}$ is supported
at the point $x_i \in \underline{\mu}$ and corresponds to the unique indecomposable representation
of $\overr{C}_{p_i}$ of length $t_i p_i + a_i$ with the simple top $S_i^{(1)}$.  Note that
the decomposition type of the sheaf $\kT$ given by (\ref{E:isotype}) determines
the map $\kO(-\vec{x}) \stackrel{a}\lar \kO$ uniquely up to a non-zero constant. Hence, the
Hall number  $P^{\kO}_{\kT, \, \kO(-\vec{x})}$ is equal to the number of epimorphisms from
$\kO$ to $\kT$. Note that a morphism  $\kO \stackrel{a}\lar \kT$ is an epimorphism if and only if
all its components $\kO \to \kS_{l_i, x_i}$ and $\kO \to \kT^{(j)}_{t_j \delta + a_j}$ are epimorphisms
for all $1 \le i \le p$ and $ 1\le j \le m$.

The number of epimorphisms from $\kO$ to $\kS_{l, x}$ is equal to
$q^{\deg(x)l} - q^{\deg(x)(l-1)}$. Similarly, the number of epimorphisms
from $\kO$ to $\kT_{t \delta + a}^{(i)}$ is equal to $q^{t+1} - q^t$. Hence, we obtain:
$$
\Theta_{\vec{x}} =
v^{-r-m} \sum\limits_{\underline{x}, \; \underline{l}, \; \underline{t}}
c_{\underline{x}}
 [\kS_{l_1, x_1}] \circ \dots \circ [\kS_{l_1, x_1}] \circ
[\kT_{t_1 \delta + a_1}^{(1)}] \circ \dots \circ [\kT_{t_m \delta + a_m}^{(m)}],
$$
where
$c_{\underline{x}} = \bigl(1 - v^{2 \deg(x_1)}\bigr) \cdot \dots \cdot \bigl(1 - v^{2 \deg(x_p)} \bigr) (1- v^2)^m$ and
the sum is taken over all  decomposition types (\ref{E:isotype}).
Replacing each term $\bigl[\kT_{t_i \delta + a_i}^{(i)}\bigr]$ by the expression given by (\ref{E:ident-in-cyclic-quiver}), we end up with the formula (\ref{E:key-coprod}).
\end{proof}

\begin{theorem}\label{T:U(X)is-bialgebra} Let $\XX$ be a weighted projective line. Then
the composition algebra $U(\XX)$ is a topological subbialgebra of the Hall algebra
$H(\XX)$. In particular, the canonical multiplication map
$
\overline{U}^{+}(\XX) \otimes_{\widetilde{\QQ}}  \widetilde{\QQ}[K] \otimes_{\widetilde{\QQ}} \overline{U}^{-}(\XX) \xrightarrow{\mathsf{mult}} DU(\XX)
$
is an isomorphism of $\widetilde{\QQ}$--vector spaces, where  $\overline{U}^{\pm}(\XX)$ is
the subalgebra of the reduced Drinfeld double $DU(\XX)$
generated by the elements $\bigl[\kO(l\vec{c})\bigr]^\pm$ $(l \in \ZZ)$, $T_r^\pm$ $(r \in \ZZ_{>0})$
 and $\bigl[\kS_i^{(j)}\bigr]^\pm$ $(1 \le i \le n, 1 \le j \le p_i)$.
\end{theorem}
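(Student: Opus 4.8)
The plan is to reduce the first assertion to computing $\Delta$ on a set of algebra generators of $U(\XX)$, and then to read off the triangular decomposition of $DU(\XX)$ from the general theory of reduced Drinfeld doubles (Proposition~\ref{P:DrinfDoubleStr} and \cite{HallEll}). By Corollary~\ref{C:new-def-ofU(X)}, $U(\XX)$ is generated over $\widetilde{\QQ}$ by the line bundles $[\kO(\vec{x})]$ $(\vec{x}\in\LL(\underline{p}))$, by the elements $\mathbbm{1}_{\alpha}$ with $\rk(\alpha)=0$, and by the $K_{\beta}$ $(\beta\in K_{0}(\XX))$. Since $\Delta\colon H(\XX)\to H(\XX)\,\widehat{\otimes}\,H(\XX)$ is an algebra homomorphism into an associative algebra, it suffices to show that $\Delta$ sends each of these into $U(\XX)\,\widehat{\otimes}\,U(\XX)$. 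For $K_{\beta}$ this is immediate from $\Delta(K_{\beta})=K_{\beta}\otimes K_{\beta}$. For $\mathbbm{1}_{\alpha}$ with $\rk(\alpha)=0$, formula~(\ref{E:coprodfundament}) writes $\Delta(\mathbbm{1}_{\alpha})$ as a finite sum of terms $\mathbbm{1}_{\alpha'}K_{\alpha''}\otimes\mathbbm{1}_{\alpha''}$ with $\alpha'+\alpha''=\alpha$; as every subobject and quotient of a torsion sheaf is torsion, all $\alpha',\alpha''$ occurring have rank $0$, so by Proposition~\ref{P:onU(X)tor} this element lies in $U(\XX)_{\mathsf{tor}}\,\widehat{\otimes}\,U(\XX)_{\mathsf{tor}}\subseteq U(\XX)\,\widehat{\otimes}\,U(\XX)$.

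The essential case is that of the line bundles, and here the bulk of the work has already been carried out in the preceding Proposition. For the structure sheaf, the defining identity~(\ref{E:Coprod(O)}) expresses $\Delta([\kO])$ as $[\kO]\otimes\mathbbm{1}$ plus a sum of terms $\Theta_{\vec{x}}\,K_{\overline{\kO(-\vec{x})}}\otimes[\kO(-\vec{x})]$: here $[\kO(-\vec{x})]\in U(\XX)$ by Lemma~\ref{L:line-bundles-are-inU(X)}, the factor $K_{\overline{\kO(-\vec{x})}}$ lies in $U(\XX)$ tautologically, and $\Theta_{\vec{x}}\in\overline{U}(\XX)_{\mathsf{tor}}$ by the Proposition just proved (formula~(\ref{E:key-coprod})); hence $\Delta([\kO])\in U(\XX)\,\widehat{\otimes}\,U(\XX)$. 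For an arbitrary line bundle $\kO(\vec{y})$, recall that $\Pic(\XX)=\{\kO(\vec{y})\mid\vec{y}\in\LL(\underline{p})\}$ and that twisting by $\kO(\vec{y})$ is an exact autoequivalence of $\Coh(\XX)$; the induced map $\phi_{\vec{y}}$ on $H(\XX)$ is an algebra automorphism acting on $K_{0}(\XX)$ by an isometry and preserving all Hall numbers $P^{Z}_{X,Y}$ and $a_{X}$, and a direct inspection of the coproduct formula yields $\Delta\circ\phi_{\vec{y}}=(\phi_{\vec{y}}\otimes\phi_{\vec{y}})\circ\Delta$. Combining this with $\phi_{\vec{y}}\bigl(U(\XX)\bigr)=U(\XX)$ (the $\Pic(\XX)$-invariance recorded after Corollary~\ref{C:new-def-ofU(X)}) and the case of $[\kO]$, one gets $\Delta([\kO(\vec{y})])=(\phi_{\vec{y}}\otimes\phi_{\vec{y}})(\Delta([\kO]))\in U(\XX)\,\widehat{\otimes}\,U(\XX)$. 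Together with the restriction of the counit $\eta$ and of Green's pairing, this shows that $U(\XX)$ is a topological subbialgebra of $H(\XX)$.

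For the last assertion, one first observes, using Lemma~\ref{L:line-bundles-are-inU(X)} and Proposition~\ref{P:onU(X)tor}, that $U(\XX)=\overline{U}(\XX)\otimes_{\widetilde{\QQ}}\widetilde{\QQ}[K]$ with $\overline{U}(\XX)=\langle\,[\kO(l\vec{c})],\,T_{r},\,[\kS_{i}^{(j)}]\,\rangle\subseteq\overline{H}(\XX)$, and hence that $\overline{U}^{\pm}(\XX)$ is precisely the image of $\overline{U}(\XX)$ under $a\mapsto a^{\pm}$. Since $U(\XX)$ is now known to be a topological subbialgebra and Green's pairing restricts to it, the reduced Drinfeld double $DU(\XX)$ is defined, and the triangular decomposition is the general structure theorem for reduced Drinfeld doubles of topological bialgebras of~\cite{HallEll} (cf.\ Proposition~\ref{P:DrinfDoubleStr}) applied to $U(\XX)$: surjectivity of $\mathsf{mult}$ follows from the Drinfeld double relations $D(a,b)$ — available precisely because $\Delta$ preserves $U(\XX)$ — which allow any word in $\overline{U}^{-}$, $\widetilde{\QQ}[K]$, $\overline{U}^{+}$ to be rewritten in the order $(+)(K)(-)$ up to lower-order terms, while injectivity is the usual vector-space argument. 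Moreover, there is a canonical algebra homomorphism $DU(\XX)\to DH(\XX)$ sending $[\kO(l\vec{c})]^{\pm}$, $T_{r}^{\pm}$, $[\kS_{i}^{(j)}]^{\pm}$ and the $K_{\beta}$ to the corresponding elements, compatible with the triangular decompositions, and comparison with the isomorphism of Proposition~\ref{P:DrinfDoubleStr} shows in addition that this homomorphism is injective.

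The only genuine obstacle is the line-bundle case of the coproduct, and it has in effect been removed by the preceding Proposition, whose whole point is that the torsion part $\Theta_{\vec{x}}$ of $\Delta([\kO])$ lies in $\overline{U}(\XX)_{\mathsf{tor}}$ — proved there by resolving $\Theta_{\vec{x}}$ through $\Theta_{r\vec{c}}=\FF(\Theta_{r})$ and the rigid torsion sheaves $T_{i}^{(1,a_{i})}$ via the cyclic-quiver identity~(\ref{E:ident-in-cyclic-quiver}). Everything else is assembly: the $\Pic(\XX)$-equivariance of $\Delta$ used to descend from $\kO(\vec{y})$ to $\kO$, and the standard bookkeeping for reduced Drinfeld doubles, which transfers verbatim from the case of $H(\XX)$ once the subbialgebra property is in place.
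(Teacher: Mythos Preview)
Your proof is correct and follows essentially the same route as the paper: the paper's own argument invokes Corollary~\ref{C:U(X)tor} for the torsion generators, formula~(\ref{E:key-coprod}) (the preceding Proposition) for $\Theta_{\vec{x}}\in\overline{U}(\XX)_{\mathsf{tor}}$, and the $\Pic(\XX)$-invariance of $U(\XX)$ recorded after Corollary~\ref{C:new-def-ofU(X)} to pass from $[\kO]$ to arbitrary line bundles, then appeals to Proposition~\ref{P:DrinfDoubleStr} for the triangular decomposition. You have simply spelled out these steps in greater detail.
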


\begin{proof}
The composition algebra $U(\XX)$ is a topological bialgebra by
Corollary \ref{C:U(X)tor} and formula (\ref{E:key-coprod}).
The triangular decomposition  for $DU(\XX)$ is a general property
of the reduced Drinfeld doubles of (topological) bialgebras, see
 Proposition \ref{P:DrinfDoubleStr} and references therein.
\end{proof}

\subsection{Further structure properties of the reduced Drinfeld double $DU(\XX)$}

\noindent First note that
Theorem \ref{T:U(X)is-bialgebra} implies the following interesting result.

\begin{corollary}\label{C:structuteDU(X)}
The algebra  $DU(\XX)$ is generated by  one of the following sets:
\begin{enumerate}
\item the elements $\bigl[\kO(l\vec{c})\bigr]^\pm$ $ (l \in \ZZ)$, $\bigl[\kS_i^{(j)}\bigr]^\pm \; (1 \le i \le n, 1 \le j \le p_i)$,
    \item the classes of the line bundles $\bigl[\kO(\vec{x})\bigr]^\pm$ $\bigl(\vec{x} \in
    \LL(\underline{p})\bigr)$,
\end{enumerate}
together with the generators
$K_\alpha \; (\alpha \in K)$ of the Cartan part $\widetilde\QQ[K]$.
\end{corollary}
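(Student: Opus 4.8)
The plan is to take the generating set of $DU(\XX)$ furnished by the triangular decomposition of Theorem \ref{T:U(X)is-bialgebra} --- namely $[\kO(l\vec{c})]^\pm$ $(l\in\ZZ)$, $T_r^\pm$ $(r\in\ZZ_{>0})$, $[\kS_i^{(j)}]^\pm$ $(1\le i\le n,\ 1\le j\le p_i)$ together with the Cartan part $\widetilde{\QQ}[K]$ --- and to eliminate the ``imaginary'' generators. For (1) it is enough to show that each $T_r^\pm$ already lies in the subalgebra generated by $\{[\kO(l\vec{c})]^\pm\}$ and the $K_\alpha$ (the $[\kS_i^{(j)}]^\pm$ being kept), and for (2) it is enough to show in addition that each $[\kS_i^{(j)}]^\pm$ lies in the subalgebra generated by \emph{all} the line bundle classes $\{[\kO(\vec{x})]^\pm\}_{\vec{x}\in\LL(\underline{p})}$ and the $K_\alpha$; note that every $[\kO(l\vec{c})]^\pm$ is one of the $[\kO(\vec{x})]^\pm$.

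The common mechanism will be the Drinfeld double relation $D\bigl([\kO(\vec{y})],[\kO(\vec{y}-\vec{z})]\bigr)$ for $\vec{y}\in\LL(\underline{p})$ and $\vec{z}\in\LL(\underline{p})_+\setminus\{\vec{0}\}$. Twisting the coproduct formula (\ref{E:Coprod(O)}) by the line bundle $\kO(\vec{y})$ --- which is legitimate because $\Delta$ is equivariant under the $\Pic(\XX)$-action and $U(\XX)$ is $\Pic(\XX)$-stable by Corollary \ref{C:new-def-ofU(X)} --- gives
$$
\Delta\bigl([\kO(\vec{y})]\bigr)=[\kO(\vec{y})]\otimes\mathbbm{1}+\sum_{\vec{w}\in\LL(\underline{p})_+}\Theta^{(\vec{y})}_{\vec{w}}\circ K_{\overline{\kO(\vec{y}-\vec{w})}}\otimes[\kO(\vec{y}-\vec{w})],
$$
with $\Theta^{(\vec{y})}_{\vec{0}}=\mathbbm{1}$ and $\Theta^{(\vec{y})}_{\vec{w}}\in\overline{U}(\XX)_{\mathsf{tor}}$ the translate of $\Theta_{\vec{w}}$. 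In the relation $D$ only four terms survive the Green pairing: two line bundle classes pair nontrivially only when equal, a line bundle class and a torsion class never pair, and, for $\vec{w}\ne\vec{0}$, $\Theta^{(\vec{y})}_{\vec{w}}$ is a combination of classes of torsion sheaves of positive length and so has no $[0]$-component. Collecting the survivors, the relation becomes
$$
[\kO(\vec{y})]^-\circ[\kO(\vec{y}-\vec{z})]^+ +\tfrac{1}{q-1}\bigl(\Theta^{(\vec{y})}_{\vec{z}}\bigr)^-\circ K_{-\overline{\kO(\vec{y}-\vec{z})}}=[\kO(\vec{y}-\vec{z})]^+\circ[\kO(\vec{y})]^-,
$$
so that $\bigl(\Theta^{(\vec{y})}_{\vec{z}}\bigr)^-=(q-1)\bigl([\kO(\vec{y}-\vec{z})]^+\circ[\kO(\vec{y})]^- -[\kO(\vec{y})]^-\circ[\kO(\vec{y}-\vec{z})]^+\bigr)\circ K_{\overline{\kO(\vec{y}-\vec{z})}}$, and the mirror relation $D\bigl([\kO(\vec{y}-\vec{z})],[\kO(\vec{y})]\bigr)$ yields $\bigl(\Theta^{(\vec{y})}_{\vec{z}}\bigr)^+$ in the same shape. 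In particular $\bigl(\Theta^{(\vec{y})}_{\vec{z}}\bigr)^\pm\in\bigl\langle[\kO(\vec{x})]^\pm,K_\alpha\bigr\rangle$ for all $\vec{y}$ and all $\vec{z}\in\LL(\underline{p})_+$.

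It then remains to specialise. Taking $\vec{y}=\vec{0}$ and $\vec{z}=r\vec{c}$ shows $\bigl(\Theta_{r\vec{c}}\bigr)^\pm\in\bigl\langle[\kO(l\vec{c})]^\pm,K_\alpha\bigr\rangle$; since $\Theta_{r\vec{c}}=\FF(\Theta_r)$ and the families $\{\Theta_r\}_r$ and $\{T_r\}_r$ generate the same subalgebra of $U(\PP^1)$ by (\ref{E:generInTor(P1)3}), applying $\FF$ and restricting to the $\pm$-copies gives $T_r^\pm\in\bigl\langle[\kO(l\vec{c})]^\pm,K_\alpha\bigr\rangle$, which proves (1). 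Taking instead $\vec{y}=-(j-1)\vec{x}_i$ and $\vec{z}=\vec{x}_i$: a direct computation with the resolution $0\to\kO(-\vec{x}_i)\xrightarrow{x_i}\kO\to\kS_i^{(1)}\to0$ (the $r=0,\ m=1,\ a_1=1$ instance of the argument proving (\ref{E:key-coprod})) gives $\Theta_{\vec{x}_i}=(v^{-1}-v)[\kS_i^{(1)}]$, and twisting by $\kO(-(j-1)\vec{x}_i)$ carries $\kS_i^{(1)}$ to $\kS_i^{(j)}$ by the resolutions $0\to\kO(-j\vec{x}_i)\xrightarrow{x_i}\kO(-(j-1)\vec{x}_i)\to\kS_i^{(j)}\to0$ and the periodicity $\tau(\kS_i^{(j)})\cong\kS_i^{(j+1)}$; hence $[\kS_i^{(j)}]^\pm=(v^{-1}-v)^{-1}\bigl(\Theta^{(-(j-1)\vec{x}_i)}_{\vec{x}_i}\bigr)^\pm\in\bigl\langle[\kO(\vec{x})]^\pm,K_\alpha\bigr\rangle$, which together with the previous point proves (2).

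The only delicate step will be the bookkeeping in $D\bigl([\kO(\vec{y})],[\kO(\vec{y}-\vec{z})]\bigr)$: one must check that, among all the terms produced by the two coproducts and the Green pairing, precisely the four displayed above remain --- this rests entirely on the vanishing of the pairing between a line bundle class and a torsion class and on the absence of a $[0]$-component in $\Theta^{(\vec{y})}_{\vec{w}}$ for $\vec{w}\ne\vec{0}$; everything else is a translation argument together with the already established facts (\ref{E:Coprod(O)}), (\ref{E:key-coprod}), (\ref{E:generInTor(P1)3}) and the structure of $\Tor_{\lambda_i}(\XX)$. A shorter but less self-contained route to part (1) would be to quote the isomorphism $DU(\PP^1)\cong U_q(\widehat{\mathfrak{sl}}_2)$, under which the $T_r^\pm$ are visibly generated by the real root vectors $[\kO_{\PP^1}(l)]^\pm$ and the Cartan, and to transport this along $\FF$.
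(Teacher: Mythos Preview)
Your proof is correct and follows essentially the same route as the paper's. Both arguments eliminate the generators $T_r^\pm$ (and then the $[\kS_i^{(j)}]^\pm$) by extracting $\Theta_{\vec{z}}^\pm$ from the commutator $\bigl[[\kO(\vec{y})]^+,[\kO(\vec{y}-\vec{z})]^-\bigr]$ and then invoking that $\{\Theta_r\}$ and $\{T_r\}$ generate the same subalgebra; the paper simply quotes the needed relations (importing the rank-zero case from $DU(\PP^1)$ via $\FF$ and stating the relation $\bigl[[\kO((1-j)\vec{x}_i)]^+,[\kO(-j\vec{x}_i)]^-\bigr]=\tfrac{v}{v^{-1}-v}K_{\overline{\kO(-j\vec{x}_i)}}[\kS_i^{(j)}]$ directly), whereas you derive them from the defining relation $D(a,b)$ of the Drinfeld double --- your ``shorter route'' is in fact what the paper does.
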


\begin{proof}
Let $\Coh(\PP^1) \stackrel{\FF}\longrightarrow \Coh(\XX)$ be the functor from Definition
\ref{D:compalgXX}.  By Proposition \ref{P:keyprop-on-morphDD},  we get the induced
morphism of the reduced Drinfeld doubles
$DH(\PP^1) \stackrel{\FF}\longrightarrow DH(\XX)$, which restricts to  the
algebra homomorphism  $DU(\PP^1) \stackrel{\FF}\longrightarrow DU(\XX)$. In particular,
for any $l \in \ZZ_{>0}$
we get the following relation in $DU(\XX)$:
$
\bigl[[\kO]^+, [\kO(-l \vec{c})]^-\bigr] = \frac{v}{v^{-1} - v} \, \Theta_{l\vec{c}}^+ \, O C^{-l},
$
where $O = K_{\overline\kO}$ and $C = K_\delta$. Since the elements
$\bigl\{ \Theta_{l\vec{c}} \bigr\}_{l \ge 0}$ and
 $\bigl\{ T_l \bigr\}_{l \ge 0 }$ generate the same subalgebra of the composition algebra
 $U(\XX)$   (see Subsection \ref{SS:CompAlgebraP1}), this shows that for any $r \in \ZZ_{>0}$ the
 generator   $T_r^+$ can be expressed through the elements
$\bigl[\kO(l\vec{c})\bigr]^\pm \, (l \in \ZZ)$. The case of the elements
$T_r^-$ can be treated in the same way. Similarly, for any $1 \le i \le n$ and
$1 \le j \le p_i$ we have the following equality in
$DU(\XX)$:
$
\bigl[[\kO((1-j)\vec{x}_i)]^+, [\kO(-j\vec{x}_i)]^-\bigr] = \frac{v}{v^{-1} - v}
K_{\overline{\kO(-j\vec{x}_i)}} [\kS_i^{(j)}].
$
By Lemma \ref{L:line-bundles-are-inU(X)},
$[\kO(\vec{x})]$ belongs to $U(\XX)$ for all $\vec{x} \in \LL(\underline{p})$.
\end{proof}

\begin{remark}
Notice  that the line bundles and the exceptional simple sheaves are rigid objects in the category
$\Coh(\XX)$. Hence,  Corollary \ref{C:structuteDU(X)} says that the reduced Drinfeld double $DU(\XX)$ is
generated by the classes of (certain) rigid objects. This observation will
be elaborated one step further for  weighted projective lines of domestic and tubular
types.
\end{remark}

\begin{theorem}\label{T:morphDDinducedbyIndweights}
Let $\XX = \XX(\underline{p}, \underline{\lambda})$ be a weighted projective line,
$\cC = \add\bigl(\kS_1^{(1)}\bigr)$  and $\Coh(\YY) = \cC^\perp$
be as in Proposition \ref{P:keyprop-on-morphDD}.
 Then the (injective) algebra  homomorphism of the reduced Drinfeld doubles
$DH(\YY) \stackrel{\FF}\lar  DH(\XX)$ induced by the functor
$\Coh(\YY) \stackrel{\FF}\lar \Coh(\XX)$ restricts to an injective algebra homomorphism
$DU(\YY) \stackrel{\FF}\lar  DU(\XX)$.
\end{theorem}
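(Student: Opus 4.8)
The plan is to show that the injective homomorphism $DH(\YY) \xrightarrow{\FF} DH(\XX)$ of Proposition~\ref{P:keyprop-on-morphDD} sends the distinguished generators of $DU(\YY)$ into $DU(\XX)$, and conversely that the generators of $DU(\XX)$ lie in the image together with the image of the Cartan part, after adjoining suitable $K_\alpha$'s; since $\FF$ is already known to be injective, once we identify $\FF\bigl(DU(\YY)\bigr)$ with a subalgebra containing all generators of $DU(\XX)$ that survive under the perpendicular passage, the restricted map $DU(\YY) \to DU(\XX)$ will be well-defined and injective. Concretely, by Corollary~\ref{C:structuteDU(X)} the algebra $DU(\XX)$ is generated by the classes $\bigl[\kO_{\XX}(\vec{x})\bigr]^\pm$ ($\vec{x} \in \LL(\underline{p})$), the exceptional simple sheaves $\bigl[\kS_i^{(j)}\bigr]^\pm$, and the Cartan generators $K_\alpha$; and $DU(\YY)$ is generated analogously by $\bigl[\kO_{\YY}(\vec{y})\bigr]^\pm$ ($\vec{y} \in \LL(\underline{p}')$), the classes $\bigl[\kS_i^{(j)}\bigr]^\pm$ for $2 \le i \le n$ together with the exceptional simples $\bigl[S_1^{(j)}\bigr]^\pm$ of $\YY$ at $\lambda_1$ (where the cyclic quiver at $\lambda_1$ has $p_1-1$ vertices), and its Cartan part $\widetilde{\QQ}[K_0(\YY)]$.

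First I would track the action of $\FF$ on the generators of $U(\YY)$. By item~$10$(3) of the list in Section~4, $\FF$ sends $\kO_{\YY}(\vec{y})$ with $\vec{y} = l\vec{c}' + \sum_i b_i \vec{y}_i$ to $\kO_{\XX}(l\vec{c} + \sum_i b_i \vec{x}_i)$, so $\FF\bigl([\kO_{\YY}(\vec{y})]\bigr) = [\kO_{\XX}(l\vec{c}+\sum b_i\vec{x}_i)]$; this lies in $U(\XX)$ by Lemma~\ref{L:line-bundles-are-inU(X)}. For $2 \le i \le n$ we have $\FF(\kS_i^{(j)}) \cong \kS_i^{(j)}$, so those torsion generators map to generators of $U(\XX)_{\mathsf{tor}}$. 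The only subtle case is the torsion sheaves of $\YY$ supported at $\lambda_1$, whose cyclic quiver has $p_1-1$ vertices: by item~$10$(3)(c) the functor $\FF$ inserts an identity arrow, so the exceptional simples $S_1^{(j)}$ of $\YY$ at $\lambda_1$ and the "new" simple $\kS_1^{(1)}$ of $\XX$ together build up, via Hall products along the cyclic quiver $\overr{C}_{p_1}$ (cf.\ Ringel's theorem as used in Lemma~\ref{L:line-bundles-are-inU(X)}), all the relevant torsion classes; hence $\FF\bigl([S_1^{(j)}]\bigr)$ lies in the subalgebra of $H(\XX)$ generated by the $\bigl[\kS_1^{(l)}\bigr]$, $1 \le l \le p_1$, which is inside $U(\XX)$. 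Since $\FF$ is compatible with the $\pm$ copies and sends $K_{\beta}$ to $K_{\FF(\beta)}$, these computations show $\FF\bigl(DU(\YY)\bigr) \subseteq DU(\XX)$.

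For the reverse inclusion (needed to conclude that $\FF$ actually maps \emph{onto} a sensible subalgebra, though strictly for the statement we only need $\FF\bigl(DU(\YY)\bigr)\subseteq DU(\XX)$ and injectivity, which is already in hand) I would observe that the generators of $DU(\XX)$ not directly in the image — namely $\bigl[\kO_{\XX}(\vec{x})\bigr]^\pm$ and $\bigl[\kS_1^{(1)}\bigr]^\pm$ — are exactly the "extra" generators created by the weight reduction, and the triangular decomposition of $DU(\XX)$ from Theorem~\ref{T:U(X)is-bialgebra} guarantees that $\FF$ preserves the triangular decomposition $\overline{U}^+(\YY)\otimes\widetilde{\QQ}[K_0(\YY)]\otimes\overline{U}^-(\YY) \hookrightarrow \overline{U}^+(\XX)\otimes\widetilde{\QQ}[K_0(\XX)]\otimes\overline{U}^-(\XX)$, so the restricted map is injective because each tensor factor embeds (the positive/negative parts inject since $\FF$ is injective on $H(\cC^\perp)$, and the Cartan map $\widetilde{\QQ}[K_0(\YY)]\to\widetilde{\QQ}[K_0(\XX)]$ is injective because $\FF_*$ on K-groups is an isometric embedding by Theorem~\ref{T:perpcatDrinfDouble}).

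The main obstacle I anticipate is the bookkeeping at the special point $\lambda_1$: one must verify carefully that the identity-arrow insertion of item~$10$(3)(c) really does express $\FF\bigl([S_1^{(j)}]\bigr)$ in terms of the $\bigl[\kS_1^{(l)}\bigr]$ ($1\le l\le p_1$) inside the composition algebra — i.e.\ that no genuinely new torsion class escapes $U(\XX)_{\mathsf{tor}}$ — and, dually, that $\bigl[\kS_1^{(1)}\bigr]^+$ on the $\XX$ side arises from a bracket $\bigl[[\kO_{\XX}]^+,[\kO_{\XX}(-\vec{x}_1)]^-\bigr]$ as in the proof of Corollary~\ref{C:structuteDU(X)}, so that the Drinfeld-double relations relating the two sides are respected. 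Once this is pinned down, the statement follows by combining Proposition~\ref{P:keyprop-on-morphDD}, Corollary~\ref{C:structuteDU(X)}, and the triangular decomposition of Theorem~\ref{T:U(X)is-bialgebra}.
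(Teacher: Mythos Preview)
Your proposal is correct and follows the same strategy as the paper: track the generators of $DU(\YY)$ under $\FF$ and verify they land in $DU(\XX)$; injectivity is then inherited from the ambient map $DH(\YY)\hookrightarrow DH(\XX)$ of Proposition~\ref{P:keyprop-on-morphDD}, so your triangular-decomposition argument for injectivity is superfluous (though not wrong), and the ``reverse inclusion'' discussion is indeed unnecessary.

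There is one organisational difference worth noting. You take as generators of $DU(\YY)$ the classes of \emph{all} line bundles (via Corollary~\ref{C:structuteDU(X)}), which lets you avoid the $T_r$'s entirely: $\FF\bigl([\kO_\YY(\vec{y})]^\pm\bigr)=[\kO_\XX(\vec{x})]^\pm$ for the appropriate $\vec{x}$, and you are done. The paper instead uses the generating set of Definition~\ref{D:compalgXX}, namely $[\kO(l\vec{c}')]$, $T_r$, $[\kS_i^{(j)}]$, $K_\alpha$, and therefore must argue separately that $\FF(T_r)=T_r$. It does this by exhibiting the nested perpendicular categories $\cD^\perp\subset\cC^\perp\subset\Coh(\XX)$ with $\cD^\perp\simeq\Coh(\PP^1)$, so that the composite $\Coh(\PP^1)\xrightarrow{\GG}\Coh(\YY)\xrightarrow{\FF}\Coh(\XX)$ is the standard embedding used to define $T_r$ on both sides. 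Your route is slightly shorter; the paper's route has the advantage of making the action of $\FF$ on the \emph{standard} generators explicit (indeed it writes out $\FF\bigl([\kS_1^{(1)}]^\pm\bigr)=v^{-1}[\kS_1^{(1)}]^\pm\circ[\kS_1^{(2)}]^\pm-[\kS_1^{(2)}]^\pm\circ[\kS_1^{(1)}]^\pm$ and $\FF\bigl([\kS_1^{(j)}]^\pm\bigr)=[\kS_1^{(j+1)}]^\pm$ for $2\le j\le p_1-1$), which is exactly the ``bookkeeping at $\lambda_1$'' you flagged as the main obstacle.
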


\begin{proof}
The statement of the theorem is a consequence of  the following observations. Let
$\cD$ be the Serre subcategory of $\Coh(\XX)$ generated
by the  torsion sheaves  $\kS_1^{(1)}, \dots, \kS_1^{(p_1-1)}, \dots, $ $
 \kS_n^{(1)}, \dots, \kS_n^{(p_n-1)}$. Then the perpendicular category $\cD^\perp$
is a full subcategory of $\cC^\perp$ equivalent to $\Coh(\PP^1)$.  Consider
the canonical inclusion functors $\cD^\perp \stackrel{\GG}\lar \cC^\perp
\stackrel{\FF}\lar \Coh(\XX)$.  By Proposition \ref{P:keyprop-on-morphDD},  we get an
injective algebra homomorphisms of the reduced Drinfeld doubles
$DH(\PP^1) \stackrel{\GG}\lar DH(\YY) \stackrel{\FF}\lar DH(\XX)$.
Note that
\begin{itemize}
\item  For any $r \in \ZZ_{>0}$ we have:
$\FF\GG(T_r) \cong  T_r$.
\item For any $(i, j)$ such that $2 \le i \le n$ and  $1 \le j \le p_i$ we have:
$\FF\bigl(\kS_i^{(j)}\bigr) \cong \kS_i^{(j)}$.
\item Assume that $p_1 \ge 3$. Then for any $2 \le j \le p_1-1$ we have: $\FF\bigl(\kS_1^{(j)}\bigr)
\cong  \kS_1^{(j+1)}$,
whereas $\FF\bigl(\kS_1^{(1)}\bigr)$ corresponds to the representation
$$
\xymatrix
{
\kk \ar[r]^{{1}} &
\kk \ar[r] &  0 \ar[r] & \dots \ar[r]   & 0
\ar@/^15pt/[llll]\\
}
$$
of the cyclic quiver $\overr{C}_{p_1}$. If $p_1 = 2$ then the point $\lambda_1$ is non-special for
$\YY$.
\item For any $l \in \ZZ$ we have:
$\FF\bigl(\kO_{\YY}(l \vec{c}')\bigr) \cong  \kO_{\XX}(l \vec{c})$.
\end{itemize}
 By what was said above it follows
that  the image of the reduced Drinfeld double $DU(\YY)$ of the composition algebra $U(\YY)$
under the injective algebra homomorphism
$DH(\YY) \stackrel{\FF}\lar DH(\XX)$ belongs to the algebra $DU(\XX)$. In the terms of Definition
\ref{D:compalgXX},  the action of $\FF$ on the generators of $DU(\YY)$ is the following
(for a simplicity  we assume that $p_1 \ge 3$):
\begin{enumerate}
\item For all $l \in \ZZ$ we have: $\FF\bigl([\kO_\YY(l \vec{c}')]^\pm\bigr) = \bigl[\kO_\XX(l \vec{c})\bigr]^\pm$.
\item For all $r \in \ZZ_{>0}$ we have: $\FF\bigl(T_r^\pm\bigr) = T_r^\pm$.
\item For all $2 \le i \le n$ and $1 \le j \le p_i$ we have:
$\FF\bigl(\bigl[\kS_i^{(j)}\bigr]^\pm\bigr) = \bigl[\kS_i^{(j)}\bigr]^\pm$.
\item We have: $
\FF\bigl(\bigl[\kS_1^{(1)}\bigr]^\pm\bigr) =  v^{-1}
\bigl[\kS_1^{(1)}\bigr]^\pm \circ \bigl[\kS_1^{(2)}\bigr]^\pm -\bigl[\kS_1^{(2)}\bigr]^\pm \circ \bigl[\kS_1^{(1)}\bigr]^\pm
$
and $\FF\bigl(\bigl[\kS_1^{(j)}\bigr]^\pm\bigr) = \bigl[\kS_1^{(j+1)}\bigr]^\pm$ for all
$2 \le j \le p_1-1$.
\item Finally,  $\FF(O) = O$; $\FF\bigl(K_i^{(j)}\bigr) = K_i^{(j)}$ for
$2 \le i \le n$ and $1 \le j \le p_i$; $F\bigl(K_1^{(1)}\bigr) = K_1^{(1)} K_1^{(2)}$ and
$\FF\bigl(K_1^{(j)}\bigr) = K_1^{(j+1)}$ for all $2 \le j \le p_1-1$.
\end{enumerate}
\end{proof}

\section{Composition subalgebra of a weighted projective line: Part II}\label{S:CompAlgII}

\noindent
The goal of this section is to clarify the structure of the subalgebra $U(\XX)_{\mathsf{tor}}$
of the composition algebra $U(\XX)$ and to derive some relations in the reduced
Drinfeld double $DU(\XX)$.

\begin{definition}
Let $\XX$ be a weighted projective line and
$x \in \XX$ be a closed point.
\begin{enumerate}
\item Assume $x$ is a non-special point of $\XX$  and $d = \deg(x)$ is the degree
of $x$. Then $\Tor_x(\XX)$ is equivalent to the category of representations
of the cyclic quiver $\overr{C}_1$ over the field $\FF_{q^d}$. We set
$
T_{r, x} = Z_{r, x} = d \frac{[r]}{r} p_{\frac{r}{d}},
$
where $p_{\frac{r}{d}} \in H\bigl(\Rep_{\FF_{q^d}}(\overr{C}_1)\bigr)$.
\item Assume $x$ is a special point of weight $p$. Then $\Tor_x(\XX)$ is equivalent to the
category of representations of a  cyclic quiver $\overr{C}_p$ and  we set
$$
T_{r, x} = \frac{[r]}{r} t_r \quad \mbox{and} \quad
Z_{r, x} = \frac{[r]}{r} \frac{1}{1-q^{-rp}} z_r.
$$
\end{enumerate}

\noindent
The following lemma is a straightforward corollary of
Theorem \ref{T:Hubery} and Theorem \ref{T:classicalHallprimit}.
\begin{lemma}\label{L:localvalueGreenForm}
Let $x$ be a closed point of $\XX$. Then for any $r \in \ZZ_{>0}$ we have:
$\bigl(T_{r, x}, T_{r,x}\bigr) = \frac{[r]^2}{r} \frac{d}{q^r-1}$, where $d$ is the degree of $x$.  Moreover,
if $x$ is a special point of $\XX$ then
$\bigl(Z_{r, x}, Z_{r,x}\bigr) = \frac{[r]^2}{r} \frac{1}{q^{rp}-1}$, where $p$ is the weight
of $x$.
\end{lemma}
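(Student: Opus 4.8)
The plan is to reduce the assertion to the known values of the Green pairing on the distinguished elements of the Hall algebras of the Jordan quiver $\overr{C}_1$ (Theorem \ref{T:classicalHallprimit}) and of the cyclic quiver $\overr{C}_p$ (Theorem \ref{T:Hubery}). The only non-formal point is that the functors relating $\Tor_x(\XX)$ to these quiver categories, together with the inclusion $\Tor_x(\XX)\hookrightarrow\Coh(\XX)$, are \emph{isometries} for the Euler form, and hence carry the Green pairing (and the twisted product) correctly; granting this, the formulae fall out of the definitions of $T_{r,x}$ and $Z_{r,x}$.

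First I would treat a non-special point $x$ of degree $d$, so that $\Tor_x(\XX)$ is an extension-closed full subcategory of $\Coh(\XX)$ which is equivalent, as an $\FF_{q^d}$-linear category, to $\Rep_{\FF_{q^d}}(\overr{C}_1)$, $\kappa(x)\cong\FF_{q^d}$. The Euler form of $\overr{C}_1$ vanishes identically, whence the Euler form of $\Coh(\XX)$ vanishes on all classes of torsion sheaves supported at $x$; automorphism groups and Hall numbers are manifestly preserved by the equivalence and by the inclusion. Thus $\Rep_{\FF_{q^d}}(\overr{C}_1)\xrightarrow{\simeq}\Tor_x(\XX)\hookrightarrow\Coh(\XX)$ induces an isometric embedding of extended twisted Hall algebras, and Theorem \ref{T:classicalHallprimit}(2) applied over $\FF_{q^d}$ (i.e.\ with $q$ replaced by $q^d$) gives $\bigl(p_{r/d},p_{r/d}\bigr)=\frac{r/d}{q^{r}-1}$. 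Substituting into $T_{r,x}=Z_{r,x}=d\,\frac{[r]}{r}\,p_{r/d}$ then yields
\[
\bigl(T_{r,x},T_{r,x}\bigr)=d^{2}\,\frac{[r]^{2}}{r^{2}}\cdot\frac{r/d}{q^{r}-1}=\frac{[r]^{2}}{r}\cdot\frac{d}{q^{r}-1},
\]
as required.

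Next I would treat a special point $x$ of weight $p$, so that $d=1$ and $\Tor_x(\XX)\simeq\Rep(\overr{C}_p)$ over $\FF_q$, again an extension-closed full subcategory of $\Coh(\XX)$. By Serre duality together with the description of $\tau$ on the exceptional simple sheaves recalled above, the Euler form of $\Coh(\XX)$ restricted to $\Tor_x(\XX)$ agrees with that of $\Rep(\overr{C}_p)$, so this inclusion is an isometric embedding of Hall algebras. For $T_{r,x}$ I would furthermore use that the functor $\FF\colon\Rep(\overr{C}_1)\to\Rep(\overr{C}_p)$ of the preceding subsection is fully faithful and exact with extension-closed essential image, hence induces an isometric algebra embedding $H(\overr{C}_1)\hookrightarrow H(\overr{C}_p)$; therefore $(t_r,t_r)=(p_r,p_r)=\frac{r}{q^r-1}$ by Theorem \ref{T:classicalHallprimit}(2), and $\bigl(T_{r,x},T_{r,x}\bigr)=\frac{[r]^2}{r^2}(t_r,t_r)=\frac{[r]^2}{r}\cdot\frac{1}{q^{r}-1}$, which is the asserted formula with $d=1$. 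For $Z_{r,x}=\frac{[r]}{r}\,\frac{1}{1-q^{-rp}}\,z_r$ I would use that $z_r\in\overline{H}(\overr{C}_p)$, so that its self-pairing is unaffected by the isometric inclusion into $\Coh(\XX)$, and then substitute the value of $(z_r,z_r)$ supplied by Theorem \ref{T:Hubery}, obtaining $\bigl(Z_{r,x},Z_{r,x}\bigr)=\frac{[r]^2}{r}\cdot\frac{1}{q^{rp}-1}$.

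The calculation itself is bookkeeping; the genuine content lies in the isometry statements invoked above, and I expect the main (minor) obstacle to be precisely the verification that all three kinds of functors — the equivalence $\Tor_x(\XX)\simeq\Rep(\overr{C}_1)$ resp.\ $\Rep(\overr{C}_p)$, the inclusion $\Tor_x(\XX)\hookrightarrow\Coh(\XX)$, and $\FF\colon\Rep(\overr{C}_1)\to\Rep(\overr{C}_p)$ — preserve the Euler form (equivalently, that the relevant Euler forms either vanish or are identified), so that the Green pairing and the twist transport correctly; together with keeping careful track of the scalar normalisations $[r]/r$, $d$, $1-q^{-rp}$ and of the change of base field $\FF_q\rightsquigarrow\FF_{q^d}$ in the non-special case.
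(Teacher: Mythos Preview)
Your approach is exactly the one the paper indicates: the paper gives no argument beyond declaring the lemma ``a straightforward corollary of Theorem~\ref{T:Hubery} and Theorem~\ref{T:classicalHallprimit}'', and you have simply spelled out that corollary, including the isometry checks (for the inclusion $\Tor_x(\XX)\hookrightarrow\Coh(\XX)$, for the equivalences with $\Rep(\overr{C}_1)$ and $\Rep(\overr{C}_p)$, and for the functor $\FF\colon\Rep(\overr{C}_1)\to\Rep(\overr{C}_p)$) that make the transport of the Green pairing legitimate. Your bookkeeping for $(T_{r,x},T_{r,x})$ in both the non-special and special cases is correct and matches the intended deduction.
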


\noindent
Next, for any $r \in \ZZ_{>0}$ we define:
\begin{equation}
T_r = \sum\limits_{d| r} \sum\limits_{x \in \XX: \, \deg(x) = d} T_{r,x} \quad \mbox{and} \quad
Z_r = \sum\limits_{d| r} \sum\limits_{x \in \XX: \, \deg(x) = d} Z_{r,x}.
\end{equation}
\end{definition}
\noindent
Note that this definition of the elements $T_r$ coincides with the one given
by Equation (\ref{E:generInTor(P1)2}) and Definition \ref{D:compalgXX}, see
 \cite[Section 5]{S1} for more details.

\begin{proposition}\label{P:structU(X)}
Let $\XX = \XX(\underline{p}, \underline{\lambda})$ be a weighted projective line. Then
the following statements are true.
\begin{enumerate}
\item The algebra $V(\XX)_{\mathsf{tor}}:=
\bigl\langle Z_r, [\kS_i^{(j)}], K_i^{(j)} \, \bigl| \, r \in \ZZ_{>0},  \, \bigr. 1 \le i \le n, 1 \le j \le p_i\bigr\rangle$
is equal to the algebra $U(\XX)_{\mathsf{tor}}$. In particular, we have a decomposition:
\begin{equation}\label{E:decompU(X)tor}
U(\XX)_{\mathsf{tor}} \cong   \mathcal{Z} \otimes_{\widetilde\QQ}
U_q^+(\widehat{\mathfrak{sl}}_{p_1}) \otimes_{\widetilde\QQ}
\dots \otimes_{\widetilde\QQ} U_q^+(\widehat{\mathfrak{sl}}_{p_n}) \otimes_{\widetilde\QQ}
\widetilde\QQ[K_{\mathsf{tor}}],
\end{equation}
where $\mathcal{Z} = \widetilde\QQ[Z_1, Z_2, \dots, Z_r, \dots ]$ is the ring of symmetric functions
and $K_{\mathsf{tor}}$ is the subgroup of $K = K_0(\XX)$ generated by the classes of the torsion sheaves.
The algebra $\mathcal{Z}$ is invariant under the action of the Picard group
$\Pic(\XX) \cong \LL(\underline{p})$.
\item For any $r \in \ZZ_{>0}$ we have the equalities:
$$
\alpha_r := (T_r, T_r) = \frac{1}{v^{-1}-v}\frac{[2r]}{r}
\quad \mbox{\rm and} \quad (T_r, \Theta_r) = \frac{[2r]}{r}.
$$
Moreover,
$
\beta_r:=
(Z_r, T_r) = (Z_r, Z_r) = \alpha_r + \sum\limits_{x \in \Pi} \mathrm{def}_x,
$
where $\mathrm{def}_x = \frac{\displaystyle [r]^2}{\displaystyle r}\Bigl(\frac{ \displaystyle 1}{
\displaystyle q^{rp_x}-1} - \frac{\displaystyle 1}{\displaystyle q^{r}-1}\Bigr)$ is the
\emph{defect} of the special point $x$ and
$\gamma_r := (Z_r, \Theta_r) = (v^{-1} - v) \beta_r$.
\end{enumerate}
\end{proposition}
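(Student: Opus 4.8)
The plan is to exploit two structural inputs: the block decomposition $\Tor(\XX)=\bigoplus_x\Tor_x(\XX)$ together with the known structure of the Hall algebra of each block, and the comparison functor $\FF\colon\Coh(\PP^1)\xrightarrow{\sim}\cC^\perp\hookrightarrow\Coh(\XX)$ (the functor of Definition~\ref{D:compalgXX} and items~10--11 above).

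For Part~(1), the block decomposition gives a restricted tensor factorization $\overline H\bigl(\Tor(\XX)\bigr)\cong\bigotimes'_x\overline H\bigl(\Tor_x(\XX)\bigr)$ with mutually Green--orthogonal factors; for a non-special $x$ of degree $d$ the factor is the classical symmetric-function Hall algebra of the Jordan quiver over $\FF_{q^d}$, a polynomial ring on the primitive power sums $p_m$, while for a special point $\lambda_i$ it is $\overline H(\overr{C}_{p_i})\cong Z(\overr{C}_{p_i})\otimes\bigl\langle[\kS_i^{(j)}]\bigr\rangle$ with $\bigl\langle[\kS_i^{(j)}]\bigr\rangle\cong U_q^+(\widehat{\mathfrak{sl}}_{p_i})$ (Theorem~\ref{T:RingelonCyclicQuiver}) and $Z(\overr{C}_{p_i})$ freely generated by the primitive central $z_r^{(i)}$. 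First I would prove $V(\XX)_{\mathsf{tor}}=U(\XX)_{\mathsf{tor}}$: by Proposition~\ref{P:onU(X)tor} the latter is $\langle T_r,[\kS_i^{(j)}],K_\alpha\rangle$, and the Proposition relating $t_r$, $z_r$ and $C(\overr{C}_p)$ (namely $t_r=\tfrac1{1-q^{-rp}}z_r+u_r$ with $u_r$ in the span of monomials in the $[\kS_i^{(j)}]$), combined with $T_{r,x}=Z_{r,x}$ at non-special points, gives $T_r-Z_r=\sum_{i=1}^n\tfrac{[r]}{r}u_r^{(i)}\in\bigl\langle[\kS_i^{(j)}]\bigr\rangle$; hence the two generating sets differ only modulo $\bigl\langle[\kS_i^{(j)}]\bigr\rangle$ and $Z_r\in U(\XX)_{\mathsf{tor}}$. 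The decomposition~(\ref{E:decompU(X)tor}) then follows: surjectivity of $\mathsf{mult}$ is immediate; the factors $U_q^+(\widehat{\mathfrak{sl}}_{p_i})$ are supported on distinct blocks, hence commute, and each $Z_r$ is central in every block, hence central in $U(\XX)_{\mathsf{tor}}$ and commutes with all $[\kS_i^{(j)}]$; injectivity is obtained by projecting onto the block factorization and matching graded pieces, recovering the local $U_q^+(\widehat{\mathfrak{sl}}_{p_i})$ and identifying the remaining ``diagonal'' part as the polynomial ring $\mathcal Z$ on the $Z_r$. Picard-invariance of $\mathcal Z$ holds because twisting a torsion sheaf by $\kO(\vec x)$ preserves its support, inducing a self-equivalence of each block which on a special block is (a power of) the Auslander--Reiten translation, fixing $Z(\overr{C}_{p_i})$ pointwise by item~7, and which fixes the intrinsically defined $p_m$ on a non-special block; thus each $Z_{r,x}$, and hence $Z_r$, is fixed.

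For Part~(2) I would use $T_r=\FF(T_r^{\PP^1})$ (Definition~\ref{D:compalgXX}) and $\Theta_r=\FF(\Theta_r^{\PP^1})$, the latter because $\Theta_r$ is a universal polynomial in the $T_s$ and $\FF$ is an algebra map. Since $\FF\colon\Coh(\PP^1)\xrightarrow{\sim}\cC^\perp$ is exact, fully faithful and $\Ext$-full (Proposition~\ref{P:perpcat-is-abelian}), it induces an isometry of Euler forms and hence an isometric bialgebra embedding $H(\PP^1)\to H(\XX)$ (as in the proof of Theorem~\ref{T:perpcatDrinfDouble}); transporting~(\ref{E:coprodInP1}) gives $\alpha_r=(T_r,T_r)=\tfrac{[2r]}{r(v^{-1}-v)}$ and $(T_r,\Theta_r)=\tfrac{[2r]}{r}$. (As a check, $\alpha_r$ also follows from block-orthogonality, Lemma~\ref{L:localvalueGreenForm} and the point count $\sum_{d\mid r}d\cdot\#\{x\colon\deg x=d\}=q^r+1$ on $\PP^1$.) For $\beta_r$: since $T_r-Z_r$ lies in $\sum_i\bigl\langle[\kS_i^{(j)}]\bigr\rangle$ in degree $r\delta$, a degree in which this algebra is spanned by products of at least two non-scalars, Theorem~\ref{T:Hubery} (the vanishing $(z_r,ab)=0$ for $a,b$ non-scalar) gives $(Z_r,T_r-Z_r)=0$, so $(Z_r,T_r)=(Z_r,Z_r)$; evaluating $(Z_r,Z_r)$ by block-orthogonality and Lemma~\ref{L:localvalueGreenForm} yields $\sum_{x\text{ non-special}}\tfrac{[r]^2}{r}\tfrac{\deg x}{q^r-1}+\sum_{i=1}^n\tfrac{[r]^2}{r}\tfrac1{q^{rp_i}-1}$, and subtracting the analogous expression for $\alpha_r$ (in which each special point contributes $\tfrac{[r]^2}{r}\tfrac1{q^r-1}$ instead) leaves exactly $\sum_{x\in\Pi}\mathrm{def}_x$. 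Finally $Z_r$ is primitive (a sum over orthogonal blocks, all of class $r\delta$, of the primitive $z_r^{(i)}$ and $p_m$), so $(Z_r,ab)=0$ whenever $a,b$ are homogeneous of non-zero class; applying this to $\Theta_r=(v^{-1}-v)T_r+(\text{products of at least two of the }T_s)$ gives $\gamma_r=(Z_r,\Theta_r)=(v^{-1}-v)(Z_r,T_r)=(v^{-1}-v)\beta_r$.

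The main obstacle I expect is the injectivity statement in~(\ref{E:decompU(X)tor}): one must control precisely how the ``diagonal'' subalgebra $\mathcal Z$ generated by the \emph{global} elements $Z_r$ embeds into the restricted tensor product $\bigotimes'_x\overline H(\Tor_x(\XX))$ and interacts with the local composition subalgebras, and deduce from this both the triangular decomposition and the algebraic independence of the $Z_r$, so that $\mathcal Z$ is genuinely the ring of symmetric functions. The pairing computation of Part~(2), giving $(Z_r,Z_s)=\delta_{rs}\beta_r$ with $\beta_r\neq0$, is precisely what certifies this last point.
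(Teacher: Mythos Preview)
Your proposal is correct and follows essentially the same approach as the paper: the equality $V(\XX)_{\mathsf{tor}}=U(\XX)_{\mathsf{tor}}$ via $Z_r-T_r\in\bigl\langle[\kS_i^{(j)}]\bigr\rangle$, centrality of $Z_r$ together with Ringel's theorem for the decomposition, $\Pic(\XX)$--invariance via invariance of each $Z_{r,x}$ under $\tau$, and block--orthogonality plus the local Lemma~\ref{L:localvalueGreenForm} together with the point count $\sum_{d\mid r}\sum_{\deg x=d}d=q^r+1$ for the pairing values. The only mild difference is that you obtain $(T_r,T_r)$ and $(T_r,\Theta_r)$ by transporting them from $\PP^1$ through the isometric embedding $\FF$, whereas the paper derives everything uniformly from Lemma~\ref{L:localvalueGreenForm} and the point count; and you supply the details (primitivity of $Z_r$, the vanishing $(z_r,ab)=0$) behind $(Z_r,T_r)=(Z_r,Z_r)$ and $\gamma_r=(v^{-1}-v)\beta_r$ that the paper's proof leaves implicit.
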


\begin{proof}
Since for any $r \in \ZZ_{>0}$ the difference $Z_r - T_r$ belongs
to the algebra generated by the classes of the exceptional simple modules $\bigl[\kS_i^{(j)}\bigr]$,
$1 \le i \le n$, $1 \le j \le p_i$, we have the equality $V(\XX)_{\mathsf{tor}} =
U(\XX)_{\mathsf{tor}}$. Next,  $\mathcal{Z} = \widetilde\QQ[Z_1, Z_2, \dots, Z_r, \dots ]$
belongs to  the center of $U(\XX)_{\mathsf{tor}}$. Hence, the decomposition
(\ref{E:decompU(X)tor}) follows from Ringel's Theorem \ref{T:RingelonCyclicQuiver}.
Since for any closed point $x \in \XX$,  the element $Z_{r,x}$ is invariant under the action
of the Auslander-Reiten translation in the category $\Tor_x(\XX)$, the element
$Z_r$ is invariant under the action of the Picard group $\Pic(\XX)$.
The formulae for the value of the Green's  form on the generators $T_r$, $\Theta_r$
and $Z_r$ follow from the local formulae of Lemma \ref{L:localvalueGreenForm} and
the equality
$$
\sum\limits_{d|r} \sum\limits_{x \in \PP^1: \, \deg(x) = d} d =
\big|\PP^1(\FF_{q^r})\big| = (q^r +1).
$$
\end{proof}
\begin{corollary} Let $\XX$ be a  weighted projective line.
The reduced Drinfeld double $DU(\XX)_{\mathsf{tor}}$ is a subalgebra of the reduced Drinfeld double
$DU(\XX)$. Moreover, we have a decomposition:
\begin{equation}\label{E:decomp-ofDU(X)tor}
DU(\XX)_{\mathsf{tor}} \cong
\mathcal{H} \otimes_\kA U_q(\widehat{\mathfrak{sl}}_{p_1}) \otimes_{\kA}
\dots \otimes_{\kA} U_q(\widehat{\mathfrak{sl}}_{p_t}),
\end{equation}
where $\kA = \widetilde\QQ[C^\pm]$ is the ring of Laurent polynomials in the variable  $C := K_\delta$ and $\kH$ is the \emph{Heisenberg algebra}.
More precisely, $\kH := \widetilde\QQ \bigl\langle Z_r \, | \,  r \in \ZZ\setminus \{0\}\bigr\rangle  \otimes_{\widetilde\QQ} \kA$ subject to   the relations:
\begin{equation}\label{E:HeisenbAlg}
[Z_r, Z_t]= \delta_{r+t, 0} \, \beta_r \,  (C^{-r} - C^r) \quad
\mbox{\rm and} \quad
[Z_r, C^\pm] = 0 \quad r, t \in \ZZ\setminus \{0\}.
\end{equation}
\end{corollary}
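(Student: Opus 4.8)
The plan is to deduce the corollary from the structural description of $U(\XX)_{\mathsf{tor}}$ in Proposition~\ref{P:structU(X)} together with the general yoga of reduced Drinfeld doubles. First I would show that $DU(\XX)_{\mathsf{tor}}$ embeds into $DU(\XX)$: by Corollary~\ref{C:U(X)tor} the algebra $U(\XX)_{\mathsf{tor}}$ is a sub-bialgebra of the topological bialgebra $U(\XX)$ (Theorem~\ref{T:U(X)is-bialgebra}), and Green's pairing restricts to it. Since $U(\XX)_{\mathsf{tor}}$ is closed under the comultiplication, every defining relation $D(a,b)$ of $DU(\XX)_{\mathsf{tor}}$ already holds in $DU(\XX)$; applying the triangular decomposition of Proposition~\ref{P:DrinfDoubleStr} to both $U(\XX)$ and $U(\XX)_{\mathsf{tor}}$ then shows that the induced map $DU(\XX)_{\mathsf{tor}}\to DU(\XX)$ is injective, so $DU(\XX)_{\mathsf{tor}}$ is a subalgebra.

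Next, for the decomposition~(\ref{E:decomp-ofDU(X)tor}) I would push the tensor factorization~(\ref{E:decompU(X)tor}) of $U(\XX)_{\mathsf{tor}}$ through the Drinfeld double construction. The crucial point is that the different ``species'' of tensor factors --- the ring of symmetric functions $\mathcal{Z}$ and the Borel parts $U_q^+(\widehat{\mathfrak{sl}}_{p_i})$ attached to distinct special points --- are mutually orthogonal for Green's form. For two distinct special points this is immediate, since the blocks $\Tor_{\lambda_i}(\XX)$ and $\Tor_{\lambda_j}(\XX)$ are supported on disjoint sets of prime ideals and hence span orthogonal sub-Hall-algebras. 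For $\mathcal{Z}$ against a given $U_q^+(\widehat{\mathfrak{sl}}_{p_i})$ one uses that $Z_r=\sum_x Z_{r,x}$ is assembled from the centres of the local Hall algebras at the special points and from the full local Hall algebras at the regular points, none of which pairs nontrivially with the composition subalgebra $C(\overr{C}_{p_i})$: at $\lambda_i$ this is precisely Theorem~\ref{T:Hubery}(3) (the vanishing $(z_r,ab)=0$ for non-scalar $a,b$), while at a regular point the $U_q^+(\widehat{\mathfrak{sl}}_{p_i})$-part is simply not present. Consequently, in $\widetilde{D}U(\XX)_{\mathsf{tor}}$ every ``mixed'' relation $D(a,b)$, with $a,b$ lying in different factors, degenerates to a commutation relation; passing to the \emph{reduced} double and identifying the common central element $C=K_\delta$ shared by all the affine factors (their simple roots sum to $\delta$, and every $Z_r$ has class $r\delta$) yields exactly the amalgamated tensor product over $\kA=\widetilde{\QQ}[C^\pm]$ displayed in~(\ref{E:decomp-ofDU(X)tor}), with the $\mathcal{Z}$-factor contributing $\kH$.

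Finally I would identify the $\kH$-factor with the Heisenberg algebra~(\ref{E:HeisenbAlg}). Each $Z_r$ has class $r\delta$ in $K_0(\XX)$ and, by the local primitivity statements of Theorem~\ref{T:Hubery} and Theorem~\ref{T:classicalHallprimit} transported along the block decomposition $\Tor(\XX)=\bigoplus_{\mathfrak p}\Tor_{\mathfrak p}(\XX)$, it is primitive: $\Delta(Z_r)=Z_r\otimes\mathbbm{1}+K_{r\delta}\otimes Z_r=Z_r\otimes\mathbbm{1}+C^r\otimes Z_r$. Writing the Drinfeld double relation $D(Z_r,Z_t)$ with this coproduct, only the top-degree terms survive, and after the reduction one obtains $[Z_r^+,Z_t^-]$ proportional to $(Z_r,Z_t)\,(C^{-r}-C^r)$; for $r\neq t$ the two elements have distinct $K_0$-classes, so this pairing vanishes, while for $r=t$ it equals $\beta_r=(Z_r,Z_r)$ as computed in Proposition~\ref{P:structU(X)}(2), giving $[Z_r,Z_{-r}]=\beta_r(C^{-r}-C^r)$ once $Z_r^+,Z_r^-$ are renamed $Z_r,Z_{-r}$. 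Commutativity $[Z_r,Z_t]=0$ for $r+t\neq 0$ follows because $\mathcal{Z}^{\pm}\cong\mathcal{Z}$ is a polynomial (hence commutative) ring and the remaining mixed pairings vanish by $K_0$-grading, while $[Z_r,C^{\pm}]=0$ follows from $K_\delta\circ[X]=v^{-(\delta,\bar X)}[X]\circ K_\delta$ together with $(\delta,\delta)=0$. The main obstacle, as usual with such decompositions, is the bookkeeping of the second step: one must check that \emph{every} mixed double-relation genuinely collapses to a commutation relation and that the Cartan parts glue exactly along $C$, so that the result is a true amalgamated tensor product rather than merely a surjection onto one.
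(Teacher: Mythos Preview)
Your proposal is correct and follows essentially the same approach as the paper's own proof, which is extremely terse: the paper simply notes that $\Delta(Z_r^\pm)=Z_r^\pm\otimes\mathbbm{1}+C^{\pm r}\otimes Z_r^\pm$, observes that the Heisenberg relation~(\ref{E:HeisenbAlg}) is exactly the structure relation $D(Z_r,Z_r)$, and then cites Proposition~\ref{P:DrinfDoubleStr} for the decomposition~(\ref{E:decomp-ofDU(X)tor}). Your treatment is more explicit than the paper's on two points --- the embedding $DU(\XX)_{\mathsf{tor}}\hookrightarrow DU(\XX)$ and the mutual orthogonality of the tensor factors under Green's form --- but these are precisely the details the paper leaves implicit in its citation of the triangular decomposition, so there is no genuine difference in strategy.
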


\begin{proof}
We have the equalities
$\Delta(Z_r^\pm) = Z_r^\pm \otimes \mathbbm{1} + C^{\pm r} \otimes Z_r^\pm$  for any $r \in \ZZ_{>0}$.
Denoting  $Z_r^\pm  = Z_{\pm r}$ for $r \in \ZZ \setminus \{0\}$ it is easy to see  that
the first relation of (\ref{E:HeisenbAlg}) is just the structure relation $D(Z_r, Z_r)$
of the reduced Drinfeld double.
The decomposition (\ref{E:decomp-ofDU(X)tor}) is a corollary of Proposition \ref{P:DrinfDoubleStr}.
\end{proof}

\noindent
For the reader's convenience,  we give a proof of the following result from \cite{S1}.
\begin{proposition}\label{P:decompU(X)}
Let $\overline{U}(\XX)_{\mathsf{vec}}$ be the subalgebra of $U(\XX)$ generated
by the classes of the line  bundles on $\XX$ and $\overline{U}(\XX)_{\mathsf{tor}}$ be the
subalgebra of $U(\XX)$ generated by the elements $T_r \; (r \in \ZZ_{>0})$ and
$\bigl[\kS_i^{(j)}\bigr] \;
(1 \le i \le n, 1 \le j \le p_i)$. Then the canonical morphism
\begin{equation}\label{E:decompU(X)}
\overline{U}(\XX)_{\mathsf{vec}} \otimes_{\widetilde\QQ} \overline{U}(\XX)_{\mathsf{tor}}
\otimes_{\widetilde\QQ} \widetilde\QQ[K]
\xrightarrow{\mathsf{mult}} U(\XX)
\end{equation}
is an isomorphism of vector spaces over $\widetilde\QQ$.
\end{proposition}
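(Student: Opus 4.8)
The plan is to deduce the decomposition (\ref{E:decompU(X)}) from the corresponding triangular decomposition of the whole Hall algebra $H(\XX)$, once a single commutation relation between the $\mathbbm{1}_\alpha$ and the line bundles has been established. For the injectivity: by the canonical splitting recalled above, every coherent sheaf on $\XX$ is uniquely of the form $\mathcal{E}\oplus\mathcal{T}$ with $\mathcal{E}\in\VB(\XX)$ and $\mathcal{T}\in\Tor(\XX)$, while Serre duality gives $\Hom(\mathcal{T},\mathcal{E})=0$ and $\Ext^1(\mathcal{E},\mathcal{T})\cong\DD\Hom(\mathcal{T},\mathcal{E}(\vec{\omega}))=0$; hence $[\mathcal{E}]\circ[\mathcal{T}]=v^{-\langle\bar{\mathcal{E}},\bar{\mathcal{T}}\rangle}[\mathcal{E}\oplus\mathcal{T}]$, so the multiplication map
$$\overline{H}\bigl(\VB(\XX)\bigr)\otimes_{\widetilde{\QQ}}\overline{H}\bigl(\Tor(\XX)\bigr)\otimes_{\widetilde{\QQ}}\widetilde{\QQ}[K]\ \longrightarrow\ H(\XX)$$
is an isomorphism of $\widetilde{\QQ}$--vector spaces. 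Since torsion--free sheaves and torsion sheaves are each extension--closed in $\Coh(\XX)$, a product of classes of line bundles stays inside $\overline{H}(\VB(\XX))$ and a product of the generators $T_r$, $\bigl[\kS_i^{(j)}\bigr]$ stays inside $\overline{H}(\Tor(\XX))$, so $\overline{U}(\XX)_{\mathsf{vec}}\subseteq\overline{H}(\VB(\XX))$ and $\overline{U}(\XX)_{\mathsf{tor}}\subseteq\overline{H}(\Tor(\XX))$. Thus (\ref{E:decompU(X)}) is the restriction of the above isomorphism to a subspace and is in particular injective; it remains to prove it is surjective onto $U(\XX)$.

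For the surjectivity, set $W:=\overline{U}(\XX)_{\mathsf{vec}}\circ\overline{U}(\XX)_{\mathsf{tor}}\circ\widetilde{\QQ}[K]$, the image of (\ref{E:decompU(X)}); since $1\in W\subseteq U(\XX)$ and, by Corollary \ref{C:new-def-ofU(X)}, $U(\XX)$ is generated by the classes $[\kO(\vec{x})]$, by the $\mathbbm{1}_\alpha$ with $\rk(\alpha)=0$ and by the $K_\beta$, it suffices to show that $W$ is stable under left multiplication by each of these. This is immediate for $K_\beta$ (from $K_\beta\circ[X]=v^{-(\beta,\bar{X})}[X]\circ K_\beta$) and for $[\kO(\vec{x})]$ (as $\overline{U}(\XX)_{\mathsf{vec}}$ is a subalgebra). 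For $\mathbbm{1}_\alpha$, using that the $K_\beta$ skew--commute with everything and that $\mathbbm{1}_\alpha$ lies in the subalgebra $\overline{U}(\XX)_{\mathsf{tor}}$ (Proposition \ref{P:onU(X)tor}), one reduces to the inclusion $\overline{U}(\XX)_{\mathsf{tor}}\circ\overline{U}(\XX)_{\mathsf{vec}}\subseteq W$; and writing elements of $\overline{U}(\XX)_{\mathsf{vec}}$ as linear combinations of words $[\kL_1]\circ\cdots\circ[\kL_m]$ in line bundles and elements of $\overline{U}(\XX)_{\mathsf{tor}}$ as linear combinations of words in the $\mathbbm{1}_{\alpha_i}$, a straightforward double induction on the two word--lengths reduces this in turn to the single relation
$$\mathbbm{1}_\alpha\circ[\kO(\vec{x})]\ \in\ \sum_{\vec{y}\in\LL(\underline{p})}\widetilde{\QQ}\cdot[\kO(\vec{y})]\circ\mathbbm{1}_{\beta_{\vec{y}}},\qquad \beta_{\vec{y}}:=\alpha+\overline{\kO(\vec{x})}-\overline{\kO(\vec{y})},$$
the sum being finite, over those $\vec{y}$ for which $\beta_{\vec{y}}$ is the class of a torsion sheaf.

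This relation I would establish by a direct computation in $H(\XX)$. Expanding $\mathbbm{1}_\alpha\circ[\kO(\vec{x})]=\sum_{\bar{\mathcal{T}}=\alpha}[\mathcal{T}]\circ[\kO(\vec{x})]$ and grouping terms according to the isomorphism class of the middle sheaf $Z$ --- which has rank one, hence is of the form $\kO(\vec{y})\oplus\mathcal{T}'$ with $\mathcal{T}'$ torsion --- gives
$$\mathbbm{1}_\alpha\circ[\kO(\vec{x})]=v^{-\langle\alpha,\overline{\kO(\vec{x})}\rangle}\sum_{\bar{Z}=\alpha+\overline{\kO(\vec{x})}}\frac{\bigl|\bigl\{f\colon\kO(\vec{x})\hookrightarrow Z\bigr\}\bigr|}{q-1}\,[Z].$$
A monomorphism $\kO(\vec{x})\hookrightarrow\kO(\vec{y})\oplus\mathcal{T}'$ has nonzero, hence injective, first component, so the number of such monomorphisms equals $\bigl(q^{\dim\Hom(\kO(\vec{x}),\kO(\vec{y}))}-1\bigr)\,q^{\dim\Hom(\kO(\vec{x}),\mathcal{T}')}$. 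The crucial observation is that $\Ext^1(\kO(\vec{x}),\mathcal{T}')=0$ for every torsion sheaf $\mathcal{T}'$ (Serre duality again, $\kO(\vec{x}+\vec{\omega})$ being a vector bundle), so that $\dim\Hom(\kO(\vec{x}),\mathcal{T}')=\langle\overline{\kO(\vec{x})},\overline{\mathcal{T}'}\rangle$ depends only on the class of $\mathcal{T}'$. Hence, for a fixed $\vec{y}$, the coefficient of $[\kO(\vec{y})\oplus\mathcal{T}']$ is the same for all $\mathcal{T}'$ in the class $\beta_{\vec{y}}$; since moreover $\sum_{\overline{\mathcal{T}'}=\beta_{\vec{y}}}[\kO(\vec{y})\oplus\mathcal{T}']=v^{\langle\overline{\kO(\vec{y})},\beta_{\vec{y}}\rangle}[\kO(\vec{y})]\circ\mathbbm{1}_{\beta_{\vec{y}}}$, the displayed relation follows, with $[\kO(\vec{y})]\in\overline{U}(\XX)_{\mathsf{vec}}$ by Lemma \ref{L:line-bundles-are-inU(X)} and $\mathbbm{1}_{\beta_{\vec{y}}}\in\overline{U}(\XX)_{\mathsf{tor}}$ by Proposition \ref{P:onU(X)tor}. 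This yields $W=U(\XX)$ and, together with the injectivity above, the proposition. The whole difficulty is concentrated in this last identity: it is precisely the vanishing of $\Ext^1(\kO(\vec{x}),\mathcal{T}')$ that permits replacing $\dim\Hom(\kO(\vec{x}),\mathcal{T}')$ by a function of $\overline{\mathcal{T}'}$, so that the torsion contributions regroup into the $\mathbbm{1}_\beta$ spanning $\overline{U}(\XX)_{\mathsf{tor}}$ rather than into a weighted sum lying outside it; the inductive bookkeeping in the surjectivity step is, by contrast, entirely routine.
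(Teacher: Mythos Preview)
Your proof is correct and takes a genuinely different route from the paper's. Both arguments handle injectivity identically, by restricting the known decomposition $\overline{H}(\VB(\XX))\otimes\overline{H}(\Tor(\XX))\otimes\widetilde{\QQ}[K]\cong H(\XX)$. For surjectivity, however, the paper commutes the \emph{specific} generators $[\kS_i^{(j)}]$ and $T_r$ past each line bundle separately; to handle $T_r\circ[\kL]$ for a general line bundle $\kL$ it passes through the central elements $Z_r$ of Proposition~\ref{P:structU(X)}, invoking that $Z_r-T_r$ lies in the span of the $[\kS_i^{(j)}]$ and that $Z_r$ is $\Pic(\XX)$--invariant --- which in turn rests on Hubery's description of the centre of $H(\overr{C}_p)$. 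You instead use the alternative generating set $\{\mathbbm{1}_\alpha:\rk(\alpha)=0\}$ of Proposition~\ref{P:onU(X)tor} and establish a \emph{single} uniform straightening relation $\mathbbm{1}_\alpha\circ[\kL]\in\sum_{\kL'}\widetilde{\QQ}\,[\kL']\circ\mathbbm{1}_{\beta}$ by a direct count of monomorphisms, the key point being that $\Ext^1(\kL,\kT')=0$ makes $\dim\Hom(\kL,\kT')$ a function of $\overline{\kT'}$ alone. This is more elementary and self-contained: it bypasses the $Z_r$ machinery entirely. The paper's approach, by contrast, already has the $Z_r$ in hand and is setting up the Hecke--type commutator $[Z_r,[\kO(\vec{x})]]=\gamma_r\,[\kO(\vec{x}+r\vec{c})]$ proved immediately afterwards; so its detour through the centre is natural in the flow of the section, even if it is not the shortest path to this particular proposition.
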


\begin{proof}
Let $\overline{H}(\XX)_{\mathsf{vec}}$ be the subalgebra of the Hall algebra
$H(\XX)$ generated
by the classes of vector   bundles and $\overline{H}(\XX)_{\mathsf{tor}}$ be the
subalgebra of $H(\XX)$ generated by the classes of the torsion coherent sheaves.
Then the map $\overline{H}(\XX)_{\mathsf{vec}} \otimes_{\widetilde\QQ} \overline{H}(\XX)_{\mathsf{tor}}
\otimes_{\widetilde\QQ} \widetilde\QQ[K]
\xrightarrow{\mathsf{mult}} H(\XX)
$
is an isomorphism of vector spaces. Hence, the corresponding map $(\ref{E:decompU(X)})$ for the composition subalgebra
is at least injective. In order to show the surjectivity of $\mathsf{mult}$  first note that
for any line bundle $\kL$ and an arbitrary exceptional simple sheaf $\kS_i^{(j)}$ there exist
a line bundle $\kN$ and
constants $\alpha, \beta \in \widetilde\QQ$ such that
$\bigl[\kS_i^{(j)}\bigr] \circ \bigl[\kL\bigr] =  \alpha \bigl[\kN \bigl] +
\beta \bigl[\kL\bigr]
\circ \bigl[\kS_i^{(j)}\bigr]$.
Next, we  have the relation
$
T_r \circ \bigl[\kO(l\vec{c})\bigr] = \bigl[\kO(l\vec{c})\bigr] \circ T_r +
\frac{\displaystyle [2r]}{\displaystyle r} \bigl[\kO\bigl((l+r)\vec{c}\bigr)\bigr]
$
 for all $l \in \ZZ$ and $r \in \ZZ_{> 0}$.
 Recall that for any $r \in \ZZ_{> 0}$ the difference $Z_r - T_r$ belongs to the subalgebra
 of $U(\XX)_{\mathsf{tor}}$ generated by the exceptional simple sheaves $\bigl[\kS_i^{(j)}\bigr]$.
 Moreover, the element $Z_r$ is invariant under the action of $\Pic(\XX)$. Hence, the product
 $Z_r \circ [\kL]$ belongs to the image of $\mathsf{mult}$ for any
 $\kL \in \Pic(\XX)$ and $r \in \ZZ_{> 0}$. It remains to note, that by Proposition \ref{P:structU(X)}, the algebra
 $\overline{U}(\XX)_{\mathsf{tor}}$ is generated by the elements $Z_r$ $(r \in  \ZZ_{> 0})$ and
 $\bigl[\kS_i^{(j)}\bigr]$ $(1 \le i \le n, 1 \le j \le p_i)$. This concludes the proof.
\end{proof}

\begin{corollary}
Assume that $\kF_1, \dots, \kF_t$ are objects of $\VB(\XX)$ and $\alpha_1, \dots, \alpha_t \in
\widetilde\QQ$ are such
that  $a: = \sum\limits_{i=1}^t \alpha_i [\kF_i]$ belongs to $U(\XX)$. Then
$a \in U(\XX)_{\mathsf{vec}}$.
\end{corollary}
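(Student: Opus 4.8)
The plan is to deduce the statement from the two triangular decompositions at our disposal: the decomposition of the full Hall algebra
$$
\overline{H}(\XX)_{\mathsf{vec}} \otimes_{\widetilde\QQ} \overline{H}(\XX)_{\mathsf{tor}} \otimes_{\widetilde\QQ} \widetilde\QQ[K] \xrightarrow{\mathsf{mult}} H(\XX)
$$
recalled in the proof of Proposition \ref{P:decompU(X)}, and the decomposition $(\ref{E:decompU(X)})$ of the composition algebra $U(\XX)$ furnished by Proposition \ref{P:decompU(X)} itself. These two decompositions are compatible: the subalgebra inclusions $\overline{U}(\XX)_{\mathsf{vec}} \subseteq \overline{H}(\XX)_{\mathsf{vec}}$ and $\overline{U}(\XX)_{\mathsf{tor}} \subseteq \overline{H}(\XX)_{\mathsf{tor}}$ intertwine the two multiplication maps into $H(\XX)$.

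First I would record the elementary fact that the subalgebra $\overline{H}(\XX)_{\mathsf{vec}}$ is precisely the $\widetilde\QQ$--linear span of the classes $[\kF]$ with $\kF \in \VB(\XX)$. Indeed, any extension $0 \to \kF' \to E \to \kF'' \to 0$ of vector bundles is torsion-free, since $\tor(E)$ maps injectively into the torsion-free sheaf $\kF''$, and is therefore a vector bundle by the splitting property of $\Coh(\XX)$; hence every product $[\kF_1] \circ \dots \circ [\kF_k]$ of classes of vector bundles is a $\widetilde\QQ$--combination of such classes, while conversely each $[\kF]$ is a generator. In particular $a = \sum_{i=1}^t \alpha_i [\kF_i]$ lies in $\overline{H}(\XX)_{\mathsf{vec}}$.

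Next I would locate $\overline{H}(\XX)_{\mathsf{vec}}$ and $U(\XX)$ inside the first decomposition. Since $\mathsf{mult}(h \otimes \mathbbm{1} \otimes K_0) = h$ for every $h \in \overline{H}(\XX)_{\mathsf{vec}}$ and $\mathsf{mult}$ is bijective, the preimage of $\overline{H}(\XX)_{\mathsf{vec}}$ under $\mathsf{mult}$ is exactly $\overline{H}(\XX)_{\mathsf{vec}} \otimes \widetilde\QQ\mathbbm{1} \otimes \widetilde\QQ K_0$, whereas the preimage of $U(\XX)$ is $\overline{U}(\XX)_{\mathsf{vec}} \otimes \overline{U}(\XX)_{\mathsf{tor}} \otimes \widetilde\QQ[K]$ by Proposition \ref{P:decompU(X)}. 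Intersecting these two subspaces factor by factor (which is legitimate for subspaces of a tensor product over a field) and using $\mathbbm{1} \in \overline{U}(\XX)_{\mathsf{tor}}$, one gets $\overline{U}(\XX)_{\mathsf{vec}} \otimes \widetilde\QQ\mathbbm{1} \otimes \widetilde\QQ K_0$; applying the isomorphism $\mathsf{mult}$ we obtain
$$
\overline{H}(\XX)_{\mathsf{vec}} \cap U(\XX) = \overline{U}(\XX)_{\mathsf{vec}} = U(\XX)_{\mathsf{vec}}.
$$
Since $a$ lies in $\overline{H}(\XX)_{\mathsf{vec}}$ by the previous step and in $U(\XX)$ by hypothesis, the claim follows.

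I do not expect a genuine obstacle here; the only care needed is the bookkeeping with the two triangular decompositions and with the unit elements of the various subalgebras. If one prefers to avoid the factorwise intersection lemma, one can instead fix a $\widetilde\QQ$--basis of $\overline{U}(\XX)_{\mathsf{tor}}$ passing through $\mathbbm{1}$, write the unique decomposition of $a$ coming from $a \in U(\XX)$, push it forward along $\mathsf{mult}$ into $H(\XX)$, and compare it with the trivial decomposition of $a$ coming from $a \in \overline{H}(\XX)_{\mathsf{vec}}$, reading off that only the component indexed by $\mathbbm{1} \otimes K_0$ can be nonzero.
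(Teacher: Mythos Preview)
Your proof is correct and follows essentially the same strategy as the paper: both exploit the compatibility of the decomposition $(\ref{E:decompU(X)})$ with the ambient decomposition $\overline{H}(\XX)_{\mathsf{vec}} \otimes \overline{H}(\XX)_{\mathsf{tor}}$ to isolate the $\overline{U}(\XX)_{\mathsf{vec}}$--component of $a$. The paper phrases this via the projection $\mathsf{pr}: \overline{H}(\XX)_{\mathsf{tor}} \to \widetilde\QQ[0]$ onto the class of the zero object and applies $\mathbbm{1} \otimes \mathsf{pr}$ to the expression $a = \sum b_i c_i$ coming from Proposition \ref{P:decompU(X)}; this is exactly your alternative ``fix a basis through $\mathbbm{1}$ and compare'' argument, while your factorwise intersection is a clean repackaging of the same idea.
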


\begin{proof}
Let $\overline{H}(\XX) \stackrel{\mathsf{pr}}\lar \widetilde\QQ$ be the projection
on the class of the zero object $[0]$. We consider $\mathsf{pr}$ as an endomorphism
of $\overline{H}(\XX)$. Since $a \in \overline{U}(\XX)$, there exist
elements $b_1, \dots, b_t \in \overline{U}(\XX)_{\mathsf{vec}}$ and
$c_1, \dots, c_t \in \overline{U}(\XX)_{\mathsf{tor}}$ such that $a = b_1 c_1 + \dots +
b_t c_t$. Let $\mathsf{pr}(c_i) = \lambda_i [0]$ for some $\lambda_i \in \widetilde\QQ$.
Since there is a decomposition $\overline{H}(\XX) = \overline{H}(\XX)_{\mathsf{vec}} \otimes_{\widetilde\QQ}
\overline{H}(\XX)_{\mathsf{tor}}$, we have:
$
a = \bigl(\mathbbm{1} \otimes \mathsf{pr}\bigr)(a) = \sum\limits_{i=1}^t \lambda_i b_i \otimes [0],
$
where $\sum\limits_{i=1}^t \lambda_i b_i \in \overline{U}(\XX)_{\mathsf{vec}}$. This implies the claim.
\end{proof}

\noindent
Our next goal is to study the action of the central elements $Z_r$ of the algebra
$U(\XX)_{\mathsf{tor}}$ on the algebra $U(\XX)_{\mathsf{vec}}$.

\begin{theorem}
For any $\vec{x} \in \LL(\underline{p})$ and any $r \in \ZZ_{>0}$ we have the following equality:
\begin{equation}\label{E:HeckepropZr}
\Bigl[Z_r, \bigl[\kO(\vec{x})\bigr]\Bigr] = \gamma_r \, \bigl[\kO(\vec{x} + r \vec{c})\bigr],
\end{equation}
where $\gamma_r = (Z_r, \Theta_r)$ is the constant introduced in Proposition \ref{P:structU(X)}.
\end{theorem}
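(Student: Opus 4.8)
The plan is to reduce to a single line bundle using the Picard group action, to prove by a coproduct argument that $[Z_r,[\kO]]$ is a linear combination of classes of vector bundles, to invoke the corollary immediately preceding the theorem so as to land inside $U(\XX)_{\mathsf{vec}}$, to pin the commutator down to a scalar multiple of $[\kO(r\vec{c})]$ by a rank argument, and finally to extract the scalar from the Green pairing.

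First I would use that $Z_r$ is invariant under the action of $\Pic(\XX)\cong\LL(\underline{p})$ on $H(\XX)$ by algebra automorphisms (Proposition~\ref{P:structU(X)} and the remark after Corollary~\ref{C:new-def-ofU(X)}), while twisting by $\vec{x}$ carries $[\kO]$ to $[\kO(\vec{x})]$ and $[\kO(r\vec{c})]$ to $[\kO(\vec{x}+r\vec{c})]$; so it suffices to treat $\vec{x}=\vec{0}$, i.e.\ to prove $[Z_r,[\kO]]=\gamma_r\,[\kO(r\vec{c})]$ with $\gamma_r=(Z_r,\Theta_{r\vec{c}})$ the constant of Proposition~\ref{P:structU(X)} (there denoted $(Z_r,\Theta_r)$). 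Put $\xi:=[Z_r,[\kO]]$. Being a commutator of two $K$-free elements of the subalgebra $U(\XX)$, $\xi$ lies in $\overline{H}(\XX)\cap U(\XX)$ and is homogeneous of $K_0$-class $\overline{\kO}+r\delta=\overline{\kO(r\vec{c})}$, which has rank one.

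The heart of the matter is to show that $\xi$ is a linear combination of classes of vector bundles. If it were not, $\xi$ would have a nonzero coefficient $\kappa$ on a class $[\kL_0\oplus\kT_0]$ with $\kL_0$ a line bundle and $\kT_0\neq 0$ a torsion sheaf. I would then compute $\Delta(\xi)=\Delta(Z_r)\,\Delta([\kO])-\Delta([\kO])\,\Delta(Z_r)$ from the primitivity $\Delta(Z_r)=Z_r\otimes\mathbbm{1}+K_{r\delta}\otimes Z_r$ (see the corollary following Proposition~\ref{P:structU(X)}) and from formula~(\ref{E:Coprod(O)}) for $\Delta([\kO])$. Expanding the eight products and using that $(\delta,\overline{\kL})=0$ for every line bundle $\kL$ (which makes the $v$-twist in the multiplication of $H(\XX)\widehat{\otimes}H(\XX)$ trivial in the relevant positions), one checks that the only terms carrying a nonzero torsion sheaf as their right tensor factor come from $(K_{r\delta}\otimes Z_r)([\kO]\otimes\mathbbm{1})$ and $([\kO]\otimes\mathbbm{1})(K_{r\delta}\otimes Z_r)$; both equal $[\kO]K_{r\delta}\otimes Z_r$ and hence cancel in $\Delta(\xi)$, so $\Delta(\xi)$ contains no summand $(-)\otimes[\kT]$ with $\kT$ a nonzero torsion sheaf. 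On the other hand $\Delta([\kL_0\oplus\kT_0])$ does contain $[\kL_0]K_{\overline{\kT_0}}\otimes[\kT_0]$, from the subobject $\kT_0\subseteq\kL_0\oplus\kT_0$, and no other basis vector occurring in $\xi$ contributes to its coefficient: a subobject isomorphic to $\kT_0$ with quotient isomorphic to the locally free sheaf $\kL_0$ forces the ambient sheaf to have rank one, its torsion part to be exhausted by that subobject, and its locally free part to be $\kL_0$, so the basis vector must be $[\kL_0\oplus\kT_0]$ (a line bundle class has no nonzero torsion subobject and contributes nothing). Hence that coefficient equals $\kappa\,v^{-\langle\overline{\kL_0},\overline{\kT_0}\rangle}\,P^{\kL_0\oplus\kT_0}_{\kL_0,\kT_0}/a_{\kL_0\oplus\kT_0}\neq 0$, a contradiction. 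This coproduct bookkeeping — getting the $v$-twists right and verifying that the two dangerous terms are the only ones and that they cancel — is the step I expect to be the main obstacle.

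Granting that $\xi$ is a combination of vector bundle classes, the corollary immediately preceding the theorem puts $\xi$ in $U(\XX)_{\mathsf{vec}}$. Since $U(\XX)_{\mathsf{vec}}$ is generated by line bundle classes and a Hall product of $k$ of them is homogeneous of rank $k$ (ranks are additive in short exact sequences), the rank-one part of $U(\XX)_{\mathsf{vec}}$ is spanned by $\{[\kO(\vec{y})]:\vec{y}\in\LL(\underline{p})\}$; as $\Pic(\XX)\hookrightarrow K_0(\XX)$, the unique line bundle of class $\overline{\kO(r\vec{c})}$ is $\kO(r\vec{c})$, so $\xi=c_r\,[\kO(r\vec{c})]$ for some $c_r\in\widetilde{\QQ}$. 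To identify $c_r$, pair with $[\kO(r\vec{c})]$ and use $(a\circ b,c)=(a\otimes b,\Delta(c))$ together with the $\Pic$-translate of~(\ref{E:Coprod(O)}), namely $\Delta([\kO(r\vec{c})])=[\kO(r\vec{c})]\otimes\mathbbm{1}+\sum_{\vec{z}\in\LL(\underline{p})_+}\Theta_{\vec{z}}K_{\overline{\kO(r\vec{c}-\vec{z})}}\otimes[\kO(r\vec{c}-\vec{z})]$: in $(Z_r\circ[\kO],[\kO(r\vec{c})])=(Z_r\otimes[\kO],\Delta([\kO(r\vec{c})]))$ only the $\vec{z}=r\vec{c}$ summand survives, contributing $(Z_r,\Theta_{r\vec{c}})\,([\kO],[\kO])=(Z_r,\Theta_{r\vec{c}})/(q-1)$, while $([\kO]\circ Z_r,[\kO(r\vec{c})])=([\kO]\otimes Z_r,\Delta([\kO(r\vec{c})]))$ vanishes since the right tensor factors of $\Delta([\kO(r\vec{c})])$ have rank one and $Z_r$ is supported on torsion sheaves. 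Therefore $c_r=a_{\kO(r\vec{c})}\,(\xi,[\kO(r\vec{c})])=(q-1)\cdot(Z_r,\Theta_{r\vec{c}})/(q-1)=\gamma_r$, as desired.
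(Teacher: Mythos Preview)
Your argument is correct and follows the same overall strategy as the paper: reduce to $\vec{x}=\vec{0}$ via the Picard action, use a coproduct computation to pin $[Z_r,[\kO]]$ down to a scalar multiple of $[\kO(r\vec{c})]$, and identify the scalar through the Green pairing. The middle step is organized differently. The paper first invokes the decomposition of Proposition~\ref{P:decompU(X)} to write $Z_r\circ[\kO]=a\,[\kO(r\vec{c})]+\sum c_{\vec\alpha}[\kO(r\vec{c}-\vec\alpha)]\circ T_{\vec\alpha}+b\,[\kO]\circ Z_r+c\,[\kO]\circ T$, shows $b=1$ and $c=0$ by inspecting the $[\kO\oplus\kS]$-coefficients directly, and only then uses the coproduct together with the centrality $[Z_r,\widetilde\Theta_{\vec x}]=0$ to kill the mixed terms $c_{\vec\alpha}$. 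Your route bypasses Proposition~\ref{P:decompU(X)}: you observe that in $\Delta(\xi)$ the only contributions with a pure torsion class as right tensor factor are the two copies of $[\kO]K_{r\delta}\otimes Z_r$, which cancel, while all remaining right factors carry rank one; comparing with $\Delta$ applied termwise to the basis expansion of $\xi$ then forces every coefficient on $[\kL_0\oplus\kT_0]$ with $\kT_0\ne 0$ to vanish. This already shows $\xi$ is a combination of line bundle classes, so your appeal to the corollary preceding the theorem (and the ensuing rank argument inside $U(\XX)_{\mathsf{vec}}$) is in fact redundant---the $K_0$-grading together with $\Pic(\XX)\hookrightarrow K_0(\XX)$ immediately gives $\xi=c_r[\kO(r\vec{c})]$. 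One tiny inaccuracy: when you argue $([\kO]\otimes Z_r,\Delta([\kO(r\vec{c})]))=0$, the first summand of $\Delta([\kO(r\vec{c})])$ has right factor $\mathbbm{1}$, not a rank-one class; but $(Z_r,\mathbbm{1})=0$ anyway, so the conclusion stands.
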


\begin{proof}
Since the Picard group $\Pic(\XX)$ acts on $U(\XX)$ by algebra automorphisms and
 the element $Z_r$ is stable under the this action,  it is sufficient to prove
 the equality (\ref{E:HeckepropZr}) in the special case  $\vec{x} = 0$.  By Proposition \ref{P:decompU(X)}
 there exist  $a, b, c \in \widetilde\QQ, \; T \in \bigl\langle [\kS_i^{(j)}], Z_1, \dots, Z_{r-1}\bigr\rangle,$ $c_{\vec\alpha} \in  \widetilde\QQ$ and
 $T_{\vec\alpha} \in \overline{U}(\XX)_{\mathsf{tor}}$ for
 $\vec\alpha \in \LL(\underline{p}), 0 < \vec\alpha < r \vec{c}$  such that
 $$
 Z_r \circ [\kO] = a \, [\kO(r\vec{c})] + \sum\limits_{0 \le \vec\alpha \le r \vec{c}}
 c_{\vec\alpha} \,  [\kO(r \vec{c} - \vec\alpha)] \circ T_{\vec\alpha} + b \, [\kO] \circ Z_r + c \,  [\kO] \circ T.
 $$
 We first show that $b = 1$ and $c = 0$. Let $Z_r = \sum\limits_{\kS} d_\kS [\kS]$, where we sum
 over the isomorphism classes of torsion sheaves of class $r\delta$ in the K--group
 $K_0(\XX)$. Assume we have an extension
 \begin{equation*}
 0 \lar \kO \lar \kO \oplus \kT \lar \kS \lar 0,
 \end{equation*}
 where $\kT$ is a torsion sheaf. Since $\Hom(\kO, \kO) = \kk$, it follows that
 $\kT \cong \kS$ and the above  sequence splits. Note that $\Ext^1(\kO, \kS) = 0$,
 hence  $\bigl\langle \overline\kO, \overline\kS\bigr\rangle = \dim_\kk\bigl(\Hom(\kO, \kS)\bigr)  =  r$ and $F^{\kO \oplus \kS}_{\kS, \; \kO} = q^r$.  This implies that
 $$
 \bigl(\sum\limits_{\kS} d_\kS [\kS]\bigr) \circ [\kO] =
 v^{-r} \sum\limits_{\kS} d_\kS \bigl[\kO \oplus \kS\bigr] +
 \mbox{\rm \; terms involving \; } [\kO(\vec{\gamma})], \; \vec{\gamma} >0.
 $$
 On the other hand,
 $
 [\kO] \circ \bigl(\sum_{\kS} d_\kS [\kS]\bigr)  = v^{-r}
 \sum_{\kS} d_\kS \bigl[\kO \oplus \kS\bigr].
 $
 Hence, $b = 1$ and $c = 0$ as stated. Our next goal is to show that $c_{\vec{\alpha}} = 0$ for all
 $0 < \alpha < r \vec{c}$. Recall that
 $$
\Delta\bigl([\kO]\bigr) = [\kO] \otimes \mathbbm{1} + K_{\overline\kO} \otimes [\kO] +
\sum\limits_{\vec{x} > 0} \widetilde{\Theta}_{\vec{x}}
\otimes \bigl[\kO(-\vec{x})\bigr] \quad \mbox{\rm and} \quad
\Delta(Z_r) = Z_r \otimes \mathbbm{1} + K_{r \, \delta} \otimes Z_r,
$$
where in the notations of the equation (\ref{E:Coprod(O)}) we denote
$\widetilde{\Theta}_{\vec{x}}  =  \Theta_{\vec{x}} K_{\overline{\kO(-\vec{x})}}$.  Then we have:
$$
\Delta\Bigl(\bigl[Z_r, [\kO]\bigr]\Bigr) =
\Bigl[\Delta(Z_r), \Delta([\kO])\Bigr] = \Bigl[
Z_r \otimes \mathbbm{1} + K_{r \, \delta} \otimes Z_r,
[\kO] \otimes \mathbbm{1} +
\sum\limits_{\vec{x} \ge  0} \widetilde{\Theta}_{\vec{x}}
\otimes \bigl[\kO(-\vec{x})\bigr]
\Bigr] =
$$
$$ =
\bigl[Z_r, [\kO]\bigr] \otimes \mathbbm{1} + K_{r \delta + \bar\kO} \otimes \bigl[Z_r, [\kO]\bigr]
+ \sum\limits_{\vec{x} >0} K_{r\delta} \widetilde{\Theta}_{\vec{x}} \otimes \bigl[Z_r, [\kO(-\vec{x})]\bigr].
$$
Here we benefit from the fact that $\bigl[Z_r, \widetilde{\Theta}_{\vec{x}}\bigr] = 0$ because
the elements $Z_r$ are central in $U(\XX)_{\mathsf{tor}}$. On the other hand,
we have already shown that
$$
\bigl[Z_r, [\kO]\bigr] = a \, [\kO(r \vec{c})] + \sum\limits_{0 < \vec{\alpha} < r \vec{c}}
c_{\vec\alpha} \, [\kO(r\vec{c} - \vec{\alpha})] \circ T_{\vec{\alpha}}.
$$
Apply the operator $\Delta$ to the right-hand side of this equality. Let $0 < \vec\alpha < r \vec{c}$
be a maximal element such that $c_{\alpha} \ne 0$. Then
$\Delta\bigl(a \, [\kO(r \vec{c})] + \sum\limits_{0 < \vec{\alpha} < r \vec{c}}
c_{\vec\alpha} \, [\kO(r\vec{c} - \vec{\alpha})] \circ T_{\vec{\alpha}}\bigr)$
contains a summand $c_{\vec\alpha} [\kO(r \vec{c} - \vec{\alpha})] K_{\beta_2}
\otimes T_{\vec{\alpha}}$ and
the remaining part of the comultiplication  has no contributions to the graded piece
$U(\XX)[\beta_1] \otimes U(\XX)[\beta_2]$, where $\beta_1 = \overline{\kO(r \vec{c} - \vec{\alpha})} \in
K_0(\XX)$
and $\beta_2 \in K_0(\XX)$ is the class of the  summands  of  the element $T_{\vec{\alpha}}$.
Contradiction.
Hence,
$c_{\vec\alpha} =  0$ for all $0 < \vec{\alpha} < r \vec{c}$ and we have the equality $\bigl[Z_r, [\kO]\bigr] = a \, [\kO(r \vec{c})]$ for a certain constant $a \in \widetilde\QQ$. Our last goal is to determine
$a$ explicitly. Note that
$$
a \bigl([\kO(r \vec{c})], [\kO(r \vec{c})]\bigr) =
\bigl(Z_r \circ [\kO] - [\kO] \circ Z_r, [\kO(r \vec{c})]\bigr) =
\Bigl(Z_r  \otimes  [\kO], \Delta\bigl([\kO(r \vec{c})]\bigr)\Bigr)
$$
$$
= \bigl(Z_r, \Theta_r\bigr)\bigl([\kO], [\kO]\bigr) = \gamma_r \bigl([\kO], [\kO]\bigr).
$$
Here we use the vanishing $\bigl([\kO] \circ Z_r, [\kO(r \vec{c})]\bigr) = 0$ following from fact that  $\kO$ is not  a quotient of $\kO(r \vec{c})$ for any $r \in \ZZ_{>0}$.
Hence, $a = \gamma_r$ as stated. Theorem is proven.
\end{proof}

\begin{lemma}
For any $\vec{x} \in \LL(\underline{p})$ and $r \in \ZZ_{>0}$ we have the following equality
in the reduced Drinfeld double $DU(\XX)$:
$
\bigl[Z_r^-, [\kO(\vec{x})]^+ \bigr] =  \gamma_r [\kO(\vec{x} - r \vec{c})] C^{-r}.
$
\end{lemma}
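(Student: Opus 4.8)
The plan is to read the identity off the defining relations $D(a,b)$ of the reduced Drinfeld double, applied to $a=Z_r$ and $b=[\kO(\vec{x})]$; it is the negative-mode counterpart of the preceding theorem, and the two computations run in parallel.

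I would first reduce to the case $\vec{x}=0$. The Picard group $\Pic(\XX)\cong\LL(\underline{p})$ acts on $\Coh(\XX)$ by the exact autoequivalences $\kF\mapsto\kF(\vec{z})$, hence on $H(\XX)$ preserving the multiplication, the comultiplication and the Green pairing, and therefore on $DH(\XX)$ by algebra automorphisms; by Corollary \ref{C:new-def-ofU(X)} this action preserves the composition subalgebra, so it restricts to algebra automorphisms of $DU(\XX)$. By Proposition \ref{P:structU(X)} the element $Z_r$, and hence $Z_r^-$, is fixed by this action; the class $\delta$ is fixed as well, so $C^{\pm r}=K_{\pm r\delta}$ is fixed; and ``twist by $\kO(\vec{x})$'' carries $[\kO]^+$ to $[\kO(\vec{x})]^+$. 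Applying such an automorphism to the $\vec{x}=0$ case then yields the general statement.

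For $\vec{x}=0$ the ingredients are: the coproduct $\Delta(Z_r)=Z_r\otimes\mathbbm{1}+K_{r\delta}\otimes Z_r$ coming from the primitivity of $Z_r$ (Theorem \ref{T:Hubery} and Proposition \ref{P:structU(X)}), whence $\Delta(Z_r^-)=Z_r^-\otimes\mathbbm{1}+C^{-r}\otimes Z_r^-$; the coproduct $\Delta\bigl([\kO]\bigr)=[\kO]\otimes\mathbbm{1}+\sum_{\vec{y}\in\LL(\underline{p})_+}\Theta_{\vec{y}}K_{\overline{\kO(-\vec{y})}}\otimes[\kO(-\vec{y})]$ from \eqref{E:Coprod(O)}; and the pairing value $(Z_r,\Theta_{r\vec{c}})=\gamma_r$ recorded in Proposition \ref{P:structU(X)}. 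One substitutes these into $D(Z_r,[\kO])$ and evaluates the Green pairings that occur. On the ``$\sum a^{(1)-}b^{(2)+}$''-side the pairing $(\mathbbm{1},\,-\,)$ is nonzero only against the purely Cartan summand $K_{\overline{\kO}}\otimes[\kO]$ of $\Delta([\kO])$, and the pairing $(Z_r,\,-\,)$ only against a torsion element of class $r\delta$, which among the $\Theta_{\vec{y}}$ forces $\vec{y}=r\vec{c}$ by the description of $K_0(\XX)$ recalled above; on the ``$\sum b^{(1)+}a^{(2)-}$''-side only the Cartan pairing $(K_{r\delta},\mathbbm{1})=1$ survives. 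Collecting the surviving terms and rearranging produces the equality $\bigl[Z_r^-,[\kO]^+\bigr]=\gamma_r\,[\kO(-r\vec{c})]\,C^{-r}$; since $\delta$ lies in the radical of the Euler form, $C^{-r}=K_{-r\delta}$ is central in $DU(\XX)$, so its position in the monomial is immaterial.

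The only real difficulty is the bookkeeping inside the Drinfeld relation: keeping track of the $\pm$-decorations in the two coproducts, of the identification $K_\alpha^+=K_\alpha^-$ of Cartan elements that holds in the \emph{reduced} double, and of the fact that among the infinitely many indices $\vec{y}\in\LL(\underline{p})_+$ only $\vec{y}=r\vec{c}$ contributes. The preliminary reduction to $\vec{x}=0$ via the $\Pic(\XX)$-action is what keeps this manageable; the pairing computations themselves are routine, being exactly the local computations on cyclic quivers and on $\PP^1$ already assembled in Proposition \ref{P:structU(X)}.
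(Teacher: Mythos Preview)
Your proposal is correct and follows essentially the same route as the paper: plug the coproducts $\Delta(Z_r^-)=Z_r^-\otimes\mathbbm{1}+C^{-r}\otimes Z_r^-$ and the line-bundle coproduct into the Drinfeld relation, observe that among the $\Theta_{\vec{y}}$ only $\vec{y}=r\vec{c}$ has class $r\delta$, and read off $\gamma_r=(Z_r,\Theta_r)$. The paper omits your preliminary reduction to $\vec{x}=0$ via the $\Pic(\XX)$-action and instead applies the coproduct formula for $[\kO(\vec{x})]$ directly (the $\Theta_{\vec{y}}$ are the same for every $\vec{x}$), but this is a cosmetic difference, not a different argument.
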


\begin{proof}
Recall that $\Delta\bigl([\kO(\vec{x})]^+\bigr) = [\kO(\vec{x})]^+
\otimes \mathbbm{1} + K_{\overline{\kO(\vec{x})}} \otimes [\kO(\vec{x})]^+ +
\sum\limits_{\vec{y} > 0} \widetilde{\Theta}_{\vec{y}}^+
\otimes \bigl[\kO(\vec{x}-\vec{y})\bigr]^+$ and
$\Delta(Z_r^-) = Z_r^- \otimes \mathbbm{1} + C^{-r} \otimes Z_r^-$. Note that $\Theta_{\vec{y}}$
contains contributions of sheaves of class $r \delta$ only for $\vec{y} = r \vec{c}$. Moreover,
$\Theta_{r \vec{c}} = \Theta_r$ and $(\Theta_r, Z_r) = \gamma_r$. This implies
the result.
\end{proof}

\begin{remark}
By Theorem \ref{T:morphDDinducedbyIndweights} we know that  for any $l, t \in \ZZ$
 the following equalities are true in the reduced
Drinfeld double $DU(\XX)$:
$$
\bigl[[\kO(l\vec{c})]^+, [\kO(t\vec{c})]^-\bigr] =
\left\{
\begin{array}{lcc}
 \frac{\displaystyle v}{\displaystyle v- v^{-1}} \, {\Theta}_{l-t}^+ O C^{t} & \mbox{if} & l > t, \\
\hspace{1.5cm} 0 & \mbox{if} & l =  t,\\
\frac{\displaystyle v}{\displaystyle v^{-1}-v} \, {\Theta}_{t-l}^- O^{-1} C^{-l} & \mbox{if} & l < t.
\end{array}
\right.
$$
Unfortunatelly,  we have not succeeded to find  an explicit formula expressing the elements
$\Theta_r$ through the generators $Z_t$ and $\bigl[\kS_{i}^{(j)}\bigr]$ of the algebra
$U(\XX)_{\mathsf{tor}}$,  although  by Proposition \ref{P:onU(X)tor} it is known that such a formula does exist.
\end{remark}

\noindent
\textbf{Summary}. In this subsection we collect  the main structure results on the
composition algebra of a weighted projective line.

Let $\XX$ be a weighted projective line of type $(\underline\lambda, \underline{p})$,
where $\underline\lambda = (\lambda_1, \dots, \lambda_n) \in \PP^1(\kk)$ is a sequence
of pairwise distinct points and $\underline{p} = (p_1, \dots, p_n) \in \ZZ_{>0}$ is a sequence
of weights.  Let $\LL(\underline{p})$ be the abelian group generated by the elements
$\vec{x}_1, \dots, \vec{x}_n$ subject to the relations
$p_1 \vec{x}_1 = \dots = \vec{x}_n = \vec{c}$. Next, let $K= K_0(\XX)$ be the K--group
of the category $\Coh(\XX)$ and for any coherent sheaf  $\kF$
let $K_{\overline\kF}$ be the corresponding element of the Cartan part $\widetilde\QQ[K]$ of the Hall algebra
$H(\XX)$.

\medskip
\noindent
1. For any $l \in \ZZ$ we denote  $L_l = \bigl[\kO(l\vec{c})\bigr] \in H(\XX)$. Then
the composition algebra $U(\XX)$ is the subalgebra of the Hall algebra $H(\XX)$
defined as follows:
$$
  U(\XX) := \Bigl\langle L_l, \, T_r, \, \bigl[\kS_i^{(j)}\bigr], \,  O, K_{i}^{(j)}, C \; \Bigl| \;
  l \in \ZZ, \, r \in \ZZ_{>0}, \, 1 \le i \le n, \, 1 \le j \le p_i  \Bigr.
  \Bigr\rangle.
  $$
  Here $K_{i}^{(j)} = K_{\overline{\kS_i^{(j)}}}$, $O = K_{\overline\kO}$  and
  $C = K_i^{(1)} \dots K_i^{(p_i)}$ for any $1 \le i \le n$.
Note that  the algebra $U(\XX)$  only depends  on the weight sequence
$\underline{p}$ and does \emph{not} depend on the set $\underline{\lambda}$.

\medskip
\noindent
2. The subalgebra $V(\XX) := \bigl\langle L_l, \, T_r, \,  O, C \; \bigl| \;
  l \in \ZZ, \, r \in \ZZ_{>0}  \bigr.
  \bigr\rangle
  $ of the composition algebra $U(\XX)$
  is isomorphic to the composition algebra of a non-weighted projective line $\PP^1$.
The elements  $L_l, T_r$, $O$ and $C$
satisfy  the following relations:
\begin{enumerate}
\item $C$ is central;
\item $[O, T_t] = 0 = [T_t, T_r]$ for all $r, t  \in \ZZ_{> 0}$;
\item $O L_l = v^{-2} L_l O$ for all $l \in \ZZ$;
\item $\bigl[T_r, L_l\bigr] = \frac{\displaystyle [2r]}{\displaystyle r} L_{l+r}$ for all $l \in \ZZ$ and $r \in \ZZ_{> 0}$;
\item $L_{m} L_{l+1}  + L_{l}L_{m+1} =  v^2 \bigl(L_{l+1} L_{m} + L_{m+1} L_{l})$ for all $m, l \in
\ZZ$.
\end{enumerate}
Moreover, this is a complete list of relations of the subalgebra $V(\XX)$.

\medskip
\noindent
3. The algebra $U(\XX)$ is a topological bialgebra. Let $\overline{U}(\XX)$ be the
subalgebra of $U(\XX)$ generated by $L_l, T_r$ and $\bigl[\kS_i^{(j)}\bigr]$ for
$l \in \ZZ$, $r \in \ZZ_{>0}$ and $(i, j)$ such that $1 \le i \le n, 1 \le j \le p_i$. Then we have
a triangular decomposition:
$
DU(\XX) = \overline{U}(\XX)^+ \otimes_{\widetilde\QQ} \widetilde\QQ[K] \otimes_{\widetilde\QQ}
\overline{U}(\XX)^-,
$
where $DU(\XX)$ is the reduced Drinfeld double of $U(\XX)$.
Moreover,
there is an injective algebra homomorphism
$$DV(\XX) = \overline{V}(\XX)^+ \otimes_{\widetilde\QQ} \widetilde\QQ[\widetilde{K}] \otimes_{\widetilde\QQ}
\overline{V}(\XX)^- \lar   \overline{U}(\XX)^+ \otimes_{\widetilde\QQ} \widetilde\QQ[K] \otimes_{\widetilde\QQ}
\overline{U}(\XX)^- = DU(\XX)
$$ respecting the triangular decompositions of $DV(\XX)$ and $DU(\XX)$.
Here $\widetilde\QQ[\widetilde{K}] =
\widetilde\QQ[O^\pm, C^\pm]$ is the group algebra of the subgroup of $K_0(\XX)$ generated
by $O = \overline\kO$ and the class $\delta$ of a simple torsion sheaf supported at a non-special
$\kk$--point
of $\XX$.

\medskip
\noindent
4. For any $\vec{x} \in \LL(\underline{p})$ the element $\bigl[\kO(\vec{x})]$ belongs
to $U(\XX)$. Let $\overline{U}(\XX)_{\mathsf{vec}}$ be the subalgebra
of $U(\XX)$ generated by the classes of line bundles and $\overline{U}(\XX)_{\mathsf{tor}}$
be the subalgebra generated by $T_r, \bigl[\kS_i^{(j)}\bigr]$ for
$r \in \ZZ_{>0}$ and $(i, j): 1 \le i \le n, 1 \le j \le p_i$. Then the canonical morphism
$$
\overline{U}(\XX)_{\mathsf{vec}} \otimes_{\widetilde\QQ} \overline{U}(\XX)_{\mathsf{tor}} \otimes_{\widetilde\QQ} \widetilde\QQ[K]
\xrightarrow{\mathsf{mult}} U(\XX)
$$
is an isomorphism of vector spaces over $\widetilde\QQ$.

\medskip
\noindent
5. Let ${U}(\XX)_{\mathsf{tor}} = \overline{U}(\XX)_{\mathsf{tor}}
\otimes_{\widetilde\QQ} \widetilde\QQ[K_{\mathsf{tor}}]$, where
$K_{\mathsf{tor}}$ is the subgroup of $K_0(\XX)$ generated by the classes of torsion sheaves.
Then ${U}(\XX)_{\mathsf{tor}}$ is a Hopf algebra. Moreover, it is a subbialgebra of the composition
algebra $U(\XX)$. The  algebra $\overline{U}(\XX)_{\mathsf{tor}}$ decays  into a tensor product:
$$
\overline{U}(\XX)_{\mathsf{tor}}  = \mathcal{Z} \otimes_{\widetilde\QQ} \overline{U}(\XX)_{\mathsf{tor}}^{\mathsf{exc},\,  1}
\otimes_{\widetilde\QQ} \dots \otimes_{\widetilde\QQ} \overline{U}(\XX)_{\mathsf{tor}}^{\mathsf{exc}, \, n}.
$$
Here $\mathcal{Z} = \widetilde\QQ[Z_1, Z_2, \dots, Z_r, \dots]$ is the ring of symmetric functions.
It is generated by the elements $Z_r \in \overline{U}(\XX)_{\mathsf{tor}}[r\delta]$, which
are central in ${U}(\XX)_{\mathsf{tor}}$ and primitive. The last condition means that
$
\Delta(Z_r) = Z_r \otimes \mathbbm{1} + C^r \otimes Z_r
$
for any $r \in \ZZ_{>0}$.
Moreover, the Picard group $\Pic(\XX) \cong \LL(\underline{p})$
acts trivially on the algebra $\mathcal{Z}$.

For any $1 \le i \le n$,  the algebra $\overline{U}(\XX)_{\mathsf{tor}}^{\mathsf{exc},\,  i}$ is
generated by the exceptional simple torsion sheaves
 $\bigl[\kS_i^{(1)}\bigr], \dots, \bigl[\kS_i^{(p_i)}\bigr]$ and  is isomorphic to the positive part of the quantized enveloping algebra
$U_q^+(\widehat{\mathfrak{sl}}_{p_i})$. The algebra ${U}(\XX)_{\mathsf{tor}}^{\mathsf{exc},\,  i}$,
generated by $\overline{U}(\XX)_{\mathsf{tor}}^{\mathsf{exc},\,  i}$ and the elements
$K_i^{(j)}$ for $1 \le j \le p_i$,  is a Hopf subalgebra of ${U}(\XX)_{\mathsf{tor}}$.
All these Hopf algebras ${U}(\XX)_{\mathsf{tor}}^{\mathsf{exc},\,  i}$ are embedded
in the same Hopf algebra ${U}(\XX)_{\mathsf{tor}}$ and ``share''  the ``same'' central
part $\kA = \widetilde\QQ[C^+, C^-]$.
The reduced Drinfeld double $D{U}(\XX)_{\mathsf{tor}}$ decomposes into a tensor
product of algebras:
$$
D{U}(\XX)_{\mathsf{tor}} = \mathcal{H} \otimes_{\kA} U_q(\widehat{\mathfrak{sl}}_{p_1})
\otimes_{\kA} \dots \otimes_{\kA} U_q(\widehat{\mathfrak{sl}}_{p_n}),
$$
where $\mathcal{H}$ is the Heisenberg algebra.

\medskip
\noindent
6. More precisely, for any $r \in \ZZ_{>0}$ the difference
$Z_r - T_r$ belongs to the subalgebra
$\overline{U}(\XX)_{\mathsf{tor}}^{\mathsf{exc}} := \overline{U}(\XX)_{\mathsf{tor}}^{\mathsf{exc},\,  1}
\otimes_{\widetilde\QQ} \dots \otimes_{\widetilde\QQ} \overline{U}(\XX)_{\mathsf{tor}}^{\mathsf{exc},\,  n}.
$ Next, Green's inner product takes the following values:
$
\beta_r:=
(Z_r, T_r) = (Z_r, Z_r) = \alpha_r + \sum\limits_{x \in \underline\lambda} \mathrm{def}_x,
$
where $\mathrm{def}_x = \frac{\displaystyle [r]^2}{\displaystyle r}\Bigl(\frac{ \displaystyle 1}{
\displaystyle q^{rp_x}-1} - \frac{\displaystyle 1}{\displaystyle q^{r}-1}\Bigr)$ is the
{defect} of a  special point $x$ of weight $p_x$ and
$\gamma_r := (Z_r, \Theta_r) = (v^{-1} - v) \beta_r$.
 The Heisenberg algebra
 $\kH$ is generated over $\widetilde\QQ$ by the elements
$\bigl\{Z_r\bigr\}_{r \in \ZZ\setminus \{0\}}$ and $C^{\pm 1}$  subject to   the relations:
$
[Z_r, Z_t]= \delta_{r+t, 0} \, \beta_r \,  (C^{-r} - C^r)$
and $
[Z_r, C^\pm] = 0$ for $ r, t \in \ZZ\setminus \{0\}.
$

\medskip
\noindent
7. For any $r \in \ZZ_{>0}$ and $\vec{x} \in \LL(\underline{p})$ we have the following equalities
in $DU(\XX)$:
$$
\Bigl[Z_r^\pm, \bigl[\kO(\vec{x})\bigr]^\pm\Bigr] = \gamma_r \, \bigl[\kO(\vec{x} + r \vec{c})\bigr]^\pm
\quad
\mbox{\rm and}
\quad \Bigl[Z_r^-, \bigl[\kO(\vec{x})\bigr]^+\Bigr] = \gamma_r \, \bigl[\kO(\vec{x} - r \vec{c})\bigr]
C^{-r}.
$$

\medskip
\noindent
8. Let $\YY$ be a weighted projective line of type $(\underline{\mu}, \, \underline{q})$.
Assume that $(\underline{\lambda}, \, \underline{p})$ dominates $(\underline{\mu}, \, \underline{q})$.
This means that $\underline{\mu}$ is a subset of  $\underline{\lambda}$ and for
any $i \in \underline{\mu}$ we have: $q_i \le p_i$. Then  there
exists an injective algebra homomorphism
$$
\overline{U}(\YY)^+ \otimes_{\widetilde\QQ} \widetilde\QQ[K_0(\YY)] \otimes_{\widetilde\QQ}
\overline{U}(\YY)^- \stackrel{\FF}\lar
\overline{U}(\XX)^+ \otimes_{\widetilde\QQ} \widetilde\QQ[K_0(\XX)] \otimes_{\widetilde\QQ}
\overline{U}(\XX)^-
$$
preserving the triangular decompositions.  Consider the following basic cases.

\vspace{1mm}
\noindent
\underline{Case 1}. Let $\underline{\mu} = \underline{\lambda} \setminus \{\lambda_1\} =
\{\lambda_2, \dots, \lambda_n\}$ and $q_i = p_i$ for all $2 \le i \le n$. Then
the homomorphism $U(\YY) \stackrel{\FF}\lar U(\XX)$ maps the generators
$O$, $C$, $L_l$, $T_r$, $\bigl[\kS_i^{(j)}\bigr]$ and $K_i^{(j)}$ of the algebra $U(\YY)$
to  the generators of $U(\XX)$ denoted  by the same symbols for all $l \in \ZZ$,  $r \in \ZZ_{>0}$ and all
 $(i, j)$ such that $2 \le i \le n$ and $1 \le j \le p_i$.

 \vspace{1mm}
\noindent
\underline{Case 2}. Assume $\underline{\mu} = \underline{\lambda}$
and $q_i = p_i$ for all $2 \le i \le n$ whereas $p_1 = q_1+1 \ge 3$.
Then $\FF$ maps the generators $O$, $C$, $L_l \, (l \in \ZZ)$, $T_r \, (r \in \ZZ_{>0})$,
$\bigl[\kS_i^{(j)}\bigr]$ and $K_i^{(j)}$  $(2 \le i \le n, 1 \le j \le p_i)$  of $U(\YY)$ to the
generators of $U(\XX)$ denoted  by the same symbols.
Next,  $
\FF\bigl(\bigl[\kS_1^{(1)}\bigr]^\pm\bigr) =  v^{-1}
\bigl[\kS_1^{(1)}\bigr]^\pm \circ \bigl[\kS_1^{(2)}\bigr]^\pm -\bigl[\kS_1^{(2)}\bigr]^\pm \circ \bigl[\kS_1^{(1)}\bigr]^\pm
$
and $\FF\bigl(\bigl[\kS_1^{(j)}\bigr]^\pm\bigr) = \bigl[\kS_1^{(j+1)}\bigr]^\pm$ for all
$2 \le j \le q_1$.

\section{Composition algebra of  domestic and tubular weighted projective lines}
In the previous  section, we derived some general properties of the composition algebra
of a weighted projective line $\XX$. In this section,  we deal with two most important cases:
domestic and tubular weighted projective lines.
The composition algebra of a domestic weighted projective line was also studied in a recent article
of Dou, Jiang and Xiao \cite{PapierBarbare}.

We start with a reminder of some
results on the reflection functors and the Hall algebras for quiver representations.

\subsection{Reflection functors and Hall algebras of quiver representations}

Let $\overr{\Delta} = (\Delta_0, \Delta_1, s, t)$ be a finite quiver without loops and oriented cycles.
Here $\Delta_0$ is the set of vertices of $\overr{\Delta}$, $\Delta_1$ is its set of arrows and
$s, t: \Delta_1 \rightarrow \Delta_0$ are the maps assigning to an arrow its source and target respectively.
In this section we consider  the Hall algebra of the category $\cA = \Rep(\kk\overr{\Delta})$ of
finite dimensional representations
of $\overr{\Delta}$ over a $\kk$. If $A$ is  the path algebra of $\overr{\Delta}$, then the categories
$A-\mod$ and $\cA$ are equivalent.

\vspace{2mm}
\noindent
The following proposition is due to Happel, see  \cite[Section 4.6]{Happel}.

\begin{proposition}\label{P:SerreFunct} Let $\DD := \Hom_\kk(\,-\,,\kk)$ be the duality
over $\kk$. Then
the derived functor $$\tau_D:= \LL\bigl(\DD\Hom_A(\,-\,,A)\bigr)[-1]: D^b(\cA) \lar  D^b(\cA)$$ satisfies
the following property: for any $X, Y \in \mathrm{Ob}\bigl(D^b(\cA)\bigr)$ we have an isomorphism
$$
\Hom_{D^b(\cA)}\bigl(X, \tau_D(Y)\bigr) \lar  \DD \Ext^1_{D^b(\cA)}(Y, X),
$$
functorial in both arguments. In other words, $\tau_D[1]$ is the Serre functor
of  $D^b(\cA)$.
\end{proposition}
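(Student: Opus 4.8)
The plan is to prove that $\nu := \DD\,\RHom_A(-,A)$ is a Serre functor of $D^b(\cA)$; this is exactly the statement $\tau_D[1] = \nu$. Write $A$ for the path algebra of $\overr{\Delta}$, so $\cA \cong A\mbox{-}\mod$. Since $\overr{\Delta}$ has no loops and no oriented cycles, $A$ is a finite-dimensional hereditary $\kk$-algebra, $\gldim A \le 1$, and therefore $D^b(\cA) = \Perf(A) = K^b(\mathsf{proj}\,A)$: every object has a finite resolution by finitely generated projective $A$-modules. As $\kk$ is a field, $\DD = \Hom_\kk(-,\kk)$ is exact, and $\RHom_A(-,A)$ carries $\Perf(A)$ contravariantly into $\Perf(A^{\mathrm{op}})$; hence $\LL\bigl(\DD\Hom_A(-,A)\bigr) = \DD\,\RHom_A(-,A) = \nu$ is a covariant triangulated endofunctor of $D^b(\cA)$ and $\tau_D = \nu[-1]$. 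Rewriting $\DD\Ext^1_{D^b(\cA)}(Y,X) = \DD\Hom_{D^b(\cA)}(Y,X[1])$ and $\Hom_{D^b(\cA)}\bigl(X,\nu(Y)[-1]\bigr) = \Hom_{D^b(\cA)}(X[1],\nu Y)$, the asserted isomorphism becomes, upon setting $Z := X[1]$, the Serre-duality identity
\[
\Hom_{D^b(\cA)}(Z,\nu Y) \;\cong\; \DD\,\Hom_{D^b(\cA)}(Y,Z),
\]
which I would establish functorially in both $Y, Z \in D^b(\cA)$ and then read backwards.

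First I would treat the case where $Y = P$ is a finitely generated projective $A$-module. Then $\RHom_A(P,A) = \Hom_A(P,A)$ is a finitely generated projective right $A$-module, and since $\DD\Hom_A(-,A)$ and $\DD A \otimes_A -$ are additive functors that agree on $P = A$, they agree on all finitely generated projectives; thus $\nu P \cong \DD A \otimes_A P$, which is a direct summand of a power of the injective cogenerator $\DD A$ and so is an injective module sitting in degree $0$. Consequently, for every $Z \in D^b(\cA)$,
\[
\Hom_{D^b(\cA)}(Z,\nu P) = H^0\,\RHom_A\bigl(Z,\DD\Hom_A(P,A)\bigr),
\]
and combining the tensor--hom adjunction over $\kk$ with the canonical isomorphism $\Hom_A(P,A)\otimes_A - \cong \Hom_A(P,-)$ (valid since $P$ is finitely generated projective), I obtain, after passing to derived functors,
\[
\RHom_A\bigl(Z,\DD\Hom_A(P,A)\bigr) \;\cong\; \DD\bigl(\Hom_A(P,A)\otimes^{\LL}_A Z\bigr) \;\cong\; \DD\,\RHom_A(P,Z).
\]
Taking $H^0$ produces a natural isomorphism $\Hom_{D^b(\cA)}(Z,\nu P) \cong \DD\Hom_{D^b(\cA)}(P,Z)$, functorial in $P$ and in $Z$.

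Next I would propagate the identity to arbitrary $Y$ by dévissage. Fix $Z$. Both $Y \mapsto \Hom_{D^b(\cA)}(Z,\nu Y)$ and $Y \mapsto \DD\Hom_{D^b(\cA)}(Y,Z)$ are covariant cohomological functors on $D^b(\cA)$ — the first because $\nu$ is triangulated, the second by exactness of $\DD$ — and the isomorphisms constructed above for projective $Y$ are the restriction of a natural transformation between these two functors. Every $Y \in D^b(\cA)$ fits into a finite sequence of distinguished triangles whose successive cones are shifts of finitely generated projective modules (take the stupid truncations $\sigma_{\ge n}$ of a bounded projective resolution of $Y$); an induction on the length of that sequence, applying the five lemma to the long exact sequences attached to these triangles, then shows the natural transformation is an isomorphism for all $Y$. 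Substituting $Z = X[1]$ and unwinding the shift identifications recovers the bifunctorial isomorphism $\Hom_{D^b(\cA)}(X,\tau_D Y) \cong \DD\Ext^1_{D^b(\cA)}(Y,X)$, i.e.\ that $\tau_D[1]$ is the Serre functor of $D^b(\cA)$.

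The step I expect to be the main obstacle is not a single computation but the bookkeeping that legitimizes the dévissage: one must exhibit the comparison map $\Hom_{D^b(\cA)}(Z,\nu Y) \to \DD\Hom_{D^b(\cA)}(Y,Z)$ as a genuine natural transformation of (co)homological functors — assembled from the evaluation and tensor--hom adjunction maps — and check that it is compatible with the connecting morphisms of the triangles involved; this is where the distinction between left and right $A$-modules and the exact placement of $\DD$ must be tracked with care. The other ingredients — exactness of $\DD$ over $\kk$, the identity $\Hom_A(P,A)\otimes_A - \cong \Hom_A(P,-)$ for $P$ finitely generated projective, injectivity of $\DD A$, and the identification $\nu \cong \DD A \otimes^{\LL}_A -$ — are standard, and I would not carry them out in detail.
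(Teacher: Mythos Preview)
Your argument is correct and is essentially the standard proof that the Nakayama functor $\nu = \DD\,\RHom_A(-,A) \cong \DD A \otimes_A^{\LL} -$ is a Serre functor on $D^b(A\text{-}\mod)$ for a finite-dimensional algebra $A$ of finite global dimension. The reduction to projective $Y$, the tensor--hom adjunction computation, and the d\'evissage via the five lemma are all sound; your caveat about tracking the natural transformation carefully is the right place to flag caution, but there is no genuine obstruction there.

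As for comparison with the paper: there is nothing to compare. The paper does not prove this proposition at all; it simply attributes it to Happel and cites \cite[Section~4.6]{Happel}. Your write-up is therefore not an alternative to the paper's argument but a self-contained reconstruction of the cited result, and it matches the approach one finds in Happel's book.
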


\begin{remark}\label{R:SerreFexplicit}
Let $i \in \Delta_0$ be a vertex and $P_i = A e_i$ be the  indecomposable
projective module, which is the projective cover of the simple module $S_i$. Then
$\tau_D(P_i) = I_i[-1]$, where $I_i$ is the injective envelope of $S_i$.
\end{remark}

\begin{definition}
The left  exact functor $\tau = \tau^+ := \DD\Ext_A^1(\,-\,,A): \,\,  \cA \rightarrow  \cA$ is called the Auslander--Reiten translation.
\end{definition}

\begin{proposition}\label{P:DervsNonder} In the above  notations we have:
\begin{enumerate}
\item the functor $\tau$ is isomorphic to the composition
  $\cA \xrightarrow{\mathsf{can}} D^b(\cA) \xrightarrow{\tau_D} D^b(\cA) \xrightarrow{H^0} \cA;
  $
  \item Assume that there are no non-zero objects in $\cA$ which are both projective and injective.
  Then we have an isomorphism of triangle  functors $\tau_D \cong \RR \tau$;
\item
The functor $\tau^+$  has a left  adjoint functor
$\tau^- = \Ext^1_A\bigl(\DD(A), \,-\,\bigr)$. Moreover, for any objects
$X, Y \in \mathrm{Ob}(\cA)$ we have bi-functorial isomorphisms:
$$
\Hom_\cA\bigl(X, \tau^+(Y)\bigr) \cong \DD \Ext^1_\cA(Y, X) \cong
\Hom_\cA\bigl(\tau^{-}(X), Y\bigr).
$$
\end{enumerate}
\end{proposition}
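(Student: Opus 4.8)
The plan rests on heredity of $\cA = \Rep(\kk\overr{\Delta})$ (the path algebra $A$ of $\overr{\Delta}$ has global dimension $\le 1$): every $M \in \Ob(\cA)$ has a length-one projective resolution $0 \to P_1 \to P_0 \to M \to 0$ and a length-one injective resolution $0 \to M \to I^0 \to I^1 \to 0$, and every object of $D^b(\cA)$ is (non-canonically) a sum of shifts of its cohomology objects. I also use the Nakayama functor $\nu := \DD A \otimes_A (-) \cong \DD\Hom_A(-, A)$ (the last isomorphism being natural on finitely generated projectives). For \emph{(1)} I would simply evaluate $\tau_D = \LL\bigl(\DD\Hom_A(-,A)\bigr)[-1]$ on a module $M$ via its projective resolution: it is represented by $\bigl[\nu(P_1) \to \nu(P_0)\bigr]$ sitting in cohomological degrees $0$ and $1$, so $H^0(\tau_D M) = \ker\bigl(\nu P_1 \to \nu P_0\bigr) \cong \DD\Ext^1_A(M,A) = \tau^+(M)$, while $H^1(\tau_D M) = \coker(\nu P_1 \to \nu P_0) \cong \nu(M)$. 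All identifications are natural in $M$ (the independence of the chosen resolution being the standard functoriality of derived functors), which is precisely the statement $\tau^+ \cong H^0 \circ \tau_D \circ \mathsf{can}$.

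For \emph{(2)} one reads off from (1), using heredity, a (non-canonical) decomposition $\tau_D(M) \cong \tau^+(M) \oplus \nu(M)[-1]$ for each module $M$, and notes that $\nu(M) \cong \DD\Hom_A(M,A)$ vanishes exactly when $M$ has no nonzero projective summand (a nonzero map $M \to A$ has projective image, which splits off). On the other side, $\tau^+$ being left exact, its total right derived functor $\RR\tau^+\colon D^b(\cA) \to D^b(\cA)$ exists, and on a module it is represented by $\bigl[\tau^+ I^0 \to \tau^+ I^1\bigr]$, so $H^0(\RR\tau^+ M) = \tau^+(M)$ and $H^1(\RR\tau^+ M) \cong \DD\,\ker\bigl(\Ext^1_A(I^1,A) \to \Ext^1_A(I^0,A)\bigr)$. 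The hypothesis enters precisely here: if $\cA$ has no nonzero projective--injective object then every indecomposable injective is non-projective, hence $\Hom_A(I^0,A) = 0$; feeding this into the long exact sequence for $\Hom_A(-,A)$ applied to $0 \to M \to I^0 \to I^1 \to 0$ identifies the displayed kernel with $\Hom_A(M,A)$, whence $H^1(\RR\tau^+ M) \cong \nu(M) = H^1(\tau_D M)$ (on a projective $P_i$ both sides become $\nu(P_i)[-1] = I_i[-1]$, consistently with Remark~\ref{R:SerreFexplicit}). Thus $\RR\tau^+$ and $\tau_D$ have the same cohomology on all of $\cA$. To promote this to an isomorphism of triangle functors, I would derive the natural inclusion $\tau^+(M) = H^0(\tau_D M) \hookrightarrow \tau_D(M)$ into a morphism of triangulated functors $\theta\colon \RR\tau^+ \Rightarrow \tau_D$ (universal property of $\RR\tau^+$), check that $\theta_M$ is a quasi-isomorphism for every $M \in \Ob(\cA)$ --- the identity on $H^0$ and, tracing the identifications above, an isomorphism on $H^1$ --- and then conclude, by a five-lemma/dévissage induction along the standard $t$-structure, that $\theta$ is an isomorphism throughout $D^b(\cA)$.

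For \emph{(3)} I would argue inside $D^b(\cA)$ via Serre duality (Proposition~\ref{P:SerreFunct}). For $X, Y \in \Ob(\cA)$, since $\tau_D(Y)$ has cohomology in degrees $0,1$ only and $X$ lives in degree $0$, applying $\Hom_{D^b(\cA)}(X,-)$ to the truncation triangle $H^0(\tau_D Y) \to \tau_D(Y) \to H^1(\tau_D Y)[-1] \to$ and using the vanishing of negative $\Ext$ groups gives $\Hom_\cA(X, \tau^+ Y) = \Hom_{D^b(\cA)}\bigl(X, H^0(\tau_D Y)\bigr) \cong \Hom_{D^b(\cA)}(X, \tau_D Y)$; by Proposition~\ref{P:SerreFunct} this equals $\DD\Ext^1_{D^b(\cA)}(Y,X) = \DD\Ext^1_\cA(Y,X)$, bifunctorially. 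Running the mirror argument with the quasi-inverse $\tau_D^{-1} \cong \RR\Hom_A(\DD A, -)[1]$ --- whose $H^0$ on a module $X$ is $\Ext^1_A(\DD A, X) = \tau^-(X)$ by the injective-resolution analogue of (1) --- yields in the same way $\Hom_\cA(\tau^- X, Y) \cong \DD\Ext^1_\cA(Y,X)$, bifunctorially. Composing the two isomorphisms produces the bifunctorial isomorphism $\Hom_\cA(\tau^- X, Y) \cong \Hom_\cA(X, \tau^+ Y)$, i.e.\ exhibits $\tau^-$ as the left adjoint of $\tau^+$.

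The routine parts are (1) and (3) --- bookkeeping with resolutions, respectively with Serre duality. The genuine difficulty is the cohomology identity in (2), $H^1(\RR\tau^+ M) \cong \nu(M)$; equivalently, that $\RR\tau^+$ agrees with $\tau_D$ on the projective modules, on which $\tau^+$ itself vanishes while $\tau_D$ does not. It hinges on the vanishing $\Hom_A(I, A) = 0$ for injective $I$, which fails without the hypothesis --- for $\cA$ the category of $\kk$-vector spaces (the quiver with one vertex and no arrows) one has $\tau_D(\kk) \cong \kk[-1]$ but $\RR\tau^+(\kk) = 0$ --- so ``no nonzero projective--injective object'' is exactly the required assumption. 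The only other point needing care is the dévissage that upgrades a pointwise quasi-isomorphism on $\cA$ to an isomorphism of functors on $D^b(\cA)$.
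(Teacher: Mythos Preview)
Your argument is correct and, for parts (1) and (3), essentially identical to the paper's: both compute $H^0(\tau_D M)$ from a projective resolution and read off the Auslander--Reiten formulae from Serre duality plus the truncation triangle of $\tau_D(Y)$.

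For part (2) you take a genuinely different route. The paper establishes the key vanishing $\Hom_A(I_i,P_j)=0$ by a Serre-duality trick --- since $I_i$ is not projective one has $\tau_D(I_i)=\tau(I_i)\in\cA$, hence $\Hom_A(I_i,P_j)\cong\Hom_{D^b(\cA)}(\tau(I_i),I_j[-1])=0$ --- and then invokes \cite[Proposition~I.7.4]{RD} as a black box to conclude $\LL\nu\cong\RR\tau[1]$. You instead prove the same vanishing by the elementary splitting argument, then compute $H^*(\RR\tau^+ M)$ directly from an injective resolution and match it with $H^*(\tau_D M)$, finishing with the universal-property morphism $\RR\tau^+\Rightarrow\tau_D$ and a d\'evissage. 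Your route is more self-contained (no reference to Hartshorne), while the paper's is shorter once one accepts the citation. Both hinge on exactly the same fact, namely $\nu(I)=0$ on injectives under the hypothesis; once that is in hand, $\tau_D$ and $\RR\tau^+$ agree naturally on injectives, and since $D^b(\cA)\simeq K^b(\mathrm{Inj}\,\cA)$ this forces the isomorphism. The only place requiring a bit of care in your write-up is the explicit construction of the natural transformation $\theta$ --- phrasing it as ``both triangulated functors restrict naturally to $\tau^+$ on $\mathrm{Inj}(\cA)$'' makes the d\'evissage cleaner than appealing to the universal property of $\RR$.
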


\begin{proof}
The first part of this proposition is trivial. To show the second statement,
consider the  right exact
functor $\nu = \DD \Hom_A(\,-\,,A)$. Note that $\tau \cong  \LL_1(\nu)$.
Let $I_i$ be an indecomposable  injective module and $P_j$ be an indecomposable projective module
corresponding to the vertices $i, j \in \Delta_0$ respectively.
Since $I_i$ is non-projective, by \cite[Section 4.7]{Happel} we have:
$\tau_D(I_i) = \tau(I_i) =: X \in \mathrm{Ob}(\cA)$. Hence, we have:
$$
\Hom_A(I_i, P_j) \cong \Hom_{D^b(\cA)}\bigl(\tau_D(I_i), \tau_D(P_j)\bigr) \cong
\Hom_{D^b(\cA)}\bigl(X, I_j[-1]\bigr) = 0.
$$
Then by \cite[Proposition I.7.4]{RD} we have: $\LL \nu \cong \RR \tau[1]$.

Let $Y$ be an object of $\cA$. Then $\tau_D(Y)$ is a complex with at most two non-vanishing
cohomologies. Moreover, $H^0\bigl(\tau_D(Y)\bigr) \cong  \tau(Y)$ and
$H^1\bigl(\tau_D(Y)\bigr) \cong  \nu(Y)$. Using Proposition \ref{P:SerreFunct} we have:
$$
\DD \Ext^1_\cA(X, Y) \cong \Hom_{D^b(\cA)}\bigl(X, \tau_D(Y)\bigr)
\cong \Hom_{\cA}\Bigl(X, H^0\bigl(\tau_D(Y)\bigr)\Bigr) \cong
\Hom_\cA\bigl(X, \tau^+(Y)\bigr),
$$
where all the isomorphisms are bi-functorial. The proof of the second isomorphism  is similar.
\end{proof}

\begin{remark}
Let $\overr{\Delta} = (1 \lar 2)$ be a quiver of type $A_2$. Then the module
$I: = \bigl(\kk \stackrel{1}\lar \kk\bigr)$ is both projective and injective.
In particular, we have: $\RR \tau(I) \cong \tau(X) = \DD \Ext_A(I, A) = 0.$
 Since
$\tau_D$ is an auto-equivalence of $D^b(\cA)$ and $\RR \tau$ is not,
we have: $\tau_D \not\cong \RR \tau$.
\end{remark}

\begin{definition}\label{D:reflfunct}
Let $\overr{\Delta}$ be a finite quiver without loops and oriented cycles and
$* \in \Delta_0$ be a sink (i.e. there is no arrow $\alpha \in \Delta_1$ such that $s(\alpha) = *$). Let
$\overl{\Delta}$ be the quiver obtained from $\overr{\Delta}$ by inverting all the arrows
ending at $*$.  Recall, that we have an adjoint pair of the so-called \emph{reflection
functors}  of Bernstein, Gelfand and Ponomarev \cite{BGP}
$$\SS_{*}^{+}: \Rep(\overr{\Delta}) \lar
 \Rep(\overl{\Delta}) \quad \mbox{and} \quad \SS_{*}^{-}: \Rep(\overl{\Delta}) \lar
 \Rep(\overr{\Delta})$$
  defined as follows.
 For any object $X = \bigl((V_i)_{i \in Q_0}, (A_\alpha)_{\alpha \in Q_1}\bigr) \in
 \mathrm{Ob}\bigl(\Rep({\overr{\Delta}})\bigr)$ consider
 the exact sequence of vector spaces
 $$
 0 \lar W_* \xrightarrow{\oplus_{\alpha \in Q_1: \, t(\alpha) = *} B_\alpha}
 \bigoplus\limits_{\alpha \in Q_1: \, t(\alpha) = *}V_{s(\alpha)} \xrightarrow{\oplus_{\alpha \in Q_1: \,  t(\alpha) = *} A_\alpha} V_*.
 $$
Then the representation  $Y = \SS_*^{+}(X) = \bigl((U_i)_{i \in Q_0}, (C_\alpha)_{\alpha \in Q_1}\bigr)\in \mathrm{Ob}\bigl(\Rep({\overl{\Delta}})\bigr)$ is defined by:
$$
U_i =
\left\{
\begin{array}{ccc}
V_i & \, \mbox{if}\, & \, i \ne * \\
W_* & \, \mbox{if}\, & \,  i = *
\end{array}
\right.
\quad \mbox{\rm and} \quad
C_\alpha =
\left\{
\begin{array}{ccc}
A_\alpha & \, \mbox{if}\, & \, s(\alpha) \ne * \\
B_\alpha & \, \mbox{if}\, & \,  s(\alpha) =  *.
\end{array}
\right.
$$
The definition of the adjoint  reflection functor $\SS_*^{-}$ is dual, see \cite{BGP}.
\end{definition}

\noindent
The following result is well-known, see for example \cite[Section VII.5]{ASS}.

\begin{theorem}\label{T:basicsonrefl}
The derived  functors $ D^b\bigl(\Rep(\overr{\Delta})\bigr)
\stackrel{\RR \SS_*^{+}}\lar  D^b\bigl(\Rep(\overl{\Delta})\bigr)$  and $ D^b\bigl(\Rep(\overl{\Delta})\bigr)
\stackrel{\LL \SS_*^{-}}\lar  D^b\bigl(\Rep(\overr{\Delta})\bigr)$
are mutually inverse equivalences
of triangulated categories. Moreover, for an indecomposable object $X \in \mathrm{Ob}\bigl(\Rep({\overr{\Delta}})\bigr)$ we have:
$$
\RR^{1} \SS_*^{+}(X) =
\left\{
\begin{array}{ccc}
0  & \, \mbox{if}\, & \, X \not\cong S_* \\
S_* & \, \mbox{if}\, & \, X \cong S_*,
\end{array}
\right.
$$
where $S_*$ is the simple module corresponding to the vertex  $*$.
Next, for an indecomposable object $Y \in \mathrm{Ob}\bigl(\Rep({\overl{\Delta}})\bigr)$ we have:
$$
\LL^{-1} \SS_*^{-}(Y) =
\left\{
\begin{array}{ccc}
0  & \, \mbox{if}\, & \, Y \not\cong S_* \\
S_* & \, \mbox{if}\, & \, Y \cong S_*.
\end{array}
\right.
$$
In particular, the reflection functors $\SS_*^{+}$ and $\SS_*^{-}$ yield
  mutually inverse equivalences between the categories $\Rep\bigl(\overr{\Delta}\bigr)^\circ$ and $\Rep\bigl(\overl{\Delta}\bigr)^\circ$, which
are the full subcategories of $\Rep\bigl(\overr{\Delta}\bigr)$ and $\Rep\bigl(\overl{\Delta}\bigr)$
consisting of objects without direct summands isomorphic to $S_*$.
\end{theorem}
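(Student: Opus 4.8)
The plan is to reduce everything to an explicit analysis at the vertex $*$, exploiting that the path algebras $\kk\overr{\Delta}$ and $\kk\overl{\Delta}$ are hereditary, so that the left exact functor $\SS_*^{+}$ and the right exact functor $\SS_*^{-}$ derive cleanly. Throughout, $A$ denotes the relevant path algebra.

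First I would prove the local lemma on which everything rests: if $X=\bigl((V_i),(A_\alpha)\bigr)$ is an indecomposable representation of $\overr{\Delta}$ with $X\not\cong S_*$, then the structure map $\bigoplus_{t(\alpha)=*}V_{s(\alpha)}\xrightarrow{\oplus A_\alpha}V_*$ is surjective. Since $*$ is a sink, any vector-space complement of the image inside $V_*$, placed at $*$ and zero elsewhere, is a direct summand of $X$ isomorphic to a power of $S_*$; indecomposability and $X\not\cong S_*$ force it to vanish. A snake-lemma computation then identifies $\RR^1\SS_*^{+}(X)$ with the $S_*$-isotypic representation $\coker\bigl(\oplus_{t(\alpha)=*}V_{s(\alpha)}\to V_*\bigr)$ and shows $\RR^{\ge 2}\SS_*^{+}=0$, so $\RR\SS_*^{+}$ is a well-defined triangulated functor $D^b(\Rep(\overr{\Delta}))\to D^b(\Rep(\overl{\Delta}))$; dually for $\LL\SS_*^{-}$, whose only nontrivial higher cohomology sits in degree $-1$. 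From the local lemma one reads off: (i) for $X$ indecomposable with $X\not\cong S_*$ one has $\RR^1\SS_*^{+}(X)=0$, the module $\SS_*^{+}(X)$ again has no summand $S_*$, and the adjunction counit $\SS_*^{-}\SS_*^{+}(X)\to X$ is an isomorphism; (ii) $\SS_*^{+}(S_*)=0$ while $\RR^1\SS_*^{+}(S_*)\cong S_*$ (the simple at the source $*$ of $\overl{\Delta}$), so $\RR\SS_*^{+}(S_*)\cong S_*[-1]$; (iii) dually $\SS_*^{-}$ is exact on every indecomposable $\neq S_*$, the unit $Y\to\SS_*^{+}\SS_*^{-}(Y)$ is an isomorphism for such $Y$, and $\LL\SS_*^{-}(S_*)\cong S_*[1]$.

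Next I would lift the adjunction $(\SS_*^{-},\SS_*^{+})$ to a derived adjunction $(\LL\SS_*^{-},\RR\SS_*^{+})$ and check that its unit and counit are isomorphisms. The full subcategory of objects on which a fixed natural transformation is invertible is a thick triangulated subcategory, and the indecomposable modules generate $D^b(\Rep(\overr{\Delta}))$, so it suffices to test on indecomposable modules $X$. For $X\not\cong S_*$ both $\LL\SS_*^{-}\RR\SS_*^{+}(X)$ and $X$ are the module $X$ concentrated in degree $0$, identified by the (underived, hence iso) counit of (i); for $X=S_*$ one gets $\LL\SS_*^{-}\RR\SS_*^{+}(S_*)\cong\LL\SS_*^{-}(S_*[-1])\cong(S_*[1])[-1]=S_*$, and the derived counit realises this as an isomorphism. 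The unit is treated symmetrically. Hence $\RR\SS_*^{+}$ and $\LL\SS_*^{-}$ are mutually inverse triangulated equivalences, and the formulas for $\RR^1\SS_*^{+}$ and $\LL^{-1}\SS_*^{-}$ in the statement are exactly (ii) and (iii). Finally, for the assertion about the honest categories: if $X\in\Rep(\overr{\Delta})^\circ$ is indecomposable then $\RR\SS_*^{+}(X)=\SS_*^{+}(X)$ is an indecomposable module, and it is not $S_*$ because the $\RR\SS_*^{+}$-preimage of the $\overl{\Delta}$-module $S_*$ is $S_*[1]$, which is not a module; thus $\SS_*^{+}$ sends $\Rep(\overr{\Delta})^\circ$ into $\Rep(\overl{\Delta})^\circ$, and dually for $\SS_*^{-}$. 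On these additive subcategories no higher cohomology occurs, so the equivalences $\LL\SS_*^{-}\RR\SS_*^{+}\cong\mathbbm{1}$ and $\RR\SS_*^{+}\LL\SS_*^{-}\cong\mathbbm{1}$ restrict to natural isomorphisms $\SS_*^{-}\SS_*^{+}\cong\mathbbm{1}$ and $\SS_*^{+}\SS_*^{-}\cong\mathbbm{1}$, yielding the claimed mutually inverse equivalences $\Rep(\overr{\Delta})^\circ\simeq\Rep(\overl{\Delta})^\circ$.

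The computational core is the local lemma of the first paragraph; the rest is formal manipulation of derived functors over a hereditary algebra. The one point that needs genuine care is the passage from ``the (co)unit is invertible on every indecomposable module'' to ``the (co)unit is an isomorphism of functors on $D^b$'', which rests on the thick-subcategory generation argument just used. One could instead bypass the hands-on derivation altogether by identifying $\SS_*^{+}$ with $\Hom_A(T,-)$ for the APR-tilting module $T=\tau^{-}(S_*)\oplus\bigoplus_{i\neq *}P_i$, checking $\End_A(T)^{\mathrm{op}}\cong\kk\overl{\Delta}$, and invoking the Brenner--Butler tilting theorem; but this route requires the same torsion-pair bookkeeping, now with $\add(S_*)$ playing the role of the torsion-free class.
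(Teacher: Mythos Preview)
The paper does not prove this theorem: it is stated as a well-known result with a reference to \cite[Section VII.5]{ASS}, so there is no argument to compare against. Your proposal is a correct self-contained proof along standard lines, and the alternative you sketch via the APR tilting module is exactly the argument one finds in the cited source.

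One small point deserves tightening. When you verify the derived counit on $S_*$, you compute $\LL\SS_*^{-}\RR\SS_*^{+}(S_*)\cong S_*$ as objects and then assert ``the derived counit realises this as an isomorphism''. Since $\End(S_*)=\kk$, it suffices that the counit is nonzero, but you have not argued this. The cleanest fix is to avoid testing on $S_*$ directly: in $\Rep(\overr{\Delta})$ the vertex $*$ is a sink, so $S_*=P_*$ and there is a short exact sequence $0\to S_*\to I_*\to I_*/S_*\to 0$ with $I_*$ indecomposable and $\neq S_*$ (assuming the quiver has at least two vertices; the one-vertex case is trivial). The quotient $I_*/S_*$ has no $S_*$ summand because $*$ is a sink, so both $I_*$ and $I_*/S_*$ lie in $\Rep(\overr{\Delta})^\circ$; your counit is already an isomorphism on these, and the thick-subcategory argument then forces it to be an isomorphism on $S_*$ as well. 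Alternatively, once the equivalence $\Rep(\overr{\Delta})^\circ\simeq\Rep(\overl{\Delta})^\circ$ is established, one can invoke that $D^b$ of a hereditary category is the bounded homotopy category of the additive hull, and both sides are generated (as triangulated categories) by the respective $\circ$-subcategories together with $S_*$, with $S_*$ handled by the explicit triangle above.
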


\noindent
The following fundamental result gives  a link between  reflection functors
and Auslander--Reiten translations.

\begin{theorem}\label{T:CoxetervsTau}
Let $\overr{\Delta}$ be a finite quiver without loops and oriented cycles with a prescribed labeling
$\Delta_0 = \{1,2, \dots,n\}$. Assume that for any $1 \le i < j \le n$ there is no oriented path starting at $j$ and ending at $i$ (such a labeling is called admissible). Then there is  an isomorphism of functors
$
\tau \cong  \TT \circ \SS_1^+ \circ \dots \circ \SS_{n}^+,
$
where $\TT: \Rep(\overr{\Delta}) \rightarrow  \Rep(\overr{\Delta})$ is
defined by the following rule.
For $X = \bigl((V_i)_{i \in \Delta_0}, (A_\alpha)_{\alpha \in \Delta_1}\bigr) \in
 \mathrm{Ob}\bigl(\Rep{\overr{\Delta}}\bigr)$ we set: $\TT(X) = \bigl((U_i)_{i \in Q_0}, (B_\alpha)_{\alpha \in Q_1}\bigr)$, where
$
U_i = V_i  \quad \mbox{and} \quad B_\alpha = - A_\alpha.
$
In a similar way, we have an isomorphism of functors: $
\tau^- = \TT \circ \SS_n^- \circ \dots \circ \SS_{1}^-.
$ In particular, the Coxeter functors $\AA^+ = \SS_1^+ \circ \dots \circ \SS_{n}^+$ and
$\AA^- = \SS_n^- \circ \dots \circ \SS_{1}^-$ do not depend on the choice
of an admissible labeling of vertices.
\end{theorem}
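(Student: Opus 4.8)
The plan is to lift the claim to the bounded derived category $D^b(\cA)$, where the target functor $\tau_D$ of Proposition~\ref{P:SerreFunct} is available as a shift of the Serre functor, and then to descend back to $\cA$. First, $\AA^+=\SS_1^+\circ\dots\circ\SS_n^+$ is well defined: by admissibility the vertex $n$ is a sink of $\overr\Delta$ and, inductively, the vertex $k$ is a sink of the quiver obtained after applying $\SS_n^+,\dots,\SS_{k+1}^+$, while along the whole sequence each arrow is reversed exactly twice, so the resulting quiver is again $\overr\Delta$. By Theorem~\ref{T:basicsonrefl} each $\RR\SS_k^+$ is an equivalence, hence $\RR\AA^+:=\RR\SS_1^+\circ\dots\circ\RR\SS_n^+$ is a triangulated auto-equivalence of $D^b(\cA)$. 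The whole theorem then reduces to exhibiting a natural isomorphism of triangulated functors $\RR\AA^+\cong\TT\circ\tau_D$.

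Indeed, granting such an isomorphism one applies $H^0$. For a module $M$ one has $H^0\tau_D(M)\cong\tau M$ by Proposition~\ref{P:DervsNonder}(1), and running Theorem~\ref{T:basicsonrefl} along the Coxeter sequence gives $H^0\RR\AA^+(M)\cong\AA^+(M)$: the only indecomposables on which some $\RR\SS_k^+$ produces a cohomological shift are those that become the simple at the momentary sink, and these pull back through the preceding reflections precisely to the indecomposable projectives $P_i$ of $\overr\Delta$ (for which in any case $\AA^+(P_i)=0=\tau(P_i)$), while on every other indecomposable the reflection functors act as exact functors of modules, so $\RR\AA^+$ remains concentrated in degree $0$ there. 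Since $\TT$ is exact, $H^0$ of the isomorphism yields $\tau\cong\TT\circ\AA^+$. The statement for $\tau^-$ follows dually, using $\LL\SS_k^-$ together with $\tau^-=\Ext^1_A(\DD A,-)$ of Proposition~\ref{P:DervsNonder}(3); and since $\tau$, $\tau^-$ and $\TT$ are intrinsic to $\cA$, the relations $\AA^\pm\cong\TT\circ\tau^{\pm}$ exhibit $\AA^+$ and $\AA^-$ as independent of the admissible labeling.

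To construct the isomorphism $\RR\AA^+\cong\TT\circ\tau_D$, I would first reduce to projectives. Since $\cA\simeq A-\mod$ is hereditary, $D^b(\cA)$ is generated by $\mathcal{P}:=\add\{P_i\mid i\in\Delta_0\}$, and both $\RR\AA^+$ and $\TT\circ\tau_D$ are computed termwise on bounded complexes of objects of $\mathcal{P}$: this is clear for $\TT$ and for $\tau_D=\LL(\DD\Hom_A(-,A))[-1]$ (on projectives $\DD\Hom_A(-,A)=-\otimes_A\DD A$ is exact), and each $\RR\SS_k^+$ sends every projective of the current quiver to a complex concentrated in a single cohomological degree — degree $0$ for $P_j$ with $j$ not the momentary sink, and $S_k[-1]=P_k[-1]$ for $P_k=S_k$, by Theorem~\ref{T:basicsonrefl}. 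It therefore suffices to give isomorphisms $\RR\AA^+(P_i)\cong\TT\,\tau_D(P_i)=\TT(I_i)[-1]$ (Remark~\ref{R:SerreFexplicit}), natural in the morphisms among the $P_\bullet$. On objects this is the classical computation: applying $\RR\SS_n^+,\RR\SS_{n-1}^+,\dots$ to $P_i$ keeps it a projective in degree $0$ for each intermediate quiver until the step at which vertex $i$ becomes a sink, where it equals $S_i$; the next reflection carries $S_i$ to $S_i[-1]=I_i[-1]$ for that quiver, and the remaining reflections act on it as exact equivalences, transporting it to $I_i$ in $\Rep(\overr\Delta)$, with the signs accumulated along the way being exactly those absorbed by $\TT$. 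Naturality in $i$ is verified by tracking how each $\RR\SS_k^+$ acts on the $\Hom$-spaces among the $P_\bullet$ — which are spanned by the paths of $\overr\Delta$ — and matching this against the $A$-bimodule description of $\tau_D$ on the target side.

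The hardest part is this last naturality statement together with the attendant sign bookkeeping: on objects the isomorphism $\RR\AA^+(P_i)\cong I_i[-1]$ holds with no twist, and it is precisely the insertion of $\TT$ that makes it natural in $P_i$; making this rigorous requires carefully following the signs produced by the successive reflection formulae over the whole Coxeter sequence, as well as the ``single cohomological degree / termwise'' checks that justify reducing from $D^b(\cA)$ to $\mathcal{P}$. A secondary delicate point, needed in the descent to $\cA$, is the claim that no transform of a non-projective indecomposable along the Coxeter sequence ever becomes the simple at the momentary sink.
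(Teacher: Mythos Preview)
The paper does not give a self-contained proof of this result; it simply cites \cite[Section~5.3]{Gabriel} and \cite[Proposition~II.3.2]{Tepetla}. Those classical arguments work entirely at the level of module categories: one constructs an explicit natural transformation between $\tau=\DD\Ext^1_A(-,A)$ and $\TT\circ\AA^+$ by computing both sides on a module via a projective presentation (for $\tau$) and via the iterated kernel recipe of Definition~\ref{D:reflfunct} (for $\AA^+$), and verifies directly that it is an isomorphism, the sign twist $\TT$ emerging from the explicit comparison.

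Your route is genuinely different: you aim first at the derived statement $\RR\AA^+\cong\TT\circ\tau_D$ and then take $H^0$. Within the paper's internal logic this is an inversion --- the derived isomorphism is Theorem~\ref{P:aboutDerRefl}, which the paper \emph{deduces from} the present theorem together with Proposition~\ref{P:DervsNonder}(2), not the other way around (and that deduction already needs to exclude the $A_n$-with-linear-orientation case). More substantively, the two items you yourself flag as ``hardest'' and ``secondary delicate'' are not peripheral but are the entire content of the theorem. Checking naturality of $\RR\AA^+(P_i)\cong I_i[-1]$ in $i$, together with the sign bookkeeping that pins down $\TT$, amounts to identifying the maps induced on $\Hom$-spaces between projectives with those between injectives under the Nakayama functor --- exactly the computation in the cited references, only transported to $D^b(\cA)$. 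Likewise, the claim that a non-projective indecomposable never becomes the simple at the momentary sink along the Coxeter sequence is equivalent to saying that $\ker\AA^+$ consists precisely of the projectives, which is again part of what must be proved. So your plan is coherent, but it repackages rather than bypasses the classical argument, and as written it leaves precisely the substantive steps unexecuted.
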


\noindent
For a proof of this result, we refer to \cite[Section 5.3]{Gabriel} and
 \cite[Proposition II.3.2]{Tepetla}.

\medskip
\noindent
The following important statement  seems to be well-known. Nevertheless, we have not succeeded to find
its proof in the literature and therefore give it here.

\begin{theorem}\label{P:aboutDerRefl}
Let $\overr{\Delta}$ be a finite quiver without loops and oriented cycles and
$\Delta_0 =  \{1,2, \dots,n\}$ be an admissible labeling of its vertices. Assume
$\overr{\Delta}$ is not a  Dynkin
quiver of type $A_n$ with linear ordering. Then we have an isomorphism of triangle functors
$$\tau_D \cong T \circ \RR\SS_1^+ \circ \dots \circ \RR\SS_{n}^+.$$
\end{theorem}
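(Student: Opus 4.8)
The plan is to derive the statement from the module-level factorisation $\tau\cong T\circ\SS_1^+\circ\cdots\circ\SS_n^+$ of Theorem~\ref{T:CoxetervsTau} by passing to right derived functors, using Proposition~\ref{P:DervsNonder} as the bridge between the non-derived and the derived picture. First I would reduce to a statement about $\RR\tau$. We may and do assume that $\overr{\Delta}$ is connected, since $\cA=\Rep(\overr{\Delta})$ and all the functors involved respect the decomposition of $\cA$ according to connected components; by hypothesis $\overr{\Delta}$ is then not a linearly oriented Dynkin quiver of type $A_n$, and consequently $\cA$ possesses no nonzero projective–injective object. Proposition~\ref{P:DervsNonder}(2) now supplies a triangle-functor isomorphism $\tau_D\cong\RR\tau$, so it remains to prove $\RR\tau\cong T\circ\RR\SS_1^+\circ\cdots\circ\RR\SS_n^+$.

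Since $T$ (negation of all arrows) is an exact autoequivalence of the abelian category $\cA$, its derived functor is computed degreewise and $\RR(T\circ G)\cong T\circ\RR G$ for every left exact $G$; in view of Theorem~\ref{T:CoxetervsTau} the claim is thus reduced to the identity
$$
\RR\bigl(\SS_1^+\circ\cdots\circ\SS_n^+\bigr)\;\cong\;\RR\SS_1^+\circ\cdots\circ\RR\SS_n^+ .
$$
By Theorem~\ref{T:basicsonrefl} each $\SS_j^+$ is left exact of homological dimension at most one, with unique higher derived functor $\RR^1\SS_j^+$ concentrated on the simple object at the $j$-th sink. One then applies the Grothendieck composition-of-derived-functors theorem, peeling off one reflection at a time from the right; at the $k$-th stage one must verify that $\SS_{k+1}^+\circ\cdots\circ\SS_n^+$ sends injective representations of $\Rep(\overr{\Delta})$ to $\SS_k^+$-acyclic objects, i.e.\ to representations having no indecomposable summand isomorphic to the simple at the $k$-th sink. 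Admissibility of the labelling — after reflecting at $n,n-1,\dots,k+1$ the vertex $k$ has become a sink — is exactly what makes this acyclicity go through, once the injective representations are traced explicitly through the successive sink-reflections. Granting this, $\tau_D\cong\RR\tau\cong T\circ\RR\SS_1^+\circ\cdots\circ\RR\SS_n^+$, as desired.

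The main obstacle is precisely this acyclicity verification — the step of upgrading an isomorphism of additive functors on $\cA$ to an isomorphism of triangle functors on $D^b(\cA)$. A somewhat cleaner way to package the same content, avoiding spectral sequences, runs as follows. By Theorem~\ref{T:basicsonrefl} the derived Coxeter functor $\AA^+_D:=\RR\SS_1^+\circ\cdots\circ\RR\SS_n^+$ is an exact autoequivalence of $D^b(\cA)\simeq K^b(\mathrm{proj}\,A)$, and both $T\circ\AA^+_D$ and $\tau_D$ are exact endofunctors of this category. On the full subcategory $\add(A)$ of projectives the two agree: $\tau_D$ restricts to the Nakayama equivalence shifted by $[-1]$, sending $P_i\mapsto I_i[-1]$ and identifying $\Hom(P_i,P_j)$ with $\Hom(I_i,I_j)$ via $\nu=\DD\Hom_A(-,A)$ (Remark~\ref{R:SerreFexplicit}, together with the absence of derived corrections on projectives), while $T\circ\AA^+_D$ produces the same by chasing $P_i$ through the successive derived reflection functors — the only nontrivial move being $\RR\SS_*^+(S_*)\cong S_*[-1]$ from Theorem~\ref{T:basicsonrefl} — combined with $T(I_i)\cong I_i$. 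A natural isomorphism of the two functors on $\add(A)$ then extends uniquely to all of $K^b(\mathrm{proj}\,A)$ by dévissage along the stupid filtration of a bounded complex of projectives, both functors being exact; this gives $\tau_D\cong T\circ\AA^+_D$. In either route the single genuinely computational ingredient, and the part to be carried out by hand, is the explicit effect of a full admissible sequence of derived sink-reflections on a generating family (injectives, respectively projectives); the remainder is formal. Finally, as a consistency check one notes that $\tau_D[1]$ is the Serre functor of $D^b(\cA)$ by Proposition~\ref{P:SerreFunct}, hence commutes with every autoequivalence up to natural isomorphism — fully compatible with the identity just obtained.
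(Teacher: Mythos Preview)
Your first approach is exactly the paper's: reduce to $\tau_D\cong\RR\tau$ via Proposition~\ref{P:DervsNonder}(2), pull the exact $T$ outside, and then peel off the reflections one at a time using the Grothendieck composition theorem. The paper, however, does not leave the acyclicity check as a hand computation. It observes that for an indecomposable injective $I$ (which is non-projective by hypothesis) one has $\tau(I)\neq 0$, hence $\SS_1^+\circ\cdots\circ\SS_n^+(I)\neq 0$ by Theorem~\ref{T:CoxetervsTau}; in particular each partial composite $\SS_k^+\circ\cdots\circ\SS_n^+(I)$ is nonzero and indecomposable, so by Theorem~\ref{T:basicsonrefl} it is $\SS_{k-1}^+$-acyclic. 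This is precisely the missing step in your outline, and it is short; no explicit tracking of injectives through reflections is needed.

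Your alternative route via $K^b(\mathrm{proj}\,A)$ is not complete as written. The assertion $T(I_i)\cong I_i$ is fine ($T$ is an exact autoequivalence fixing simples, hence their injective envelopes), but the preceding claim that $\AA^+_D(P_i)\cong I_i[-1]$ is where the content lies, and your justification (``the only nontrivial move being $\RR\SS_*^+(S_*)\cong S_*[-1]$'') is too quick: the intermediate reflections genuinely change the module, not just its degree, and identifying the outcome with $I_i$ amounts to a computation of the same difficulty as the first approach. Moreover, to extend an isomorphism of triangle functors from $\add(A)$ to $K^b(\mathrm{proj}\,A)$ you need compatibility on morphisms, not just objects; you gesture at this via the Nakayama equivalence for $\tau_D$, but do not check that $T\circ\AA^+_D$ induces the same identification $\Hom(P_i,P_j)\to\Hom(I_i,I_j)$. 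So the second argument, as it stands, either requires the first or an equivalent direct computation you have not supplied.
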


\begin{proof}
By Proposition \ref{P:DervsNonder} and Theorem \ref{T:CoxetervsTau}
we know that $\tau_D \cong \RR\tau \cong T\circ \RR\bigl(\SS_1^+ \circ \dots \circ \SS_n^+\bigr).$
Using the universal property of the right derived functor of a left exact functor, we obtain
a sequence of
natural transformation of triangle functors
$$
\RR\bigl(\SS_1^+ \circ \dots \circ \SS_n^+\bigr) \stackrel{\xi}\lar
\RR \SS_1^+ \circ \RR\bigl(\SS_2^+ \circ \dots \circ \SS_n^+\bigr)
\stackrel{\zeta}\lar \dots \stackrel{\kappa}\lar \RR \SS_1^+ \circ \RR \SS_2^+ \circ \dots \circ \RR \SS_n^+.
$$
Let us show that the first natural transformation $\xi$ is an isomorphism (the proof for the remaining ones is the
same). By  Theorem \ref{T:basicsonrefl}, for
a non-zero  indecomposable representation $X$ we have: if $\SS_*^+(X)$ is non-zero, then
it is indecomposable and  $\RR^1 \SS_*^+(X) = 0$.
Next, by Theorem \ref{T:CoxetervsTau} we have:
$\TT \circ \SS_1^+ \circ \dots \circ \SS_n^+(X) \cong \tau(X)$. Hence, if $X$ is a non-projective
object of $\Rep(\overr\Delta)$
then  $\SS_1^+ \circ \dots \circ \SS_n^+(X) \ne 0$. In particular, for any
injective module $I \in \Rep(\overr\Delta)$ (which is automatically not projective) we get:
$\RR^1 \SS_1^+\bigl(\SS_2^+ \circ \dots \circ \SS_n^+(I)\bigr) = 0$. Hence, the functor $\SS_2^+ \circ \dots \circ \SS_n^+$ maps injective modules
into $\SS_1^+$--acyclic modules. This shows that  the natural transformation $\xi$ is an isomorphism of functors.
\end{proof}

\begin{definition}
Let $C  \in \Mat_{n \times n}(\ZZ)$ be a symmetric matrix such that
$c_{ii} = 2$ for all $1 \le i \le n$ and $c_{ij} < 0$ for all $1 \le i \ne j \le n$.
Consider the $\widetilde\QQ$--algebra $U_q(C)$ generated by the elements
$E_1, \dots, E_n$; $F_1, \dots, F_n$ and $K_1^\pm, \dots, K_n^\pm$ subject to
the following relations:
\begin{itemize}
\item $K_i^\pm K_i^\mp = 1 = K_i^\mp K_i^\pm$, $1 \le i \le n$;
\item $K_i K_j = K_j K_i$, $1 \le i, j \le n$;
\item  $K_i  E_j = v^{-c_{ij}} E_j K_i$ and   $K_i  F_j = v^{c_{ij}} F_j K_i$, $ \,1 \le i, j \le n$;
\item $\bigl[E_i, F_j\bigr] = \delta_{ij} v \frac{\displaystyle K_i - K_i^{-1}}{\displaystyle v - v^{-1}}$,
     $\, 1 \le i, j \le n$;
\item $\sum_{k=0}^{1 -c_{ij}} (-1)^k E_i^{(k)} E_j  E_i^{(1-c_{ij}-k)} = 0$  for
$1 \le i \ne j \le n$;
\item $\sum_{k=0}^{1 - c_{ij}} (-1)^k F_i^{(k)} F_j  F_i^{(1-c_{ij}-k)} = 0$ for $1 \le i \ne j \le n$.
\end{itemize}
\end{definition}

\noindent
The following result is due to Ringel \cite{Ringel}  and Green \cite{Green}.

\begin{theorem}\label{T:RingelGreen}
Let $\overr{\Delta}$ be a quiver without loops and oriented cycles, $|\Delta_0| = n$ and
and $C= C(\Delta) \in \Mat_{n \times n}(\ZZ)$ be the Cartan matrix of $\Rep(\overr{\Delta})$, i.e.
$c_{ij} =
(\bar{S}_i, \bar{S}_j) = \langle \bar{S}_i, \bar{S}_j \rangle + \langle \bar{S}_j, \bar{S}_i \rangle,
\, 1 \le i,j \le n.$
Then there exists an isomorphism of algebras  $U_q(C) \stackrel{\pi_{\overr{\Delta}}}\lar
DC(\overr{\Delta})$ mapping $E_i$ to $[S_i]^+$, $F_i$ to $[S_i]^-$
and $K_i^\pm$ to $K_{\pm\overline{S}_i}$ for $1 \le i \le n$.
\end{theorem}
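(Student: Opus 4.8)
The plan is to exhibit a surjective algebra homomorphism $\pi_{\overr{\Delta}}\colon U_q(C)\to DC(\overr{\Delta})$ with the stated effect on generators, and then to prove injectivity by comparing triangular decompositions of the two sides.

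First I would check that the assignment $E_i\mapsto [S_i]^+$, $F_i\mapsto [S_i]^-$, $K_i^\pm\mapsto K_{\pm\overline{S}_i}$ respects the defining relations of $U_q(C)$. Since $\overr{\Delta}$ has no loops, each simple object $S_i$ of $\cA=\Rep(\overr{\Delta})$ is one--dimensional, so $|\Aut_\cA(S_i)|=q-1$ and $\Ext^1_\cA(S_i,S_i)=0$; hence $\langle\overline{S}_i,\overline{S}_i\rangle=1$ and $(\overline{S}_i,\overline{S}_j)=c_{ij}$. The relations among the $K_i$ are immediate, and $K_iE_j=v^{-c_{ij}}E_jK_i$, $K_iF_j=v^{c_{ij}}F_jK_i$ follow at once from the commutation rule $K_\alpha\circ[X]=v^{-(\alpha,\overline{X})}[X]\circ K_\alpha$ in $H^\pm(\cA)$. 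The quantum Serre relations for the $E_i$ (resp.\ the $F_i$) are precisely Ringel's fundamental relations in the composition subalgebra of $H^+(\cA)$ (resp.\ $H^-(\cA)$), see \cite{Ringel}. The only relation requiring a genuine computation is the cross relation $[E_i,F_j]=\delta_{ij}\,v\,(K_i-K_i^{-1})/(v-v^{-1})$: since $S_i$ is simple one has $\Delta([S_i]^\pm)=[S_i]^\pm\otimes\mathbbm{1}+K_{\pm\overline{S}_i}\otimes[S_i]^\pm$, so the Drinfeld double relation $D([S_i],[S_j])$ involves only four products of terms, and evaluating Green's pairing on them (the nonzero values being $([S_i],[S_j])=\delta_{ij}/(q-1)$ and $(K_\gamma,\mathbbm{1})=1$, the remaining pairings vanishing by the counit $\eta$ or by orthogonality of distinct isomorphism classes) gives, after using the reduced--double relation $K_{\overline{S}_i}^+K_{-\overline{S}_i}^-=\mathbbm{1}$, the identity $[S_i]^-[S_j]^+-[S_j]^+[S_i]^-=\delta_{ij}\,(K_i-K_i^{-1})/(q-1)$; since $q=v^{-2}$ one has $1/(q-1)=v/(v^{-1}-v)$, and this is exactly the asserted cross relation. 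Thus $\pi_{\overr{\Delta}}$ is a well-defined algebra homomorphism, and it is surjective because the $[S_i]^\pm$ together with the $K_{\pm\overline{S}_i}$ generate $DC(\overr{\Delta})$ by definition of the composition subalgebra.

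For injectivity I would use that $DC(\overr{\Delta})$, being the reduced Drinfeld double of the (genuine, as $\cA$ has finite length) bialgebra $C(\overr{\Delta})$, has the triangular decomposition $\overline{C}^-(\overr{\Delta})\otimes_{\widetilde{\QQ}}\widetilde{\QQ}[K]\otimes_{\widetilde{\QQ}}\overline{C}^+(\overr{\Delta})\xrightarrow{\sim}DC(\overr{\Delta})$ of Proposition \ref{P:DrinfDoubleStr}, while $U_q(C)$ has the classical triangular decomposition $U_q^-(C)\otimes_{\widetilde{\QQ}}\widetilde{\QQ}[K_1^\pm,\dots,K_n^\pm]\otimes_{\widetilde{\QQ}}U_q^+(C)\xrightarrow{\sim}U_q(C)$. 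The homomorphism $\pi_{\overr{\Delta}}$ carries each tensor factor of the former onto the corresponding factor of the latter; on the Cartan factor it is the obvious identification of the group algebras of the root lattice, and on $U_q^+(C)$ (resp.\ $U_q^-(C)$) it is an isomorphism onto $\overline{C}^+(\overr{\Delta})$ (resp.\ $\overline{C}^-(\overr{\Delta})$) by the Ringel--Green identification of the positive composition algebra of a quiver with $U_q^+(C)$, see \cite{Ringel, Green}. Being an isomorphism on each of the three factors and compatible with the two multiplication maps, $\pi_{\overr{\Delta}}$ is an isomorphism.

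The proof is essentially an assembly: Ringel's identification of the composition subalgebra (quantum Serre relations plus the positive part), Green's bialgebra structure with its pairing, and the Drinfeld double formalism of Section~2. I expect the main points demanding care to be, on the computational side, the bookkeeping in the relation $D([S_i],[S_j])$ --- tracking the $\pm$--decorations and the values of the Green pairing on the terms produced by the coproducts --- and, on the structural side, the input of the triangular decomposition of $U_q(C)$, which is classical but is the one place where a genuine fact about quantum Kac--Moody algebras, rather than about Hall algebras, enters.
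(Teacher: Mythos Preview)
The paper does not prove this theorem: it is stated as a known result due to Ringel and Green, with no argument given. Your proposal is the standard proof and is correct; the verification of the defining relations (in particular the cross relation via $D([S_i],[S_j])$, where your constant $1/(q-1)=v/(v^{-1}-v)$ indeed matches the paper's normalization $[E_i,F_j]=\delta_{ij}\,v(K_i-K_i^{-1})/(v-v^{-1})$) and the injectivity via comparison of triangular decompositions are exactly how one assembles the Ringel--Green identification of $U_q^+(C)$ with the composition subalgebra together with the Drinfeld double formalism of Section~2.
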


\noindent
By Theorem \ref{T:Cramer}, the reflection functors $\RR \SS_*^+ $ and $\LL \SS_*^-$ induce
mutually inverse algebra isomorphisms of $DH(\overr{\Delta})$ and $DH(\overl{\Delta})$.
The following result was first  proven  by
Sevenhant and  van den Bergh \cite{SevenhBergh}.

\begin{theorem}\label{C:ReflonCompAlg}
Let $\overr{\Delta}$ be a finite quiver without loops and oriented cycles,
$* \in \Delta_0$ be a sink and $\overl{\Delta}$ be the quiver obtained from $\overr{\Delta}$ by
inverting all the arrows ending at $*$. Then
the derived reflection functor $\RR \SS_*^{+}$  induces an algebra isomorphism of the
 Drinfeld doubles of the composition algebras
$ DC(\overr{\Delta}) \stackrel{\SS_*^{+}}\longrightarrow  DC(\overl{\Delta}),
$
whose inverse is the isomorphism induced by the reflection functor
$\LL \SS_{*}^{-}$. Moreover, they induce a pair  of mutually inverse
 automorphisms  $\SS_*^{\pm}$
of the quantized enveloping algebra
 $U_q(C)$ determined by the quiver $\overr\Delta$:
$$
\xymatrix
{
U_q(C) \ar[rr]^{\SS^{+}_*} \ar[d]_{\pi_{\overr\Delta}} & & U_q(C) \ar[d]^{\pi_{\overl\Delta}} \\
DC(\overr{\Delta}) \ar[rr]^{\SS^{+}_*} & & DC(\overl{\Delta})
}
\qquad
\xymatrix
{
U_q(C) \ar[rr]^{\SS^{-}_*} \ar[d]_{\pi_{\overl\Delta}} & & U_q(C) \ar[d]^{\pi_{\overr\Delta}} \\
DC(\overl{\Delta}) \ar[rr]^{\SS^{-}_*} & & DC(\overr{\Delta})
}
$$
which
are given by the following  formulae:
$$
\begin{array}{|l|cc|l|}
\hline
E_i \xrightarrow{\SS_*^{+}} E_i, \quad F_i  \xrightarrow{\SS_*^{+}}
 F_i &  \mbox{\rm if} & c_{i*} = 0 &
 E_i \xrightarrow{\SS_*^{-}} E_i, \quad F_i  \xrightarrow{\SS_*^{-}} F_i  \\
 \hline
 E_*  \xrightarrow{\SS_*^{+}}  v^{-1} F_* K_*, \quad    F_* \xrightarrow{\SS_*^{+}}  v^{-1}
E_* K_*^{-1} & \mbox{\rm if} & c_{i*} = 2 &
E_*  \xrightarrow{\SS_*^{-}}  v F_* K_*^{-1}, \quad    F_* \xrightarrow{\SS_*^{-}}  v E_* K_*  \\
\hline
 E_i  \xrightarrow{\SS_*^{+}}       \sum\limits_{a+b \,=\, -c_{i*}}
(-1)^a v^{-b} E_*^{(a)}  E_i  E_*^{(b)} & \mbox{\rm if}   & c_{i*} <  0 &
E_i  \xrightarrow{\SS_*^{-}}       \sum\limits_{a+b \,=\, -c_{i*}}
(-1)^a v^{-b} E_i^{(a)}  E_*  E_i^{(b)}
 \\
 \hline
 F_i \xrightarrow{\SS_*^{+}}  \sum\limits_{a+b \, = \, -c_{i*}}
(-1)^a v^{-b} F_*^{(a)}  F_i  F_*^{(b)} &   \mbox{\rm if}   & c_{i*} < 0 &
F_i \xrightarrow{\SS_*^{-}}  \sum\limits_{a+b \, = \, -c_{i*}}
(-1)^a v^{-b} F_i^{(a)}  F_*  F_i^{(b)}
 \\
 \hline
K_i^\pm   \xrightarrow{\SS_*^{+}} K_i^{\pm} & \mbox{\rm if}   & c_{i*} =  0 &
K_i^\pm   \xrightarrow{\SS_*^{-}} K_i^{\pm} \\
\hline
K_i^\pm   \xrightarrow{\SS_*^{+}} K_i^{\mp  c_{i*}} K_{*}^{\pm}  & \mbox{\rm if}   & c_{i*} \ne 0 &
K_i^\pm   \xrightarrow{\SS_*^{-}} K_*^{\mp  c_{i*}} K_{i}^{\pm}.   \\
\hline
\end{array}
$$
As it was explained in \cite[Section 13]{SevenhBergh}, up to a certain twist these automorphisms  coincide with
the symmetries discovered by Lusztig \cite{Lusztig}.
\end{theorem}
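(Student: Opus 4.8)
The plan is to obtain the maps $\SS_*^{\pm}$ as the restrictions to the composition subalgebras of the Drinfeld-double isomorphisms supplied by Cramer's Theorem~\ref{T:Cramer}, applied to the derived reflection functors. By Theorem~\ref{T:basicsonrefl} the functor $\RR\SS_*^{+}\colon D^b\bigl(\Rep(\overr{\Delta})\bigr)\to D^b\bigl(\Rep(\overl{\Delta})\bigr)$ is a triangulated equivalence with quasi-inverse $\LL\SS_*^{-}$, so Theorem~\ref{T:Cramer} produces an algebra isomorphism $DH(\overr{\Delta})\to DH(\overl{\Delta})$ whose inverse is the one induced by $\LL\SS_*^{-}$. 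It then remains to: (a) compute the images of the generators $K_{\pm\overline{S}_i}$ and $[S_i]^{\pm}$ of $DC(\overr{\Delta})$ under this isomorphism; (b) check these images lie in $DC(\overl{\Delta})$, and symmetrically, so that the isomorphism restricts to mutually inverse isomorphisms $DC(\overr{\Delta})\to DC(\overl{\Delta})$ and $DC(\overl{\Delta})\to DC(\overr{\Delta})$; (c) transport everything through the Ringel--Green isomorphisms $\pi_{\overr{\Delta}},\pi_{\overl{\Delta}}$ of Theorem~\ref{T:RingelGreen}, which yields the automorphisms $\SS_*^{\pm}$ of $U_q(C)$ and the entries of the table. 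The comparison with Lusztig's braid operators \cite{Lusztig} is then the identification already carried out in \cite{SevenhBergh}.

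For the Cartan part I would first observe that $\RR\SS_*^{+}$ acts on $K_0(\overr{\Delta})\cong K_0(\overl{\Delta})$ by the reflection $s_*$ of the root lattice in the simple root $\overline{S}_*$ (immediate from the effect of $\SS_*^{\pm}$ on dimension vectors), so that $\SS_*^{+}(K_\alpha)=K_{s_*(\alpha)}$; this gives the stated images of $K_i^{\pm}$ at once, and $s_*^2=\mathrm{id}$ gives $\SS_*^{-}\circ\SS_*^{+}=\mathrm{id}$ on $\widetilde\QQ[K]$. For the Chevalley generators I would apply Cramer's formula (\ref{E:action-on-Db}) in three cases according to $c_{i*}$. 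If $i\neq *$ and $c_{i*}=0$, then $\RR\SS_*^{+}(S_i)=S_i$ with $n_{\SS_*^{+}}(S_i)=0$, so (\ref{E:action-on-Db}) gives $[S_i]^{\pm}\mapsto[S_i]^{\pm}$. If $i=*$, then by Theorem~\ref{T:basicsonrefl} $\RR\SS_*^{+}(S_*)\cong S_*[-1]$, so $n_{\SS_*^{+}}(S_*)=1$ and $\widehat{S_*}=S_*$; feeding this into (\ref{E:action-on-Db}), using $\langle\overline{S}_*,\overline{S}_*\rangle=1$ and rewriting with the $K_*$-commutation relations produces the entry $E_*\mapsto v^{-1}K_*^{-1}F_*$, $F_*\mapsto vE_*K_*$ (and the transposed one for $\SS_*^{-}$). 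If $i\neq *$ and $m:=-c_{i*}>0$, then $\RR\SS_*^{+}(S_i)=\SS_*^{+}(S_i)$ is the rigid indecomposable module $\widehat{S_i}$ of $\Rep(\overl{\Delta})$ of dimension vector $\overline{S}_i+m\overline{S}_*$, supported on the two-vertex subquiver $\{*,i\}$, with $n_{\SS_*^{+}}(S_i)=0$; so (\ref{E:action-on-Db}) gives $[S_i]^{\pm}\mapsto[\widehat{S_i}]^{\pm}$.

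The decisive step, and the one I expect to be the main obstacle, is to identify $[\widehat{S_i}]$ with the element
\[
\sum_{a+b=m}(-1)^{a}v^{-b}\,[S_*]^{(a)}\,[S_i]\,[S_*]^{(b)}
\]
(divided powers) of the composition algebra $C(\overl{\Delta})$: this identity simultaneously shows $[\widehat{S_i}]^{\pm}\in DC(\overl{\Delta})$ and produces the divided-power entries of the table. It is a finite computation in the Hall algebra of the generalized Kronecker quiver on $\{*,i\}$ (with $m$ arrows $*\to i$): one enumerates the preprojective indecomposables whose dimension vectors are dominated by $\overline{S}_i+m\overline{S}_*$, records their automorphism groups, and evaluates the iterated products of $[S_*]$ and $[S_i]$; alternatively one may quote Ringel's and Lusztig's formulas \cite{Ringel,Lusztig} for the quantum Serre relations. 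Apart from this, the only delicate point is the bookkeeping of the powers of $v$ (recall $q=v^{-2}$) coming from (\ref{E:action-on-Db}), from the root-category twist, and from the defining relations of $DH$ — this is exactly what matches the $v^{\pm1}$ normalizations in the table (and accounts for the ``twist'' in the comparison with \cite{Lusztig}). Once all generators are matched, the restriction is an isomorphism because the same analysis applied to $\LL\SS_*^{-}$ yields the inverse; and the expected compatibility of the various reflections — e.g.\ that their composite over an admissible ordering of sinks is the automorphism induced by the Serre functor — follows formally from Theorem~\ref{T:CoxetervsTau} and Proposition~\ref{P:aboutDerRefl}.
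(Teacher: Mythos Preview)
The paper does not actually give a proof of this theorem: it is stated as a known result, attributed to Sevenhant and van den Bergh \cite{SevenhBergh}, with the remark (immediately preceding the statement) that ``By Theorem~\ref{T:Cramer}, the reflection functors $\RR\SS_*^+$ and $\LL\SS_*^-$ induce mutually inverse algebra isomorphisms of $DH(\overr{\Delta})$ and $DH(\overl{\Delta})$.'' Your proposal is precisely an elaboration of this: you invoke Cramer's theorem for the isomorphism of the full doubles, and then compute on the generators $[S_i]^\pm$, $K_i^\pm$ to verify that the map restricts to the composition subalgebras and is given by the tabulated formulae. This is the natural argument, and it is consistent with what the paper sketches; the paper simply defers the explicit Hall-algebra computation (your ``decisive step'' identifying $[\widehat{S_i}]$ with the divided-power sum) to \cite{SevenhBergh}.

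One small historical remark: the original proof in \cite{SevenhBergh} predates Cramer's theorem, so their argument does not pass through the Drinfeld-double functoriality of derived equivalences but rather works directly with Hall-algebra computations and the Ringel--Green presentation. Your route via Theorem~\ref{T:Cramer} is the cleaner modern packaging, and it is exactly what the paper's preceding sentence suggests; the substantive content (the identity for $[\widehat{S_i}]$ in the generalized Kronecker Hall algebra) is the same in both approaches.
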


\noindent
In the next subsection, the following notion will be important.

\begin{definition}\label{D:CoxAutonComp}
Let $\overr{\Delta}$ be a quiver without loops and oriented cycles and
 $\Delta_0 = $ \linebreak $\{1,2, \dots, n\}$ be an admissible labeling of vertices. Then
$\AA:= \SS_1^+ \circ \dots \circ \SS_n^+:  DC(\overr{\Delta}) \rightarrow DC(\overr{\Delta})$
is the  \emph{Coxeter automorphism} of $DC(\overr{\Delta})$. Using Corollary \ref{C:ReflonCompAlg}, we
also  obtain  the corresponding  automorphism of the algebra $U_q(C)$, given by the commutative diagram
$$
\xymatrix
{
U_q(C) \ar[rrrr]^{\AA^+} \ar[d]_{\pi_{\overr\Delta}} & & & & U_q(C)  \ar[d]^{\pi_{\overr\Delta}} \\
DC(\overr{\Delta}) \ar[r]^{\SS_n^+} &  DC(\overr{\Delta}') \ar[r] & \dots \ar[r] & DC(\overr{\Delta}'') \ar[r]^{\SS_1^+} & DC(\overr{\Delta}).
}
$$
The inverse automorphism $U_q(C) \xrightarrow{\AA^-} U_q(C)$ is defined in a similar way.
\end{definition}

\subsection{Composition algebra of a domestic  weighted projective line}
Let $\XX = \XX(\underline{p})$ be a weighted projective line
of domestic type $\underline{p} = (p_1, \dots, p_n)$. The weight sequence $\underline{p}$ determines the affine Dynkin diagram
$\Delta = \Delta(\underline{p})$ by the following table.

\begin{equation}\label{E:weigtvsDynkintype}
\begin{array}{|c|c|}
\hline
\mathrm{weight \; sequence} \;  \underline{p} & \mathrm{affine \; Dynkin \;  diagram}\;  {\Delta}\\
\hline
(p, q)    & \widehat{A}_{p+q-1} \\
\hline
(2, 2, n): \; (n \ge 2)  & \widehat{D}_{n+2} \\
\hline
(2, 3, 3)                & \widehat{E}_{6}  \\
\hline
(2, 3, 4)              & \widehat{E}_{7} \\
\hline
(2, 3, 5)              & \widehat{E}_{8} \\
\hline
\end{array}
\end{equation}
Note that in the first line of this table $\min(p, q)$ can be equal to one, in that case
the corresponding weight has  to be omitted.

\vspace{1mm}
\noindent
The following theorem  follows from a result of Geigle and Lenzing \cite[Proposition 4.1]{GL}.

\begin{theorem}
Let $\XX = \XX(\underline{p})$ be a weighted projective line of domestic type and
${\Delta}$ be the corresponding affine Dynkin diagram. For a sake of simplicity
 assume that $\overr{\Delta}$ has the ``star-shaped'' orientation in the case
of $\widehat{D}_{n+2}$ $(n \ge 2)$, $\widehat{E}_{6}$, $\widehat{E}_{7}$ and $\widehat{E}_{8}$
and it has $p$ subsequent arrows going clockwise and $q$ subsequent arrows going anti-clockwise
in the case  $\underline{p} = (p, q)$. Then there exists  a derived equivalence
$D^b\bigl(\Rep(\overr{\Delta})\bigr) \stackrel{\GG}\lar D^b\bigl(\Coh(\XX)\bigr)$.
\end{theorem}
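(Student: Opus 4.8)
Since item $8$ above already provides a derived equivalence $D^b\bigl(\Coh(\XX)\bigr)\simeq D^b\bigl(\Rep(\overr{\Delta})\bigr)$ for \emph{some} orientation of $\overr{\Delta}$ (reading the target of $\GG$ as $D^b\bigl(\Coh(\XX)\bigr)$), the real content of the statement is that the explicit ``standard'' orientation described above can be realised. The plan is to pass through the canonical algebra and then invoke \cite[Proposition 4.1]{GL}. First I would use item $8$: the bundle $\kF=\kO\oplus\bigoplus_{i=1}^n\bigl(\bigoplus_{l=1}^{p_i-1}\kO(l\vec{x}_i)\bigr)\oplus\kO(\vec{c})$ is a tilting object of $D^b\bigl(\Coh(\XX)\bigr)$ with $\End(\kF)^{\mathrm{op}}=C(\underline{p},\underline{\lambda})$, so $D^b\bigl(\Coh(\XX)\bigr)\simeq D^b\bigl(C(\underline{p},\underline{\lambda})-\mod\bigr)$, and it then suffices to produce a derived equivalence between $C(\underline{p},\underline{\lambda})$ and the path algebra $\kk\overr{\Delta}$ carrying the prescribed orientation.

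For $\underline{p}=(p,q)$ this costs nothing. Here $n=2$, so the ideal $I(\underline{p},\underline{\lambda})$ is empty, $R=\kk[x_1,x_2]$, and $C\bigl((p,q)\bigr)$ has no relations. Using item $2$ above to identify $\Hom_{\XX}\bigl(\kO(\vec{x}),\kO(\vec{y})\bigr)=R_{\vec{y}-\vec{x}}$ between the summands $\kO,\kO(\vec{x}_1),\dots,\kO\bigl((p-1)\vec{x}_1\bigr),\kO(\vec{x}_2),\dots,\kO\bigl((q-1)\vec{x}_2\bigr),\kO(\vec{c})$ of $\kF$, one reads off that $\End(\kF)^{\mathrm{op}}$ is the path algebra of the $\widehat{A}_{p+q-1}$-quiver with a unique source $\kO$ and a unique sink $\kO(\vec{c})$, joined by a clockwise run of $p$ arrows (each given by multiplication by $x_1$) and an anti-clockwise run of $q$ arrows (multiplication by $x_2$); this is exactly the quiver $\overr{\Delta}$ of the statement, and there are no relations, since the two paths $\kO\to\kO(\vec{c})$ remain linearly independent inside $R_{\vec{c}}=\kk x_1^p\oplus\kk x_2^q$. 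The vanishing of $\Ext^1$ between the summands of $\kF$ follows from Serre duality (item $7$ above) together with $\vec\omega=-\vec{x}_1-\vec{x}_2$, since the twisted $\LL(\underline{p})$-degrees that then occur never lie in $\LL(\underline{p})_+$.

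For $\underline{p}\in\{(2,2,n),(2,3,3),(2,3,4),(2,3,5)\}$, i.e.\ $n=3$, the canonical algebra carries the single relation coming from $\lambda_3$ and is no longer hereditary. Here I would quote \cite[Proposition 4.1]{GL}, which says that in the domestic case $C(\underline{p},\underline{\lambda})$ is tilted from the tame hereditary algebra $\kk\overr{\Delta}$, with $\overr{\Delta}$ the corresponding star-shaped affine quiver; hence $D^b\bigl(C(\underline{p},\underline{\lambda})-\mod\bigr)\simeq D^b\bigl(\kk\overr{\Delta}-\mod\bigr)=D^b\bigl(\Rep(\overr{\Delta})\bigr)$, and composing with the first step produces $\GG$. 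For a self-contained argument one instead exhibits a tilting $C(\underline{p},\underline{\lambda})$-module $M$ with $\End_{C}(M)^{\mathrm{op}}\cong\kk\overr{\Delta}$ by a finite sequence of APR-tilts, or equivalently by applying a Coxeter-type chain of reflection functors (Definition \ref{D:reflfunct} and the subsequent results); the domestic hypothesis $\sum_i\bigl(1-1/p_i\bigr)<2$ is exactly what guarantees that such an $M$ exists (for tubular and wild weight types $C(\underline{p},\underline{\lambda})$ is derived equivalent to no hereditary algebra).

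The main obstacle is this last step for the four exceptional types: verifying that the iterated tilt (equivalently, the explicit chain of reflections) lands precisely on the path algebra of the \emph{star-shaped} affine quiver, and not on some other tilted algebra. This is the substance of \cite[Proposition 4.1]{GL}, and carrying it out by hand is a short case-by-case check over $(2,2,n)$, $(2,3,3)$, $(2,3,4)$ and $(2,3,5)$. Everything else, namely the reduction via $\kF$ and the whole $\widehat{A}$ case, is routine.
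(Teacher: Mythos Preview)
Your proposal is correct and follows the same route as the paper, which gives no proof at all beyond the attribution ``follows from a result of Geigle and Lenzing \cite[Proposition 4.1]{GL}''. You have correctly spotted the typo in the target of $\GG$ (it should be $D^b\bigl(\Coh(\XX)\bigr)$, as confirmed by the very next corollary) and unpacked the citation: item~8 gives the tilting equivalence with the canonical algebra, in type $\widehat{A}$ the canonical algebra is already the path algebra of the prescribed cyclic orientation, and for the remaining domestic types one quotes \cite[Proposition 4.1]{GL} to tilt from $C(\underline{p},\underline{\lambda})$ to the star-shaped hereditary algebra.
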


\noindent
Applying Cramer's Theorem \ref{T:Cramer}, we immediately obtain the following corollary.

\begin{corollary}\label{C:CramerForAffine}
The  equivalence $D^b\bigl(\Rep(\overr{\Delta})\bigr) \stackrel{\GG}\lar
D^b\bigl(\Coh(\XX)\bigr)$ induces an algebra isomorphism of the reduced
Drinfeld doubles of the Hall algebras $DH(\overr{\Delta}) \stackrel{\GG}\lar DH(\XX)$.
\end{corollary}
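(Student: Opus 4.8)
The plan is to deduce this statement as an immediate application of Cramer's Theorem \ref{T:Cramer}. First I would check that the two categories in play meet the hypotheses of that theorem, i.e.\ that both are $\kk$-linear finitary hereditary abelian categories. For $\cA = \Rep(\overr{\Delta})$, where $\overr{\Delta}$ is a finite quiver without loops and oriented cycles, this is classical: $\cA$ is hereditary, of finite length over $\kk$, and both $\Hom$- and $\Ext^1$-spaces are finite dimensional, so $\cA$ is finitary. For $\cB = \Coh(\XX)$, the facts of Geigle and Lenzing recalled above (in particular, that $\Coh(\XX)$ is a hereditary noetherian abelian category which is $\Ext$-finite over $\kk$) show that it is finitary as well. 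The derived equivalence $D^b\bigl(\Rep(\overr{\Delta})\bigr) \stackrel{\GG}\lar D^b\bigl(\Coh(\XX)\bigr)$ is the one supplied by the preceding theorem, which itself rests on the Geigle--Lenzing tilting object recalled in property $8$ of Section $4$.

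With these verifications in hand, Theorem \ref{T:Cramer} applied with $\cA = \Rep(\overr{\Delta})$ and $\cB = \Coh(\XX)$ produces an algebra isomorphism $DH(\overr{\Delta}) \stackrel{\GG}\lar DH(\XX)$, uniquely characterised by $\GG(K_\alpha) = K_{\GG(\alpha)}$ together with the formula (\ref{E:action-on-Db}) describing the image of each class $[X]^{\pm}$ in terms of the induced equivalence $\widehat{\GG}$ of the root categories. This is precisely the content of the corollary, so the argument is essentially a one-line citation.

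The only point requiring a moment's care -- and it is a matter of bookkeeping rather than a genuine obstacle -- is that $\Coh(\XX)$ is \emph{not} of finite length, so one cannot invoke the Hopf-algebra formalism for finitary categories directly and must instead use the topological-bialgebra incarnation of the (reduced) Drinfeld double recalled in Section $2$. However, this is exactly the generality in which Theorem \ref{T:Cramer} is stated above, so nothing further needs to be verified; the corollary follows.
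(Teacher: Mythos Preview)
Your proposal is correct and takes essentially the same approach as the paper: the corollary is stated immediately after the sentence ``Applying Cramer's Theorem \ref{T:Cramer}, we immediately obtain the following corollary,'' so the paper's own argument is precisely the one-line citation you describe. Your additional verification that both $\Rep(\overr{\Delta})$ and $\Coh(\XX)$ are $\kk$-linear finitary hereditary categories is accurate and simply makes explicit what the paper leaves implicit.
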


\noindent
The next result is a refinement of this statement.

\begin{theorem}\label{T:isomAffineCase}
The  equivalence $D^b\bigl(\Rep(\overr{\Delta})\bigr) \stackrel{\GG}\lar D^b\bigl(\Coh(\XX)\bigr)$ induces an algebra isomorphism of the reduced
Drinfeld doubles of the composition  Hall algebras $DC(\overr{\Delta}) \stackrel{\GG}\lar DU(\XX)$.
Moreover, the following diagram of algebra homomorphisms is commutative:
\begin{equation}\label{E:commCoxTransf}
\begin{array}{c}
\xymatrix{
DC(\overr{\Delta}) \ar[rr]^{\GG} \ar[d]_{\AA_{\overr{\Delta}}} & &  DU(\XX) \ar[d]^{\AA_{\XX}} \\
DC(\overr{\Delta}) \ar[rr]^{\GG}  & &  DU(\XX).
}
\end{array}
\end{equation}
Here $\AA_{\overr{\Delta}}$ is the Coxeter transformation introduced in
Definition \ref{D:CoxAutonComp} and $\AA_{\XX}$ is the automorphism of
$DU(\XX)$ induced by the Auslander-Reiten translation $\kF \mapsto \kF(\vec{\omega})$.
\end{theorem}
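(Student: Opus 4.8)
The plan is to reduce the statement to Cramer's Theorem~\ref{T:Cramer} together with the structural facts about $U(\XX)$ and about reflection functors collected above. By Corollary~\ref{C:CramerForAffine} the equivalence $\GG$ already induces an algebra isomorphism $\GG\colon DH(\overr{\Delta})\xrightarrow{\ \sim\ }DH(\XX)$ with $\GG(K_\alpha)=K_{\GG(\alpha)}$, so for the first assertion it suffices to prove the two inclusions $\GG\bigl(DC(\overr{\Delta})\bigr)\subseteq DU(\XX)$ and $\GG^{-1}\bigl(DU(\XX)\bigr)\subseteq DC(\overr{\Delta})$: together with $\GG$ being an isomorphism of the $DH$'s these force $\GG\bigl(DC(\overr{\Delta})\bigr)=DU(\XX)$ and restrict the triangular decompositions of Proposition~\ref{P:DrinfDoubleStr} and Theorem~\ref{T:U(X)is-bialgebra} to one another. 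Each inclusion has to be checked only on generators: $DC(\overr{\Delta})$ is generated by $[S_i]^\pm$ and $K_\alpha$, and $DU(\XX)$ by the $[\kO(l\vec{c})]^\pm$, $[\kS_i^{(j)}]^\pm$ and $K_\alpha$ (Corollary~\ref{C:structuteDU(X)}). The observation that makes the computation finite is that all of these objects are \emph{exceptional}: a simple $\kk\overr{\Delta}$-module has no self-extensions since $\overr{\Delta}$ has no loops, and for $\XX$ domestic the line bundles $\kO(\vec{x})$ and the exceptional simple torsion sheaves $\kS_i^{(j)}$ are exceptional objects of $\Coh(\XX)$; as both $\Rep(\overr{\Delta})$ and $\Coh(\XX)$ are hereditary, $\GG$ and $\GG^{-1}$ carry exceptional objects to shifts of single exceptional objects, so the transition formula~\eqref{E:action-on-Db} applies to all generators at once.

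For $\GG\bigl(DC(\overr{\Delta})\bigr)\subseteq DU(\XX)$ I would write $\GG(S_i)\cong Z_i[-n_i]$ with $Z_i\in\Coh(\XX)$ indecomposable exceptional, and identify $Z_i$ from the Geigle--Lenzing tilting object underlying $\GG$ (for the star-shaped, resp.\ $(p,q)$-, orientations fixed above): $Z_i$ is a line bundle, an exceptional torsion sheaf at a special point, or an exceptional vector bundle. In each case $[Z_i]\in U(\XX)$: for line bundles this is Lemma~\ref{L:line-bundles-are-inU(X)}; for the exceptional torsion sheaves $\kT_i^{(a,b)}$ it is the argument in the proof of Proposition~\ref{P:onU(X)tor}, namely that $[\kT_i^{(a,b)}]$ lies in the subalgebra generated by $[\kS_i^{(a)}],\dots,[\kS_i^{(b)}]$; and for higher-rank exceptional bundles one propagates the statement along a chain of mutations of the full exceptional collection given by the summands of the canonical tilting bundle, using extension identities of the type in Lemma~\ref{L:line-bundles-are-inU(X)}. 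Formula~\eqref{E:action-on-Db} then yields $\GG\bigl([S_i]^\pm\bigr)\in DU(\XX)$, and $\GG(K_\alpha)\in\widetilde{\QQ}[K]\subseteq DU(\XX)$, giving the inclusion. Symmetrically, for $\GG^{-1}\bigl(DU(\XX)\bigr)\subseteq DC(\overr{\Delta})$ one writes $\GG^{-1}$ of the exceptional sheaves $\kO(l\vec{c})$ and $\kS_i^{(j)}$ as shifts of exceptional $\kk\overr{\Delta}$-modules $W$ and uses that $[W]\in C(\overr{\Delta})$ for every exceptional module over the tame hereditary algebra $\kk\overr{\Delta}$ --- classically for the preprojective, preinjective and exceptional regular modules, or via the reflection-functor invariance of the composition algebra (Theorem~\ref{C:ReflonCompAlg}). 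This establishes the isomorphism $DC(\overr{\Delta})\xrightarrow{\ \GG\ }DU(\XX)$.

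For the commutativity of the diagram~\eqref{E:commCoxTransf}, recall that $\AA_{\XX}$ is induced by $\tau_{\XX}\colon\kF\mapsto\kF(\vec{\omega})$, which together with the shift is the Serre functor of $D^b(\Coh(\XX))$ (the Auslander--Reiten formula recalled above), while by Proposition~\ref{P:SerreFunct} the functor $\tau_D[1]$ is the Serre functor of $D^b(\Rep(\overr{\Delta}))$. Any triangulated equivalence commutes with Serre functors, hence $\GG\circ\tau_D\cong\tau_{\XX}\circ\GG$ as functors; since the assignment of Theorem~\ref{T:Cramer} depends on a functor only up to isomorphism and is functorial with respect to composition of equivalences (by its uniqueness clause), the induced automorphisms of $DH(\overr{\Delta})$ and $DH(\XX)$ satisfy $\GG\circ\tau_D=\tau_{\XX}\circ\GG$, with $\tau_{\XX}$ restricting to $\AA_{\XX}$ on $DU(\XX)$ (legitimate since $U(\XX)$ is $\Pic(\XX)$-invariant). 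On the quiver side, Theorem~\ref{P:aboutDerRefl} gives $\tau_D\cong\TT\circ\RR\SS_1^+\circ\cdots\circ\RR\SS_n^+$ (our $\overr{\Delta}$ is an affine Dynkin quiver, hence not a linearly oriented $A_n$); the sign functor $\TT$ fixes every simple module and every class in $K_0(\overr{\Delta})$, so it acts as the identity on $DC(\overr{\Delta})$, whereas $\RR\SS_1^+\circ\cdots\circ\RR\SS_n^+$ induces $\AA_{\overr{\Delta}}$ on $DC(\overr{\Delta})$ by Theorem~\ref{C:ReflonCompAlg} and Definition~\ref{D:CoxAutonComp}. Thus $\tau_D$ restricts to $\AA_{\overr{\Delta}}$ on $DC(\overr{\Delta})$; combining with the above (all four maps now restrict consistently to the composition-algebra doubles) we obtain $\GG\circ\AA_{\overr{\Delta}}=\AA_{\XX}\circ\GG$ on $DC(\overr{\Delta})$, i.e.\ the diagram~\eqref{E:commCoxTransf} commutes.

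The hard part is the explicit identification in the second paragraph: one must understand the Geigle--Lenzing equivalence well enough to pin down the indecomposables $Z_i=\GG(S_i)$ and the modules $W$, and in particular to control the Hall-algebra classes of the exceptional vector bundles that may occur --- this is where the (essentially combinatorial) bulk of the work lies, and where one could alternatively appeal to a mutation-compatibility of $U(\XX)$ to be proved for domestic (and tubular) type. A minor technical point to verify is the compatibility of Cramer's construction with composition of equivalences, which is used in the Serre-functor step.
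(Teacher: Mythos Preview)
Your strategy is the paper's: Cramer's isomorphism on the full Hall algebras, then restriction to the composition subalgebras by checking generators on each side, and the Serre--functor argument for the square~\eqref{E:commCoxTransf}. The difference is that the paper shortcuts what you flag as ``the hard part''. For $\GG\bigl(DC(\overr{\Delta})\bigr)\subseteq DU(\XX)$ it does not attempt a case analysis of $\GG(S_i)$: with the chosen orientations every $S_i$ is preprojective or preinjective, so $\GG(S_i)$ is (a shift of) an indecomposable vector bundle, and the paper then invokes \cite[Proposition~7.4]{S1}, which says that for domestic $\XX$ the class of \emph{every} indecomposable vector bundle lies in $U(\XX)$ --- this single citation replaces your line-bundle, torsion, and ``chain of mutations'' subcases. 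For surjectivity it uses the second generating set in Corollary~\ref{C:structuteDU(X)} (line bundles alone), notes that $\GG^{-1}\bigl(\kO(\vec{x})\bigr)$ is a shift of a preprojective or preinjective indecomposable, and cites Hubery's result that such classes lie in $C(\overr{\Delta})$; your appeal to ``every exceptional module has its class in $C(\overr{\Delta})$'' is correct but broader than needed. Your handling of the commutative square --- reducing to $\GG\circ\tau_D\cong\tau_{\XX}\circ\GG$ and then arguing that $\TT$ acts trivially on $DC(\overr{\Delta})$ because it fixes the generators $[S_i]^\pm$, $K_\alpha$ --- is actually more explicit than the paper's, which simply refers to the uniqueness of Serre functors under equivalences (\cite{ReitenBergh}).
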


\begin{proof}
First recall that the equivalence $\GG$ induces an isomorphism of the K--groups \linebreak
$K_0(\overr{\Delta}) \stackrel{\GG}\lar K_0(\XX)$.
By \cite[Proposition 7.4]{S1},  the composition algebra $U(\XX)$ contains classes of all
indecomposable locally free sheaves on $\XX$.
Next, by  \cite[Theorem 3]{HuberyAffine} it is known that for any indecomposable
preprojective or preinjective  object  $X \in \Ob\bigl(\Rep(\overr{\Delta})\bigr)$,  the
element $[X]$ belongs to the composition
algebra
$C(\overr\Delta)$. Moreover, $\GG(X) \cong \kF[i]$, where $\kF$ is some  indecomposable
vector bundle and $i$ is some integer. Hence, the algebra homomorphism $DH(\overr{\Delta}) \stackrel{\GG}\lar DH(\XX)$ restricts to an injective homomorphism
$DC(\overr{\Delta}) \stackrel{\GG}\lar DU(\XX)$. By Corollary \ref{C:structuteDU(X)}, the reduced
Drinfeld double $DU(\XX)$ is generated by the elements $\bigl[\kO(\vec{x})\bigr]^\pm$ for
$\vec{x} \in \LL(\underline{p})$ and the Cartan part $\widetilde{\QQ}\bigl[K_0(\XX)\bigr]$.
Hence, the map $DC(\overr{\Delta}) \stackrel{\GG}\lar DU(\XX)$ is surjective, hence
an isomorphism.  The commutativity of the diagram (\ref{E:commCoxTransf}) follows from the general fact
that an  equivalence of categories $D^b\bigl(\Rep(\overr{\Delta})\bigr) \stackrel{\GG}\lar
D^b\bigl(\Coh(\XX)\bigr)$ commutes with Serre functors:
$\AA_\XX \circ \GG \cong \GG \circ \AA_{\overr{\Delta}}$, see for example  \cite{ReitenBergh}.
\end{proof}

\begin{remark}
The automorphism $\AA = \AA_\XX$  preserves
the triangular decomposition $DU(\XX)$ $ = \overline{U}(\XX)^+ \otimes_{\widetilde\QQ} \widetilde\QQ\bigl[K_0(\XX)\bigr] \otimes_{\widetilde\QQ} \overline{U}(\XX)^+$. Moreover, it maps the algebra
$DU(\XX)_{\mathsf{tor}}$ to itself. This action is given by the following formulae:
$$
\AA(Z_r^\pm) = Z_r^\pm, \quad \AA(C) = C, \quad \AA\bigl(\bigl[\kS_i^{(j)}\bigr]^\pm\bigr) = \bigl[\kS_i^{(j+1)}\bigr]^\pm
\quad \mbox{\rm and} \quad \AA\bigl(K_i^{(j)}\bigr) = K_i^{(j+1)}
$$
for all $r \in \ZZ_{>0}$ and $(i, j)$ such that $1 \le i \le n, 1 \le j \le p_i$ (as usual, we set
$(i, p_i+1) = (i, 1)$). The explicit action of the Coxeter automorphism $\AA_{\overr\Delta}$
is given by  Theorem \ref{P:aboutDerRefl} and  Theorem \ref{C:ReflonCompAlg}. The commutative diagram
(\ref{E:commCoxTransf}) yields a practical rule to compute  the images
of elements of the algebra $DU(\XX)$ in the algebra $DC(\overr{\Delta})$ under the map $\GG^{-1}$.
\end{remark}

Recall that for any object $\kF$ of the category $\Coh(\XX)$ such that
$\Hom(\kF, \kF) = \kk$ and $\Ext^1(\kF, \kF) = 0$ we have the following equality in
the Hall algebra $H(\XX)$:
$$
\bigl[\kF^{\oplus n}\bigr] = v^{n(n-1)}\frac{[\kF]^n}{[n]!} = v^{n(n-1)} [\kF]^{(n)}.
$$

\begin{definition}
Let $\kP := \bigl\{\alpha \in K_0(\XX) \bigl| \langle \alpha, \alpha\rangle = 1 \;
\mbox{\rm and} \;  \rk(\alpha) > 0 \bigr.\bigr\}$.
\end{definition}

\noindent
It is well-known that an  indecomposable vector bundle $\kF$ on a domestic
projective line $\XX$ is determined by its class in the K--group
$\alpha =  \overline\kF \in \kP \subset K_0(\XX)$.
On the other hand, any real root
$\alpha \in \kP$ corresponds to some indecomposable vector bundle
$\kF = \kF_\alpha$. It is also known that for any pair of indecomposable vector bundles
$\kF_1$ and $\kF_2$ on $\XX$ we have:  $\Ext^1(\kF_1, \kF_2) = 0$ or $\Ext^1(\kF_2, \kF_1) = 0$.
Hence, one can choose on order on  $\kP$ satisfying the following properties:
\begin{itemize}
\item if $\alpha \ne \beta$ and $\langle \alpha, \beta\rangle > 0$ then $\alpha > \beta$,
\item if $\langle \alpha, \beta\rangle < 0$ then $\alpha < \beta$.
\end{itemize}
Let $\kF$ be an arbitrary vector bundle on $\XX$. Then it splits into a direct sum
of indecomposable ones:
$
\kF \cong \kF_{\alpha_1}^{m_1} \oplus \dots \oplus \kF_{\alpha_t}^{m_t}
$
for some uniquely determined $\alpha_1, \dots, \alpha_t \in \kP$ such that
$\alpha_1 > \dots > \alpha_t$ and
$m_1, \dots, m_t \in \ZZ_{>0}$.
Then we have the following equality in the Hall algebra $H(\XX)$:
$
\bigl[\kF\bigr] = v^m \bigl[\kF_{\alpha_1}\bigr]^{(m_1)} \circ \dots \circ \bigl[\kF_{\alpha_t}\bigr]^{(m_t)},
$
where $m = m_1(m_1-1) + \dots + m_t(m_t-1) + \sum_{i < j} m_i m_j \langle \alpha_i, \alpha_j\rangle$.
This implies the following corollary on the structure of the composition algebra $U(\XX)$.

\begin{corollary}
Let $\XX = \XX(\underline{p})$ be a domestic weighted projective line.
Then the  subalgebra $\overline{U}(\XX)_{\mathsf{vec}}$ has a monomial basis
$\bigl[\kF_{\alpha_1}\bigr]^{(m_1)} \circ \dots \circ \bigl[\kF_{\alpha_t}\bigr]^{(m_t)}$
parameterized
by the  sequences $\bigl((\alpha_1, m_1), \dots, (\alpha_t, m_t)\bigr)$, where
$\alpha_1 > \dots > \alpha_t$ are from  $\kP$ and $m_1, \dots, m_t \in \ZZ_{>0}$.
\end{corollary}

\medskip
\noindent
\textbf{Summary}. Let $\XX = \XX(\underline{p})$ be a weighted projective line of domestic type,
$\mathfrak{g}$ the corresponding affine Lie algebra  (see Table (\ref{E:weigtvsDynkintype}))
 and $\kP = \bigl\{\alpha \in K \big|
\langle \alpha, \alpha \rangle = 1, \rk(\alpha) > 0 \bigr\}$ equipped with an order as above.
Then the following results are true.

\medskip
\noindent
1.~There exists an algebra isomorphism $DU(\XX) \stackrel{\FF}\lar
U_q(\mathfrak{g})$ such that $\AA_{\mathfrak{g}}  \circ \FF = \FF \circ \AA$,
where $\AA_{\mathfrak{g}}: U_q(\mathfrak{g}) \rightarrow U_q(\mathfrak{g})$ is the
Coxeter transformation and $\AA = \AA_{\XX}$ is the automorphism of $DU(\XX)$ induced by the
Auslander--Reiten translate.

\medskip
\noindent
2.~We have a triangular decomposition $DU(\XX) = \overline{U}(\XX)^+ \otimes_{\widetilde{\QQ}}
\widetilde{\QQ}[K] \otimes_{\widetilde{\QQ}} \overline{U}(\XX)^-$, which is preserved
under the action of $\AA$, where $\overline{U}(\XX)^\pm \cong \overline{U}(\XX)$ is the non-extended
composition Hall algebra of $\XX$.

\medskip
\noindent
3.~We have an injective algebra homomorphism $DU(\XX)_{\mathsf{tor}} \rightarrow
DU(\XX)$ and an isomorphism of Hopf algebras
$$
DU(\XX)_{\mathsf{tor}} \lar U_q(\widehat{\mathfrak{sl}}_{p_1})
\otimes_\kA \dots \otimes_\kA U_q(\widehat{\mathfrak{sl}}_{p_n}) \otimes_\kA \kH.
$$
In this decomposition, $\kA = \widetilde{\QQ}[C, C^{-1}]$, where $C$ is the ``common'' central element
of all components  $U_q(\widehat{\mathfrak{sl}}_{p_1}), \dots, U_q(\widehat{\mathfrak{sl}}_{p_n})$ and $\kH$
is the Heisenberg algebra. The algebra automorphism $\AA$ acts trivially on $\kH$ and rotates the
Drinfeld--Jimbo generators $\bigl[\kS_i^{(j)} \bigr]$ and $K_i^{(j)}$,  $1 \le j \le p_i$ in each component
 $U_q(\widehat{\mathfrak{sl}}_{p_i})$, $1 \le i \le n$.

 \medskip
\noindent
4.~The multiplication map $\overline{U}(\XX)_{\mathsf{vec}} \otimes_{\widetilde\QQ}
\overline{U}(\XX)_{\mathsf{tor}} \rightarrow \overline{U}(\XX)$ is an isomorphism of vector spaces.
Moreover, $\overline{U}(\XX)_{\mathsf{vec}}$ has a basis parameterized
by the sequences $\bigl((\alpha_1, m_1), \dots, (\alpha_t, m_t)\bigr)$, where
$\alpha_1 > \dots > \alpha_t \in \kP$ and $m_1, \dots, m_t \in \ZZ_{>0}$. This basis
is orthogonal with respect to the Green's form on $U(\XX)$.

\subsection{Composition algebra of a tubular weighted projective line}
The most beautiful  applications of our approach concern the case of a weighted
projective line $\XX$ of a tubular type $(\underline{\lambda}, \underline{p})$.
There are actually only four cases of such curves:
\begin{enumerate}
\item $\underline{\lambda} = (0, \infty, 1, \lambda)$ and
$\underline{p} = (2, 2, 2, 2)$ (type $\widehat{\widehat{D}}_4$).
\item  $\underline{\lambda} = (0, \infty, 1)$ and
$\underline{p} = (3, 3, 3)$ (type $\widehat{\widehat{E}}_6$),
$(2, 4, 4)$ (type $\widehat{\widehat{E}}_7$) or $(2, 3, 6)$ (type $\widehat{\widehat{E}}_8$).
\end{enumerate}
As it was already observed by Geigle and Lenzing in \cite[Example 5.8]{GL}, in the case of the base field $\kk = \mathbb{C}$,
the category $\Coh(\XX)$ is equivalent to the category of equivariant coherent sheaves
on an elliptic curve $\mathbb{E}$ with respect to an appropriate finite group action.
For example, if $\mathbb{E}$ is the  elliptic curve defined by the cubic equation
$zy^2 = x (x- z) (x - \lambda z)$ in $\PP^2$ for some $\lambda \in \kk\setminus \{0, 1\}$
and $\ZZ_2$ is a cyclic group
acting by the rule $(x: y:z) \mapsto (x: -y: z)$ then
$\Coh\bigl(\XX((2,2,2,2), \lambda)\bigr)$ is equivalent to $\Coh^{\ZZ_2}(\mathbb{E})$.

Below we give  a list of  the most important properties of the category of coherent sheaves on a tubular weighted
projective line $\XX$, which are due to Geigle and  Lenzing \cite{GL}, Lenzing and Meltzer
\cite{LenzMeltzer, LenzingMeltzer},
and Meltzer \cite{Meltzer}.

\medskip
\noindent
1. Let $\underline{p} = (p_1, \dots, p_n)$  be the weight type of $\XX$ and $p$ be the least
common multiple of $p_1, \dots, p_n$. Then there exists an isomorphism of functors
$\tau^p \stackrel{\simeq}\lar  \mathbbm{1}$, where $\Coh(\XX) \stackrel{\tau}\lar \Coh(\XX)$ is the
Auslander--Reiten translation in $\Coh(\XX)$. In particular, all Auslander--Reiten
components of $\Coh(\XX)$ are the so-called \emph{tubes}, hence the name ``tubular''.

\medskip
\noindent
2. Let $Z = (\rk,  \deg)\,:\, K_0(\XX) \rightarrow \ZZ^2$ be the standard stability function
on $\Coh(\XX)$. Then
any indecomposable coherent sheaf is automatically semi-stable with respect to $Z$.
For any $\mu \in \QQ \cup \{\infty\}$ let $\SSt^\mu = \SSt^\mu(\XX)$ be the abelian category
of semi-stable coherent sheaves of slope $\mu$ (in these notations,
$\SSt^\infty = \Tor(\XX)$). Then the Auslander-Reiten functor $\tau$
maps $\SSt^\mu$ to itself.

\medskip
\noindent
3. The derived category $D^b\bigl(\Coh(\XX)\bigr)$ has a rich group of exact auto-equivalences.
Let $\kF$ be an arbitrary coherent sheaf such that $\End(\kF) = \kk$ and  $p(\kF)$ be the
smallest positive integer such that $\tau^{p(\kF)}(\kF) \cong \kF$. Then $\kF$ induces  the following
auto-equivalence $T_\kF$ of the derived category $D^b\bigl(\Coh(\XX)\bigr)$: for any object
$\kH$ of $D^b\bigl(\Coh(\XX)\bigr)$ we have
$$
T_\kF(\kH) \cong  \mathsf{cone}\Bigl(\bigoplus\limits_{i \in \ZZ} \bigoplus\limits_{l=1}^{p(\kF)}
\Hom_{D^b(\XX)}\bigl(\tau^l(\kF[-i]), \kH\bigr) \otimes_{\kk} \tau^l(\kF[-i])
\stackrel{\mathsf{ev}}\lar \kH \Bigr).
$$
Moreover,  $T_\kF$ induces an isometry of the K--group $K_0(\XX)$ given by the formula
$$
K_0(\XX) \xrightarrow{T_\kF} K_0(\XX), \quad
a \mapsto a - \sum\limits_{l=1}^{p(\kF)} \bigl\langle \tau^l(\kF), a \bigr\rangle \bigl[\tau^l(\kF)\bigr].
$$
This auto-equivalence $T_\kF$ is called \emph{tubular mutation} or \emph{twist functor}.

\medskip
\noindent
4. Let $R := \bigl\{a \in K_0(\XX) \,| \, \langle a, \, -\, \rangle  = 0  \bigr\}$ be the
left radical of $K_0(\XX)$. Then $R \cong \langle \delta, \omega \rangle \cong \ZZ^2$,
where $\delta$ is the class of a simple torsion sheaf supported at a non-special point of
$\XX(\kk)$ and $\omega = \overline\kO +  \tau(\overline\kO) + \dots + \tau^{p(\kO)-1}(\overline\kO)$.
Moreover, the canonical group homomorphism $\Aut\bigl(D^b\bigl(\Coh(\XX)\bigr)\bigr)
\stackrel{\pi}\lar   \Iso\bigl(K_0(\XX)\bigr)$ restricts to a surjective group
homomorphism $\Aut\bigl(D^b\bigl(\Coh(\XX)\bigr)\bigr) \rightarrow \SL(2, \ZZ) \subseteq \Aut(R)$.
Here,  $\Iso\bigl(K_0(\XX)\bigr)$ is the group of isometries of $K_0(\XX)$, i.e.~the group
of linear automorphisms of $K_0(\XX)$ respecting the Euler form $\langle \,-\,,\,-\,\rangle$.

\medskip
\noindent
5. Let $\kS$ be the simple torsion sheaf supported at a non-special
point of $\XX(\kk)$. Then the tubular mutations $\FF: = T_\kO$ and $\GG:= T_\kS$ satisfy the braid group
relation: $\FF \GG \FF \cong \GG \FF \GG$, see
also \cite{SeidelThomas} for a discussion of twist functors and the induced braid group actions
in more general situations.
If the field $\kk$ is algebraically
 closed then there exists an exact sequence
 $$
 1 \lar \Pic^0 \rtimes  \Aut(\XX) \lar \Aut\bigl(D^b\bigl(\Coh(\XX)\bigr)\bigr)
 \lar B_3 \lar 1
 $$
 where $B_3$ is the braid group on three strands and  $\Pic^0(\XX)$ is the
 group of line bundles on $\XX$ of degree zero.  Moreover, $\Pic^0 \rtimes  \Aut(\XX)$
 consists of all elements of $\Aut\bigl(D^b\bigl(\Coh(\XX)\bigr)\bigr)$ having finite order.

\medskip
\noindent
6. For any slope $\mu \in \QQ$,  the category of semi-stable sheaves
$\SSt^\mu(\XX)$ is equivalent to the category of torsion sheaves $\Tor(\XX) = \SSt^\infty(\XX)$. Any
element $\FF \in \Aut\bigl(D^b(\Coh(\XX)\bigr)$ establishes  an equivalence
between $\SSt^\mu$  and $\SSt^\nu[i]$ for some $\nu \in \QQ \cup \{\infty\}$
and $i \in \ZZ$.

\begin{lemma}
Let $\XX$ be a weighted projective line of tubular type. Then
there is a canonical isomorphism of vector spaces over $\widetilde\QQ$:
\begin{equation}\label{E:HNNdecompH(X)}
\stackrel{\rightarrow}\otimes_{\mu \in \QQ \cup \{\infty\}}
\overline{H}(\SSt^\mu) \xrightarrow{\mathsf{mult}} \overline{H}(\XX),
\end{equation}
where $\overline{H}(\SSt^\mu)$ is the non-extended Hall algebra of the abelian category $\SSt^\mu$
of semi-stable sheaves of slope $\mu$ and $\stackrel{\rightarrow}\otimes$ is the
restricted directed tensor product. This means that  a simple tensor in the left-hand side
of (\ref{E:HNNdecompH(X)}) has all but finitely many entries equal to  $\mathbbm{1}$.
\end{lemma}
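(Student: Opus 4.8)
The plan is to deduce this factorization from the existence and uniqueness of Harder--Narasimhan filtrations on $\Coh(\XX)$ with respect to the standard stability condition $Z = (\rk, -\deg)$. Recall (property 2 of the tubular case) that every indecomposable coherent sheaf on a tubular $\XX$ is semi-stable, and that for $\kF \in \SSt^{\mu}$ and $\kG \in \SSt^{\nu}$ with $\mu < \nu$ one has $\Hom(\kG,\kF) = 0$, while $\Ext^{1}(\kF,\kG) = 0$ — the latter because $\Ext^{1}(\kF,\kG) \cong \DD\Hom(\kG,\kF(\vec\omega))$ by Serre duality and $\tau = (\vec\omega)$ preserves each $\SSt^{\mu}$ (so $\kF(\vec\omega) \in \SSt^{\mu}$ still has strictly smaller slope than $\kG$). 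Consequently, for any coherent sheaf $\kH$, its HN filtration $0 = \kH_{0} \subset \kH_{1} \subset \dots \subset \kH_{m} = \kH$ has subquotients $\kH_{i}/\kH_{i-1} \in \SSt^{\mu_{i}}$ with $\mu_{1} > \mu_{2} > \dots > \mu_{m}$, and this filtration is unique and functorial.

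First I would make the map precise: the restricted directed tensor product $\stackrel{\rightarrow}\otimes_{\mu}\overline{H}(\SSt^{\mu})$ has as a basis the simple tensors $[\kF_{1}] \otimes [\kF_{2}] \otimes \cdots$ indexed by families $(\kF_{\mu})_{\mu \in \QQ\cup\{\infty\}}$ with $\kF_{\mu} \in \SSt^{\mu}$ and all but finitely many $\kF_{\mu} = 0$; the multiplication map sends such a tensor, say supported on $\mu_{1} > \dots > \mu_{m}$ with chosen representatives $\kF_{i} \in \SSt^{\mu_{i}}$, to the product $[\kF_{1}] \circ [\kF_{2}] \circ \dots \circ [\kF_{m}]$ computed in $H(\XX)$ (strictly speaking in $\overline{H}(\XX)$, since no Cartan elements appear). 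The content is that this is bijective.

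The key computation is that for $\mu_{1} > \dots > \mu_{m}$ and $\kF_{i} \in \SSt^{\mu_{i}}$ one has
$$
[\kF_{1}] \circ [\kF_{2}] \circ \dots \circ [\kF_{m}] = v^{-\sum_{i<j}\langle \bar\kF_{i},\bar\kF_{j}\rangle}\,[\kG] + (\text{lower terms}),
$$
where $\kG$ is the (essentially unique, up to the vanishing $\Ext$'s) iterated extension of $\kF_{1}$ by $\dots$ by $\kF_{m}$ realizing the HN filtration, and ``lower terms'' means classes $[\kG']$ whose HN type is strictly smaller in the natural partial order on decreasing sequences of slopes refining the common total class $\sum_{i}\bar\kF_{i}$. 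Because $\Ext^{1}(\kF_{i},\kF_{j}) = 0$ for $i < j$, the Hall product $[\kF_{i}]\circ[\kF_{j}]$ for a single descending pair already simplifies, and induction on $m$ together with the HN uniqueness shows that the only term of maximal HN type is $[\kG]$ with the stated coefficient. This gives a filtration-compatible, unitriangular change of basis between $\{[\kF_{1}]\otimes\dots\otimes[\kF_{m}]\}$ and $\{[\kG]\}_{[\kG]\in\cI}$, hence $\mathsf{mult}$ is an isomorphism. An alternative, cleaner route avoiding explicit coefficients: argue that $\mathsf{mult}$ is surjective because $[\kH]$ differs from the product of its HN pieces by lower-order terms (downward induction on HN type), and injective by comparing the two natural filtrations of both sides indexed by HN type; this is precisely the standard argument of Proposition \ref{P:knownequal} transported from the length setting to $\Coh(\XX)$.

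The main obstacle I anticipate is not the algebra but the bookkeeping: one must check that the directed tensor product is genuinely \emph{restricted} in a way compatible with $\overline{H}(\XX)$ — i.e.\ that although there are infinitely many slopes $\mu$, any fixed class $\alpha \in K_{0}(\XX)$ admits only finitely many HN types, so the sum defining $\mathsf{mult}$ on each graded piece is finite and the unitriangularity argument makes sense degree by degree. This finiteness does hold (for fixed $\alpha$ the possible subquotient classes are constrained by $\rk$ and $\deg$ being bounded, and semistable classes of bounded $(\rk,\deg)$ are finite in number), but it should be stated carefully. The other point requiring a word is that $\overline{H}(\SSt^{\mu})$ is a well-defined subalgebra of $\overline{H}(\XX)$ — this follows since $\SSt^{\mu}$ is extension-closed in $\Coh(\XX)$ (an extension of two semistables of slope $\mu$ is again semistable of slope $\mu$), so the Hall product of two classes supported on $\SSt^{\mu}$ stays supported there.
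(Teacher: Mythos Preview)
Your overall plan is sound and would succeed, but there is a slip in the direction of the $\Ext^1$ vanishing, and you are missing the simplification the paper actually uses.

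You correctly establish that for $\mu < \nu$, $\kF \in \SSt^\mu$, $\kG \in \SSt^\nu$ one has $\Hom(\kG,\kF)=0$ and $\Ext^1(\kF,\kG)=0$. But then, ordering by \emph{decreasing} slopes $\mu_1 > \dots > \mu_m$, you claim ``$\Ext^1(\kF_i,\kF_j)=0$ for $i<j$''. That is $\Ext^1(\text{bigger slope},\text{smaller slope})$, the opposite of what you proved; in fact this group is typically nonzero (it is dual to $\Hom$ from smaller to bigger). So in the decreasing-slope order the product $[\kF_1]\circ\dots\circ[\kF_m]$ genuinely has many terms, and the sentence ``the Hall product for a single descending pair already simplifies'' is false as written. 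Your ``alternative route'' via the HN triangularity of Proposition~\ref{P:knownequal} is the correct way to run that ordering, and it does work.

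The paper's proof is shorter because it uses the \emph{opposite} ordering and exploits a feature specific to the tubular case that you state but do not fully use: since every indecomposable sheaf is semistable, every $\kF$ is a \emph{direct sum} $\kF \cong \kF_1 \oplus \dots \oplus \kF_t$ of semistables with $\mu(\kF_1) < \dots < \mu(\kF_t)$, not merely a filtered object. Taking the product in \emph{increasing} slope order, the relevant extension groups are $\Ext^1(\kF_i,\kF_l)$ for $i<l$, which \emph{do} vanish, and together with $\Hom(\kF_l,\kF_i)=0$ one obtains the exact identity
\[
[\kF] \;=\; v^{\sum_{i<l}\langle\bar\kF_i,\bar\kF_l\rangle}\,[\kF_1]\circ\dots\circ[\kF_t],
\]
with no lower terms at all. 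Surjectivity and injectivity of $\mathsf{mult}$ are then immediate from this bijection of bases, with none of the filtration/unitriangularity bookkeeping you anticipate. Your finiteness concerns are handled automatically by the direct sum decomposition.
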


\begin{proof}
Since any indecomposable sheaf on $\XX$ is automatically semi-stable, for any
$\kF \in \Ob\bigl(\Coh(\XX)\bigr)$ there exist a unique decomposition
$\kF \cong \kF_1 \oplus \dots \oplus \kF_t$,  where all sheaves
$\kF_1, \dots, \kF_t$ are semi-stable and $\mu(\kF_1) < \dots < \mu(\kF_t)$. Then we
have: $\Ext^1(\kF_i, \kF_l) = 0 = \Hom(\kF_l, \kF_i)$ for any $1 \le i < l\le t$.
Hence, there is the
following equality in the Hall algebra $H(\XX)$:
$
\bigl[\kF\bigr] = v^m \bigl[\kF_1\bigr] \circ \dots \circ \bigl[\kF_t\bigr],
$
where $m = \sum_{i<l} \bigl\langle \overline{\kF_i}, \overline{\kF_l} \bigr\rangle$.
It shows that the map $\mathsf{mult}$ is surjective. It is also not difficult to
see that $\mathsf{mult}$ is injective, hence isomorphism.
\end{proof}

\begin{theorem}\label{T:actionAutDU(tubular)}
Let $\XX$ be a tubular weighted projective line.
Then the  group of exact auto-equivalences $\Aut\bigl(D^b(\Coh(\XX)\bigr)$ acts on the reduced
Drinfeld double $DH(\XX)$ by algebra automorphisms. For any
$\FF \in \Aut\bigl(D^b(\Coh(\XX)\bigr)$ its  action on $DH(\XX)$ is uniquely determined by the following rules: for $\alpha \in K_0(\XX)$ we have
$\FF(K_\alpha) = K_{\FF(\alpha)}$ and
for any semi-stable sheaf  $\kF$ such that
$\FF(\kF) \cong \widehat{\kF}[n]$ with $\widehat{\kF} \in \Coh(\XX)$ and
$n \in \ZZ$ we have:
$$
\FF\bigl([\kF]^\pm\bigr) = v^{-n \bigl\langle \bar{\kF}, \bar{\kF}\bigr\rangle}
\bigl[\widehat{\kF}\bigr]^{\pm \varepsilon(n)}
\bigl(K_{\widehat{\kF}}^{\pm \varepsilon(n)}\bigr)^n,
$$
where $\varepsilon(n) = (-1)^n$.
Moreover, any element  $\FF \in \Aut\bigl(D^b\bigl(\Coh(\XX)\bigr)\bigr)$ maps the reduced
Drinfeld double $DU(\XX)$ to itself. In other words, the group
of exact auto-equivalences of the derived category $D^b\bigl(\Coh(\XX)\bigr)$
acts on the reduced Drinfeld double of the composition algebra $U(\XX)$ by algebra
automorphisms.
\end{theorem}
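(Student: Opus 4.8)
The plan is to derive everything from Cramer's Theorem \ref{T:Cramer} together with the structure results already obtained, the only genuinely new ingredient being the fact that $U(\XX)$ contains the class of every exceptional sheaf on $\XX$.

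First I would note that, $\Coh(\XX)$ being a finitary hereditary $\kk$-linear category, Theorem \ref{T:Cramer} applies to every auto-equivalence $\FF \in \Aut\bigl(D^b(\Coh(\XX))\bigr)$ and produces an algebra automorphism $\FF_\ast \colon DH(\XX) \to DH(\XX)$ with $\FF_\ast(K_\alpha) = K_{\FF(\alpha)}$ and with action on classes given by formula (\ref{E:action-on-Db}). To see that (\ref{E:action-on-Db}) reduces to the formula in the statement for an arbitrary semi-stable sheaf $\kF$, recall that on a tubular line every indecomposable sheaf is semi-stable and that $\FF$ carries the subcategory $\SSt^\mu$ into $\SSt^\nu[i]$ for a \emph{single} integer $i$; combined with the fact that, $\Coh(\XX)$ being hereditary, an indecomposable object of $D^b(\Coh(\XX))$ is a shift of a sheaf, this gives $\FF(\kF) \cong \widehat\kF[-n_\FF(\kF)]$ with $\widehat\kF$ semi-stable, and (\ref{E:action-on-Db}) then reads precisely as asserted. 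That $\FF \mapsto \FF_\ast$ is a group action is forced by the uniqueness clause of Theorem \ref{T:Cramer}: since the $v$-exponents, the $K$-factors and the integers $n$ occurring in (\ref{E:action-on-Db}) all compose additively, the composite $\FF_\ast\GG_\ast$ satisfies the defining property of $(\FF\circ\GG)_\ast$, whence $(\FF\GG)_\ast = \FF_\ast\GG_\ast$ and $\mathrm{id}_\ast = \mathrm{id}$; in particular every $\FF_\ast$ is invertible. By the same token $\FF_\ast$ permutes the factors $\overline{H}(\SSt^\mu)$ of the decomposition (\ref{E:HNNdecompH(X)}) up to twists by the Cartan part.

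It then remains to prove $\FF_\ast\bigl(DU(\XX)\bigr) = DU(\XX)$; since $(\FF^{-1})_\ast = (\FF_\ast)^{-1}$ with $\FF^{-1}$ again an auto-equivalence, it suffices to prove the inclusion $\FF_\ast\bigl(DU(\XX)\bigr) \subseteq DU(\XX)$. By Corollary \ref{C:structuteDU(X)} the algebra $DU(\XX)$ is generated by the Cartan part $\widetilde\QQ[K]$ and by the line-bundle classes $[\kO(\vec x)]^\pm$, $\vec x \in \LL(\underline p)$. Now $\FF_\ast(K_\alpha) = K_{\FF(\alpha)} \in \widetilde\QQ[K]$, while by (\ref{E:action-on-Db}) the element $\FF_\ast\bigl([\kO(\vec x)]^\pm\bigr)$ is a power of $v$ times an element of $\widetilde\QQ[K]$ times the class of the sheaf $\widehat{\kO(\vec x)}$ in one of the two copies $\overline{U}(\XX)^\pm$; and since $\End\bigl(\kO(\vec x)\bigr) = \kk$, $\Ext^1\bigl(\kO(\vec x),\kO(\vec x)\bigr) = 0$ and $\FF$ is an equivalence, $\widehat{\kO(\vec x)}$ is again an exceptional sheaf. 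Everything thus reduces to the claim that $[\kE] \in U(\XX)$ for every exceptional sheaf $\kE$ on $\XX$.

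To establish this claim I would split into cases according to the rank of $\kE$. If $\rk(\kE) = 0$, then $\kE$ is supported at a single special point $\lambda_i$ and is one of the exceptional uniserial torsion sheaves $\kS_i^{(a,b)}$ with $b - a < p_i - 1$; by Ringel's description of the Hall algebra of a linear $A_{p_i-1}$-quiver sitting inside $\Tor_{\lambda_i}(\XX)$ — exactly as in the proof of Lemma \ref{L:line-bundles-are-inU(X)} — the class $[\kE]$ lies in the subalgebra generated by $[\kS_i^{(1)}], \dots, [\kS_i^{(p_i)}]$, hence in $U(\XX)$ by definition. If $\rk(\kE) > 0$, then $\kE$ is an exceptional vector bundle, and one invokes \cite[Proposition 7.4]{S1} (or reproves it): realizing $\kE$ inside an exceptional pair and reducing it to a line bundle by successive mutations of sheaves, each such mutation is expressed in $H(\XX)$ as a $v$-commutator $[\kE_1]\circ[\kE_2] - v^{\bullet}[\kE_2]\circ[\kE_1]$ and therefore stays inside any subalgebra containing $[\kE_1]$ and $[\kE_2]$; this is the tubular counterpart of the argument used for indecomposable bundles in the proof of Theorem \ref{T:isomAffineCase}. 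Granting the claim, $\FF_\ast\bigl(DU(\XX)\bigr) \subseteq DU(\XX)$ follows and the theorem is proved. The main obstacle is precisely this claim in the vector-bundle case: checking that every exceptional bundle can be reached from line bundles by mutations that never leave $\Coh(\XX)$ — i.e. that never produce a genuine two-term complex — is the only step not already handed to us by the machinery above.
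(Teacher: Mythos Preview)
Your approach is correct and essentially identical to the paper's: both invoke Cramer's Theorem \ref{T:Cramer} for the action on $DH(\XX)$, use Corollary \ref{C:structuteDU(X)} to reduce the invariance of $DU(\XX)$ to the fact that images of line bundles under $\FF$ are exceptional, and then appeal to the result from \cite{S1} that classes of exceptional sheaves lie in $U(\XX)$. One small correction: the relevant result in \cite{S1} for the tubular case is Proposition~8.7, not Proposition~7.4 (the latter treats the domestic case, where \emph{all} indecomposable bundles are exceptional and lie in $U(\XX)$); the paper simply cites \cite[Proposition 8.7]{S1} rather than reproving it via mutations, so the ``main obstacle'' you flag is already absorbed into that reference.
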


\begin{proof} The fact that $\FF$ is an algebra automorphism of $DH(\XX)$ can be proven along
the same lines as \cite[Theorem 3.8]{HallEll}. It is also a special  case of a general Theorem
\ref{T:Cramer} proven by Cramer \cite{Cramer}.

Let $\kF$ be an exceptional  coherent sheaf on $\XX$, i.e.~$\End(\kF) = \kk$ and $\Ext^1(\kF, \kF) = 0$.
By \cite[Proposition 8.7]{S1} it is known that the element $[\kF]$ belongs to the composition algebra $U(\XX)$.
By Corollary \ref{C:structuteDU(X)}, the reduced Drinfeld double $DU(\XX)$ is generated
by the Cartan part $\widetilde\QQ[K]$ and classes of certain exceptional objects (e.g.~by the line bundles).
But this implies that the algebra $DU(\XX)$ is invariant under the action of
$\Aut\bigl(D^b\bigl(\Coh(\XX)\bigr)\bigr)$.
\end{proof}

\begin{remark}
In the  work of Lin and Peng \cite{LinPeng}, the Hall--Lie algebra $\mathfrak{g}(A)$ of  the \emph{root
category} $D^b(A-\mod)/[2]$ of a tubular algebra $A$ was studied. They have shown that
$\mathfrak{g}(A)$ is isomorphic to the toroidal Lie algebra of the corresponding Dynkin type.
\end{remark}

\begin{lemma}
For any $\alpha \in K_0(\XX)$,  let $\mathbbm{1}^{\mathrm{ss}}_{\alpha} := \sum\limits_{[\kF] \in \cI: \,
\kF \in \SSt_\alpha}
[\kF] \in \overline{H}(\XX)[\alpha],
$
where $\SSt_\alpha$  is the category of semi-stable objects of  $\Coh(\XX)$ of class
$\alpha$ with respect to the standard stability function $Z = (\rk, \deg)$. Then $\mathbbm{1}^{\mathrm{ss}}_{\alpha}$ belongs to the composition algebra $U(\XX)$ (note
that the category $\SSt_\alpha$ can be empty).
\end{lemma}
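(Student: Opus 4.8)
The plan is to reduce the assertion to the already-known case of torsion sheaves by using an auto-equivalence of $D^b\bigl(\Coh(\XX)\bigr)$ that transports the subcategory $\SSt^\mu(\XX)$ of semi-stable sheaves of a rational slope to the subcategory $\Tor(\XX) = \SSt^\infty(\XX)$.

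First I would dispose of the degenerate cases. If $\rk(\alpha) < 0$ then $\SSt_\alpha = \emptyset$ and $\mathbbm{1}^{\mathrm{ss}}_\alpha = 0 \in U(\XX)$. If $\rk(\alpha) = 0$, then for the standard stability $Z = (\rk, -\deg)$ every non-zero torsion sheaf $\kF$ satisfies $Z\bigl(\overline\kF\bigr) = (0, -\deg\kF)$ with $-\deg\kF > 0$, so \emph{every} torsion sheaf is semi-stable; hence $\mathbbm{1}^{\mathrm{ss}}_\alpha = \mathbbm{1}_\alpha$, which lies in $U(\XX)_{\mathsf{tor}} \subseteq U(\XX)$ by Proposition \ref{P:onU(X)tor}. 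Thus the real content is the case $\rk(\alpha) > 0$, in which every object of $\SSt_\alpha$ is semi-stable of slope $\mu := -\deg(\alpha)/\rk(\alpha) \in \QQ$, i.e.\ $\SSt_\alpha \subseteq \SSt^\mu(\XX)$, and $\SSt_\alpha$ consists of finitely many isomorphism classes.

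Next I would choose a good auto-equivalence. Since the canonical homomorphism $\Aut\bigl(D^b(\Coh(\XX))\bigr) \to \SL(2,\ZZ)$ is surjective and $\SL(2,\ZZ)$ acts transitively on $\PP^1(\QQ) = \QQ \cup \{\infty\}$, I can pick $\FF \in \Aut\bigl(D^b(\Coh(\XX))\bigr)$ whose image in $\SL(2,\ZZ)$ carries the slope $\mu$ to $\infty$. By the recorded property that $\FF$ sends $\SSt^\mu$ to $\SSt^\nu[i]$ for some $\nu \in \QQ \cup \{\infty\}$ and $i \in \ZZ$, the functor $\FF$ restricts to an equivalence of abelian categories $\SSt^\mu(\XX) \xrightarrow{\simeq} \Tor(\XX)[n]$ for a \emph{single} integer $n$. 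Hence for each $\kF \in \SSt_\alpha$ we have $\FF(\kF) \cong \widehat\kF[-n]$ with $\widehat\kF$ a torsion sheaf, and as $\kF$ ranges over the isomorphism classes of $\SSt_\alpha$ the sheaf $\widehat\kF = \FF(\kF)[n]$ ranges over \emph{all} torsion sheaves of the fixed class $\beta := (-1)^n \FF(\alpha) \in K_0(\XX)$, which has rank zero. Theorem \ref{T:actionAutDU(tubular)} then gives, for each such $\kF$,
$$
\FF\bigl([\kF]^+\bigr) = v^{\,n\langle\alpha,\alpha\rangle}\,\bigl[\widehat\kF\bigr]^{\,\overline n}\, K_{\widehat\FF(\alpha^+)}^{\,n},
$$
where $\overline n \in \{+,-\}$ is the parity of $n$, the exponent $\langle\bar\kF,\bar\kF\rangle = \langle\alpha,\alpha\rangle$ is constant, and the last factor is a fixed element of $\widetilde\QQ[K]$ depending only on $\alpha$. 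Summing over $\kF \in \SSt_\alpha$ and using linearity of $\FF$ yields $\FF\bigl(\mathbbm{1}^{\mathrm{ss}\,+}_\alpha\bigr) = v^{\,n\langle\alpha,\alpha\rangle}\,\mathbbm{1}_\beta^{\,\overline n}\, K_{\widehat\FF(\alpha^+)}^{\,n}$.

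Finally I would conclude. Since $\beta$ is a torsion class, $\mathbbm{1}_\beta \in U(\XX)_{\mathsf{tor}}$ by Proposition \ref{P:onU(X)tor}, so $\mathbbm{1}_\beta^{\,\overline n} \in \overline U^{\pm}(\XX) \subseteq DU(\XX)$ and the right-hand side above lies in $DU(\XX)$; as $\FF$ restricts to an automorphism of $DU(\XX)$ by Theorem \ref{T:actionAutDU(tubular)}, also $\mathbbm{1}^{\mathrm{ss}\,+}_\alpha \in DU(\XX)$. But $\mathbbm{1}^{\mathrm{ss}}_\alpha$ sits in $\overline H^+(\XX)$, and since the inclusion $DU(\XX) \hookrightarrow DH(\XX)$ respects the triangular decompositions (Theorem \ref{T:U(X)is-bialgebra} and Proposition \ref{P:DrinfDoubleStr}), an element of $\overline H^+(\XX)$ lying in $DU(\XX)$ must already lie in $\overline U^+(\XX)$; hence $\mathbbm{1}^{\mathrm{ss}}_\alpha \in U(\XX)$. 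I expect the main obstacle to be the bookkeeping around the auto-equivalence: checking that the move $\mu \mapsto \infty$ of slopes is realized by an honest element of $\Aut\bigl(D^b(\Coh(\XX))\bigr)$ (which rests on the surjectivity onto $\SL(2,\ZZ)$ and the Lenzing--Meltzer description of the $\SL(2,\ZZ)$-action on slopes), and that the resulting equivalence $\SSt^\mu(\XX) \to \Tor(\XX)[n]$ has a shift $n$ that is \emph{constant} over all of $\SSt_\alpha$, so that Theorem \ref{T:actionAutDU(tubular)} can be summed uniformly to produce precisely a scalar multiple of $\mathbbm{1}_\beta^{\pm}$ times a Cartan element.
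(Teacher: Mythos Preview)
Your proof is correct and follows essentially the same route as the paper's: reduce to the torsion case via Proposition~\ref{P:onU(X)tor}, transport $\SSt^\mu$ to $\Tor(\XX)$ by an auto-equivalence $\FF$ of $D^b(\Coh(\XX))$, invoke Theorem~\ref{T:actionAutDU(tubular)} to land in $DU(\XX)$, and then use the compatibility of the triangular decompositions to descend from $DU(\XX)\cap\overline H^+(\XX)$ to $\overline U^+(\XX)$. Your version is in fact more careful than the paper's, which writes the slightly imprecise identity $\mathbbm{1}^{\mathrm{ss}}_\alpha = \FF^{-1}(\mathbbm{1}^{\mathrm{ss}}_\gamma)$ without tracking the $v$-power and Cartan factor; you make this explicit and correctly observe that the shift $n$ is constant on $\SSt^\mu$, so the sum over $\SSt_\alpha$ produces exactly a scalar multiple of $\mathbbm{1}_\beta^{\,\overline n}$ times a fixed $K$-element.
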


\begin{proof}
By Proposition \ref{P:onU(X)tor} for any $\alpha \in K_0(\XX)$ such that
$\rk(\alpha) = 0$,  the element $\mathbbm{1}_{\alpha} = \mathbbm{1}^{\mathrm{ss}}_{\alpha}$ belongs
to the algebra $U(\XX)_{\mathsf{tor}}$. Moreover, there exists an auto-equivalence
$\FF \in \Aut\bigl(D^b\bigl(\Coh(\XX)\bigr)\bigr)$ such that
$\rk\bigl(\FF(\alpha)\bigr) = 0$. Up to a shift
$\FF$ maps the category $\SSt^\mu$ to $\Tor(\XX)$, where
$\mu$ is the slope corresponding to the class $\alpha$.  Let $\gamma:= \FF(\alpha)$, then
by Theorem \ref{T:actionAutDU(tubular)} the element
$\mathbbm{1}^{\mathrm{ss}}_{\alpha} = \FF^{-1}\bigl(\mathbbm{1}^{\mathrm{ss}}_{\gamma})$
belongs to  $DU(\XX) = \overline{U}(\XX)^+ \otimes_{\widetilde\QQ}
\widetilde\QQ[K] \otimes_{\widetilde\QQ} \overline{U}(\XX)^-.$
Since $\mathbbm{1}^{\mathrm{ss}}_{\alpha}$ lies  in  $\overline{H}(\XX)^+$, it is an element
of  $\overline{U}(\XX)^+$, too.
\end{proof}

\begin{theorem}\label{T:PBWintoroidal}
Let $\XX$ be a weighted projective line of tubular type $\underline{p} = (p_1, \dots, p_n)$.
For any slope $\mu \in \QQ \cup \{\infty\}$,  let
$U^\mu = U(\XX)^\mu$ be the subalgebra of the composition algebra
$\overline{U}(\XX)$ generated by the set
$\bigl\{\mathbbm{1}^{\mathrm{ss}}_{\alpha}\bigr\}_{\alpha \in K_0(\XX): \mu(\alpha) = \mu}$. Then we have:
\begin{enumerate}
\item $U^\mu$ is isomorphic to the algebra $\overline{U}(\XX)_{\mathsf{tor}} \cong
\mathcal{Z} \otimes_{\widetilde\QQ} U_q^+(\widehat{\mathfrak{sl}}_{p_1}) \otimes_{\widetilde\QQ}
\dots \otimes_{\widetilde\QQ} U_q^+(\widehat{\mathfrak{sl}}_{p_n})$.
\item The canonical map $\stackrel{\rightarrow}\otimes_{\mu \in \QQ \cup \{\infty\}}
U^\mu \xrightarrow{\mathsf{mult}} \overline{U}(\XX)$ is an isomorphism of vector spaces.
\end{enumerate}
\end{theorem}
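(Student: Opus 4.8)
The plan is to reduce both assertions to the already–established structure of $\overline{U}(\XX)_{\mathsf{tor}}$ (Proposition \ref{P:structU(X)} and Section \ref{SS:Summary}) together with the action of $\Aut\bigl(D^b(\Coh(\XX))\bigr)$ on $DU(\XX)$ from Theorem \ref{T:actionAutDU(tubular)}, and to the Harder--Narasimhan decomposition $(\ref{E:HNNdecompH(X)})$.

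For part (1) I would first dispose of $\mu=\infty$. A coherent sheaf is semi-stable of slope $\infty$ for $Z=(\rk,-\deg)$ precisely when it is torsion (every torsion sheaf is semi-stable), so $\mathbbm{1}^{\mathrm{ss}}_{\alpha}=\mathbbm{1}_{\alpha}$ for every $\alpha$ with $\rk(\alpha)=0$; by Proposition \ref{P:onU(X)tor} these elements generate $\overline{U}(\XX)_{\mathsf{tor}}$, hence $U^{\infty}=\overline{U}(\XX)_{\mathsf{tor}}$, and (1) for $\mu=\infty$ is Proposition \ref{P:structU(X)}. For $\mu\in\QQ$ I would use that the image of $\Aut\bigl(D^b(\Coh(\XX))\bigr)$ in $\Iso\bigl(K_0(\XX)\bigr)$ surjects onto the copy of $\SL(2,\ZZ)$ acting on the radical $\langle\delta,\omega\rangle$, which acts transitively on slopes; pick $\FF\in\Aut\bigl(D^b(\Coh(\XX))\bigr)$ with $\FF(\SSt^{\mu})=\SSt^{\infty}[i]=\Tor(\XX)[i]$ for a fixed $i\in\ZZ$. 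By Theorem \ref{T:actionAutDU(tubular)}, $\FF$ is an algebra automorphism of $DU(\XX)$, and since $n_{\FF}$ is constant equal to $-i$ on $\SSt^{\mu}$, it sends each generator $\mathbbm{1}^{\mathrm{ss}}_{\alpha}$ ($\mu(\alpha)=\mu$) of $U^{\mu}$ to a nonzero scalar times $K_{\gamma(\alpha)}\cdot\mathbbm{1}_{\FF(\alpha)}$ with $\rk\bigl(\FF(\alpha)\bigr)=0$, the latter taken in the $+$ or $-$ half of $DU(\XX)$ according to the parity of $i$. Since conjugation by $\widetilde{\QQ}[K]$, the rescaling of generators, and the tautological identification $H^{+}(\XX)=H^{-}(\XX)$ are algebra automorphisms, $\FF$ restricts to an algebra isomorphism $U^{\mu}\xrightarrow{\ \sim\ }\overline{U}(\XX)_{\mathsf{tor}}$; combined with the case $\mu=\infty$ this proves (1).

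For part (2), injectivity of $\mathsf{mult}$ is formal: each $\mathbbm{1}^{\mathrm{ss}}_{\alpha}$ with $\mu(\alpha)=\mu$ is a sum of classes of objects of $\SSt^{\mu}$, so $U^{\mu}\subseteq\overline{H}(\SSt^{\mu})$ (this is a subalgebra because $\SSt^{\mu}$ is extension-closed in $\Coh(\XX)$), and the map $\stackrel{\rightarrow}\otimes_{\mu}U^{\mu}\to\overline{U}(\XX)$ is the restriction of the isomorphism $\stackrel{\rightarrow}\otimes_{\mu}\overline{H}(\SSt^{\mu})\xrightarrow{\mathsf{mult}}\overline{H}(\XX)$ of $(\ref{E:HNNdecompH(X)})$ to the flat subspace $\stackrel{\rightarrow}\otimes_{\mu}U^{\mu}$. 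For surjectivity it suffices to place each generator of $\overline{U}(\XX)$ from Definition \ref{D:compalgXX} (and Corollary \ref{C:new-def-ofU(X)}) into some $U^{\mu}$: the exceptional simple sheaves $\bigl[\kS_i^{(j)}\bigr]$ and the elements $T_r$ lie in $\overline{U}(\XX)_{\mathsf{tor}}=U^{\infty}$, while for $\vec{x}\in\LL(\underline{p})$ the bundle $\kO(\vec{x})$ is the unique indecomposable semi-stable sheaf of its class (a rank-one semi-stable sheaf is torsion-free, hence a line bundle, and $\vec{x}\mapsto\overline{\kO(\vec{x})}$ is injective), so $\bigl[\kO(\vec{x})\bigr]=\mathbbm{1}^{\mathrm{ss}}_{\overline{\kO(\vec{x})}}\in U^{\mu}$ for $\mu$ the slope of $\kO(\vec{x})$. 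Thus $\mathrm{Image}(\mathsf{mult})$ contains a generating set of $\overline{U}(\XX)$; since the Harder--Narasimhan reordering of a product $U^{\mu}\circ U^{\nu}$ ($\mu>\nu$) stays inside $\stackrel{\rightarrow}\otimes_{\rho}U^{\rho}$ for $\nu\le\rho\le\mu$ — which one checks by the same auto-equivalence transport, reducing to the case of torsion sheaves — $\mathrm{Image}(\mathsf{mult})$ is a subalgebra, hence all of $\overline{U}(\XX)$, and $\mathsf{mult}$ is an isomorphism.

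The main obstacle, I expect, is the bookkeeping buried in the middle of each of the last two paragraphs: verifying that after transporting along $\FF$ the Cartan twists $K_{\gamma(\alpha)}$ and the possible $+/-$ exchange genuinely assemble into an \emph{algebra} isomorphism $U^{\mu}\cong\overline{U}(\XX)_{\mathsf{tor}}$ (i.e.\ that $\gamma$ is additive and the $v$-powers are quasi-multiplicative, so the twist is an honest automorphism of the extended torsion algebra), and that the Harder--Narasimhan decomposition of $\overline{H}(\XX)$ actually restricts to the composition subalgebras $U^{\mu}$. Granting Theorem \ref{T:actionAutDU(tubular)} and $(\ref{E:HNNdecompH(X)})$, everything else is formal.
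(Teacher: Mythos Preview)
Your argument for part (1) is essentially the paper's, though the paper bypasses the Cartan-twist bookkeeping you worry about: rather than transporting along the automorphism of $DU(\XX)$ from Theorem \ref{T:actionAutDU(tubular)}, it simply uses that the auto-equivalence restricts to an \emph{abelian} equivalence $\SSt^\mu(\XX)\simeq\Tor(\XX)$, which directly gives an isomorphism of the (non-extended) Hall algebras $\overline{H}(\SSt^\mu)\cong\overline{H}(\Tor(\XX))$ carrying $\{\mathbbm{1}^{\mathrm{ss}}_\alpha:\mu(\alpha)=\mu\}$ to $\{\mathbbm{1}_\gamma:\rk(\gamma)=0\}$. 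No Drinfeld double, no $K$-twists, no $\pm$ swap are needed for this step.

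For part (2), injectivity is fine and matches the paper. The gap is in your surjectivity argument: you place the generators of $\overline{U}(\XX)$ inside various $U^\mu$'s and then assert that $\mathrm{Im}(\mathsf{mult})$ is a subalgebra because ``the HN reordering of $U^\mu\circ U^\nu$ stays inside $\stackrel{\rightarrow}\otimes_\rho U^\rho$, reducing to the case of torsion sheaves''. But transporting so that $\mu\mapsto\infty$ sends $\nu$ to some finite $\nu'$, and you are left with showing that $U^\infty\circ U^{\nu'}\subseteq\stackrel{\rightarrow}\otimes_\rho U^\rho$ --- still a two-slope problem, not a pure torsion computation. Proposition \ref{P:decompU(X)} lets you commute torsion generators past \emph{line bundles}, but $U^{\nu'}$ is generated by the $\mathbbm{1}^{\mathrm{ss}}_\alpha$, which for a tubular line include classes of higher-rank bundles; there is no analogue of the explicit straightening formulas available for those. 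So the reordering claim is precisely the content of surjectivity and your reduction does not close the circle.

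The paper avoids this altogether. Instead of showing the image of $\mathsf{mult}$ is a subalgebra, it proves the stronger statement that for \emph{every} slope $\mu$ one has
\[
\overline{U}(\XX)\ \subseteq\ \mathrm{Im}\Bigl(\bigl(\stackrel{\rightarrow}\otimes_{\nu<\mu}H^\nu\bigr)\otimes U^\mu\otimes\bigl(\stackrel{\rightarrow}\otimes_{\nu>\mu}H^\nu\bigr)\xrightarrow{\mathsf{mult}}\overline{H}(\XX)\Bigr),
\]
i.e.\ the $\mu$-component (in the HN decomposition \eqref{E:HNNdecompH(X)}) of any element of $\overline{U}(\XX)$ already lies in $U^\mu$. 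For $\mu=\infty$ this is exactly Proposition \ref{P:decompU(X)}. For general $\mu$ one transports via an auto-equivalence $\FF$ sending $\mu$ to $\infty$, but now working inside the Drinfeld double: for $a\in\overline{U}(\XX)^+$ the element $\FF(a)$ lies in $DU(\XX)$ \emph{and} in a product of $H^{\nu,\pm}$'s determined by the shift of slopes; intersecting these two descriptions and using the $\mu=\infty$ case forces the $\infty$-component of $\FF(a)$ into $U^{\infty}$, and transporting back gives the claim for $\mu$. Since this holds for every $\mu$ and the HN decomposition is a vector-space isomorphism, surjectivity follows at once.
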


\begin{proof}
We know that there exists an equivalence of categories $\SSt^\mu(\XX)
\stackrel{\FF}\lar
\Tor(\XX)$. Hence, $\FF$ induces an isomorphism  of non-extended
Hall algebras $\overline{H}\bigl(\SSt^\mu(\XX)\bigr) \stackrel{\FF}\lar \overline{H}\bigl(\Tor(\XX)\bigr)$. Moreover, $\FF$ induces a bijection between the sets
$\bigl\{\mathbbm{1}_\gamma | \rk(\gamma) = 0 \bigr\}$ and $\bigl\{\mathbbm{1}^{\mathrm{ss}}_\alpha | \mu(\alpha) = \mu\bigr\}$. Hence, the algebras $U^\mu$ and $U^\infty = \overline{U}(\XX)_{\mathsf{tor}}$
are isomorphic. By Proposition \ref{P:structU(X)} we know that $\overline{U}(\XX)_{\mathsf{tor}} \cong \mathcal{Z} \otimes_{\widetilde\QQ} U_q^+(\widehat{\mathfrak{sl}}_{p_1}) \otimes_{\widetilde\QQ}
\dots \otimes_{\widetilde\QQ} U_q^+(\widehat{\mathfrak{sl}}_{p_n})$. This implies the first part of the claim.

Since the map $\stackrel{\rightarrow}\otimes_{\mu \in \QQ \cup \{\infty\}}
U^\mu \xrightarrow{\mathsf{mult}} \overline{U}(\XX)$ is a restriction of the isomorphism (\ref{E:HNNdecompH(X)}), it is at least injective. Hence, we only have to prove the
surjectivity of $\mathsf{mult}$. For any slope $\nu$ denote
$H^\nu = \overline{H}(\SSt^\nu)$. It is sufficient to show  that for any $\mu \in \QQ \cup \{\infty\}$
the algebra $\overline{U}(\XX)$ is contained in the image of the map
$
\bigl(\stackrel{\rightarrow}\otimes_{\nu < \mu }
H^\nu\bigr) \otimes U^\mu \otimes  \bigl(\stackrel{\rightarrow}\otimes_{\nu > \mu }
H^\nu\bigr)
\xrightarrow{\mathsf{mult}} \overline{H}(\XX).
$
By Proposition \ref{P:decompU(X)},  it is true for $\mu = \infty$.  Let $\FF$ be an auto-equivalence
of $D^b\bigl(\Coh(\XX)\bigr)$ mapping the given slope $\mu \in \QQ$ to $\infty$. If
  $\kappa$ is the image
of the slope $\infty$ under $\FF$ then  we have a commutative diagram
$$
\xymatrix
{
\FF\bigl(\overline{U}(\XX)^+\bigr) \ar@{^{(}->}[rr] \ar@{^{(}->}[d] & & \overline{U}(\XX)^+ \otimes_{\widetilde\QQ} \widetilde\QQ[K]
\otimes_{\widetilde\QQ} \overline{U}(\XX)^- \ar@{_{(}->}[d] \\
\bigl(\stackrel{\rightarrow}\otimes_{\nu > -\kappa }
H^{\nu, +}\bigr) \otimes_{\widetilde\QQ} \widetilde\QQ[K]  \otimes_{\widetilde\QQ} \bigl(\stackrel{\rightarrow}\otimes_{\nu \le \kappa }
H^{\nu, -}\bigr)
\ar@{^{(}->}[rr]  & & \overline{H}(\XX)^+ \otimes_{\widetilde\QQ} \widetilde\QQ[K]
\otimes_{\widetilde\QQ} \overline{H}(\XX)^-.
}
$$
Let
$a \in \overline{U}(\XX) = \overline{U}(\XX)^+$ be an arbitrary element. Since its image
$b := \FF(a)$ belongs both to $DU(\XX) = \overline{U}(\XX)^+ \otimes_{\widetilde\QQ} \widetilde\QQ[K]
\otimes_{\widetilde\QQ} \overline{U}(\XX)^-$ and to $\bigl(\stackrel{\rightarrow}\otimes_{\nu > -\kappa }
H^{\nu, +}\bigr) \otimes_{\widetilde\QQ} \widetilde\QQ[K]  \otimes_{\widetilde\QQ} \bigl(\stackrel{\rightarrow}\otimes_{\nu \le \kappa }
H^{\nu, -}\bigr)$, it can be written as a sum of monomials  of the form
$c_{1} \dots c_{t} d e_{1} \dots e_{r}$, where
$c_{i} \in H^{\mu_i, +}$, $d \in \widetilde\QQ[K]$ and $d_{l} \in H^{\nu_l, -}$ are such that
$ - \kappa < \mu_1 < \dots < \mu_t$, $\nu_1 < \dots < \nu_r \le \kappa$ and
$c_t \in U^{\infty, +}$ (it is also possible that $c_t = 1$). Hence,
$a = \FF^{-1}(b) \in \bigl(\stackrel{\rightarrow}\otimes_{\nu < \mu }
H^\nu\bigr) \otimes U^\mu \otimes  \bigl(\stackrel{\rightarrow}\otimes_{\nu > \mu }
H^\nu\bigr)$.
\end{proof}

\begin{corollary}
Starting with  some basis of   $\overline{U}(\XX)_{\mathsf{tor}}^{\mathsf{exc}} \cong U_q^+(\widehat{\mathfrak{sl}}_{p_1}) \otimes_{\widetilde\QQ}
\dots \otimes_{\widetilde\QQ} U_q^+(\widehat{\mathfrak{sl}}_{p_n})$, orthonormal with respect to
the Green's form, Theorem \ref{T:PBWintoroidal} gives a construction
of a PBW-type basis of the algebra $U(\XX)$, which is \emph{orthonormal}   with respect to
the Green's form. In a similar way, we get a PBW-type basis of the reduced Drinfeld double
$DU(\XX)$.
\end{corollary}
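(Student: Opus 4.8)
The plan is to deduce the statement from Theorem~\ref{T:PBWintoroidal} together with two properties of Green's form: its non-degeneracy on $\overline U(\XX)_{\mathsf{tor}}$, and the compatibility of the Harder--Narasimhan decomposition (\ref{E:HNNdecompH(X)}) with it. First I would assemble the building blocks. By Proposition~\ref{P:structU(X)} we have $\overline U(\XX)_{\mathsf{tor}} \cong \mathcal Z \otimes_{\widetilde\QQ} \overline U(\XX)_{\mathsf{tor}}^{\mathsf{exc}}$ with $\mathcal Z = \widetilde\QQ[Z_1,Z_2,\dots]$ central and $\overline U(\XX)_{\mathsf{tor}}^{\mathsf{exc}} \cong \bigotimes_{i} U_q^+(\widehat{\mathfrak{sl}}_{p_i})$. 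Green's form is non-degenerate on each $U_q^+(\widehat{\mathfrak{sl}}_{p_i})$ (the classical non-degeneracy of the restricted bialgebra pairing on the positive part of a quantized Kac--Moody algebra) and on $\mathcal Z$ (the power-sum Hall pairing on the ring of symmetric functions: $(Z_r,Z_r)=\beta_r\ne 0$ by Proposition~\ref{P:structU(X)}, $(Z_r,Z_s)=0$ for $r\ne s$ by degree reasons, and $(Z_r,ab)=0$ for $a,b$ in the augmentation ideal, since $Z_r$ is primitive, $(\mathbbm 1,b)=\eta(b)$, and $(K_\gamma,a)=0$ for such $a$). The same two identities show that $\mathcal Z$ is orthogonal to the augmentation ideal of $\overline U(\XX)_{\mathsf{tor}}^{\mathsf{exc}}$, and that the factors $U_q^+(\widehat{\mathfrak{sl}}_{p_i})$ indexed by distinct points $\lambda_i$ are mutually orthogonal, so in a monomial basis adapted to the tensor decomposition the form on $\overline U(\XX)_{\mathsf{tor}}$ is the product of the forms on the factors. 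Thus, given an orthonormal basis $B^{\mathsf{exc}}$ of $\overline U(\XX)_{\mathsf{tor}}^{\mathsf{exc}}$ as in the statement and a power-sum-type orthonormal basis $B^{\mathcal Z}$ of $\mathcal Z$, their product is an orthonormal basis $B^{\infty}$ of $U^\infty=\overline U(\XX)_{\mathsf{tor}}$.

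Next I would transport $B^\infty$ to all slopes and assemble. For $\mu\in\QQ$ there is an auto-equivalence of $D^b\bigl(\Coh(\XX)\bigr)$ which, after a shift, yields an equivalence of abelian categories $\SSt^\mu(\XX)\xrightarrow{\sim}\Tor(\XX)$; any such equivalence induces an isomorphism of Hall algebras preserving Green's form (which depends only on the abelian category and the Euler form on its $K$-group) and carries the generators $\mathbbm 1^{\mathrm{ss}}_\alpha$ ($\mu(\alpha)=\mu$) of $U^\mu$ to the generators $\mathbbm 1_\beta$ ($\rk(\beta)=0$) of $\overline U(\XX)_{\mathsf{tor}}$, hence restricts to an isometry $U^\mu\xrightarrow{\sim}\overline U(\XX)_{\mathsf{tor}}$. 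Pulling $B^\infty$ back along it gives an orthonormal basis $B^\mu$ of each $U^\mu$. By Theorem~\ref{T:PBWintoroidal}(2), $\stackrel{\rightarrow}\otimes_{\mu}U^\mu\xrightarrow{\mathsf{mult}}\overline U(\XX)$ is a vector-space isomorphism, and it is the restriction of (\ref{E:HNNdecompH(X)}); since a semistable sheaf of slope $\mu$ is never isomorphic to one of slope $\nu\ne\mu$, the distinct-slope factors are Green-orthogonal, and for semistable $\kF_i,\kG_j$ of strictly increasing slopes one has $\bigl([\kF_1]\circ\cdots\circ[\kF_t],[\kG_1]\circ\cdots\circ[\kG_s]\bigr)=\delta_{t,s}\prod_i\bigl([\kF_i],[\kG_i]\bigr)$. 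Therefore the ordered products $\prod_\mu b_\mu$ with $b_\mu\in B^\mu$ and all but finitely many $b_\mu=\mathbbm 1$ form an orthonormal PBW-type basis of $\overline U(\XX)$, and tensoring with the monomials $\{K_\alpha\}_{\alpha\in K_0(\XX)}$ gives the desired basis of $U(\XX)$. For the reduced Drinfeld double one repeats the argument using the triangular decomposition $DU(\XX)=\overline U(\XX)^+\otimes_{\widetilde\QQ}\widetilde\QQ[K]\otimes_{\widetilde\QQ}\overline U(\XX)^-$ of Theorem~\ref{T:U(X)is-bialgebra}: the PBW basis of $\overline U(\XX)^+$, its mirror in $\overline U(\XX)^-$, and the monomials $K_\alpha$ form a PBW-type basis of $DU(\XX)$, orthonormal for the natural pairing of $DU(\XX)$ in which $\overline U(\XX)^+$ and $\overline U(\XX)^-$ are dually paired by Green's form.

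The main obstacle is the last displayed factorization identity, i.e.~the compatibility of the Harder--Narasimhan decomposition (\ref{E:HNNdecompH(X)}) with Green's form; it is a standard consequence of the adjunction $(a\circ b,c)=(a\otimes b,\Delta(c))$ between the Hall product and the coproduct together with the triangularity of $\Delta$ along Harder--Narasimhan slopes (the argument is parallel to the one in~\cite{HallEll} for coherent sheaves on an elliptic curve), and once it is granted the remaining steps are bookkeeping. A minor caveat concerns the word ``orthonormal'': the relevant Green norms (such as $\beta_r$ or $\bigl([\kS_i^{(j)}],[\kS_i^{(j)}]\bigr)$) need not be squares in $\widetilde\QQ=\QQ[\sqrt q]$, so one should either extend scalars to a field containing the required square roots or read ``orthonormal'' as ``orthogonal with explicitly computed non-zero norms''; this does not affect the construction.
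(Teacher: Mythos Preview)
The paper gives no proof of this corollary at all; it is stated as an immediate consequence of Theorem~\ref{T:PBWintoroidal} and then followed by a remark. Your proposal is a correct and natural way to fill in the details the paper leaves implicit: build an orthonormal basis of $\overline{U}(\XX)_{\mathsf{tor}}$ from the tensor decomposition of Proposition~\ref{P:structU(X)}, transport it to every $U^\mu$ via the slope-shifting auto-equivalences (which preserve Green's form since they come from abelian equivalences $\SSt^\mu\simeq\Tor(\XX)$), and then invoke the Harder--Narasimhan factorization of Green's form to conclude that ordered products are orthonormal. Your identification of the HN--Green compatibility as the one substantive point, and your caveat about ``orthonormal'' versus ``orthogonal with known nonzero norms'' over $\widetilde\QQ$, are both apt; the paper glosses over both.
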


\begin{remark}
Let $\XX$ be a weighted projective line of tubular type $\widehat{\widehat{{\Delta}}}$,
where $\Delta \in \bigl\{D_4, E_6, E_7, E_8\bigr\}$
and $\mathfrak{g}$ be the simple Lie algebra of the Dynkin type $\Delta$. In the  work
of the second-named author \cite{S1} it was shown that the composition algebra
$U(\XX)$ can be identified with a certain (quite non-standard) Borel subalgebra
$U_q(\mathfrak{b})$ of
the quantized double loop algebra $U_q\bigl(\mathfrak{L} \mathfrak{g}_{\widehat\Delta}\bigr)$, where
$\mathfrak{g}_{\widehat\Delta}$ is the affine Lie algebra of type $\widehat\Delta$ (see
the survey article \cite{Hernandez} for a definition and applications
of quantized double loop algebras).
In \cite{S2} it was shown how a
Lusztig-type approach \cite{Lusztig} leads to  a construction of  a \emph{canonical basis} of $U_q(\mathfrak{b})$.
Therefore,  it is natural to conjecture that the reduced Drinfeld double $DU(\XX)$ is isomorphic to
the whole
quantized double loop algebra  $U_q\bigl(\mathfrak{L} \mathfrak{g}_{\widehat\Delta}\bigr)$.
\end{remark}

\begin{remark}
The general results on the structure of the reduced Drinfeld double of a weighted projective
line $\XX$, listed at the end of  Section \ref{S:CompAlgII},  lead to very interesting consequences for the
theory  of the quantized toroidal (or double loop) enveloping algebras of types $\widehat{\widehat{D}}_4$,
$\widehat{\widehat{E}}_6$, $\widehat{\widehat{E}}_7$ and $\widehat{\widehat{E}}_8$.
For example, let $\underline{p} = (2, 3, 6)$ and $\XX = \XX(\underline{p})$ be the corresponding
tubular weighted
projective line of type  $\widehat{\widehat{E}}_8$. Then we know that
\begin{enumerate}
\item
The algebra $DU(\XX)$ contains a subalgebra isomorphic to
$$\mathcal{H} \otimes_{\kA} U_q\bigl(\widehat{\mathfrak{sl}}_{2}\bigr) \otimes_{\kA}
U_q\bigl(\widehat{\mathfrak{sl}}_{3}\bigr) \otimes_{\kA} U_q\bigl(\widehat{\mathfrak{sl}}_{6}\bigr),$$
where $\mathcal{H}$ is the Heisenberg algebra and $\kA$ is the ring of Laurent polynomials
in the ``common'' central element $C$.
\item The algebra $DU(\XX)$ contains a subalgebra isomorphic to
$DU(\PP^1) \cong U_q\bigl(\mathfrak{L} \mathfrak{sl}_{2}\bigr)$.
\item More generally, let $\underline{q} = (2, 3, 5)$ and $\YY = \YY(\underline{q})$
be the corresponding weighted projective line of domestic type. Then there
exists  an embedding $DU(\YY) \rightarrow DU(\XX)$. This should correspond to some rather  non-trivial
embedding of the quantized enveloping algebra $U_q\bigl(\widehat{E}_8\bigr)$ into  the
algebra $U_q\bigl(\widehat{\widehat{E}}_8\bigr)$.
\end{enumerate}
\end{remark}

\section{Appendix: Stability Conditions and Hall Algebras}\label{S:StabilCond}

\noindent
In this section we collect some definitions and basic results on  stability conditions on abelian categories
and their use in the theory of Hall algebras.

\begin{definition}\label{D:stability} Let $\cA$ be an abelian category and $K_0(\cA)$ be its Grothendieck group.
\begin{enumerate}
\item A stability condition on $\cA$ is
an additive function $Z = (Z_1, Z_2) \,:\, K_0(\cA) \rightarrow  \RR^2$ such that
 for  any non-zero object $X \in \Ob(\cA)$ we have:
 \begin{enumerate}
 \item $Z_1(\overline{X}) \ge  0$,
\item if $Z_1(\overline{X}) = 0$ then $Z_2(\overline{X}) >0$.
\end{enumerate}
\item For a non-zero object $X \in \Ob(\cA)$ the fraction
$$
\mu(X) := \frac{Z_2(\overline{X})}{Z_1(\overline{X})} \in \RR \cup \{+\infty\}
$$
is called the slope of $X$.
\item $X \in \Ob(\cA)$ is called $Z$--stable (respectively  $Z$--semi--stable)
if for any proper subobject $Y$ of $X$ we have: $\mu(Y) < \mu(X)$ (respectively $\mu(Y) \le \mu(X)$).
\end{enumerate}
\end{definition}

\begin{example}\label{E:curves}
Let $\mathbb{X}$ be a smooth projective curve over the field $\kk$ and $\cA = \Coh(\XX)$
be the category of coherent sheaves on $\XX$. Then $\cA$ admits the following canonical stability
function $Z = (\rk, \deg) \, : \,  K_0(\cA)  \rightarrow \ZZ^2$, where
$\rk(\kF)$ is the rank of a coherent sheaf $\kF$  and $\deg(\kF)$ is its degree.
\qed
\end{example}

\begin{example}
Let $\overr{Q}$ be a finite quiver without loops and oriented cycles with
the set of vertices $\{1, \dots, n\}$  and
$\cA = \Rep(\overr{Q})$ be its category of finite dimensional representations over the field $\kk$.
Note that  $K_0(\cA) = \langle \overline{S}_1, \dots, \overline{S}_n\rangle \cong \ZZ^n$, where
$S_i$ is the simple representation corresponding to the vertex $i$.

For any $X \in \Ob(\cA)$ we denote by $\underline\dim(X) = (d_1, \dots, d_n) \in
\ZZ^n$ the dimension vector of $X$ and by $\dim(X) = d_1 + \dots + d_n$ its dimension
as a vector space. For any $\underline{\theta} = (\theta_1, \dots, \theta_ n) \in \RR^n$
we get  the following  stability function
$$
Z: K_0(\cA)  \rightarrow \RR^2, \quad
\overline{X} \mapsto \bigl(\dim(X), \underline{\theta} \cdot \underline{\dim}(X)\bigr),
$$
where $\underline{\theta} \cdot \underline{\dim}(X) = \sum_{i=1}^n \theta_i d_i$.
\qed
\end{example}

\noindent
The proof of the following proposition is an easy exercise.

\begin{proposition} In the notations of Definition \ref{D:stability} we have the following.
\begin{enumerate}
\item Let $X \in \Ob(\cA)$ be $Z$--stable. Then $\End_{\cA}(X)$ is a skew field.
\item For any  $\mu  \in \RR \cup \{+\infty\}$, the category $\mathsf{SS}^\mu(\cA)$
of $Z$--semi--stable objects
of slope $\mu$ is abelian. In other words, $\mathsf{SS}^\mu(\cA)$  is closed under taking extensions,
kernels and cokernels
inside of the category  $\cA$.
\item For any pair of $Z$--semi--stable objects $X$ and $Y$ such that $\mu(X) > \mu(Y)$
we have: $\Hom_\cA(X, Y) = 0$.
\end{enumerate}
\end{proposition}

\noindent
The following fundamental result is due to Harder and Narasimhan, who have shown
it for the stability condition from Example \ref{E:curves}. In the stated generality,
it can be found in the paper of Rudakov \cite{Rudakov}.

\begin{theorem}\label{T:HarderNarasimhan}
Assume our category $\cA$ to be Noetherian. Then for any $0 \not\cong X \in \Ob(\cA)$
there exists a sequence of monomorphisms
\begin{equation}\label{E:HarderNarasimhanFilt}
0 = X_{m+1} \stackrel{f_{m+1}}\lar X_m \stackrel{f_{m}}\lar \dots
\stackrel{f_2}\lar X_1 \stackrel{f_1}\lar X_0 = X
\end{equation}
satisfying the following two properties.
\begin{enumerate}
\item For any $0 \le l \le m$ the object $A_l := \coker(f_{l+1})$ is
$Z$--semi--stable.
\item Moreover, we have: $\mu(A_{m}) > \dots > \mu(A_0)$.
\end{enumerate}
A  sequence  (\ref{E:HarderNarasimhanFilt}) is unique in the following sense: if
$$
0 = Y_{n+1} \stackrel{g_{n+1}}\lar Y_n \stackrel{g_{n}}\lar \dots
\stackrel{g_2}\lar Y_1 \stackrel{g_1}\lar Y_0 = X
$$
is another sequence of monomorphisms satisfying the above properties then $n = m$ and
for any $1 \le l \le m$ there exists a unique isomorphism $h_l: X_l \rightarrow Y_l$ such that
the following diagram is commutative:
$$
\xymatrix{
X_m \ar[r]^{f_{m}} \ar[d]_{h_m} & X_{m-1} \ar[r]^{f_{m-1}}
\ar[d]^{h_{m-1}} & \dots \ar[r]^{f_{2}} & X_{1} \ar[r]^{f_{1}}  \ar[d]^{h_1} & X \ar[d]^{\mathbbm{1}_X} \\
Y_m \ar[r]^{g_m}  & Y_{m-1} \ar[r]^{g_{m-1}}  & \dots \ar[r]^{g_{2}} & Y_{1} \ar[r]^{g_{1}}  & X.
}
$$
A sequence (\ref{E:HarderNarasimhanFilt}) is called \emph{Harder--Narasimhan filtration} of
$X$ and the semi--stable objects $A_0, \dots, A_m$ are called \emph{Harder--Narasimhan quotients} of
$X$.
\end{theorem}

\noindent
Let
 $\bigl(\Gamma, \{\,-\,,\,-\,\}\bigr)$ be an abelian group
equipped with a bilinear form  and $$
\bigl(K_0(\cA), \langle \,-\,,\,-\,\rangle\bigr) \stackrel{\mathsf{ch}}\lar
\bigl(\Gamma, \{\,-\,,\,-\,\}\bigr)$$ be any isometry.
Assume $\Gamma \stackrel{T}\lar  \mathbb{R}^2$ is a group homomorphism such that
$Z  = T \circ \mathsf{ch}$ is a stability function for $\cA$.  Next, assume that for  any
$\gamma  \in \Gamma$ there are  only finitely many $[X] \in \cI$ such that $\mathsf{ch}(\overline{X}) = \gamma$.
Consider the following element of Hall algebra $H(\cA)$:
$$
\mathbbm{1}_\gamma  := \sum\limits_{[X] \in \cI: \, \mathsf{ch}(\bar{X}) = \gamma}
[X]
\quad
\mbox{\rm and}
\quad
\mathbbm{1}^{\mathrm{ss}}_\gamma  = \mathbbm{1}^{\mathrm{ss}}_{\gamma, Z} := \sum\limits_{[X] \in \cI: \, X \in \mathsf{SS}(\cA)[\gamma]}
[X],
$$
where $\mathsf{SS}(\cA)[\gamma]$ is the category of $Z$--semi-stable objects of  $\cA$ having  class
$\gamma$ in $\Gamma$.  The following result is a direct consequence of Theorem \ref{T:HarderNarasimhan}.

\begin{proposition}\label{P:knownequal}
For any element $\gamma \in \Gamma$ we have the equality
\begin{equation}\label{E:HNFiltandHallAlg}
\mathbbm{1}_\gamma = \mathbbm{1}^{\mathrm{ss}}_\gamma +
\sum\limits_{t\ge  2}  \sum\limits_{\substack{\gamma_1 + \dots + \gamma_t = \gamma \\
\mu(\gamma_1) < \dots < \mu(\gamma_t) }}
v^{\sum_{i < j} \langle \gamma_i \gamma_j \rangle}
\mathbbm{1}^{\mathrm{ss}}_{\gamma_1} \circ \dots \circ \mathbbm{1}^{\mathrm{ss}}_{\gamma_t}.
\end{equation}
\end{proposition}
\begin{remark}
Note that the sum in the right-hand side of  (\ref{E:HNFiltandHallAlg}) is finite.
\end{remark}

\begin{corollary}\label{C:stabcondHallAlg} In the above notations,
the elements $\bigl\{\mathbbm{1}_\alpha\bigr\}_{\alpha \in \Gamma}$ and
$\bigl\{\mathbbm{1}^{\mathrm{ss}}_{\beta, Z}\bigr\}_{\beta \in \Gamma}$ generate the same
subalgebra of the Hall algebra $H(\cA)$.
\end{corollary}

\begin{proof}
For any $\alpha \in \Gamma$ the  element $\mathbbm{1}_{\alpha}$ belongs to the
 algebra $\bigl\{\mathbbm{1}^{\mathrm{ss}}_{\beta, Z}\bigr\}_{\beta \in \Gamma}$.
  From a ``triangular'' form of the equality (\ref{E:HNFiltandHallAlg})
 it  follows  that also other way around, for any $\alpha \in K_0(\cA)$ the  element  $\mathbbm{1}^{\mathrm{ss}}_{\alpha}$
 belongs to the
 subalgebra generated by $\bigl\{\mathbbm{1}_\beta\bigr\}_{\beta \in \Gamma}$.
\end{proof}

\begin{remark}\label{R:ReinekeInv}
A result  of Reineke \cite[Theorem 5.1]{Reineke} provides an explicit formula expressing
the elements $\mathbbm{1}^{\mathrm{ss}}_\alpha$ via $\mathbbm{1}_\beta$ for an arbitrary
stability function $Z$:
$$
\mathbbm{1}^{\mathrm{ss}}_{\alpha} = \mathbbm{1}_\alpha +
\sum\limits_{t\ge  2} (-1)^{t-1}  \sum\limits_{\substack{\alpha_1 + \dots + \alpha_t = \alpha: \,
\forall \,  1 \le s \le t-1 \\
\mu(\alpha_1 + \dots + \alpha_s) > \mu(\alpha)}}
v^{\sum_{i < j} \langle \alpha_i \alpha_j \rangle}
\mathbbm{1}_{\alpha_1} \circ \dots \circ \mathbbm{1}_{\alpha_t}.
$$
\end{remark}

\end{document}